\newtheorem{thm}{Theorem}
\newtheorem*{thm*}{Theorem}
\newtheorem{theorem}[thm]{Theorem}
\newtheorem{cor}[thm]{Corollary}
\newtheorem{lem}[thm]{Lemma}
\newtheorem*{lem*}{Lemma}
\newtheorem{lemma}[thm]{Lemma}
\newtheorem{prop}[thm]{Proposition}
\newtheorem{proposition}[thm]{Proposition}
\newtheorem{defnlem}[thm]{Definition and Lemma}
\theoremstyle{definition}
\newtheorem{defn}[thm]{Definition}
\newtheorem{rem}[thm]{Remark}
\numberwithin{equation}{section}
\numberwithin{thm}{section}
\newcommand{\red}{\color{red}}
\newcommand{\Cov}{\mathbb{K}}
\newcommand{\E}{\mathbb{E}}
\newcommand{\Prob}{\mathbb{P}}
\newcommand{\Pkern}[1][]{\ensuremath{{}^{ #1}\!P}}
\newcommand{\Pfs}[1][]{\ensuremath{\mathbb{P}_{#1}\text{-a.s.}}}
\newcommand{\Erw}[2][]{\ensuremath{\mathbb{E}_{#1} \left( {#2} \right)}}
\newcommand{\N}{\mathbb{N}}
\newcommand{\Q}{\mathbb{Q}}
\newcommand{\R}{\mathbb{R}}
\newcommand{\B}{\mathfrak{B}}
\newcommand{\C}{\mathbb{C}}
\newcommand{\V}{\mathbb{V}}
\newcommand{\A}{\mathcal{A}}
\newcommand{\F}{\mathcal{F}}
\renewcommand{\S}{\mathcal{S}}
\newcommand{\llam}{{l}}
\newcommand{\supp}{\mathrm{supp}\,  }
\renewcommand{\epsilon}{\varepsilon}
\renewcommand{\rho}{\varrho}
\newcommand{\esl}[1]{\overline{ #1 }}
\newcommand{\1}[1][]{\mathbf{1}_{#1}}
\newcommand{\norm}[1]{\ensuremath{\left\| {#1} \right\|}}
\newcommand{\abs}[1]{\ensuremath{\left| {#1} \right|}}
\newcommand{\skalar}[1]{\langle #1 \rangle} 
\newcommand{\infnorm}[1]{\abs{#1}_\infty}
\newcommand{\eqdist}{\stackrel{\mathcal{L}}{=}}
\newcommand{\follows}{\Rightarrow}
\newcommand{\Step}[1][]{\textsc{Step #1}}
\newcommand{\ST}{\mathcal{S}}
\newcommand{\STh}{\mathcal{S}_0}
\newcommand{\STi}{\mathcal{S}_Q}
\newcommand{\Pset}[1][]{\mathcal{P}_{#1}}
\newcommand{\measureset}[2][]{\mathcal{M}^{#1}\left( #2 \right)}
\newcommand{\law}[1]{\mathcal{L}\left( #1 \right)}
\newcommand{\tree}{\mathfrak{T}}
\newcommand{\Ttree}{\mathcal{T}}
\newcommand{\Ytree}{\mathcal{Y}}
\newcommand{\Id}{\matrix{Id}}
\newcommand{\Rd}{\R^d}
\newcommand{\Rdnn}{{\R^d_\ge}}
\newcommand{\Rp}{\R_>}
\newcommand{\Rnn}{\R_\ge}
\newcommand{\Cf}[2][]{\mathcal{C}^{#1}\left( #2 \right)}
\newcommand{\Cbf}[2][]{\mathcal{C}_b^{#1}\left( #2 \right)}
\newcommand{\Ccf}[2][]{\mathcal{C}_c^{#1}\left( #2 \right)}
\newcommand{\Cof}[2][]{\mathcal{C}_0^{#1}\left( #2 \right)}
\newcommand{\Bbmf}[1]{\mathcal{B}^1\left( #1 \right)}
\newcommand{\Zolset}[1]{\mathcal{D}_{#1}}
\newcommand{\dto}{\stackrel{d}{\to}}
\newcommand{\vto}{\stackrel{v}{\to}}
\newcommand{\Fset}[1][]{\mathcal{F}_{#1}}
\newcommand{\Fseta}[1][\alpha]{\mathcal{F}^{#1}}
\newcommand{\Eigenspace}[1]{\mathrm{Eig}(#1)}
\newcommand{\Eigenspaceo}[1]{\mathrm{Eig}_0(#1)}
\newcommand{\Eigenspacenn}[1]{\mathrm{Eig}^+(#1)}
\newcommand{\U}[1][]{\mathbb{U}_{#1}}
\newcommand{\condC}{(C)}
\newcommand{\QQ}{\mathbb{Q}}
\newcommand{\Sp}{\mathbb{S}_\ge}
\newcommand{\Sd}{\mathbb{S}}
\newcommand{\Mset}{\mathcal{M}_\ge}
\newcommand{\interior}[1]{\breve{#1}}
\newcommand{\est}[1][s]{e_*^{#1}}
\newcommand{\Pst}[1][s]{P_*^{#1}}
\newcommand{\nust}[1][s]{\nu_*^{#1}}
\newcommand{\pist}[1][s]{\pi_*^{#1}}
\newcommand{\es}[1][s]{e^{#1}}
\newcommand{\Ps}[1][s]{P^{#1}}
\newcommand{\nus}[1][s]{\nu^{#1}}
\newcommand{\mM}{\mathbf{M}}
\newcommand{\mT}{\mathbf{T}}
\newcommand{\mPi}{\matrix{\Pi}}
\newcommand{\mL}{\matrix{L}}
\newcommand{\mA}{\matrix{A}}
\renewcommand{\matrix}[1]{\mathbf{#1}}
\newcommand{\PiA}{\matrix{\daleth}}
\newcommand{\Rfp}{R}
\newcommand{\fpd}{\eta}
\newcommand{\fprv}{Y}
\newcommand{\LTfp}{\psi}
\newcommand{\hfpd}{\eta_0}
\newcommand{\hfprv}{Y_0}
\newcommand{\ifpd}{\eta_Q}
\newcommand{\ifprv}{Y_Q}
\newcommand{\as}{\cdot}
\newcommand{\LTa}[1][]{\phi^{#1}}
\newcommand{\LTb}{\varphi}
\newcommand{\T}{T}
\newcommand{\K}{K}
\newcommand{\No}{\mathbb{N}_0}
\newcommand{\deins}{\mathbf{\vartheta_d}}
\newcommand{\eins}{\mathbf{\vartheta_1}}
\newcommand{\tshift}[2]{\left[ #1 \right]_{#2}}
\newcommand{\stablen}[2]{\tilde{S}_{#1}(#2)}
\newcommand{\lalpha}{\lambda^\alpha}
\newcommand{\ldd}{\lambda^{d^2}}
\renewcommand{\L}{\mathfrak{L}}
\newcommand{\minkern}{\Psi}
\newcommand{\minmeas}{\Phi}
\newcommand{\tv}[1]{\mathrm{tv}\left[ #1 \right]}
\renewcommand{\O}{\mathbb{O}}
\newcommand{\minkernXV}{\Upsilon}
\newcommand{\ma}{\matrix{a}}
\newcommand{\sline}[1][]{\mathcal{I}_{#1}}
\newcommand{\slineu}[1][]{\mathcal{I}_{#1}^u}
\newcommand{\D}[1][]{D_{#1}}
\renewcommand{\P}[2][]{\ensuremath{\mathbb{P}_{#1} \left( {#2} \right)}}
\newglossaryentry{leftderivative}{type=symbols,name={\ensuremath{m'(s^{-})}},
symbol={$m'(s^{-})$},sort=m, description={left derivative of $m$ in $s$}}
\newglossaryentry{boundary}{type=symbols,name={\ensuremath{\partial B}},
symbol={$\partial B$},sort=d, description={topological boundary of the set $B$}}
\newglossaryentry{rho}{type=symbols,name={\ensuremath{\rho(\cdot,\cdot)}},
symbol={$\rho(\cdot,\cdot)$},sort=rho, description={Prohorov metric}}
\newglossaryentry{K}{type=symbols,name={\ensuremath{K}},
symbol={$K$},sort=K, description={Chapter \ref{chapter:MST}: scaling constant.
Chapter \ref{chapter:RDE}:\newline $K=\frac{1}{l(\beta) \beta} \int_{\Sd}
\frac{1}{\est[\beta]({y})}
\Erw{(\skalar{y,R}^+)^{\kappa}-(\skalar{y,\mT R}^+)^{\kappa}}
\pist(dy)$ }}
\newglossaryentry{N(t)}{type=symbols,name={\ensuremath{N(t)}},
symbol={$N(t)$},sort=N(t), description={first exit time $N(t)= \inf\{ n \geq 0 : V_n > t\}$ }}
\newglossaryentry{R(t)}{type=symbols,name={\ensuremath{R(t)}},
symbol={$R(t)$},sort=R(t), description={residual lifetime process $R(t) = (V_{N(t)} - t)\1[\{
N(t) < \infty \}]$ }}
\newglossaryentry{Z(t)}{type=symbols,name={\ensuremath{Z(t)}},
symbol={$Z(t)$},sort=Z(t), description={jump process $Z(t)= X_{N(t)}\1[\{ N(t) < \infty
\}]$ }}
\newglossaryentry{taun}{type=symbols,name={\ensuremath{(\tau_n)_{n \in
\N}}}, symbol={$(\tau_n)_{n \in \N}$},sort=taun, description={hitting times of
$(X_n)_{n \ge 0}$ in $B_\delta(x_0)$ }}
\newglossaryentry{sigman}{type=symbols,name={\ensuremath{(\sigma_n)_{n \in
\No}}}, symbol={$(\sigma_n)_{n \in \No}$},sort=sigman, description={regeneration
epochs for $(X_n, U_n)_{n \in \No}$ under $\Q_x^\beta$ \gls{wrt} $\minkernXV$.
}}
\newglossaryentry{varpin}{type=symbols,name={\ensuremath{(\varpi_n)_{n \in
\No}}}, symbol={$(\varpi_n)_{n \in \No}$},sort=wn,
description={feasible sequence of regeneration epochs for $(X_n, \mM_n, Q_n)_{n
\in \No}$ under $\O_x$ 
\gls{wrt} $\Xi$, $X_{\varpi_{n}-1} \in B_\delta(X_0)$. }}
\newglossaryentry{Jn}{type=symbols,name={\ensuremath{(J_n)_{n \in \No}}},
symbol={$(J_n)_{n \in \No}$},sort=Jn, description={iid B(1,$\xi$) r.v.s,
determining whether regeneration occurs, see Lemma \ref{regenerationlemma} }}
\newglossaryentry{R(i)}{type=symbols,name={(R\ldots)},
symbol={(R\ldots)},sort=0r, description={properties of the regenerative
structure, see Lemma \ref{regenerationlemma} }}
\newglossaryentry{Xi}{type=symbols,name={\ensuremath{\Xi}},
symbol={$\Xi$},sort=Xi, description={minorizing kernel: $\hat{O}(x, \cdot) \ge
\Xi(x, \cdot)$. There are $L, \varsigma>
0$ such that\newline $ \Xi(x,B_\delta(x_0) \times B
\times \Rd) = L \int_{B_\varsigma(\Id)} \1[B_\delta(x_0) \times B](\mA \as x, \mA \mA_x)  \ldd(d\mA)  
$  }}
\newglossaryentry{Ohat_id}{type=symbols,name={\ensuremath{\hat{O}}},
symbol={$\hat{O}$},sort=Oh, description={$\hat{O}((x,\mA,q),A\times B \times C) = \P{
(\Pi_1 \as x, \Pi_1,Q_1) \in A \times B \times C}$, Markov
transition operator of $(X_n, \mM_n, Q_n)_{n \in \No}$ under $\O_x$}}
\newglossaryentry{Ox}{type=symbols,name={\ensuremath{\O_x}},
symbol={$\O_x$},sort=Ox, description={$\O_x = \delta(x) \otimes \delta(\Id) \otimes
 \delta(0) \otimes \bigotimes_{n=1}^\infty \varrho$ }}
\newglossaryentry{minkernXV}{type=symbols,name={\ensuremath{\minkernXV}},
symbol={$\minkernXV$},sort=Upsilon, description={minorizing kernel: $ \hat{Q}_s((x,u), \cdot) \ge \xi_s
\minkernXV(x, \cdot )$ There is compact $I \subset \R$ with $\supp \minkernXV(x,
\cdot) \subset B_\delta(x_0) \times I $ for all $x \in \Sd$ }}
\newglossaryentry{Qhat_id}{type=symbols,name={\ensuremath{\hat{Q}_s}},
symbol={$\hat{Q}_s$},sort=Qhs, description={$\hat{Q}_s((x,u),A\times B) $
\newline\phantom{Yak}$= \frac{1}{\est(x) \kappa(s)} \Erw {\est(\mM_1 \as x)
\abs{\mM_1 x}^s \1[A](\mM_1 \as x) \1[B](\log \abs{\mM_1 x})}$. Markov transition operator of $(X_n, U_n)_{n \in \No}$ under $\Q_x^s$}}
\newglossaryentry{SA}{type=symbols,name={\eqref{SA}},
symbol={\eqref{SA}},sort=0s, description={If \eqref{irred},\eqref{density},\eqref{eqn:MC1'} and \eqref{eqn:MC2'}
hold, they hold with $n_0=n=1$ }}
\newglossaryentry{Cfconj}{type=symbols,name={\ensuremath{\Cf{E}'}},
symbol={$\Cf{E}'$},sort=Cf', description={conjugate space of
$(\Cf{E},\infnorm{\cdot})$: regular bounded signed measures on $E$, equipped
with the total variation norm }}
\newglossaryentry{r(Q)}{type=symbols,name={\ensuremath{r(Q)}},
symbol={$r(Q)$},sort=r(Q), description={spectral radius of the operator Q }}
\newglossaryentry{MC1}{type=symbols,name={\eqref{eqn:MC1}},
symbol={\eqref{eqn:MC1}},sort=0mc1, description={$\Prob \left( (\Pi_1 \as y, \Pi_1) \in \cdot \right) \ge
 \xi \minkern(y, \cdot)$ }}
 \newglossaryentry{MC2}{type=symbols,name={\eqref{eqn:MC2}},
symbol={\eqref{eqn:MC2}},sort=0mc2, description={$\Prob \left( \Pi_1 \as y \in \cdot, \Pi_1 \in C \right) \ge \xi
 \minmeas$ }}
\newglossaryentry{Q^n}{type=symbols,name={\ensuremath{Q^n}},
symbol={$Q^n$},sort=Qn, description={$Q^n = \sum_{k=1}^n \PiA_{k-1} Q_k$ }}
\newglossaryentry{R^n}{type=symbols,name={\ensuremath{R^n}},
symbol={$R^n$},sort=Rn, description={$\Rfp^{n}=\sum_{k>n}
\left(\prod_{j=n+1}^{k-1}\mT_{(j)}\right)Q_k$ }}
\newglossaryentry{minkern}{type=symbols,name={\ensuremath{\minkern}},
symbol={$\minkern$},sort=Psi, description={minorizing kernel: $ \Prob \left(
(\Pi_1 \as y, \Pi_1) \in \cdot \right) \ge \xi \minkern(y, \cdot)$ (under
\eqref{SA}),\newline there is compact $C$ with $\supp \minkern(y, \cdot) \subset
B_\delta(x) \times C $ for all $y \in \Sd$ }}
\newglossaryentry{minmeas}{type=symbols,name={\ensuremath{\minmeas}},
symbol={$\minmeas$},sort=Phi, description={minorizing measure: $ \Prob
\left( \Pi_1 \as y \in \cdot, \Pi_1 \in C \right) \ge \xi \minmeas$ (under
\eqref{SA}), $\minkern(y, \cdot \times C) = \minmeas$ for all $y \in \Sd$ }}
\newglossaryentry{existsbeta}{type=symbols,name={\eqref{cond:existence_of_beta}},
symbol={\eqref{cond:existence_of_beta}},sort=0A, description={$\exists_{s_0 >0} \quad \E \inf_{u \in \Sd} \abs{\mT^\top x}^{s_0} \ge 1$}}
\newglossaryentry{Gamma0}{type=symbols,name={\ensuremath{\Gamma_0}},
symbol={$\Gamma_0$},sort=Gamma0, description={invertible matrix, appearing in
\eqref{density}}}
\newglossaryentry{tv}{type=symbols,name={\ensuremath{\tv{\cdot}}},
symbol={$\tv{\cdot}$},sort=tv, description={total variation norm:
$\tv{\nu, \eta}=\sup{f : \abs{f}\le 1} \abs{\nu(f)-\eta(f)}$}}
\newglossaryentry{l(s)}{type=symbols,name={\ensuremath{l(s)}},
symbol={$l(s)$},sort=l(s), description={$l(s)= \E_{\pist}^s V_1 = \frac{\kappa'(s^-)}{\kappa(s)}$}}
\newglossaryentry{l(0)}{type=symbols,name={\ensuremath{l(0)}},
symbol={$l(0)$},sort=l(s)1, description={$ l(0) =\lim_{n \to \infty} \frac{1}{n}
\log \abs{\Pi_n x} < 0 \quad \Pfs$,\newline upper Lyapunov exponent}}
\newglossaryentry{betamom_T}{type=symbols,name={\eqref{beta_moments}},
symbol={\eqref{beta_moments}},sort=0t, description={$\E \norm{\mT}^\beta \left( 
\abs{\log \norm{\mT}} + \abs{\log
\norm{\mT^{-1}}} \right) < \infty$}}
\newglossaryentry{betamom_Q}{type=symbols,name={\eqref{beta_moments_Q}},
symbol={\eqref{beta_moments_Q}},sort=0q, description={$0 < \E
\abs{Q}^\beta < \infty$}}
\newglossaryentry{Rneqr}{type=symbols,name={\eqref{nottrivial}},
symbol={\eqref{nottrivial}},sort=0r, description={$\forall\ r \in \Rd \quad
\P{\mT r + Q = r}<1$}}
\newglossaryentry{abs}{type=symbols,name={\ensuremath{\abs{\cdot}}},
symbol={$\abs{\cdot}$},sort=aaa, description={absolute value / euclidean norm on
$\R^d$}}
\newglossaryentry{skalar}{type=symbols,name={\ensuremath{\skalar{\cdot,\cdot}}},
symbol={$\skalar{\cdot,\cdot}$},sort=aaa, description={euclidean scalar
product on $\R^d$}}
\newglossaryentry{Cf}{type=symbols,name={\ensuremath{\Cf{E}}},
symbol={$\Cf(E)$},sort=C, description={set of continuous mappings $f : E
\to \R$}}
\newglossaryentry{Ccf}{type=symbols,name={\ensuremath{\Ccf{E}}},
symbol={$\Ccf(E)$},sort=Ccf, description={set of compactly supported continuous
mappings $f : E \to \R$}}
\newglossaryentry{Cbf}{type=symbols,name={\ensuremath{\Cbf{E}}},
symbol={$\Cbf(E)$},sort=Cbf, description={set of bounded continuous mappings $f
: E \to \R$}}
\newglossaryentry{Cmf}{type=symbols,name={\ensuremath{\Cf[m]{E}}},
symbol={$\Cf[m](E)$},sort=C, description={set of $m$-times continuously
differentiable mappings $f : E \to \R$}}
\newglossaryentry{Cof}{type=symbols,name={\ensuremath{\Cof{E}}},
symbol={$\Cof(E)$},sort=C0f, description={set of continuous mappings $f : E
\to \R$, that vanish at infinity}}
\newglossaryentry{infnorm}{type=symbols,name={\ensuremath{\infnorm{\cdot}}},
symbol={$\infnorm{\cdot}$},sort=aaa, description={$\infnorm{f}=\sup_{x
\in E} \abs{f(x)}$ for $f \in \Cf{E}$.}}
\newglossaryentry{opnorm}{type=symbols,name={\ensuremath{\norm{\cdot}}},
symbol={$\norm{\cdot}$},sort=aaa, description={operator norm:
$\norm{A}:=\sup_{\abs{x}=1} \abs{Ax}$}}
\newglossaryentry{MdR}{type=symbols,name={\ensuremath{M(d\times d, E)}},
symbol={$M(d\times d, E)$},sort=Md, description={set of $d \times d$ matrices
with entries in E }}
\newglossaryentry{mT}{type=symbols,name={\ensuremath{(\mT_i)_{i=1}^N}},
symbol={$\mT_i$},sort=Tmatrix, description={random matrices in $M(d \times d,\Rnn)$, w.l.o.g. identically distributed}}
\newglossaryentry{ST}{type=symbols,name={$\ST$}, symbol={$\ST$},sort=ST,
description={smoothing transform }}
\newglossaryentry{Pset}{type=symbols,name={\ensuremath{\Pset}},sort=Pset,
symbol={$\Pset$}, description={set of probability measures (on the specified
measurable space)}}
\newglossaryentry{Rnn}{type=symbols,name={\ensuremath{\Rnn}},sort=R,
symbol={$\Rnn(\cdot)$}, description={nonnegative real numbers $[0,\infty)$}}
\newglossaryentry{law}{type=symbols,name={\ensuremath{\mathcal{L}}},sort=law,
symbol={$\law{\cdot}$}, description={law (distribution) of the specified random
variable }}
\newglossaryentry{eta}{type=symbols,name={\ensuremath{\fpd}}, sort=eta,
symbol={$\fpd$}, description={probability measure, fixed point of $\ST$ }}
\newglossaryentry{T}{type=symbols,name={\ensuremath{\T}},sort=T,
symbol={$\T$}, description={$\T=(\mT_i)_{i=1}^N$}}
\newglossaryentry{Fset}{type=symbols,name={\ensuremath{\Fset}},sort=Fset,
symbol={$\Fset$}, description={set of fixed points of $\ST$}}
\newglossaryentry{dirac}{type=symbols,name={\ensuremath{\delta}},sort=dirac,
symbol={$\delta$}, description={Dirac measure in the specified point}}
\newglossaryentry{Fseta}{type=symbols,name={\ensuremath{\Fseta[\aleph]}},sort=Fseta,
symbol={$\Fseta[\aleph]$}, description={set of $\aleph$-elementary fixed points
of $\ST$}}
\newglossaryentry{mu}{type=symbols,name={\ensuremath{\mu}},
symbol={$\mu$},sort=mu, description={$\mu = \law{\mT_1}= \dots = \law{\mT_N}$}}
\newglossaryentry{mus}{type=symbols,name={\ensuremath{\mu^*}},
symbol={$\mu^*$},sort=mus, description={$\mu^* = \law{\mT_1^\top}$}}
\newglossaryentry{Mn}{type=symbols,name={\ensuremath{(\mM_n)_{n \in \N}}},
symbol={$(\mM_n)_{n \in \N}$},sort=M, description={sequence of \gls{iid}
random matrices with distribution $\mu^*$}}
\newglossaryentry{Tn}{type=symbols,name={\ensuremath{(\mT_{(n)})_{n \in \N}}},
symbol={$(\mT_{(n)})_{n \in \N}$},sort=Tn, description={sequence of \gls{iid}
random matrices with distribution $\mu$}}
\newglossaryentry{kappa}{type=symbols,name={\ensuremath{\kappa(s)}},
symbol={$\kappa(s)$},sort=kappa, description={$\kappa(s)=  \lim_{n \to \infty}
\left(\E \norm{\PiA_n}^s \right)^\frac1n = \lim_{n \to
\infty} \left( \E \norm{\Pi_n}^s \right)^\frac1n$}}
\newglossaryentry{m}{type=symbols,name={\ensuremath{m(s)}},
symbol={$m(s)$},sort=m, description={spectral function, Chapter
\ref{chapter:MST}: $m(s)= N \kappa(s) $, Chapter \ref{chapter:RDE}:
$m(s)=\kappa(s)$}}
\newglossaryentry{logmom_id}{type=symbols,name={\eqref{logmoments_RDE}},
symbol={\eqref{logmoments_RDE}},sort=0lo, description={$\E \log^+ \norm{\mT} +
\log^+ \abs{Q} < \infty $}}
\newglossaryentry{l<0}{type=symbols,name={\eqref{liapunov_exponent_negativ}},
symbol={\eqref{liapunov_exponent_negativ}},sort=0l, description={$l = \Pfs-\lim_{n \to \infty} \frac1n \log \norm{\PiA_n} < 0$}}
\newglossaryentry{PiA}{type=symbols,name={\ensuremath{\PiA_n}},
symbol={$\PiA_n$},sort=PiA, description={$\PiA_n = \mT_{(1)} \cdot \ldots \cdot \mT_{(n)}$}}
\newglossaryentry{R}{type=symbols,name={\ensuremath{R}},
symbol={$R$},sort=R, description={$\Rfp = \sum_{n=1}^\infty \PiA_{n-1} Q_n$,
$R \eqdist \mT R +Q$}}
\newglossaryentry{varrho}{type=symbols,name={\ensuremath{\varrho}},
symbol={$\varrho$},sort=rhov, description={$\varrho = \law{\mT,Q}$}}
\newglossaryentry{irred}{type=symbols,name={\eqref{irred}},
symbol={\eqref{irred}},sort=0i, description={$\forall_{x \in \Sd} \ \forall_{\text{open } U \subset \Sd}\ \max_{n \in \N}
\P{\Pi_n \as x \in U}>0$}}
\newglossaryentry{density}{type=symbols,name={\eqref{density}},
symbol={\eqref{density}},sort=0d, description={$\exists_{\Gamma_0 \in GL(d,\R)} \ \exists_{c, p >0} \ \exists_{n_0 \in
\N} \ \P{\Pi_{n_0} \in \cdot} \ge p \1[{B_c(\Gamma_0)}] \ldd$}}
\newglossaryentry{Imu}{type=symbols,name={\ensuremath{I_\mu}},
symbol={$I_\mu$},sort=Imu, description={$I_\mu = \{ s \ge 0 \ : \ \E
\norm{\mT_1}^s < \infty \}$}}
\newglossaryentry{sinfty}{type=symbols,name={\ensuremath{s_\infty}},
symbol={$s_\infty$},sort=sinfty, description={$s_\infty = \sup I_\mu$}}
\newglossaryentry{alpha}{type=symbols,name={\ensuremath{\alpha}},
symbol={$\alpha$},sort=alpha, description={$\alpha :=\inf \{s \in I_\mu :
m(s)\le 1\}. $ $\alpha \in \interior{I_\mu} \follows$ $m(\alpha)=1,$
$m'(\alpha) \le 0$}}
\newglossaryentry{limdownarrow}{type=symbols,name={\ensuremath{\lim_{t
\downarrow 0}}}, symbol={$\lim_{t
\downarrow 0}$},sort=lim, description={right sided limit in zero, $\lim_{t
\downarrow 0} = \lim_{t \to 0, t>0}$}}
\newglossaryentry{beta}{type=symbols,name={\ensuremath{\beta}},
symbol={$\beta$},sort=beta, description={$\beta :=\sup \{s \in I_\mu :
m(s)\le 1\}. $ $\beta \in \interior{I_\mu} \follows$ $m(\beta)=1,$
$m'(\beta) \ge 0$}}
\newglossaryentry{interior}{type=symbols,name={\ensuremath{\interior{}}},
symbol={\interior{}},sort=aaa, description={topological
interior of the specified set }}
\newglossaryentry{STi}{type=symbols,name={\ensuremath{\STi}},
symbol={$\STi$},sort=STQ, description={inhomogeneous smoothing transform
\newline $\STi \ : \ \nu \mapsto \law{\sum_{i=1}^N \mT_i Y_i + Q}$}}
\newglossaryentry{STh}{type=symbols,name={\ensuremath{\STh}},
symbol={$\STh$},sort=ST0, description={homogeneous smoothing transform
$\STh \ : \ \nu \mapsto \law{\sum_{i=1}^N \mT_i Y_i}$}}
\newglossaryentry{LTZ}{type=symbols,name={\ensuremath{\LTa_Z}},
symbol={$\LTa_Z$},sort=phi, description={Laplace transform of $Z$,
$\LTa_Z(x)=\E\ {\exp\left( - \skalar{x,Z} \right)}$}}
\newglossaryentry{LTeta}{type=symbols,name={\ensuremath{\LTa_\eta}},
symbol={$\LTa_\eta$},sort=phie, description={Laplace transform of $\eta$,
$\LTa_\eta(x)=\int_{\Rdnn} e^{-\skalar{x,y}} \eta(dy)$}}
\newglossaryentry{measuresetc}{type=symbols,name={\ensuremath{\measureset[c]{E}}},
symbol={\measureset[c]{\Rdnn}},sort=Mc, description={set of measures on $E$
with total mass less or equal $c$}}
\newglossaryentry{measuresetpm}{type=symbols,name={\ensuremath{\measureset[\pm]{E}}},
symbol={\measureset[\pm]{E}},sort=Mpm, description={ (vector space) of
regular bounded signed measures on $E$ }}
\newglossaryentry{vaguec}{type=symbols,name={\ensuremath{\vto}},
symbol={$\vto$},sort=aaa, description={vague convergence}}
\newglossaryentry{weakc}{type=symbols,name={\ensuremath{\dto}},
symbol={$\dto$},sort=aaa, description={convergence in distribution, weak
convergence}}
\newglossaryentry{Rp}{type=symbols,name={\ensuremath{\Rp}},sort=Rp,
symbol={$\Rp$}, description={positive half-line $(0,\infty)$}}
\newglossaryentry{N}{type=symbols,name={\ensuremath{\N}},sort=N,
symbol={$\N$}, description={positive integers $\{1, 2, \dots \}$}}
\newglossaryentry{No}{type=symbols,name={\ensuremath{\No}},sort=N0,
symbol={$\No$}, description={natural numbers $\{0, 1, 2, \dots\}$}}
\newglossaryentry{eqdist}{type=symbols,name={\ensuremath{\eqdist}},
symbol={$\eqdist$},sort=aaa, description={equal in distribution}}
\newglossaryentry{Sd}{type=symbols,name={\ensuremath{\Sd}},
symbol={$\Sd$},sort=Sd, description={unit sphere in $\Rd$}}
\newglossaryentry{Sp}{type=symbols,name={\ensuremath{\Sp}},
symbol={$\Sp$},sort=Sp, description={$\Sd \cap \Rdnn$}}
\newglossaryentry{gotimesh}{type=symbols, name={\ensuremath{g \otimes h}},
symbol={$g \otimes h$}, sort=aaa, description={$g \otimes h (x,y) = g(x) h(y)$}}
\newglossaryentry{nuotimeseta}{type=symbols, name={\ensuremath{\nu \otimes
\eta}}, symbol={$\nu \otimes \eta$}, sort=aaa, description={product measure}}
\newglossaryentry{Salpha1}{type=symbols, name={\ensuremath{S_\alpha(\sigma,
\lambda, b)}}, symbol={$S_\alpha(\sigma, \lambda, b)$}, sort=Salpha,
description={one-dimensional stable distribution with scale parameter
$\sigma$, skewness parameter $\lambda$ and shift parameter $b$}}
\newglossaryentry{Salphad}{type=symbols, name={\ensuremath{S_\alpha(\K \nu,
 b)}}, symbol={$S_\alpha(\K \nu, b)$}, sort=SalphaK,
description={multivariate stable distribution with scale parameter
$\K$, spectral measure $\nu$ and shift parameter $b$}}
\newglossaryentry{Salphan}{type=symbols, name={\ensuremath{\stablen{\alpha}{\K
\nu, 0}}}, symbol={$\stablen{\alpha}{\K
\nu, 0}$}, sort=Salphan,
description={multivariate stable distribution with \gls{LT} $\exp\left( -\K
\int_{\Sp} \skalar{x,y}^\alpha \nu(dy) \right) $, $x \in \Rdnn$}}
\newglossaryentry{Dn}{type=symbols, name={\ensuremath{D^{\mathbf{n}}}}, symbol={$D^{\mathbf{n}}$}, 
sort=Dn,
description={$D^{\mathbf{n}} f(x) =
\frac{\partial^{\sum \mathbf{n}_i}}{\partial_1^{\mathbf{n}_1} \ldots
\partial_d^{\mathbf{n}_d}} f (x)$ for $\mathbf{n} \in \No^d$}}
\newglossaryentry{CcfR0}{type=symbols, name={\ensuremath{\Ccf{\overline{\Rdnn} \setminus \{0\}}}},
symbol={$\Ccf{\overline{\Rdnn} \setminus \{0\}}$}, sort=CcfR0,
description={functions $f \in \Cbf{\Rdnn}$ which are supported away from the
origin}}
\newglossaryentry{lalpha}{type=symbols, name={\ensuremath{\lalpha}},
symbol={$\lalpha$}, sort=lebesgue, description={
$\lalpha(ds)=\frac{1}{s^{1+\alpha}} ds$}}
\newglossaryentry{deins}{type=symbols, name={\ensuremath{\deins}},
symbol={$\deins$}, sort=thetad, description={$\deins := (1, \dots,1)^\top
\in \Rd$}}
\newglossaryentry{eins}{type=symbols, name={\ensuremath{\eins}},
symbol={$\eins$}, sort=theta1, description={$\eins := \sqrt{d}^{-1}(1,
\dots,1)^\top \in \Rd$}}
\newglossaryentry{awedgeb}{type=symbols, name={\ensuremath{ \wedge }},
symbol={$ \wedge $}, sort=aaa, description={$(a \wedge b)_i=\min\{a_i, b_i\}$,
$i=1, \ldots, d$ for $a,b \in \Rd$}}
\newglossaryentry{aveeb}{type=symbols, name={\ensuremath{ \vee }},
symbol={$ \vee $}, sort=aaa, description={$(a \vee b)_i=\max\{a_i, b_i\}$,
$i=1, \ldots, d$ for $a,b \in \Rd$}}
\newglossaryentry{U}{type=symbols, name={\ensuremath{ \U }},
symbol={$ \U $}, sort=U, description={renewal measure}}
\newglossaryentry{Ux}{type=symbols, name={\ensuremath{ \U[x] }},
symbol={$ \U[x] $}, sort=Ux, description={Markov renewal measure,
$\U[x]=\sum_{n=0}^\infty \P[x]{(X_n, V_n) \in \cdot}$, \newline
$g*\U[x](t)=\Erw[x]{\sum_{n=0}^\infty g(X_n, t-V_n)}$}}
\newglossaryentry{Ps}{type=symbols, name={\ensuremath{ \Ps }},
symbol={$ \Ps$}, sort=Ps, description={$\Ps f(x)= \Erw{\abs{\mT_1 x}^s f(\mT_1 \as x)} $}}
\newglossaryentry{Pst}{type=symbols, name={\ensuremath{ \Pst }},
symbol={$ \Pst$}, sort=Pst, description={$P^s_* f(x) =
\Erw{\abs{\mT_1^\top x}^s f(\mT_1^\top \as x)}= \Erw{\abs{\mM_1 x}^s
f(\mM_1 \as x)}$}}
\newglossaryentry{Mset}{type=symbols, name={\ensuremath{ \Mset }},
symbol={$ \Mset$}, sort=Mset, description={$\Mset=M(d \times d, \Rdnn)$}}
\newglossaryentry{supp}{type=symbols, name={\ensuremath{ \supp }},
symbol={$ \supp$}, sort=Mset, description={support of a measure $\mu$: 
$\{ x \ : \ \forall \text{ open } O \text{ with } x \in O, \mu(O)>0\} $}}
\newglossaryentry{suppmu}{type=symbols, name={\ensuremath{ [\supp \mu] }},
symbol={$ [\supp \mu]$}, sort=Mset, description={smallest closed subsemigroup
containing $\supp \mu$}}
\newglossaryentry{Wperp}{type=symbols, name={\ensuremath{ {}^\perp }},
symbol={$ {}^\perp$}, sort=aaa, description={For $W \subset \Rd$,
$W^\perp = \{x \in \Rd \ : \ \skalar{x,y}=0\ \forall y \in W \}$}}
\newglossaryentry{Beps}{type=symbols, name={\ensuremath{ B_\epsilon(x) }},
symbol={$ B_\epsilon(x)$}, sort=B, description={$B_\epsilon(x) := \{ y \ : \
\abs{x-y}< \epsilon\}$}}
\newglossaryentry{XnVn}{type=symbols, name={\ensuremath{ (X_n, V_n)_{n \in \No}}},
 symbol={$\(X_n, V_n)_{n \in \No} $  }, sort=Xn, description={Markov random
 walk; $X_n := \Pi_n \as X_0$, $V_n := \log \abs{\Pi_n X_0}$}}
\newglossaryentry{ldd}{type=symbols, name={\ensuremath{\ldd}}, symbol={$\ldd $},
sort=ldd, description={Lebesgue measure on $M(d \times d,\R) \subset
\R^{d^2}$}}
\newglossaryentry{lambdaA}{type=symbols, name={\ensuremath{\lambda_\matrix{A}}}, 
symbol={$\lambda_\matrix{A}$}, sort=lambdaA, description={Perron-Frobenius
eigenvalue of $\matrix{A}$}}
\newglossaryentry{Hgamma}{type=symbols, name={\ensuremath{H^\gamma(E)}}, 
symbol={$H^\gamma(E)$}, sort=Hgamma, description={set of $\gamma$-H\"older
functions on E: \newline
 $H^\gamma = \{ f \in \Cf{E} \ : \ \sup_{x,y \in {{ E }}}
\frac{\abs{f(x)-f(y)}}{\abs{x-y}^\gamma} < \infty \}$}}
\newglossaryentry{uA}{type=symbols, name={\ensuremath{u_\matrix{A}}}, symbol={$u_\matrix{A} $},
sort=uA, description={normalized Perron-Frobenius eigenvector of $\matrix{A}$}}
\newglossaryentry{complement}{type=symbols, nonumberlist,
name={\ensuremath{A^c}}, symbol={$A^c$}, sort=aaa, description={For $A \subset E$: $A^c = E \setminus
A$}}
\newglossaryentry{Lambda}{type=symbols, name={\ensuremath{\Lambda(\Gamma)}},
symbol={$\Lambda(\Gamma) $}, sort=Lambda,
description={$\Lambda(\Gamma) =\overline{\{u_\matrix{A} \ : \ \matrix{A} \in
\Gamma
\cap
\interior{\Mset} \}}$}}
\newglossaryentry{Ck}{type=symbols, name={\ensuremath{ C_k}},
 symbol={$ C_k $}, sort=Ck, description={$ C_k = \left\{ x \in S \ : \
 \P[x]{\frac{V_m}{m} \ge \frac1k \ \forall m \ge k} \ge \frac{1}{2} \right\} $}}
\newglossaryentry{V}{type=symbols, name={\ensuremath{V}},
 symbol={$V$}, sort=V, description={$V= \Rd \setminus{\{0\}} $}}
\newglossaryentry{NA}{type=symbols, name={\ensuremath{ N(A)}},
 symbol={$ N(A) $}, sort=NA, description={number of renewals (visits of the
 random walk) in the set $A$}}
 \newglossaryentry{es}{type=symbols, name={\ensuremath{ \es}},
 symbol={$ \es $}, sort=es, description={$\Ps \es = \kappa(s) \es$,
 $\abs{\es}_\infty=1$}}
 \newglossaryentry{est}{type=symbols, name={\ensuremath{ \est}},
 symbol={$ \est $}, sort=est, description={$\Pst \est = \kappa(s) \est$,
 $\abs{\est}_\infty=1$}}
\newglossaryentry{nus}{type=symbols, name={\ensuremath{ \nus}},
 symbol={$ \nus $}, sort=nu, description={probability measure, $\Ps \nus =
 \kappa(s) \nus$}}
\newglossaryentry{nust}{type=symbols, name={\ensuremath{ \nust}},
 symbol={$ \nust $}, sort=nust, description={probability measure, $\Pst \nust =
 \kappa(s) \nust$}}
 \newglossaryentry{mPi}{type=symbols, name={\ensuremath{ \mPi_n}},
 symbol={$ \mPi_n $}, sort=Pi, description={$\mPi_n=\mM_n \ldots \mM_1$}}
\newglossaryentry{Qx}{type=symbols, name={\ensuremath{ \Q_x}},
 symbol={$ \Q_x $}, sort=Qx, description={$\Q_x = \delta_x \otimes \bigotimes_{n=1}^\infty \mu^*$}}
\newglossaryentry{Ex}{type=symbols, name={\ensuremath{\E_x}}, symbol={$\E_x$},
sort=Ex, description={expectation symbol of $\Q_x$}}
\newglossaryentry{Exs}{type=symbols, name={\ensuremath{\E_x^s}},
symbol={$\E_x^s$}, sort=Exs, description={expectation symbol of $\Q_x^s$,
satisfying\newline $\E_x^s \left(f((X_i,
V_i)_{i=1}^n) \right) = \frac{1}{\est(x)\kappa^n(s)}\Erw[x]{e^{s
V_n}\est(X_n)((X_i,
V_i)_{i=1}^n)}$ \newline for all bounded measurable functions
$f$ and all $n \in \N$.}}
\newglossaryentry{Qsx}{type=symbols, name={\ensuremath{\Q_x^s}},
symbol={$\Q_x^s$}, sort=Qsx, description={probability measure on $(\Omega, \A)$,
projective limit of $\ _n\Q_x^s$}}
\newglossaryentry{Qnsx}{type=symbols, name={\ensuremath{\ _n\Q_x^s}},
symbol={$\ _n\Q_x^s$}, sort=Qnsx, description={$ \ _n\QQ_x^s((X_0, (\mM_i)_{i=1}^n) \in
A):=$\newline
$\frac{1}{\est(x)\kappa^n(s)}\Erw[x]{e^{s V_n}\est(X_n)\1[A](X_0,
(\mM_i)_{i=1}^n)}$}}
\newglossaryentry{Qkernel}{type=symbols, name={\ensuremath{ Q^s_*}},
symbol={$ Q^s_*$}, sort=Qk, description={Markov transition kernel on $\Cf{S}$,
$Q^s_* f(x) =
\frac{1}{\est(x)\kappa(s)}\Pst (\est f)(x)$}}
\newglossaryentry{pist}{type=symbols, name={\ensuremath{ \pist}},
symbol={$ \pist$}, sort=pist, description={stationary distribution for $Q^s_*$,
$ \pist(dx) = \est(x) \nust(dx)$}}
\newglossaryentry{Q}{type=symbols, name={\ensuremath{ \QQ^s}},
symbol={$ \QQ^s$}, sort=Q, description={$\QQ^s = \int \QQ^s_x \pist(dx)$,
$(X_n)_{n
\in \No}$ is stationary under $\QQ^s$}}
\newglossaryentry{ghat}{type=symbols, name={\ensuremath{ \hat{g}}},
symbol={$ \hat{g}$}, sort=g, description={For $g: \Sp \times \R \to
\R$, $\hat{g}(t) =\sup_{u \in S} \abs{g(u,t)}$}}
\newglossaryentry{tree}{type=symbols, name={\ensuremath{ \tree}},
symbol={$ \tree$}, sort=t, description={$N$-ary Ulam-Harris tree, $\tree := \bigcup_{n=0}^\infty \{1, \dots, N\}^n$}}
\newglossaryentry{Ttree}{type=symbols, name={\ensuremath{ \Ttree}},
symbol={$ \Ttree$}, sort=Ttree, description={$\Ttree:=(\T(v))_{v \in \tree}$,
sequence of \gls{iid} copies of $\T$}}
\newglossaryentry{Ytree}{type=symbols, name={\ensuremath{ \Ytree}},
symbol={$ \Ytree$}, sort=Ytree, description={$\Ytree:=(Y(v))_{v \in \tree}$,
sequence of \gls{iid} copies of a \gls{rv} $Y$}}
\newglossaryentry{L}{type=symbols, name={\ensuremath{ \matrix{L}(v)}},
symbol={$ \matrix{L}(v)$}, sort=L, description={matricial path weights in the
weighted branching tree, recursively defined by $\matrix{L}(\emptyset)=\Id$, 
$\matrix{L}(vi)= \matrix{L}(v)\mT_i(v)$}}
\newglossaryentry{Id}{type=symbols, name={\ensuremath{ \Id}},
symbol={$ \Id$}, sort=Id, description={identity matrix}}
\newglossaryentry{Yn}{type=symbols, name={\ensuremath{ Y_n}},
symbol={$ Y_n$}, sort=Yn, description={weighted branching process associated
with $\Ttree \otimes \Ytree$,\newline $Y_n := \sum_{\abs{v}=n} \matrix{L}(v)Y(v)
+
\sum_{k=0}^{n-1} \sum_{\abs{v}=k}
\matrix{L}(v)Q(v)$}}
\newglossaryentry{tshift}{type=symbols, name={\ensuremath{ \tshift{\cdot}{v}}},
symbol={$ \tshift{\cdot}{v}$}, sort=aaa, description={tree shift operator }}
\newglossaryentry{vlim}{type=symbols, name={\ensuremath{ v\mathrm{-lim}}},
symbol={$ v\mathrm{-lim}$}, sort=v, description={vague limit }}
\newglossaryentry{dlim}{type=symbols, name={\ensuremath{ d\mathrm{-lim}}},
symbol={$ d\mathrm{-lim}$}, sort=d, description={weak limit, limit in
distribution }}
\newglossaryentry{Psets}{type=symbols, name={\ensuremath{ \Pset[s](E)}},
symbol={$ \Pset[s](E)$}, sort=Psets, description={set of probability measures on
$E$ with finite $s$-th moment }}
\newglossaryentry{Fsets}{type=symbols, name={\ensuremath{ \Fset[s]}},
symbol={$ \Fset[s]$}, sort=Fsets, description={$\Fset \cap \Pset[s](\Rd)$ }}
\newglossaryentry{Dk}{type=symbols, name={\ensuremath{ D^k}},
symbol={$ D^k$}, sort=Dk, description={$D^k f$: $k$-th Fr\'echet derivative of
$f$ }}
\newglossaryentry{Psetsz}{type=symbols, name={\ensuremath{ \Pset[s,z](E)}},
symbol={$ \Pset[s,z](E)$}, sort=Psets, description={set of probability measures
on $E$ with finite $s$-th moment and fixed mixed moment sequence $z$ }}
\newglossaryentry{EZn}{type=symbols, name={\ensuremath{ \E Z^\mathbf{n}}},
symbol={$ \E Z^\mathbf{n}$}, sort=EZn, description={$\E Z^\mathbf{n} =
\Erw{Z_{1}^{\mathbf{n}_1} \cdots Z_{d}^{\mathbf{n}_d}}$ for a multiindex $\mathbf{n} \in \No^d$ }}
\newglossaryentry{Zolset}{type=symbols, name={\ensuremath{ \Zolset{s}}},
symbol={$ \Zolset{s}$}, sort=Ds, description={$\left\{ f \in
\Cf[k]{\Rd} \ : \ \forall_{x,y \in \Rd} \ \norm{D^k f(x) - D^k f(y)} \le \abs{x-y}^{s-k}
\right\}$ }}
\newglossaryentry{zetas}{type=symbols, name={\ensuremath{ \zeta_{s}}},
symbol={$ \zeta_{s}$}, sort=zetas, description={Zolotarev metric }}
\newglossaryentry{Psetsw}{type=symbols, name={\ensuremath{ \Pset[s,w](E)}},
symbol={$ \Pset[s,w](E)$}, sort=Psetsw, description={set of probability measures
on $E$ with finite $s$-th moment and expectation $w$ }}
\newglossaryentry{Psetsws}{type=symbols, name={\ensuremath{
\Pset[s,w,\Sigma](E)}}, symbol={$ \Pset[s,w,\Sigma](E)$}, sort=Psetsws,
description={set of probability measures on $E$ with finite $s$-th moment,
expectation $w$ and covariance matrix $\Sigma$ }}
\newglossaryentry{Cov}{type=symbols, name={\ensuremath{
\Cov}}, symbol={$ \Cov$}, sort=K,
description={covariance matrix }} 
\newglossaryentry{eigenspace}{type=symbols, name={\ensuremath{
\Eigenspace{\matrix{A},\lambda}}}, symbol={$ \Eigenspace{\matrix{A},\lambda}$},
sort=Eig, description={eigenspace of
$\matrix{A}$ for eigenvalue $\lambda$ }}
\newglossaryentry{eigenspace0}{type=symbols, name={\ensuremath{
\Eigenspaceo{\matrix{A},\lambda}}}, symbol={$
\Eigenspaceo{\matrix{A},\lambda}$}, sort=Eig0,
description={$\Eigenspace{\matrix{A}, \lambda} \cup \{0\}$ }}
\newglossaryentry{N0sigma}{type=symbols, name={\ensuremath{
N(0, \Sigma)}}, symbol={$ N(0, \Sigma)$}, sort=N0S,
description={multivariate Normal distribution
with expectation $0$ and covariance matrix $\Sigma$ }}
\newglossaryentry{wd}{type=symbols, name={\ensuremath{
w_d}}, symbol={$ w_d$}, sort=wd,
description={Wasserstein metric }}  
\newglossaryentry{Psetwd}{type=symbols, name={\ensuremath{
\Pset[w_d](\eta)}}, symbol={$ \Pset[w_d](\eta)$}, sort=Psetwd,
description={$ \Pset[w_d](\eta) = \{ \nu \in \Pset(E)  \
: \ w_d(\eta, \nu) < \infty \} $ }} 
\newglossaryentry{ls}{type=symbols, name={\ensuremath{
l_s}}, symbol={$ l_s$}, sort=ls,
description={ $l_s(\nu, \eta) := \inf \{ \E \abs{Y-Z}^s \ : \ \law{Y}=\nu,
\law{Z}=\eta .\}$  }} 
\newglossaryentry{iota}{type=symbols, name={\ensuremath{
\iota(\mA)}}, symbol={$\iota(\mA)$}, sort=iota,
description={ $\iota(\matrix{A}) =\inf_{x \in \Sp}\abs{\mA
x}$  }} 
\newglossaryentry{Ttreen}{type=symbols, name={\ensuremath{
\Ttree_n}}, symbol={$\Ttree_n$}, sort=Ttreen,
description={ filtration of $\Ttree$, $\Ttree_n = \sigma\left( (T(v))_{\abs{v}
\le n} \right)$ }}
\newglossaryentry{Psetseta}{type=symbols, name={\ensuremath{
\Pset[w_d](\eta)}}, symbol={$ \Pset[s](\eta)$}, sort=Psetseta,
description={$ \Pset[s](\eta) = \{ \nu \in \Pset(\Rd) \ : \ l_s(\nu, \eta)< \infty \} $ }}
\newglossaryentry{FsetQ}{type=symbols, name={\ensuremath{ \Fset[Q]}},
symbol={$ \Fset[Q]$}, sort=FsetQ, description={set of fixed points of
$\STi$ }}
\newglossaryentry{Fset0}{type=symbols, name={\ensuremath{ \Fset[0]}},
symbol={$ \Fset[0]$}, sort=Fset0, description={set of fixed points of
$\STh$ }}
\newglossaryentry{momcond}{type=symbols, name={(s-moments)},
symbol={(s-moments)}, sort=0m, description={$\Erw{\norm{\mT_1}^s
+ \abs{Q}^s}< \infty$ }}
\newglossaryentry{eigcond}{type=symbols, name={(eigenvalue)},
symbol={(eigenvalue)}, sort=0e, description={$w = N \E \mT_1 w + \E Q$ }}
\newglossaryentry{varcond}{type=symbols, name={(variance)},
symbol={(variance)}, sort=0v, description={$\Sigma = N \Erw{\mT_1 \Sigma
\mT_1^\top}$ }}
\newglossaryentry{esl}{type=symbols, name={\ensuremath{\esl{x}}},
symbol={$\esl{x}$}, sort=aaa, description={$\esl{x}= \abs{x}^{-1} x$ for $x \in
\Rd$ }}
\newglossaryentry{barg}{type=symbols, name={\ensuremath{\bar{g}}},
symbol={$\bar{g}$}, sort=aaa, description={$\bar{g}(y,t) =
\int_{-\infty}^t e^{- (t-s)} g(y,s) ds$ }}
\newglossaryentry{actions}{type=symbols, name={\ensuremath{\as}},
symbol={$\as$}, sort=aaa, description={action of matrices on the sphere,
$\matrix{A} \cdot x = \esl{\matrix{A}x}$ }}
\newglossaryentry{Bbmf}{type=symbols, name={\ensuremath{\Bbmf{E}}},
symbol={$\Bbmf{E}$}, sort=Bbmf, description={bounded Borel-measurable functions
$E \to \R$ with $\infnorm{f} \le 1$. }}
\newglossaryentry{SToffunction}{type=symbols, name={\ensuremath{\ST f(x)}},
symbol={$\ST f(x)$}, sort=STf, description={$\ST f(x) = \Erw{\prod_{i=1}^N
f(\mT_i^\top x)}$ }}
\newglossaryentry{Salphae}{type=symbols, name={\ensuremath{\stablen{\alpha}{\K
\est[\alpha], 0}}}, symbol={$\stablen{\alpha}{\K\est[\alpha]
, 0}$}, sort=Salphae,
description={multivariate stable distribution with \gls{LT} $\LTa(tu)=\exp\left(
-\K t^\alpha \est[\alpha](u) \right) $}}
\newglossaryentry{Dphi}{type=symbols, name={\ensuremath{D_{\chi, \LTa}}},
symbol={$D_{\chi, \LTa}$}, sort=Dphi, description={$D_{\chi, \LTa}(u,t)=  \frac{e^{\chi t}}{\est(u)} (1-
\LTa(e^{-t}u))$ }}
\newglossaryentry{Gphi}{type=symbols, name={\ensuremath{G_{\chi, \LTa}(u,t)}},
symbol={$G_{\chi, \LTa}(u,t)$}, sort=Gphi, description={$= 
\frac{e^{\chi t}}{\est(u)}\Erw{\prod_{i=1}^N \LTa(e^{-t}\mT_i^\top u) + \sum_{i=1}^N\left(
1- \LTa(e^{-t}\mT_i^\top u) \right) -1}$ }}
\newglossaryentry{Dan}{type=symbols, name={\ensuremath{D_{\alpha,n}}},
symbol={$D_{\alpha,n}$}, sort=Dan, description={$D_{\alpha,n}= D_{\alpha, \ST^n \LTa_0} $ }}
\newglossaryentry{Gan}{type=symbols, name={\ensuremath{G_{\alpha,n}}},
symbol={$G_{\alpha,n}$}, sort=Gan, description={$G_{\alpha,n}= G_{\alpha, \ST^n \LTa_0} $ }}
\newglossaryentry{Pkernalpha}{type=symbols, name={\ensuremath{\Pkern[\alpha]}},
symbol={$\Pkern[s]$}, sort=Pkernalpha, description={$\Pkern[s] f(u,t)
= \E_u^s f(X_1, t-V_1$) }}
\newglossaryentry{Uxalpha}{type=symbols, name={\ensuremath{\U[x]^s}},
symbol={$\U[x]^s$}, sort=Uxs, description={Markov renewal measure of
$(X_n, V_n)_{n \in \No}$ under $\Q_x^s$  }}
\newglossaryentry{MlogM}{type=symbols, name={(M~logM)},
symbol={(M~logM)}, sort=0m, description={$\E
(1+\norm{\mM_1}) \left(1+ \abs{\log\norm{\mM_1}} + \abs{\log \iota(\mM_1)}
\right) < \infty$}}
\newglossaryentry{Wn}{type=symbols, name={\ensuremath{W_n}},
symbol={$W_n$}, sort=Wn, description={Biggins martingale for $\alpha=1$: $W_n =
\sum_{\abs{v}=n} \matrix{L}(v) w$, $w = N\E \mT_1 w$}}
\newglossaryentry{Wnu}{type=symbols, name={\ensuremath{W_n(u)}},
symbol={$W_n(u)$}, sort=Wnu, description={Biggins martingale for
$\alpha<1$:\newline $W_n(u) = \sum_{\abs{v}=n} \int_{\Sp}
\skalar{\matrix{L}(v)^\top u, y}^\alpha \nu^\alpha(dy)$}}
\newglossaryentry{Wu}{type=symbols, name={\ensuremath{W(u)}},
symbol={$W(u)$}, sort=Wu, description={\gls{as} limit of $W_n(u)$}}
\newglossaryentry{Wn*}{type=symbols, name={\ensuremath{W_n^*}},
symbol={$W_n^*$}, sort=Wns, description={$ W_n^*= \sum_{k=0}^{n-1} \sum_{\abs{v}=k} \matrix{L}(v) Q(v) $}}
\newglossaryentry{W*}{type=symbols, name={\ensuremath{W^*}},
symbol={$W^*$}, sort=Ws, description={\gls{as} limit of $W_n^*$}}
\newglossaryentry{phi0}{type=symbols, name={\ensuremath{\LTa_0}},
symbol={$\LTa_0$}, sort=phi0, description={(In Section \ref{sect:existence}:)
$\LTa_0 (tu) = \exp\left( -\K t^\alpha \est[\alpha](u) \right) $}}
\newglossaryentry{psi}{type=symbols, name={\ensuremath{\LTfp}},
symbol={$\LTfp$}, sort=psi, description={(In Section \ref{sect:existence}:)
$\LTfp = \lim_{n \to \infty} \ST^n \LTa_0$}}
\newglossaryentry{STchi}{type=symbols, name={\ensuremath{\ST_\chi}},
symbol={$\ST_\chi$}, sort=STchi, description={$\ST_\chi \ : \ \nu \mapsto \law{\sum_{i=1}^N
\frac{1}{m(\chi)^{\frac{1}{\chi}}}\mT_i X_i}$}}
\newglossaryentry{Tchi}{type=symbols, name={\ensuremath{\T_\chi}},
symbol={$\T_\chi$}, sort=Tchi, description={$\T_\chi = (\mT_{\chi,1}, \dots, \mT_{\chi,N}) =
{m(\chi)^{-\frac{1}{\chi}}}(\mT_1, \dots, \mT_N)$}}
\newglossaryentry{muchi}{type=symbols, name={\ensuremath{\mu_\chi}},
symbol={$\mu_\chi$}, sort=muchi, description={$\mu_\chi =
\law{{m(\chi)^{-\frac{1}{\chi}}}\mT_1}$}}
\newglossaryentry{mchi}{type=symbols, name={\ensuremath{m_\chi}},
symbol={$m_\chi$}, sort=mchi, description={spectral function of $\T_\chi$}}
\newglossaryentry{Jchi}{type=symbols, name={\ensuremath{J_\chi}},
symbol={$J_\chi$}, sort=Jchi, description={compact subset of $\Cf{\Sp \times
\R}$, see Def. \ref{defnJchi}}}
\newglossaryentry{hs}{type=symbols, name={\ensuremath{h_s}},
symbol={$h_{s}$}, sort=hs, description={$h_s(u,t) =
\frac{D_{\chi,\LTa}(u,s+t)}{e^{\chi s}(1-\LTa(e^{-s}\deins))} = \frac{e^{\chi
t}}{e_*^\chi(u)}\frac{1- \LTa(e^{-(s+t)}u)}{1- \phi(e^{-s}\deins)}$}}
\newglossaryentry{Hchi}{type=symbols, name={\ensuremath{H_{\chi,c}}},
symbol={$H_{\chi,c}$}, sort=Hchi, description={compact subset of $J_\chi$, see
Def. \ref{def:H_chi}}}
\newglossaryentry{logM}{type=symbols, name={\eqref{logM}},
symbol={\eqref{logM}}, sort=0l, description={$\E \abs{\log \norm{\mM_1}} + \abs{\log
\iota(\mM_1)} + \abs{\log \iota(\mT_1)} < \infty$}}
\newglossaryentry{Lt}{type=symbols, name={\ensuremath{L_t}},
symbol={$L_t$}, sort=Lt, description={$L_t(u,r)= \frac{g(u,t+r)}{g(u,r)}$ for
$g \in H_{\chi,c}$}}
\newglossaryentry{Echi}{type=symbols, name={\ensuremath{E_{\chi,c}}},
symbol={$E_{\chi,c}$}, sort=Hchi, description={extremal points of
$H_{\chi,c}$,\newline $ E_{\chi,c}= \left\{ (u,t) \mapsto
c\frac{e_*^\gamma(u)}{e_*^\chi(u)} e^{(\chi-\gamma)t} \ : \ \gamma \in (0,1], m(\gamma)=1 \right\} .$}}
\newglossaryentry{FsetaQ}{type=symbols,name={\ensuremath{\Fseta[\alpha]_Q}},sort=FsetaQ,
symbol={$\Fseta[\alpha]_Q$}, description={set of $\alpha$-elementary fixed
points of $\STi$}}
\newglossaryentry{Fseta0}{type=symbols,name={\ensuremath{\Fseta[\alpha]_0}},sort=Fseta0,
symbol={$\Fseta[\alpha]_0$}, description={set of $\alpha$-elementary fixed
points of $\STh$}}
\newglossaryentry{Rn}{type=symbols,name={\ensuremath{R_n}},sort=Rn,
symbol={$R_n$}, description={$ R_n = \max_{\abs{v}=n} \norm{L(v)}$}}
\newglossaryentry{Lapl}{type=symbols,name={\ensuremath{\L}},sort=L,
symbol={$\L$}, description={Laplace transform mapping $\eta \mapsto \LTa_\eta$}}
\newglossaryentry{LRdnn}{type=symbols,name={\ensuremath{\L(\Rdnn)}},sort=LRd,
symbol={$\L(\Rdnn)$}, description={space of \glspl{LT} of distributions on
$\Rdnn$}}
\newglossaryentry{c_s}{type=symbols,name={\ensuremath{c_s}},sort=cs,
symbol={$c_s$}, description={$c_s = {\abs{\int \skalar{\cdot,y}^s
\nus(dy)}_\infty}$}}
\newglossaryentry{Omega}{type=symbols,name={\ensuremath{\Omega}},sort=Omega,
symbol={$\Omega$}, description={$ \Omega = S \times \mathcal{M}^\N $}}
\newglossaryentry{eigenspacep}{type=symbols, name={\ensuremath{
\Eigenspacenn{\Pst[\alpha],\lambda}}}, symbol={$
\Eigenspace{\Pst[\alpha],\lambda}$}, sort=Eigp, description={positive
eigenfunctions of $\Pset[\alpha]$ for eigenvalue $\lambda$ }}
 \renewcommand{\est}[1][\alpha]{H^{#1}}
 \renewcommand{\fprv}{X}
 \renewcommand{\1}[1][]{\mathds{1}_{#1}}
  \renewcommand{\deins}{\mathbf{1}}
    \renewcommand{\eins}{\tilde{\mathbf{1}}}
    \renewcommand{\tree}{\mathds{V}}
    \renewcommand{\interior}[1]{\mathrm{int}\left(#1\right)}
\begin{document}

\title{The Fixed Points of the Multivariate Smoothing Transform }
\author{Sebastian Mentemeier}
\thanks{Sebastian Mentemeier. Instytut Matematyczny, Uniwersytet Wroc\l awski,
pl. Grunwaldzki 2/4, 50-384 Wroc\l aw, Poland. e-mail:
mente@math.uni.wroc.pl . Research supported by Deutsche Forschungsgemeinschaft
(SFB 878). }
\maketitle

\begin{abstract}
Let be given a sequence $(\mT_1, \mT_2, \dots)$ of random $d \times d$ matrices with nonnegative entries and a random vector $Q$ with nonnegative entries. 
Consider random vectors $\fprv$ with nonnegative entries, satisfying
\begin{equation} \label{SFPE_abstract}\tag{*} \fprv ~\eqdist~ \sum_{i \ge 1} \mT_i \fprv_i + Q, \end{equation}
where $\eqdist$ denotes equality of the corresponding laws, $(\fprv_i)_{i \ge 1}$ are i.i.d.~copies of $\fprv$ and independent of $(Q, \mT_1, \mT_2, \dots)$.

For $d=1$, this equation, known as fixed point equation of the smoothing transform, has been intensively studied. 
Under assumptions similar to the one-dimensional case, we obtain a complete characterization of all solutions $\fprv$ to \eqref{SFPE_abstract} in the non-critical case, and existence results in the critical case.

%
%

\medskip
\textsc{Keywords}: Smoothing transform, Markov random walks, general branching
processes, multivariate stable laws, multitype branching
random walk, Choquet-Deny lemma, weighted branching

\medskip
\textsc{MSC 2010}: 60G44,
 60E05,    
60B20,   	
60B15 
\end{abstract}

\section{Introduction}

Let $d >1$. Let $(\mT_i)_{i \ge 1}$ be a sequence of random $d\times d$-matrices  and $Q$ a random vector in $\Rd$.  Assume that $$N := \# \{ i \, : \, \mT_i \neq 0 \}$$ is finite a.s. We will presuppose throughout that the $(\mT_i)_{i \ge 1}$ are ordered in such a way that $\mT_i \neq 0$ if and only if $i \le N$. For any random variable $X \in \Rd$, let $(X_i)_{i \ge 1}$ be a sequence of i.i.d.~copies of $X$, and independent of $(Q, (\mT_i)_{i \ge 1})$. Then the random variable
$ \sum_{i =1}^N \mT_i X_i + Q$
is well defined, and one can ask the question, whether it holds that
\begin{equation}\label{eq:SFPE} X \eqdist \sum_{i=1}^N \mT_i X_i + Q, \end{equation}
where $\eqdist$ denotes equality in law. If $X$ is a random variable such that \eqref{eq:SFPE} holds, then we call its law $\law{X}$ a solution of Eq. \eqref{eq:SFPE}, and ask for a characterization of all solutions of Eq. \eqref{eq:SFPE}.
In other words, introducing the mapping $\STi$ on the set $\Pset(\Rd)$ of probability measures on $\Rd$, defined by
\begin{equation}
\label{def:STi} \STi \eta ~:=~ \law{\sum_{i=1}^N \mT_i X_i + Q},
\end{equation}
where $(X_i)_{i \ge 1}$ are i.i.d. random variables with law $\eta$, and independent of $(Q, (\mT_i)_{i \ge 1})$, we want to find all $\eta \in \Pset{\Rdnn}$, such that $\STi \eta = \eta$, i.e. all fixed points of $\STi$. Following \cite{DL1983}, we call $\STi$ the multivariate smoothing transform, and 
\begin{equation}
\label{def:STi} \STh \eta ~:=~ \law{\sum_{i=1}^N \mT_i X_i},
\end{equation}
the homogeneous multivariate smoothing transform. By an slight abuse of notation, we will also call a random variable $X$ a fixed point (FP) or solution, if its $\law{X}$ is a fixed point of the multivariate smoothing transform.

Note that $X \equiv 0$ is always a solution of the  homogeneous equation with $Q \equiv 0$, to which we refer as the trivial one. If there is $X \neq 0$ which is a fixed point of the homogeneous equation, then $cX$ will be a fixed point of the homogeneous equation as well, for any $c \in \R$. 
A more formal way to define the fixed point property is to consider the mapping 

Multivariate stochastic fixed point equations of the form \eqref{eq:SFPE} arise naturally in the study of branching processes, divide-and-conquer algorithms or generalized Polya urn models, some instances appear in \cite[Theorem 4.1 and Theorem 4.3]{NR2006}, \cite[Theorem 1]{Blum2006} or in \cite[Eq. (3.5)]{Janson2004}, which can be written in matricial form, too. 
Recently, \cite{Bassetti2014} used Eq. \ref{eq:SFPE} with $Q \equiv 0$ to study generalized kinetic models, aiming at the description of particle velocities in a Maxwell gas.

Understanding Eq. \eqref{eq:SFPE} (with $Q \equiv 0$) is an important step to study the so-called {\em multiplicative chaos equation} for matrices, i.e. to find random matrices $\matrix{X}$, satisfying
\begin{equation}\label{eq:SFPEm}  \matrix{X} ~\eqdist~ \sum_{i=1}^N \mT_i \matrix{X}_i,\end{equation}
where $(\matrix{X}_i)_{i \ge 1}$ is a sequence of i.i.d.~copies of $\matrix{X}$ and independent of $(\mT_i)_{i \ge 1}$. 
To wit, if $X, X_1, X_2, \dots$ satisfy Eq. \eqref{eq:SFPE} (with $Q \equiv 0$), then for every $a=(a_1, \cdots, a_d)^\top \in \Rd$ it holds that
$$ \left(\begin{array}{cccc} a_1 X & a_2 X & \dots & a_d X \end{array} \right) ~\eqdist~ \sum_{i=1}^N \mT_i \, \left(\begin{array}{cccc} a_1 X_i & a_2 X_i & \dots & a_d X_i \end{array} \right),$$
i.e. the rank-one matrix $\matrix{X}=Xa^\top$ is a fixed point of Eq. \eqref{eq:SFPEm}. There is substantial interest in understanding \eqref{eq:SFPEm}, for it may serve as a discrete approximation to Gaussian matrix-multiplicative chaos, a field of study which has just started, see \cite{Chevillard2013,Rhodes2013}.

\medskip

The long-term goal is to characterize all solutions of the multivariate stochastic fixed point equation \eqref{eq:SFPE} for general matrices. As the historic survey for the one-dimensional case will show, techniques (for $d=1$) were first developed to understand nonnegative fixed points of the {\em homogeneous} equation (i.e. $Q=0$) with nonnegative {\em weights} $(T_i)_{i \ge 1}$. This equation then served as a r\^ole model for the study of the more complicated case of real-valued fixed points, or even real-valued weights; a problem which was solved very recently in \cite{Iksanov2014}. Here, we set out our investigations by studying the multidimensional analogue of the nonnegative case, i.e. we assume that 
\begin{equation}\tag{A0}\label{A0} \text{$(Q,(\mT_i)_{i \ge 1})$ is a random variable in $\Rdnn \times M(d\times d, \Rnn)^\N$}\end{equation}
and, as mentioned before, that
\begin{equation}\tag{A1}\label{A1} \text{the r.v. $N := \# \{ i \, : \, \mT_i \neq 0 \}$ equals $\sup\{ i \, : \, \mT_i \neq 0 \} $ and is finite a.s.} \end{equation}
Here, $\Rnn :=[0,\infty)$.

In this setting, \cite{Menshikov2005} asked (Remark after Proposition 2.4) for sufficient conditions for the existence of a fixed point of the matrix-multiplicative chaos equation. As indicated above, by characterizing the fixed points of Eq. \eqref{eq:SFPE}, we solve this open problem. Further applications 
will be given below.

\medskip

The basic example the reader should keep in mind are multivariate stable laws: Let $N>1$ be fixed and the $\mT_i$, $1 \le i \le N$ be deterministic diagonal matrices with entries $N^{-1/\alpha}$ on the diagonal, $\alpha \in (0,1]$.  The equation
\begin{equation} \label{eq:stablelaws}
X \eqdist \sum_{i=1}^N N^{-1/\alpha} X_i
\end{equation} for $X \Rdnn$ is satisfied by the multivariate strictly $\alpha$-stable laws.
See \cite{ST1994} for further information on multivariate stable laws.

Equation \eqref{eq:SFPE} can be understood as a generalized equation of stability, with the scalar weights $N^{-1/\alpha}$ replaced by nonnegative matrices. It will turn out that the fixed points are in fact mixtures of multivariate $\alpha$-stable laws, with $\alpha$ being defined in such a way that it coincides with the index of stability in the particular case of Eq. \eqref{eq:stablelaws}. Moreover, we will introduce conditions, under which Eq. \eqref{eq:SFPE} has exactly a one-parameter class of solutions, in contrast to Eq. \eqref{eq:stablelaws}, which is solved by all multivariate strictly $\alpha$-stable laws on $\Rdnn$, which are parametrized by the set of finite measures on $\Sp$.

In order to explain what assumptions and results can be expected, we will now review the history of the problem in dimension $d=1$, . Then we state all our assumptions and the main result. 


\subsection{History of the Problem: Results for $d=1$}
For $d=1$, $(Q, T_1, T_2, \dots)$ is a sequence of nonnegative random variables, and one can define, for $s \ge 0$,  the quantity
$$ \hat{m}(s) := \E \sum_{i=1}^N T_i^s \in [0, \infty].$$ Then it is immediate, upon taking expectations, that $\hat{m}(1)=1$ ($\hat{m}(1)<1$) is a necessary condition for the homogeneous (inhomogeneous with $\E Q >0$) equation to have a nontrivial, i.e.~non-zero, fixed point with finite expectation. But the r\^ole of the function $s \mapsto m(s)$ is much more fundamental: It is log-convex on its domain of definition, and subject to the assumption $\E N>1$, which is imposed in all the studies below, there are at most two values $0 < \alpha < \beta$ with $m(\alpha)=m(\beta)=1$. If derivatives exists, then $m'(\alpha^-) \le 0$, $m'(\beta) \ge 0$, with strict inequality holding unless $\beta=\alpha$, which then is called the {\em critical case}. The set of fixed points is structured by the value of $\alpha$, which generalizes the index of stability. Roughly speaking, there are two classes of fixed points: mixtures of $\alpha$-stable laws, which have an infinite moment of order $\alpha$, and fixed points with a finite moment of order $\alpha$, which then might have heavy tails with tail index $\beta$, if $\beta$ exists. The second class of fixed points attracts point masses (and can be found by using Banach's fixed point theorem), while the first class attracts $\alpha$-stable laws and needs  more involved techniques to be characterized.

It was shown in \cite{Doney1972,Biggins1977,KP1976}, that $m(1)=1$ together with $m'(1^-)<0$ is a sufficient condition for the existence of fixed points of $\STh$ with finite expectation. The first two papers  are motivated by questions about  convergence of martingales in the branching random walk, while the third paper studies so-called Mandelbrot cascades, a model introduced in \cite{Mandelbrot1974}.
Motivated by questions from interacting particle systems, \cite{DL1983} considered the homogeneous equation for general $\alpha \in (0,1]$ under the assumption that $N$ is bounded. Directly related to Eq. \eqref{eq:SFPE} is the functional equation $$f(t)=\E \prod_{i=1}^N f(T_i t),$$ where $f(t)=\E \exp(-tX)$ is the Laplace transform (LT) of a nonnegative random variable. \cite{DL1983} proved that there is a function $L$, slowly varying at 0, such that the Laplace transform of each nontrivial fixed point satisfies (assuming non-arithmeticity)
\begin{equation}\label{eq:alphaelementaryd1} \lim_{r \to 0} \frac{1 - f(t)}{L(t)t^\alpha} = K\end{equation}
for some $K >0$. The function $L$ is constant in the non-critical case, and equals $L(t)=\abs{\log t}$ in the critical case $\alpha=1$. Using the Tauberian theorem for LTs, \cite[XIII.(5.22)]{Feller1971}, this relation also proves that fixed points have tails of order $\alpha$ for $\alpha <1$, and finite expectation if $\alpha=1$ and $m'(\alpha^-)<0.$
  This result was extended to the case of more general $N$, namely random $N$, in \cite{Liu1998}, which also contains many references and additional results. 
The homogeneous equation in the {\em critical case}, i.e. $m'(\alpha^-)=0$ was studied in \cite{Biggins1997,Kyprianou1998,Biggins2005b,Aidekon2014}. 

\cite{Iksanov2004} studied  the functional equation for functions $f$  which are not necessary Laplace transforms, but satisfying (a generalization of) \eqref{eq:alphaelementaryd1}.  It was proved only recently in \cite{ABM2012}, under the weakest assumptions known, that any monotone function with values in $[0,1]$, satisfying the functional equation, is of the form \begin{equation}\label{eq:ft}f(t)=\E \exp(-K t^\alpha W),\end{equation} where $W$ is the (up to scaling) unique fixed point of $W \eqdist \sum_{i\ge1} T_i^\alpha W_i$, where $W, W_1, \cdots$ are i.i.d.~and independent of $(T_i)_{i \ge 1}$. Here $\alpha \in (0, \infty)$ and $\hat{m}'(\alpha) \le 0$ is considered. Such functions are completely monotone (hence Laplace transforms of a random variable) if and only if $\alpha \in (0,1]$. In that case, $f(t)$ describes a mixture of $\alpha$-stable laws, the LTs of which are given by $f_{\alpha,c}(t):=\exp(-c t^\alpha)$, $c>0$.  

The set of fixed points for the inhomogeneous case was described in \cite{AM2010a}: There is a particular fixed point $W^*$ (called minimal solution there), which has a finite moment of order $\alpha$, and in general, there is a one-parameter family of fixed points, the Laplace transform of which satisfies
$$ \E \exp(-tX) ~=~ \exp(- K t^\alpha W - tW^*)$$ for some $K \ge 0$, where $W$ is as above. If $K> 0$, these fixed points have an infinite moment of order $\alpha$. The inhomogeneous equation in the critical case was studied in \cite{Buraczewski2014}. Real-valued fixed points (still for $T_i$ being nonnegative) were considered in \cite{Caliebe2003,CR2003,AM2010b}, and the most general question about fixed points for real-valued { weights} $(Q,T_1, T_2, \dots)$ was solved very recently by \cite{Iksanov2014}.

All the papers cited above studied as their main objective the structure of the set of fixed points of $\STh$ or $\STi$ in the one-dimensional case, the aim we want to pursue in the multivariate case.
There is a lot of related work concerned with properties of particular fixed points from the second class, namely those of finite expectation (i.e. $\alpha=1$) for the homogeneous equation, or properties of $W^*$ for the inhomogeneous equation. The tail behavior of $L^1$-fixed points of $\STh$ was studied in \cite{Gui1990} and subsequently in \cite{Liu2000}. \cite{BDGM2014} is the generalization of the first paper to the multivariate situation. It turns out that these fixed points have Pareto-like tails with tail index $\beta$, where $\hat{m}(\beta)=1$, $\hat{m}(\beta)>0$. \cite{JO2010a} proved that the particular solution $W^*$ of the inhomogeneous equation satisfies
$$ \lim_{t \to \infty} t^\beta \P{W^* > t}= C \ge 0,$$ where positivity of $C$ was proved in some cases, and under general assumptions later in \cite{Alsmeyer2013a,BDZ2014+}. 
The results of \cite{Mirek2013} constitute  the multidimensional analogue of \cite{JO2010a}. Tail behavior of fixed points in the critical case was studied in \cite{Buraczewski2009} for the homogeneous, and in  \cite{Buraczewski2014} for the inhomogeneous equation. The case of real-valued weights was considered in \cite{JO2010b}. The r.v. $W$ in \eqref{eq:ft} is the limit of a martingale $W_n$ (more details given below), tails of $\sup_n W_n$ were studied in \cite{Iksanov2006}. Continuity properties of the density of fixed points were considered in \cite{Liu2001}.

\subsection{Statement of the Main Result}
\subsubsection*{The Weighted Branching Model for Matrices}
Let $\tree = \bigcup_{n=0}^\infty \N^n $ be the infinite
Ulam-Harris tree with root $\emptyset$. For a node $v=v_1 \dots v_n$ write
$\abs{v}=n$ for its generation, $v|k=v_1 \dots v_k$ for its {\em ancestor} in the $k$-th generation,
$k \le n$, and $vi$ for its $i$-th child. Attaching to each node an independent
copy $\T(v)=(Q(v),\mT_1(v), \mT_2(v), \dots)$ of $\T=(Q,\mT_1, \mT_2, \dots)$, $\mT_i(v)$
can be interpreted as a weight along the path from $v$ to $vi$. The product of the weights along the
unique shortest path from $\emptyset$ to $v$ is defined recursively by
$\mL(\emptyset)= \Id$, the identity matrix, and
\begin{equation}\label{def:Lv}
\mL(vi) = \mL(v)\mT_i(v) . 
\end{equation}
Due to the assumption $N < \infty$ a.s., the number of nonzero weights $\mL(v)$ is a.s. finite in every generation, in fact, it follows a Galton-Watson process with offspring law $N$. Define a filtration by
$$ \B_n := \sigma \left( \T(v), \abs{v} < n\right).$$ Observe that $\mL(v)$ is $\B_{\abs{v}}$-measurable, while $Q(v)$ is independent of $\B_{\abs{v}}$.

\subsubsection*{A Particular Fixed Point}
With these definitions,  a natural candidate for a fixed point of \eqref{eq:SFPE} is given by the law of the random variable
\begin{equation} \label{eq:defWstar}W^*:= \sum_{n=0}^\infty \, \sum_{\abs{v}=n} \mL(v) Q(v), \end{equation}
as soon as this sum converges.
A sufficient condition therefore is that $\E \abs{W^*}^s < \infty$ for some $s>0$. Considering e.g. $s \le 1$, 
one obtains
\begin{equation} \label{eq:Wstar} \E \abs{W^*}^s < \E \abs{Q}^s  \sum_{n=0}^\infty \, \E \sum_{\abs{v}=n}  \norm{\mL(v)}^s,  \end{equation}
and the right hand side is finite if and  only if $\E \abs{Q}^s < \infty$ and the quantity
$$m(s) := \limsup_{n \to \infty} \left(\E \sum_{\abs{v}=n}  \norm{\mL(v)}^s\right)^{1/n} $$
is smaller than 1. The function $s \mapsto m(s)$ will play a fundamental role in the characterization of the set of fixed points below, it is the multivariate analogue of the function $\hat{m}(s)$ that appeared in the one-dimensional case. 

\subsubsection*{Assumptions on N} 
Besides \eqref{A1} which was already introduced, assume that
\begin{equation}
\tag{A2}\label{A2} N \ge 1 \text{ a.s. and } 1 < \E N < \infty.
\end{equation}

The finiteness of $\E N$ allows to introduce a probability measure $\mu$ on nonnegative matrices, defined by
\begin{equation}\label{eq:defmu} \int \, f(\ma) \, \mu(d\ma) ~:=~ \frac{1}{\E N} \, \E \left( \sum_{i=1}^N f(\mT_i)\right). \end{equation}

If now $(\mM_n)_{n \ge 1}$ is a sequence of i.i.d.~random matrices with law $\mu$,  then
\begin{equation}
m(s) ~=~ \E N \, \left(\lim_{n \to \infty} \left(\E   \norm{\mM_n \cdots \mM_1}^s\right)^{1/n} \right).
\end{equation}
We write 
$$ I_\mu := \{ s \ge 0 \, : \, m(s) < \infty\}.$$ On $I_\mu$, the function $m(s)$ is log-convex, thus continuous and differentiable on the interior $\interior{I_\mu}$ of $I_\mu$. 

The assumption $N \ge 1$ a.s. is for convenience, it assures that the underlying branching process (the subtree pertaining to nonzero weights $\mL(v)$) survives with probability 1 and is supercritical due to the assumption $\E N>1$. 

\subsubsection*{Geometrical Assumptions}
A {\em nonnegative} matrix $\ma \in \Mset:=M(d\times d, \Rnn)$ is called {\em allowable}, if it has no zero row nor column. We say that $\ma$ is {\em positive}, if all its entries are positive ($>0$), i.e.  $\ma \in \interior{\Mset}$.
Let $\Gamma$ be a semigroup of nonnegative matrices. 
\begin{defn}\label{defn:condc}
We say that $\Gamma$ satisfies condition $\condC$, if 
\begin{enumerate}
\item every $\ma$ in $\Gamma$ is allowable, and
\item $\Gamma$ contains a positive matrix.
\end{enumerate} 
\end{defn}
Then we impose the following assumption:
\begin{equation}
\tag{A3} \label{A3} \text{ The subsemigroup $[\supp \mu]$ generated by $\supp \mu$ satisfies $\condC$.}
\end{equation}

We are also going to impose a non-arithmeticity assumption. Recall that a positive matrix $\ma$ has, due to the Perron-Frobenius theorem, a unique dominant eigenvalue $\lambda_\ma$, exceeding all other eigenvalues in modulus, which is furthermore positive and algebraically simple, with corresponding eigenvector $u_\ma \in \Sp := \Sd \cap \Rdnn$.
\begin{equation}
\tag{A4} \label{A4} \text{The additive group generated by $\{ \log \lambda_{\ma} \, : \, \ma \in [\supp \, \mu] $ is positive$\}$ is dense in $\R$}
\end{equation} 

\subsubsection*{Moment Assumptions}

The results from the one-dimensional case indicate that it is natural to assume 
\begin{equation}
\label{A5}\tag{A5} \text{There is $\alpha \in (0,1] \cap \interior{I_\mu}$ such that $m(\alpha)=1$ and $m'(\alpha) \le 0$.}
\end{equation}
For $d=1$, assuming only ${\E N>1}$ and $\P{(T_i)_{i \ge 1} \in \{0,1\}^\N} <1$, it is shown in \cite[Theorem 6.1]{AM2010b} that $\alpha \in (0,1]$ is necessary for the existence of fixed points of $\STi$. Our assumption \eqref{A5} is slightly stronger and guarantees in addition the existence of $\epsilon >0$ such that $m(\alpha + \epsilon) <1$ as soon as $m'(\alpha) <0$. 

Let $\abs{\cdot}$ be the Euclidean norm on $\Rd$ with unit sphere $\Sd$, and let $\norm{\ma}:= \sup_{x \in \Sd}\abs{\ma x}$ be the corresponding operator norm on the set of matrices. Every allowable matrix $\ma$ acts on $\Sp$ by the definition
$$ \ma \as x := \frac{\ma x}{\abs{\ma x}}$$
and  maps $\ma \as \interior{\Sp} \subset \interior{\Sp}$. Moreover, the quantity 
$$ \iota(\ma) := \inf_{x \in \Sp} \abs{\ma x}$$
is strictly positive, and we have for any $x \in \Sp$, that $\iota(\ma) \le \abs{\ma x} \le \norm{\ma}$. 
We will use the following moment conditions, with $\mM$ having law $\mu$.
\begin{align}
\label{A6}\tag{A6} & \E \norm{\mM}^\alpha \log(1+ \norm{\mM}) < \infty, \qquad  \E \norm{\mM}^\alpha \abs{ \log \iota(\mM^\top)} < \infty \\
 \tag{A6a}\label{A6a}
& \E \norm{\mM}^\alpha \log(1+ \norm{\mM}) < \infty, \qquad \E (1+\norm{\mM})^\alpha \abs{\log \iota(\mM^\top)} < \infty \\
\label{A7}\tag{A7} & \E \norm{\mM} < \infty \\
\label{A8}\tag{A8} & 0 < \E \abs{Q}^{\alpha+\epsilon} < \infty \text{ for some } \epsilon >0.
\end{align}

\subsubsection*{A Martingale}

One additional piece of information is needed to formulate our main result in full detail. If $\mu$ is any measure on nonnegative matrices which satisfies $\condC$, $\law{\mM}=\mu$, then
the following operator in the set $\Cf{\Sp}$ of continuous functions on $\Sp$ is well defined for any $s \in I_\mu$:
\begin{align}
\label{def:Ps} \Ps[s] : \Cf{\Sp} \to \Cf{\Sp},& \qquad \Ps[s] f(x) = \E
\abs{\mM x}^s f(\mM \as x).
\end{align}

Its adjoint operator $(\Ps)'$ is a mapping on the set of bounded measures on
$\Sp$. Defining $\tilde{\Ps} \nu := [(\Ps)' \nu(\Sp)]^{-1} (\Ps)' \nu$, it
induces a continuous self-map of $\Pset(\Sp)$. By
the Schauder-Tychonoff theorem 
$\tilde{\Ps}$ has a fixed point $\nus$, say, which is in turn an eigenmeasure
of $(\Ps)' $. Denote its eigenvalue by $k(s)$. 
 It is  shown in \cite[Proposition 3.1]{BDGM2014} that $\nus$ is unique up to scaling
 and that $k(s)=(\E N)^{-1}m(s)$, i.e. 
 $$ \nus[\alpha] (\Ps[\alpha])' = \frac1{\E N} \nus[\alpha].$$ 

This property, together with the definition of $\mu$ in \eqref{eq:defmu} is fundamental for showing that
\begin{equation}\label{def:martingale} W_n(u) := \sum_{\abs{v}=n} \int_{\Sp} \, \skalar{\mL(v)^\top  u, y} \, \nus[\alpha](dy) \end{equation}
defines a martingale w.r.t.~the filtration $(\B_n)_{n \in \No}$. It is nonnegative, thus has an almost sure limit $W(u)$ for every $u \in \Sp$.

\subsubsection*{Statement of the Main Result}

\begin{thm}\label{thm:main thm}
Assume \eqref{A0}--\eqref{A7} and $m'(\alpha) <0$.
\begin{enumerate}
\item  Then $W(u)$ is positive a.s. with $0 < \E W(u) < \infty$ for each $u \in \Sp$. A random vector  $X \in \Rdnn$  is a solution of \eqref{eq:SFPE} with $Q \equiv 0$, if and only if its Laplace transform satisfies
\begin{equation} \label{eq:fphom} \E \exp(-r \skalar{u,X}) ~=~ \E \exp (-K \, r^\alpha \, W(u)) \end{equation}
for some $K \ge 0$ and all $r \in \Rnn$, $u \in \Sp$. 
\item Assume in addition \eqref{A8} and $\alpha <1$. Then $W^*$ is a.s.~finite, and a random vector $X \in \Rdnn$ is a solution of \eqref{eq:SFPE} if and only if its Laplace transform satisfies
\begin{equation}\label{eq:fpinhom} \E \exp(-r \skalar{u,X}) ~=~ \E \exp (-K \, r^\alpha \, W(u) - r \skalar{u,W^*}) \end{equation}
for some $K \ge 0$ and all $r \in \Rnn$, $u \in \Sp$. 
\end{enumerate}
If \eqref{A0}--\eqref{A6} hold with \eqref{A6} replaced by \eqref{A6a} and $m'(\alpha)=0$ then Eq. \eqref{eq:SFPE} with $Q \equiv 0$ has a nontrivial fixed point.

\end{thm}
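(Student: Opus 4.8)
The plan is to prove the two characterizations (the case $m'(\alpha)<0$) and the critical addendum separately, reducing in each case to machinery developed beforehand: a multivariate Biggins theorem for the martingale $W_n(u)$ and a Markov renewal theorem for products of the $\mT_i^\top$.

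\emph{The ``if'' direction.} I would first record the recursions holding, on the common weighted tree $\tree$, for the Biggins limit $W(\cdot)$ and for $W^*$: a generation shift together with the $\alpha$-homogeneity $W(ru)=r^\alpha W(u)$ gives $W(u)\eqdist\sum_{i=1}^N\abs{\mT_i^\top u}^\alpha W^{(i)}(\mT_i^\top\as u)$, while \eqref{eq:defWstar} rearranges to $W^*\eqdist\sum_{i=1}^N\mT_iW^{*,(i)}+Q$, with $(W^{(i)},W^{*,(i)})_i$ i.i.d.\ copies of $(W,W^*)$ independent of $(Q,(\mT_i)_i)$. If $X$ is the random vector whose Laplace transform is the right-hand side of \eqref{eq:fphom} (resp.\ \eqref{eq:fpinhom}), then conditioning on $(Q,(\mT_i)_i)$, using $\skalar{u,\mT_iX_i}=\skalar{\mT_i^\top u,X_i}$ and the conditional independence of the copies collapses $\E\exp(-r\skalar{u,\sum_i\mT_iX_i+Q})$ back to $\E\exp(-r\skalar{u,X})$, so $X$ is a fixed point; a.s.\ finiteness of $W^*$ when $\alpha<1$ follows from \eqref{eq:Wstar} with an exponent $s\in(\alpha,1]$ at which $m(s)<1$ (available since $m'(\alpha)<0$) and $\E\abs{Q}^s<\infty$ from \eqref{A8}.

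\emph{The martingale, and its a.s.\ positivity.} The assertion $0<\E W(u)<\infty$ amounts to $L^1$-convergence $W_n(u)\to W(u)$; I would obtain this from the multivariate Kesten--Stigum/Biggins theorem: with $\nus[\alpha](\Ps[\alpha])'=(\E N)^{-1}\nus[\alpha]$ and $m(\alpha)=1$ one size-biases the increment and checks the resulting $x\log^+x$ condition, which under $m'(\alpha)<0$ is supplied by \eqref{A6} when $\alpha<1$ and by \eqref{A6} and \eqref{A7} (the linear case) when $\alpha=1$, in the spirit of \cite{BDGM2014}. Uniform integrability then gives $\E W(u)=\int_\Sp\skalar{u,y}^\alpha\nus[\alpha](dy)$, which is the normalised positive $\Pst[\alpha]$-eigenfunction, hence equals $c_\alpha H^\alpha$ and is strictly positive. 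For a.s.\ positivity the recursion identifies $A:=\{W(u)=0\ \forall u\in\Sp\}$ with $\{W(u_0)=0\}$ for any interior $u_0$ (since $u\le Cu_0$ componentwise forces $W_n(u)\le C^\alpha W_n(u_0)$) and, by allowability of the $\mT_i$, $A=\bigcap_{i=1}^NA^{(i)}$ with $A^{(i)}$ i.i.d.\ copies of $A$ independent of $N$; thus $\Prob(A)$ solves $a=\E a^N$, which under $N\ge1$, $\E N>1$ forces $\Prob(A)\in\{0,1\}$, and $\Prob(A)=1$ is excluded by $\E W(u_0)>0$. A boundary $u$ is handled via \eqref{A3}: a positive matrix in $[\supp\mu]$ makes $\mL(v)^\top u$ interior somewhere along the tree with positive probability, and a second zero-one argument yields $\Prob(W(u)>0)=1$.

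\emph{The ``only if'' direction.} Given a fixed point $X$ with Laplace transform $\phi$, put $D(u,t)=\frac{e^{\alpha t}}{H^\alpha(u)}(1-\phi(e^{-t}u))$ on $\Sp\times\R$. Inserting the fixed point equation and the identity $1-\prod_i a_i=\sum_i(1-a_i)-(\prod_i a_i+\sum_i(1-a_i)-1)$, and using the eigenrelation of $(H^\alpha,\nus[\alpha],k(\alpha))$ with $\E N\,k(\alpha)=m(\alpha)=1$ to see the first term as one step of the $\alpha$-tilted Markov random walk $(X_n,V_n)$ on $\Sp\times\R$, I obtain a Markov renewal equation $D(u,t)=\E_u^\alpha[D(X_1,t-V_1)]-G(u,t)+r_Q(u,t)$ with kernel $G=G_{\alpha,\phi}\ge0$ (Weierstrass' product inequality) directly Riemann integrable and remainder $r_Q$ that is absent for $Q\equiv0$ and $O(e^{-(s-\alpha)t})$ as $t\to\infty$ for some $s\in(\alpha,1)$, hence negligible since $\alpha<1$ in case (2). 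After an a priori bound $\sup_tD(u,t)<\infty$ (a stopping-line estimate exploiting $m(\alpha)=1$) the Markov renewal theorem applies — non-arithmeticity by \eqref{A4}, non-zero drift $l(\alpha)=m'(\alpha)\neq0$ — and gives $D(u,t)\to K$ as $t\to\infty$ for some $K\ge0$ not depending on $u$, i.e.\ $1-\phi(ru)\sim K\,r^\alpha H^\alpha(u)$ as $r\downarrow0$. To pass to the exact formula I would iterate the fixed point equation, $\phi(u)=\E[\exp(-\skalar{u,\sum_{k<n}\sum_{\abs{v}=k}\mL(v)Q(v)})\prod_{\abs{v}=n}\phi(\mL(v)^\top u)]$; since $m'(\alpha)<0$ there is $\epsilon>0$ with $m(\alpha+\epsilon)<1$, so $R_n:=\max_{\abs{v}=n}\norm{\mL(v)}\to0$ a.s., whence $\sum_{\abs{v}=n}\log\phi(\mL(v)^\top u)=-\sum_{\abs{v}=n}(1-\phi(\mL(v)^\top u))+o(1)$, and replacing $1-\phi(\mL(v)^\top u)$ by $\frac{K}{c_\alpha}\int_\Sp\skalar{\mL(v)^\top u,y}^\alpha\nus[\alpha](dy)\,(1+o(1))$ and using $W_n(u)\to W(u)$ turns the exponent into $-\frac{K}{c_\alpha}W(u)-\skalar{u,W^*}$ in the limit; dominated convergence then yields \eqref{eq:fpinhom}, resp.\ \eqref{eq:fphom}, after absorbing $c_\alpha$ into $K$ and using $W(ru)=r^\alpha W(u)$.

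\emph{The critical case $m'(\alpha)=0$.} Now the tilted walk has zero drift, $W_n(u)\to0$ a.s., and the above collapses; I would instead iterate $\STh$ on the pure stable transform $\phi_0(ru)=\exp(-K\,r^\alpha H^\alpha(u))$. Since $\E W_1(u)=c_\alpha H^\alpha(u)$, Jensen's inequality gives $\STh\phi_0\ge\phi_0$, and monotonicity of $\STh$ makes $(\STh^n\phi_0)_n$ increase pointwise to a limit $\psi$ with $\STh\psi=\psi$. The real work is $\psi\not\equiv1$: one controls the functions $D_{\alpha,n}=D_{\alpha,\STh^n\phi_0}$ and $G_{\alpha,n}$ on $\Sp\times\R$ and shows, using the zero-drift (recurrent) renewal structure and the slowly varying correction $\abs{\log r}$, that they remain in a fixed compact set $H_{\alpha,c}$ whose extremal points $E_{\alpha,c}$ are the explicit exponentials indexed by the roots of $m=1$ in $(0,1]$; then $D_{\alpha,\psi}$ is neither $0$ nor infinite, so $\psi$ is the Laplace transform of a nonzero $X\in\Rdnn$ solving \eqref{eq:SFPE} with $Q\equiv0$. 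The passage from \eqref{A6} to \eqref{A6a} provides the slightly sharper moment bounds this compactness step needs. I expect this non-degeneracy argument in the critical regime — together with the a priori bound $\sup_tD<\infty$ in the renewal step — to be the main obstacle; parts (1) and (2) are comparatively routine once the Markov renewal theorem for $(X_n,V_n)$ and the Biggins theorem for $W_n(u)$ are in hand.
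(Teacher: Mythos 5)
Your ``if'' direction and the treatment of the martingale $W_n(u)$ (mean convergence via the multitype Biggins/Kesten--Stigum theorem, a.s.\ positivity via the zero--one law for the recursion) are correct and essentially what the paper does in Sections 4 and 5. The two serious problems are in the ``only if'' direction and in the critical case.

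For uniqueness, you write the Markov renewal equation $D(u,t)=\E_u^\alpha D(X_1,t-V_1)-G(u,t)$ and assert that, after an a priori bound, ``the Markov renewal theorem applies and gives $D(u,t)\to K$ not depending on $u$.'' This skips the actual crux. The renewal theorem controls the potential $\sum_n P^nG$, but a bounded solution of the equation is determined only up to a bounded space--time harmonic function of $(U_n,S_n)$, and in the multivariate setting such functions are \emph{not} automatically constant: $h(u,t)=e^{(\chi-\alpha)t}H^\chi(u)/H^\alpha(u)$ is harmonic for every root $\chi$ of $m(\chi)=1$, so in particular the second root $\beta$ produces a non-constant candidate that must be excluded, and the directional profile $H^\alpha$ versus $H^\beta$ must be identified. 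The paper spends Sections 7, 9 and 10 on precisely this: first the two-sided a priori bound ($\alpha$-regularity, which itself needs the stopping-line law of large numbers of Theorem \ref{thm:l1_conv_wf}, i.e.\ Kesten's renewal theorem for $(U(t),R(t))$ plus a Cohn--Jagers triangular-array argument); then uniform $\alpha$-H\"older continuity in $u$ of the rescaled functions $\D[L,s]$, which is exactly the hypothesis (ii) needed to invoke the Choquet--Deny result (Proposition \ref{Choquet}); then an Arzel\'a--Ascoli/Krein--Milman analysis identifying the extremal subsequential limits with the family $cH^\chi/H^\alpha\,e^{(\chi-\alpha)t}$ and ruling out $\chi=\beta$ by boundedness and a contraction argument. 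Your proposal assumes the conclusion of all of this in one sentence; the rest of your reduction to \eqref{eq:fphom} (iterating the fixed point equation and replacing $1-\phi$ by $KH^\alpha$) also needs the convergence to be \emph{uniform} in the direction, which is again Theorem \ref{thm:uniqueness_final}(3). Your renewal-equation treatment of the inhomogeneous case is a legitimate alternative to the paper's $l_s$-metric bijection (Proposition \ref{prop:bijection_fpi_fph}), but it inherits the same gap.

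The critical case is simply wrong as proposed. You correctly note that $W_n(u)\to 0$ a.s.\ when $m'(\alpha)=0$, but then $\STh^n\LTa_0(ru)=\E\exp(-Kr^\alpha W_n(u))\to 1$ by bounded convergence, so your monotone limit $\psi$ is identically $1$: the iteration on the pure stable law produces only the trivial fixed point, and no compactness argument can rescue a sequence that converges pointwise to $1$. A direct construction would require a Seneta--Heyde renormalization (rescaling $K$ with $n$), which is a substantially deeper result. The paper avoids this entirely by approximating from the subcritical side: for $\chi<\alpha$ the biased transform $\STh^\chi$ satisfies $m_\chi(\chi)=1$, $m_\chi'(\chi)<0$, so the non-critical existence theorem yields fixed points $\LTfp_\chi$, which after the normalization $\LTfp_\chi(u_0)=1/2$ admit a subsequential limit as $\chi\uparrow\alpha$ that is a nontrivial fixed point of $\STh$. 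You should adopt that route (or cite the Seneta--Heyde machinery explicitly) rather than the monotone iteration.
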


\begin{rem}
In terms of random variables, Eq. \eqref{eq:fpinhom} states that if $X$ is a solution of \eqref{eq:SFPE}, then for all $u \in \Sp$,
$$ \skalar{u,X} \eqdist \skalar{u,W^*} + (KW(u))^{1/\alpha} Y_\alpha,$$
where $Y_\alpha$ is a one-dimensional $\alpha$-stable random variable with LT $\E \exp(-tY) = \exp(-t^\alpha)$, independent of $(W(u),W^*)$. 

A sufficient condition for the existence of $\alpha \in (0,1)$ with $m(\alpha)=1$, $m'(\alpha) <0$ is that the nonnegative matrix $\E \mM$ has spectral radius smaller than 1, see {\cite[Lemma 4.14]{BDGM2014}}.

Additional properties of the fixed points, like multivariate regular variation or a representation as a mixture of multivariate $\alpha$-stable laws are given in Theorem \ref{thm:propertiesfp}.

It can be shown that if Assumption \eqref{A4} is violated, then there are more fixed points, the situation being similar to the one-dimensional arithmetic case. See \cite[Section 18]{Mentemeier2013}.
\end{rem}

\subsection{Comparison to Previous Work.}\label{subject:comparison} Up to now, only partial results about existence and / or uniqueness of fixed points of Eq. \eqref{eq:SFPE} have been achieved: Fixed points on $\Rd$ with a finite variance were studied in \cite[Theorem 4.4]{NR2006} and in \cite{Bassetti2014}. Properties of $W^* \in \Rdnn$ were studied in \cite{Mirek2013}, its counterpart in $\Rd$ in \cite{BDMM2013}. Fixed points (in $\Rdnn$) of the homogeneous equation with finite expectation were studied in \cite{BDGM2014}. Let us point out that, except for the last one, all these existence results rely on contraction arguments, i.e. an application of the Banach fixed point theorem on  a suitable subspace of probability measures, namely those having a finite moment of order $\alpha + \epsilon$. The fixed points described below have an infinite moment of order $\alpha$ if $K>0$ and thus are not accessible by this methods. Note also, that the focus of the last three papers mentioned is mainly on studying heavy tail properties of particular fixed points.

To pursue our main subject, i.e.~finding all fixed points, we naturally will follow ideas from the one-dimensional setting, in particular those of \cite{DL1983} and \cite{ABM2012}. Besides finding the right multivariate formulation being far from trivial, the multivariate setting adds a lot of features, but also technical problems to overcome; a fundamental one being, how to generalize Eq. \eqref{eq:alphaelementaryd1}? The first question is how to define $\alpha$ in the case of matrices. Next, one has to prove the existence of the limit
\begin{equation} \label{eq:limregular}\lim_{r \to 0} \frac{1-\LTfp(ru)}{r^\alpha L(r)}, \end{equation} which will turn out to be the most involved question, and to understand how it depends on $u$. Is there a unique function $f$, such that the limit always equals $K f(u)$ and the fixed points can be characterized by this $K$, or are there several possible functions describing the directional component, each of them parametrizing a family of fixed points? 
All these questions will be solved, but therefore we have to draw on a
broad variety of methods from the theory of multitype branching random walk,
products of random matrices, Markov random walks, general branching processes
and harmonic analysis.

\subsection{Applications}
\subsubsection*{Random Walk in Random Environment on Multiplexed Trees}
The random walk in random environment (RWRE) on trees was studied in detail  first by  \cite{Lyons1992}, there very general underlying trees were considered. Of particular interest for the present situation is the work of \cite{Menshikov2002}, where it was shown that recurrence properties of a RWRE on Galton-Watson trees are intimately connected to the existence of fixed points of the smoothing transform.
 
This model has been extended in \cite{Menshikov2003,Menshikov2005} to RWRE on multiplexed trees: Let $d,N >1$. Attach to every vertex $v \in \V=\bigcup_{n=0}^\infty \{1, \cdots, N\}^n$ $d$ different levels $\{1, \dots, d\}$. To every edge connecting the vertex $vi$ with its ancestor $v$ belongs a $d \times d$-matrix $\mT(v)_i$, the $a$-th column of which consists of the transition probabilities for going from $v$ to $vi$ and changing from level $a$ to $b$, divided by the transition probability of going from $vi$ to $v$ and returning to level $a$.   Due to the (i.i.d.) random environment, the $\mT_i$ become random matrices with nonnegative entries. It is assumed in \cite{Menshikov2005} that the law of $\mT_i$ satisfies condition $\condC$, and it is proved there \cite[Section 2]{Menshikov2005}, that positive recurrence holds in this model, if and only if
$$ \skalar{\deins,Y} ~:=~\skalar{\deins, \sum_{k=0}^\infty \sum_{\abs{v}=k} \mL(v) \deins} < \infty, $$
where $\deins=(1, \dots, 1)^\top$ is the vector with all entries equal to 1, and $Y$ satisfies the inhomogeneous fixed point equation
$$ Y \eqdist \sum_{i=1}^N \mT_i Y_i + \deins.$$

\ 
\subsubsection*{Uniqueness of Fixed Points of the Inhomogeneous Multivariate
Smoothing Transform}  \cite{Mirek2013} proved the existence of the
particular fixed point $\law{W^*}$ of the multivariate inhomogeneous smoothing
transform under stronger assumptions (namely, the matrices are invertible and $\alpha \le 1/2$.)
He proved that $\law{W^*}$ is an attracting fixed point within the class of probability laws on $\Rdnn$ with a finite moment of order $s_2 =\alpha+\epsilon$ for some $\epsilon >0$, see (ibid., top of p. 679). Therefore, it is the unique fixed point within this class; but, as our result shows, not in general, for there are more fixed points with an infinite moment of order $\alpha$. 
Thus, the statement of \cite[Theorem 1.7]{Mirek2013} is misleading, there is no unique fixed point. Our result gives a full characterization of all fixed points of the inhomogeneous multivariate smoothing transform. This particularly clarifies the meaning of (ibid., Remark 1.8).

\subsubsection*{Existence of Fixed Points in the Boundary Case}
\cite{BDGM2014} considered the multivariate
homogeneous smoothing transform in the particular case $\alpha=1$ and the
left-hand derivative $m'(1^-)< 0$. In this situation, they proved existence
of a fixed point with finite expectation and moreover, that the existence of such a fixed point
implies $\alpha=1$ and $m'(1^-) < 0$. Our result now allows to treat the
general case not considered there: We prove existence and
uniqueness of fixed points for general $\alpha \in (0,1)$. Note that, in contrast to the one-dimensional case, the
existence of FPs for $\alpha <1$ cannot be deduced from the result for
$\alpha=1$ by applying the stable transformation. Moreover,
our results allow to deduce the existence of a fixed point in the boundary case
$\alpha=1$, $m'(1)=0$. 

\subsection{Acknowledgements}
The major part of this work was done while
the author held a  position at the Institute of Mathematical Statistics,
University of M\"unster which was supported by Deutsche Forschungsgeemeinschaft
via SFB 878.  An earlier version of a part of these results has been published
within the author's PhD thesis (\cite{Mentemeier2013}), written under the
supervision of Gerold Alsmeyer, M\"unster, to whom I'd like to express my
gratidude; as well as to Dariusz Buraczewski, Ewa Damek, Yves
Guivarc'h, Konrad Kolesko, Daniel Matthes and Matthias Meiners for very helpful discussions on the subject.

\section{Organization of the Paper and Outline of the Proof}

As indicated in the Introduction, a major point will be to understand the behavior of \eqref{eq:limregular} for the Laplace transforms of fixed points. To structurize the subsequent discussion, we introduce, following \cite{Iksanov2004}, two special classes of Laplace transforms:

\begin{defn}\label{defn:regular.elementary}\label{def:alpha_regular} 
Let $\alpha \in (0,1]$ and $L$ be a positive function which is slowly varying
at 0. We call a Laplace transform $\LTfp$ of a probability measure on $\Rdnn$  
\emph{$L$-$\alpha$-elementary}, if it satisfies
$$ \lim_{r \to 0} \frac{1- \LTfp(r\deins)}{L(r) r^\alpha}  \in (0, \infty).$$

It is called 
\emph{$L$-$\alpha$-regular}, if it satisfies $$ 0 < \liminf_{r \to 0} \frac{1 -
\LTfp(r\deins)}{L(r)r^\alpha} \le \limsup_{r \to 0} \frac{1-\LTfp(r\deins)}{L(r)r^\alpha} < \infty.$$ 
If $L \equiv 1$, we say that $\LTfp$ is $\alpha$-elementary ($\alpha$-regular).
\end{defn}

Then the organization of the paper is as follows: In Section \ref{sect:condC}, we study implications of assumptions \eqref{A3} and \eqref{A4}, giving additional information on $\Ps$ and introducing a Markov random walk, associated with the measure $\mu$. It then appears in a many-to-one identity in Section \ref{sect:BRWmatrices}, which contains detailed informations about the weighted branching model given by $(\mL(v))_{v \in \tree}$, inter alia the mean convergence of $W_n$. A remarkable result is Theorem \ref{thm:l1_conv_wf}, which is in the spirit of results for general branching processes as in \cite{Nerman1981,Jagers1989}, applying for the first time the renewal theorem of \cite{Kesten1974} in this context. 

With this prerequisites at hand, we turn to the proof of Theorem \ref{thm:main thm}, which will be done in several steps, each of them given in a self-contained section.
The first step is to prove that each Laplace transform of the form \eqref{eq:fphom} resp. \eqref{eq:fpinhom} is indeed a fixed point, which is done in Section \ref{sect:existence}. Subsequently, some properties of these fixed points, including a representation as mixtures of multivariate stable laws, are proved in Section \ref{sect:propFP}. 
Having proved the existence of fixed points, we are going to prove that all fixed points of $\STh$ are of the form \eqref{eq:fphom} (this will imply the analogous result for $\STi$ , see Section \ref{sect:inhom}). In order to so, we show in Section \ref{sect:allfpregular} that all fixed points of $\STh$ are $\alpha$-regular. This is done by generalizing the approach of \cite{ABM2012} to the multidimensional situation.

At the heart of our proof lies Theorem \ref{thm:uniqueness_final} which shows that an $\alpha$-regular fixed point is already $\alpha$-elementary. Its proof is given in Sections  
\ref{sect:arzelaascoli} and \ref{kreinmilman}. The basic idea is to use the Arzel\'a-Ascoli theorem to infer the existence of subsequential limits of $r^{-\alpha}(1-\LTfp(ru))$, there the proof of equicontinuity poses the most problems. This issue is solved by observing that $\alpha$-regularity implies that $u \mapsto r^{-\alpha}(1-\LTfp(ru))$ is $\alpha$-H\"older continuous for each $r$, see Section \ref{sect:arzelaascoli}. Then one has to show that all subsequential limits coincide, this is done in Section \ref{kreinmilman} by identifying the limits as bounded harmonic functions for the associated Markov random walk, which are then constant due to a Choquet-Deny type result. 
Having proved that all fixed points of $\STh$ are $\alpha$-elementary, we identify such fixed points to be of the form \eqref{eq:fphom} in Section \ref{sect:uniqueness}.

In Section \ref{sect:inhom}, we use the results proved for $\STh$ to deduce the uniqueness of fixed points of $\STi$. The final assertion of Theorem \ref{thm:main thm} about the critical case $m'(\alpha)=0$ is proved in Section \ref{sect:crit}. The final Sections \ref{sect:proofs1} and \ref{sect:proofs2} contain proofs for the results stated in Section \ref{sect:BRWmatrices}. A list of symbols and some helpful inequalities for multivariate Laplace transforms are given in the appendix.

\section{Implications of Condition $\condC$ and a Change of Measure}\label{sect:condC}

In this section, we collect important implications of our geometrical assumptions and define an {\em associated Markov random walk}, which constitutes the multivariate analogue of the random walk associated with a branching random walk. 

Let $\mM$ as before be a random matrix with law $\mu$, satisfying condition $\condC$.
Besides the operator $\Ps[s]$ introduced in \eqref{def:Ps}, 
consider also the operator \begin{equation}\label{def:Pst}\Pst[s] : \Cf{\Sp} \to \Cf{\Sp}, \qquad \Pst[s] f(x) = \E
\abs{\mM^\top x}^s f(\mM^\top \as x). \end{equation}

Initiated by 
\cite{Kesten1973}, a detailed study of these operators under condition $\condC$ can be found in \cite{BDGM2014}, based on the fundamental works
 \cite{Guivarc'h2004,Guivarch2012} for invertible matrices.
We cite for the reader's convenience the following most important properties:
\begin{proposition}\label{prop:Ps}
Assume that $[\supp \mu]$ satisfies $\condC$ and let $s \in I_{\mu}$. 
Then the spectral radii of $\Ps[s]$ and $\Pst[s]$ are both equal to $k(s)=(\E N)^{-1} m(s)$, and there is a unique probability measure $\nus[s]$ satisfying $(\Ps[s])' \nus[s] = k(s) \nus[s]$ and an (up to scaling) unique eigenfunction $H^s$ satisfying $\Pst[s] H^s = k(s) H^s$.

The function $H^s$ is strictly positive on $\Sp$, $\min\{s,1\}$-H\"older-continuous and given by
\begin{equation}\label{eq:defHs} H^s(u) ~=~ \int_{\Sp} \, \skalar{u,y}^s \, \nus[s](dy). \end{equation}
Moreover, if there is a nonnegative, nonzero continuous function $f$, satisfying $\Pst[s] f=\lambda f$ for some $\lambda >0$, then $\lambda=k(s)$, $f=cH^s$ for some $c >0$.
\end{proposition}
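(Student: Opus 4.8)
This is essentially \cite[Proposition 3.1]{BDGM2014}, resting on the Perron--Frobenius theory for products of nonnegative matrices from \cite{Kesten1973,Guivarc'h2004,Guivarch2012}; I would either quote it or reconstruct the argument as follows. For $s\in I_\mu$ both $\Ps[s]$ and $\Pst[s]$ are bounded positive operators on $\Cf{\Sp}$: boundedness because $\Ps[s]\1(x)=\E\abs{\mM x}^s\le\E\norm{\mM}^s<\infty$ (and symmetrically for $\Pst[s]$), positivity because $\mu$ is carried by nonnegative matrices. Thus one is in the situation of the Krein--Rutman theorem, and the r\^ole of $\condC$ is precisely to upgrade this to a quantitative statement with a spectral gap.

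The crucial input is that $\condC$ forces strong contraction of the projective action on $\Sp$: since $[\supp\mu]$ contains a positive matrix $\ma_0$, the set $\ma_0\as\Sp$ is a compact subset of $\interior{\Sp}$, and by the Birkhoff--Hopf inequality for allowable (not necessarily strictly positive) nonnegative matrices the diameter of $\mM_n\cdots\mM_1\as\Sp$ contracts to $0$ at a uniform geometric rate with positive probability. Combining this with the moment bound $\E\norm{\mM}^s<\infty$ yields a Doeblin--Fortet (Ionescu-Tulcea--Marinescu) inequality for $\Pst[s]$ on the space $H^\gamma(\Sp)$ of $\gamma$-H\"older functions with $\gamma=\min\{s,1\}$, of the form $\norm{(\Pst[s])^n f}_\gamma\le A\theta^n\norm{f}_\gamma+B_n\norm{f}_\infty$ with $\theta$ strictly below the spectral radius. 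Hence $\Pst[s]$ is quasi-compact on $H^\gamma(\Sp)$: its spectral radius is an eigenvalue, the eigenfunction can be chosen strictly positive (using that $\condC$ makes the action topologically irreducible), the eigenvalue is algebraically simple, and the remainder of the spectrum lies in a strictly smaller disc; dually $(\Ps[s])'$ has an eigenmeasure $\nus[s]$ for the same eigenvalue, unique up to scaling.

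It remains to make the eigen-data explicit. From $(\Ps[s])^n\1(x)=\E\abs{\mPi_n x}^s$, the bounds $\iota(\mPi_n)\le\abs{\mPi_n x}\le\norm{\mPi_n}$, and the definition \eqref{eq:defmu} of $\mu$, the spectral radius equals $\lim_n(\E\norm{\mPi_n}^s)^{1/n}=(\E N)^{-1}m(s)=k(s)$. For \eqref{eq:defHs} put $h(u):=\int_{\Sp}\skalar{u,y}^s\,\nus[s](dy)$; applying the relation $(\Ps[s])'\nus[s]=k(s)\nus[s]$ to the test function $g=\skalar{x,\cdot}^s$ and noting $\Ps[s] g(y)=\E\abs{\mM y}^s\skalar{x,\mM\as y}^s=\E\skalar{\mM^\top x,y}^s$, one obtains $\Pst[s] h(x)=\E\abs{\mM^\top x}^s h(\mM^\top\as x)=\E\int\skalar{\mM^\top x,y}^s\,\nus[s](dy)=k(s)h(x)$; so $h$ is a strictly positive eigenfunction of $\Pst[s]$, whence $h=cH^s$ by the uniqueness above, and one normalises so that $h=H^s$. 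The H\"older bound then follows directly from the explicit formula, via $\abs{a^s-b^s}\le\abs{a-b}^s$ for $s\le1$ and the mean value theorem for $s>1$.

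For the last assertion, let $f\in\Cf{\Sp}$ be nonnegative, nonzero with $\Pst[s] f=\lambda f$, $\lambda>0$. Iterating and using that the transposed semigroup also satisfies $\condC$ and acts topologically transitively on $\Sp$ forces $f>0$ everywhere; pairing with the eigenmeasure $\nust[s]$ of $(\Pst[s])'$ then gives $\lambda\,\nust[s](f)=\nust[s](\Pst[s] f)=k(s)\,\nust[s](f)$ with $\nust[s](f)>0$, so $\lambda=k(s)$, and algebraic simplicity of the eigenvalue forces $f=cH^s$. The main obstacle is the quasi-compactness step: the Doeblin--Fortet inequality depends on contraction estimates for the projective action of merely nonnegative allowable matrices possessing a positive element, which are more delicate than in the invertible case treated in \cite{Guivarc'h2004,Guivarch2012}; carrying these out is precisely the technical core of \cite{BDGM2014}, which is why we content ourselves here with citing the result.
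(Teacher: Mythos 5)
Your proposal matches the paper exactly: the paper offers no argument of its own for this proposition and simply cites \cite[Proposition 3.1]{BDGM2014}, which is precisely the source you identify and ultimately defer to. Your accompanying sketch (eigenmeasure/eigenfunction via positivity and the contraction furnished by condition $\condC$, identification of $k(s)=(\E N)^{-1}m(s)$, the duality computation giving \eqref{eq:defHs}, and the pairing argument for the last assertion) is a reasonable outline of the cited proof and raises no concerns.
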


\begin{proof}[Source:]
\cite[Proposition 3.1]{BDGM2014}.
\end{proof}

Using formula \eqref{eq:defHs}, we see that  $H^s$ extends to a $s$-homogeneous function on $\Rdnn$, i.e.~$H^s(x)=\abs{x}^s H^s(\abs{x}^{-1} x)$.   In particular,  the identity $\Pst[s] H^s = k(s) H^s$ becomes
\begin{equation}\label{eq:propHs} H^s(x) ~=~ \frac{\E N}{m(s)} \Erw{ H^s(\mM^\top x)} ~=~ \frac{1}{m(s)} \Erw{\sum_{i =1}^N H^s(\mT_i^\top x)} \end{equation}

This allows us to introduce for any $s \in I_{\mu}$ and $u \in \Sp$ a {\em $s$-shifted} probability measure $\Prob_u^\alpha$ on $\Sp \times \Mset^\N$ by setting
\begin{align} \label{eq:manytoone1}
&~ \E_u^s \left[ f(U_0, \mM_1, \cdots, \mM_n) \right] ~:=~ \frac{(\E N)^n }{m(s)^n H^s(u)} \Erw{ H^s(\mM_n^\top \cdots \mM_1^\top u) \, f(u, \mM_1, \cdots, \mM_n) } 
\end{align} 
for all $n \in \N$ and any bounded measurable function $f : \Sp \times \Mset^n \to \R$. See \cite[Section 3.1]{BDGM2014} for details.
Note that $(\mM_n)_{n \in \N}$ are i.i.d.~with law $\mu$ under $\Prob_u^0$ for each $u \in \Sp$, which implies $H^0 \equiv 1$; and that $\Prob_u^s(U_0=u)=1$ for all $u \in \Sp$ and $s \in I_\mu$.

\subsection{The Associated Markov Random Walk}
Under $\Prob_u^s$, define the following sequences of random variables:
\begin{align*} \mPi_n ~:=&~ \mM_n^\top \cdots \mM_1^\top,  \\
U_n ~:=&~ \mPi_n \as U_0 ~=~ \mM_n^\top \as U_{n-1}, \\
S_n ~:=&~ - \log \abs{\mPi_n U_0} ~=~ S_{n-1} + (- \log \abs{\mM_n^\top U_{n-1}}).
\end{align*}
Then $(U_n, S_n)_{n \in \No}$ constitutes a {\em Markov random walk} under each $\Prob_u^s$, i.e. $(U_n)_{n \in \No}$ is a Markov chain, conditioned under which $(S_n)$ has independent (but not identically distributed) increments. It is this Markov random that will take the r\^ole of the {\em associated random walk} appearing i.a.~in \cite{DL1983} or \cite{Liu1998}.

From \eqref{eq:manytoone1}, one obtains the following comparison formula:
\begin{align} \label{eq:manytoone10}
&~ \E_u^s \left[ f((U_k, S_k)_{k \le n}) \right] ~=~ \frac{1}{k(s)^n H^s(u)} \E_u^0 \left[{ H^s(e^{-S_n}) \,  f((U_k, S_k)_{k \le n}) } \right]
\end{align} 

Assumptions \eqref{A3}--\eqref{A6}, which are in fact assumptions on $\mu$, guarantee that $S_n$ satisfies a SLLN (see \cite[Theorem 6.1]{BDGM2014}), that $(U_n, S_n)$ satisfies the assumptions of Kesten's renewal theorem (see (ibid., Proposition 7.2)) and a Choquet-Deny type lemma (see \cite[Theorem
2.2]{Mentemeier2013a}). We will make use of all of these results.

Since it is the particular instance, where \eqref{A6} enters, we state here the SLLN for $S_n$.
\begin{proposition}\label{prop:SLLN}
Assume \eqref{A0}--\eqref{A3}, \eqref{A5} and \eqref{A6}. Then
\begin{equation}\label{eq:convergenceSn} \lim_{n \to \infty} \frac{S_n}{n} ~=~ - {m'(\alpha)} \qquad \Prob_u^\alpha\text{-a.s.} \end{equation}
\end{proposition}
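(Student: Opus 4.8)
This is essentially \cite[Theorem 6.1]{BDGM2014}, and the plan is to recover it from an ergodic theorem for the Markov chain underlying the $\alpha$-shift. First I would record that under $\Prob_u^\alpha$ the direction sequence $(U_n)_{n\in\No}$ is itself a Markov chain: by \eqref{eq:manytoone1} and $m(\alpha)=1$, the Radon--Nikodym derivative of $\Prob_u^\alpha$ with respect to $\Prob_u^0$ on $\sigma(\mM_1,\dots,\mM_n)$ equals $(\E N)^n H^\alpha(\mPi_n u)/H^\alpha(u)$, so the conditional law of $\mM_{n+1}$ given the past depends only on $U_n$, and $(U_n)$ has transition kernel $P^\alpha g(v)=\E N\,H^\alpha(v)^{-1}\E[H^\alpha(\mM^\top v)\,g(\mM^\top\as v)]$, the $H^\alpha$-transform of $\Pst[\alpha]$; this is a probability kernel by \eqref{eq:propHs}. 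A one-line computation with the left eigenmeasure $\nust[\alpha]$ of $(\Pst[\alpha])'$ (Proposition \ref{prop:Ps} applied to $\mM^\top$) shows that $\pist[\alpha]:=(\int H^\alpha\,d\nust[\alpha])^{-1}\,H^\alpha\,\nust[\alpha]$ is $P^\alpha$-invariant. Since $[\supp\mu]$ satisfies $\condC$, this chain is uniformly ergodic --- a uniform minorization $P^{n_0}(v,\cdot)\ge\epsilon\,\lambda$ holds, as in \cite{BDGM2014,Guivarc'h2004} --- so $\pist[\alpha]$ is its unique invariant law and the pathwise ergodic theorem holds from every starting point $U_0=u$.

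Next I would write $S_n=\sum_{k=1}^n\xi_k$ with $\xi_k:=-\log\abs{\mM_k^\top U_{k-1}}$ and observe that the pairs $(U_{k-1},\mM_k)$, $k\ge1$, form a Markov chain under $\Prob_u^\alpha$ (since $U_k$ is a measurable function of $(U_{k-1},\mM_k)$ and the conditional law of $\mM_{k+1}$ given the past depends only on $U_k$), for which $\xi_k$ is a measurable functional and the invariant law is $\bar\pi(dv,d\ma)=\pist[\alpha](dv)\,q^\alpha(v,d\ma)$, where $q^\alpha$ is the innovation kernel of $P^\alpha$. Here \eqref{A6} enters: since $\iota(\mM^\top)\le\abs{\mM^\top v}\le\norm{\mM}$ for $v\in\Sp$, one has $\abs{\log\abs{\mM^\top v}}\le\abs{\log\iota(\mM^\top)}+\log(1+\norm{\mM})$ uniformly in $v$, and since $H^\alpha$ is continuous, hence bounded and bounded below by a positive constant on $\Sp$, assumption \eqref{A6} gives $\sup_{v\in\Sp}\E_v^\alpha\abs{\xi_1}<\infty$, so the functional lies in $L^1(\bar\pi)$. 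Birkhoff's ergodic theorem for the pair chain then yields
\[ \frac{S_n}{n}=\frac1n\sum_{k=1}^n\xi_k\ \longrightarrow\ \int_{\Sp}\E_v^\alpha[\xi_1]\,\pist[\alpha](dv)\qquad\Prob_u^\alpha\text{-a.s.} \]

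Finally I would identify this limit. By \eqref{eq:manytoone1} with $n=1$ and $m(\alpha)=1$ one has $\E_v^\alpha[\xi_1]=-\E N\,H^\alpha(v)^{-1}\E[H^\alpha(\mM^\top v)\log\abs{\mM^\top v}]$, so the limit equals $-\E N\,(\int H^\alpha\,d\nust[\alpha])^{-1}\int\E[H^\alpha(\mM^\top v)\log\abs{\mM^\top v}]\,\nust[\alpha](dv)$. By analytic perturbation theory the simple dominant eigenvalue $k(s)$ of the quasi-compact operator $\Pst[s]$ is real-analytic in $s$ on $\interior{I_\mu}$, with analytic eigenfunction $H^s$ and eigenmeasure $\nust[s]$; differentiating $\Pst[s]H^s=k(s)H^s$ at $s=\alpha$, pairing against $\nust[\alpha]$ and using $(\Pst[\alpha])'\nust[\alpha]=k(\alpha)\nust[\alpha]$, the derivative-of-$H^s$ terms cancel, and since $\tfrac{d}{ds}\Pst[s]H^s\big|_{s=\alpha}(v)=\E[H^\alpha(\mM^\top v)\log\abs{\mM^\top v}]$ one obtains $\int\E[H^\alpha(\mM^\top v)\log\abs{\mM^\top v}]\,\nust[\alpha](dv)=k'(\alpha)\int H^\alpha\,d\nust[\alpha]$. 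Substituting and using $m(s)=\E N\,k(s)$ gives $\int_{\Sp}\E_v^\alpha[\xi_1]\,\pist[\alpha](dv)=-\E N\,k'(\alpha)=-m'(\alpha)$, which is \eqref{eq:convergenceSn}.

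The main obstacle is the last step: justifying the interchange of $\tfrac{d}{ds}$ with the eigen-equation, i.e.\ the real-analyticity of $s\mapsto(k(s),H^s,\nust[s])$ near $s=\alpha$ together with the finiteness of all the pairings involved. Both follow from the quasi-compactness and spectral-gap analysis of $\Pst[s]$ in \cite{BDGM2014} combined with Kato's perturbation theory, once one checks --- again via \eqref{A6} --- that $v\mapsto\E[H^\alpha(\mM^\top v)\log\abs{\mM^\top v}]$ defines a continuous function on $\Sp$ and that $s\mapsto\tfrac{d}{ds}\Pst[s]H^s$ is continuous into $\Cf{\Sp}$ near $\alpha$. (One could instead avoid differentiation altogether by bounding $\tfrac1n\log\E_u^\alpha e^{-tS_n}$ via subadditivity and reading the drift off the slope at $t=0$, but the perturbation route is shorter.) By contrast, the ergodicity input of the first paragraph is unproblematic, being available essentially verbatim from the cited literature, and the $L^1$-control of the increments is precisely what assumption \eqref{A6} provides.
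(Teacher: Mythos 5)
The paper does not actually prove this proposition: its ``proof'' is a one-line citation of \cite[Theorem 6.1]{BDGM2014}, so what you have written is a reconstruction of that external argument. Your skeleton is the right one and matches the structure underlying the cited result: under $\Prob_u^\alpha$ the direction sequence $(U_n)$ is Markov with kernel the $H^\alpha$-Doob transform of $\E N\,\Pst[\alpha]$, the invariant law is proportional to $H^\alpha\,\nust[\alpha]$, the increments $\xi_k=-\log\abs{\mM_k^\top U_{k-1}}$ are uniformly integrable precisely because of \eqref{A6} (your bound $\abs{\log\abs{\mM^\top v}}\le\abs{\log\iota(\mM^\top)}+\log(1+\norm{\mM})$ together with $0<\inf_{\Sp}H^\alpha\le\sup_{\Sp}H^\alpha\le1$ is correct), and the drift identification by differentiating $\Pst[s]H^s=k(s)H^s$ at $s=\alpha$, pairing with $\nust[\alpha]$ so the $\partial_sH^s$ terms cancel, and using $m=\E N\,k$ does give $-m'(\alpha)$ (it reduces to the familiar tilted-drift computation in $d=1$).

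The one step that would fail as written is the ergodicity input: you assert a uniform minorization $P^{n_0}(v,\cdot)\ge\epsilon\,\lambda$ ``as in \cite{BDGM2014,Guivarc'h2004}'', and then invoke Doeblin/Harris theory for the pathwise LLN from every start. Under \eqref{A3}--\eqref{A4} alone no such minorization holds in general: $\mu$ may have finite support, in which case the $n$-step laws of $(U_n)$ are finitely supported with atoms that move with the starting direction $v$, so no fixed nonzero minorizing measure exists, and the chain need not be Harris recurrent. What the cited works actually provide is contraction of the projective action of positive matrices (Hilbert-metric/spectral-gap estimates on H\"older functions), which yields uniqueness of $\pist[\alpha]$ and convergence of $P^{\alpha,n}f$ to $\pist[\alpha](f)$ uniformly in the starting point; from that, the a.s.\ LLN for the unbounded $L^1$ functional $\xi_k$ from an arbitrary start still requires an extra argument (truncation plus the uniform convergence of $v\mapsto\E_v^\alpha[\xi_1\wedge M]$, a coupling with the stationary chain, or a subadditivity argument via $\iota(\mPi_n)\le\abs{\mPi_n u}\le\norm{\mPi_n}$), not Meyn--Tweedie-type Harris theory. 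The same spectral-gap input is also what licenses your Kato perturbation step, which you correctly flag as the delicate point. So either re-base the ergodicity step on those contraction results, or do what the paper does and import the statement wholesale from \cite[Theorem 6.1]{BDGM2014}.
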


\begin{proof}[Source:] \cite[Theorem 6.1]{BDGM2014} \end{proof}

\section{The Weighted Branching Model with Matrices}\label{sect:BRWmatrices}

In this section, we give exhaustive information about the behavior of the family of branch weights $(\mL(v))_{v \in \tree}$, which can also be seen as a branching random walk on the semigroup of allowable matrices. By studying its action on particular vectors $u \in \Sp$ (which includes the standard basis vectors), we obtain detailed information about its asymptotic behavior. In particular, Theorem \ref{thm:l1_conv_wf} might be of interest in its own right.
Some of the results presented here require quite involved proofs, which we postpone to the (self-contained!) Sections \ref{sect:proofs1} and \ref{sect:proofs2}.

We start with some additional notation concerned with $(\mT(v))_{v \in \tree}$. Here and below, $(\Omega, \B, \Prob)$ denotes a generic probability space, rich enough to carry all the random variables we define. If no specific $\sigma$-field is mentioned, measurability is always understood with respect to the Borel-$\sigma$-field.

\subsection{The Weighted Branching Model with Matrices}\label{subsect:branchingmodel}
Recall that $\tree = \bigcup_{n=0}^\infty \N^n $ denotes the infinite
Ulam-Harris tree with root $\emptyset$, that $\abs{v}$ denotes the generation of the node $v$, that $(\T(v))_{v \in \tree}$ is a family of i.i.d.~copies of $\T$ with corresponding filtration $\B_n := \sigma \left( \T(v), \abs{v} < n\right)$ and that we defined recursively $ \mL(vi)=\mL(v) \mT_i(v)$, with $\mL(\emptyset)=\Id$.

Furthermore, introduce shift operators
$\tshift{\cdot}{v}$, $v \in \tree$: Given any function $F=f(\Ttree)$ of the weight family $\Ttree=(\T(w))_{w \in \tree}$ pertaining to $\tree$, define
\begin{equation}\label{eq:tshift} \tshift{F}{v} :=
f\left((\T(vw))_{w \in \tree}\right)\end{equation} as the same function evaluated at the weight family $(\T(vw))_{w \in \tree}$ pertaining to the subtree rooted in $w$.
Note that the family $(\T(vw))_{w \in \tree}$ has the same distribution as $\Ttree$ and is independent of
$\B_{\abs{v}}$ as well as of all other weight families pertaining to subtrees rooted at
the same level.

Instances of such functions are e.g. the branch weights $\mL(w)=\matrix{l}(w)(\Ttree)$, we obtain
$$ \tshift{\mL(w)}{v}= \mT_{w_1}(v) \mT_{w_2}(vw_1) \cdots \mT_{w_m}(vw_1 \cdots w_{m-1})$$ if $w=w_1 \cdots w_m$, and in particular
$$ \mL(vw)=\mL(v)\tshift{\mL(w)}{v}$$
for any $v, w \in \tree$. 

\medskip
Define $ R_n :=\max_{\abs{v}=n} \norm{\mL(v)}.$ Then we have the following result, which in contrast to \cite[Lemma 7.2]{Liu1998} needs the stronger assumption that $m(s)<1$ for some $s > \alpha$, which follows from Assumption \eqref{A5} combined with $m'(\alpha)<0$.

\begin{lem}\label{Lemma Rn}
Assume \eqref{A0}--\eqref{A2} and \eqref{A5} and let $m'(\alpha)<0$. Then
$ \lim_{n \to \infty} R_n = 0 \quad \Prob\text{-a.s.}$
\end{lem}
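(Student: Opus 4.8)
The plan is to combine a first–moment estimate with the Borel--Cantelli lemma. The decisive ingredient is the exponent gap recorded in the text right after assumption \eqref{A5}: since $m$ is log-convex, hence continuous and differentiable on $\interior{I_\mu}$, and since $\alpha\in\interior{I_\mu}$ with $m(\alpha)=1$ and $m'(\alpha)<0$, one can select $s\in\interior{I_\mu}$ with $s>\alpha$ and $m(s)<1$. I would fix such an $s$ and then fix a real number $\rho$ with $m(s)<\rho<1$.

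Next I would turn this into an exponential decay of first moments. Because $m(s)=\limsup_{n\to\infty}\bigl(\E\sum_{\abs{v}=n}\norm{\mL(v)}^s\bigr)^{1/n}$, there is $n_0\in\N$ such that $\E\sum_{\abs{v}=n}\norm{\mL(v)}^s\le\rho^{\,n}$ for all $n\ge n_0$. The finiteness of these expectations in every generation is not an issue: the cocycle relation $\mL(vw)=\mL(v)\tshift{\mL(w)}{v}$, submultiplicativity of the operator norm and independence of the subtree weights make $n\mapsto\E\sum_{\abs{v}=n}\norm{\mL(v)}^s$ submultiplicative, so $m(s)<\infty$ already forces $\E\sum_{\abs{v}=1}\norm{\mL(v)}^s<\infty$ and hence finiteness in every generation.

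Then I would run the union bound. For $\epsilon>0$ and $n\ge n_0$, the event $\{R_n>\epsilon\}$ is contained in $\{\sum_{\abs{v}=n}\norm{\mL(v)}^s>\epsilon^{s}\}$ — indeed, if some $\norm{\mL(v)}$ with $\abs{v}=n$ exceeds $\epsilon$, that single summand already exceeds $\epsilon^{s}$ — so Markov's inequality gives
\[ \Prob(R_n>\epsilon)\;\le\;\epsilon^{-s}\,\E\sum_{\abs{v}=n}\norm{\mL(v)}^s\;\le\;\epsilon^{-s}\rho^{\,n}. \]
Summing over $n$ yields $\sum_n\Prob(R_n>\epsilon)<\infty$, so the Borel--Cantelli lemma gives $\limsup_{n\to\infty}R_n\le\epsilon$ $\Prob$-a.s.; intersecting over $\epsilon\in\{1/k:k\in\N\}$ then gives $\lim_{n\to\infty}R_n=0$ $\Prob$-a.s.

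I do not expect a genuine obstacle in this argument. The two points deserving a word of care are (i) the selection of the exponent $s>\alpha$ with $m(s)<1$, which is exactly the consequence of \eqref{A5} together with $m'(\alpha)<0$ noted above, and (ii) the finiteness of the moments $\E\sum_{\abs{v}=n}\norm{\mL(v)}^s$, which submultiplicativity settles; everything else is the standard first-moment/Borel--Cantelli routine.
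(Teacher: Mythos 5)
Your argument is correct, but it takes a genuinely different route from the paper. You work generation by generation: picking $s>\alpha$ with $m(s)<1$ (the same key consequence of \eqref{A5} and $m'(\alpha)<0$ that the paper uses), you read off from the limsup-definition of $m(s)$ that $a_n:=\E\sum_{\abs{v}=n}\norm{\mL(v)}^s\le\rho^n$ for all large $n$, then bound $\Prob(R_n>\epsilon)\le\epsilon^{-s}a_n$ by Markov and conclude with Borel--Cantelli. The paper instead chooses a block length $l$ with $\E\sum_{\abs{v}=l}\norm{\mL(v)}^s<1$, builds the block sums $Z_m$, shows $\rho(s)^{-m}Z_m$ is a nonnegative martingale, and deduces $Z_m\to 0$ from martingale convergence plus Fatou, finally patching in the generations not divisible by $l$ via the decomposition $\limsup_n R_n\le\sum_{k<l}\sum_{\abs{v}=k}\norm{\mL(v)}\limsup_m\tshift{R_{m,l}}{v}$. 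Your version is more elementary: it needs neither the block decomposition nor the martingale convergence theorem, only the first-moment bound, and it yields the same conclusion under the same hypotheses (the paper's martingale also delivers an a.s.\ geometric decay rate along blocks, which your Borel--Cantelli argument could recover as well by letting $\epsilon$ depend on $n$, though neither is needed here). One small caveat: your aside that submultiplicativity of $a_n$ plus $m(s)<\infty$ ``forces'' $a_1<\infty$ is backwards --- submultiplicativity bounds $a_n$ by powers of $a_1$, not the other way round --- but this is immaterial, since your Markov/Borel--Cantelli step only uses generations $n\ge n_0$, where $a_n\le\rho^n<\infty$ follows directly from $\limsup_n a_n^{1/n}=m(s)<\rho$.
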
 

\begin{proof}
See Section \ref{sect:proofs1}.
\end{proof}

Upon fixing $u \in \Sp$, define for each $v \in \tree$ the induced random variables
$$ U^u(v):= \mL(v)^\top \as u , \qquad S^u(v) := - \log
\abs{\mL(v)^\top u}.$$
Note that $S^u(v), U^u(v)$ are measurable w.r.t
$\B_{\abs{v}}$ and that $\mL(v)^\top u = e^{-S^u(v)} U^u(v)$.
Let $v \in \N^\No$ be an infinite branch. On the set $\{\mL(v|k) \neq 0 \, \forall \, k \in  \No\}$, the sequence $(U(v|k))_{k \in \No}$ constitutes an (inhomogeneous) Markov chain with state space $\Sp$, conditioned on which the increments of $(S^u(v|k))_{k \in \No}$ are independent. This is why we call $(U^u(v), S^u(v))_{v \in \tree}$  a {\em branching Markov random walk}. By Lemma \ref{Lemma Rn}, we have that $\lim_{\abs{v} \to \infty} S^u(v) = \infty$ a.s.

\subsection{A Many-to-one identity and a Martingale}

\begin{lem}
Assume \eqref{A0}--\eqref{A3} and \eqref{A5}. Then, with $\Prob_u^\alpha$ as in \eqref{eq:manytoone1}, it holds for all $u \in \Sp$, all $n \in \N$ and any  measurable function $f : \Sp \times \Mset^n \to \Rnn$, that
\begin{equation}\label{spinaltreeidentity}
\E_u^\alpha \left( f(U_0, \mM_1, \cdots, \mM_n) \right) ~=~ \frac{1}{H^\alpha(u)} \Erw{ \sum_{\abs{v}=n} H^\alpha(\mL(v)^\top u) \ f(u, \mL(v|1), \cdots, \mL(v)) }
\end{equation}
\end{lem}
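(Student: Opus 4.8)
The plan is to derive the many-to-one identity \eqref{spinaltreeidentity} directly from the definition \eqref{eq:manytoone1} of $\Prob_u^\alpha$ together with the eigenfunction property \eqref{eq:propHs} of $H^\alpha$, by induction on $n$. The key point is that $\Prob_u^\alpha$ was defined so that the law of $(\mM_1,\dots,\mM_n)$ under $\E_u^\alpha$ is, relative to the law of a fixed i.i.d.\ sequence with law $\mu$, weighted by the Radon--Nikodym factor $\frac{(\E N)^n}{m(\alpha)^n H^\alpha(u)}H^\alpha(\mM_n^\top\cdots\mM_1^\top u)$; one must transport this into a weighted sum over the tree $\tree$ at level $n$, where the appropriate weight turns out to be exactly $H^\alpha(\mL(v)^\top u)$ after normalisation.

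First I would recall \eqref{eq:propHs}, which reads $H^\alpha(x)=\frac{\E N}{m(\alpha)}\Erw{H^\alpha(\mM^\top x)}=\frac1{m(\alpha)}\Erw{\sum_{i=1}^N H^\alpha(\mT_i^\top x)}$ (here $m(\alpha)=1$, but keeping $m(\alpha)$ explicit makes the induction transparent), and note that this is precisely what makes $W_n(u)$ from \eqref{def:martingale} a martingale. For $n=1$ the right-hand side of \eqref{spinaltreeidentity} is $\frac1{H^\alpha(u)}\Erw{\sum_{i=1}^N H^\alpha(\mT_i^\top u)\,f(u,\mT_i)}$, and this equals $\E_u^\alpha(f(U_0,\mM_1))$ by \eqref{eq:manytoone1} with $n=1$ combined with the definition \eqref{eq:defmu} of $\mu$: the size-biasing by $N$ and the averaging $\frac1{\E N}\E\sum_{i=1}^N$ in \eqref{eq:defmu} cancel against the factor $\E N$ in \eqref{eq:manytoone1}. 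For the inductive step I would write a node $v$ at level $n+1$ as $v=wj$ with $|w|=n$ and $j\in\{1,\dots,N(w)\}$, use $\mL(wj)^\top u = \mT_j(w)^\top \mL(w)^\top u$, and split the expectation using the independence of $(\T(w'))_{|w'|=n}$ from $\B_n$; then condition on $\B_n$ and apply \eqref{eq:propHs} (with $x=\mL(w)^\top u$) to the inner sum over the children of each $w$. This reproduces, via the induction hypothesis, the one-step transition of the Markov random walk $(U_n,S_n)$ under $\Prob_u^\alpha$, giving \eqref{spinaltreeidentity} at level $n+1$.

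The one genuinely nontrivial point, which I would handle with care rather than gloss over, is the bookkeeping of the size-biasing: the factor $H^\alpha(\mM^\top x)$ in the change of measure \eqref{eq:manytoone1} already encodes the expected number of offspring through \eqref{eq:propHs}, so when unfolding to the tree one must be sure that $f$ is evaluated along the ``spine'' with the correct tilted law and that the remaining (non-spine) subtrees carry their unbiased law $\mu$ — this is the standard spine/many-to-one decomposition, but the presence of random $N$ and of the matrix cocycle $\mL(v)^\top$ rather than scalar weights means one should verify measurability of $S^u(v),U^u(v)$ w.r.t.\ $\B_{|v|}$ (already noted in the text) and that the sum $\sum_{|v|=n}$ has a.s.\ finitely many nonzero terms (which follows from \eqref{A1}). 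A clean alternative, which I would mention, is to avoid induction entirely: define the martingale change of measure on the tree by the density $D_n:=\frac1{H^\alpha(u)}\sum_{|v|=n}H^\alpha(\mL(v)^\top u)$ w.r.t.\ $(\B_n)$ — which is a positive $\B_n$-martingale by \eqref{eq:propHs} with expectation $1$ — and identify, via Kolmogorov's extension theorem, the resulting spine process with $(U_n,\mM_n)_{n}$ under $\Prob_u^\alpha$; then \eqref{spinaltreeidentity} is just the statement that both describe the same size-biased measure. Either route makes the identity a formal consequence of Proposition~\ref{prop:Ps} and the definitions, with no analytic input beyond the finiteness $\alpha\in I_\mu$ that guarantees $H^\alpha$ and $m(\alpha)$ are well defined.
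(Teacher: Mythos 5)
Your argument is correct and is essentially the paper's own proof, which consists precisely of combining the definition \eqref{eq:defmu} of $\mu$ with the definition \eqref{eq:manytoone1} of the shifted measure; your induction (and the alternative spine/change-of-measure remark) just makes that combination explicit. One small precision: in the inductive step the inner sum over the children of $w$ should be handled by \eqref{eq:defmu} applied to $\ma \mapsto H^\alpha(\ma^\top x)\,g(\ma)$ — of which \eqref{eq:propHs} is only the special case $g \equiv 1$ — exactly as you already do in the base case $n=1$.
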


\begin{proof} By combining the definition of $\mu$, \eqref{eq:defmu}, with the definition \eqref{eq:manytoone1} of the $\alpha$-shifted measure. \end{proof}

For the branching Markov random walk, we immediately infer the following:
\begin{cor}
Assume \eqref{A0}--\eqref{A3} and \eqref{A5}. Then for all $u \in \Sp$, $n \in \N$ and any nonnegative measurable function $f : (\Sp \times \R)^{n+1}$,
\begin{align} 
\label{eq:manytoone2} &~\frac{1}{H^\alpha(u)} \, \Erw{\sum_{\abs{v}=n} e^{-\alpha S^u(v)} H^\alpha(U^u(v)) \, f\Bigl((U^u(v|k), S^u(v|k))_{k \le n} \Bigr)} \\
=&~ \E_u^\alpha \left( \ f\Bigl((U_k, S_k)_{k \le n}) \right). 
\end{align} 
\end{cor}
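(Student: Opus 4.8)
The plan is to obtain \eqref{eq:manytoone2} as an immediate consequence of the many-to-one identity \eqref{spinaltreeidentity}, the only new inputs being the $\alpha$-homogeneity of $H^\alpha$ and a routine identification of path functionals.

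First I would record two elementary facts. (i) By \eqref{eq:defHs}, $H^\alpha$ extends to an $\alpha$-homogeneous function on $\Rdnn$, so that $H^\alpha(y) = \abs{y}^\alpha H^\alpha(\esl{y})$ for $y \neq 0$; applying this with $y = \mL(v)^\top u$ yields
\[ H^\alpha(\mL(v)^\top u) ~=~ e^{-\alpha S^u(v)}\, H^\alpha(U^u(v)), \]
and the branches $v$ with $\mL(v) = 0$ may be dropped, since they carry weight $H^\alpha(0) = 0$, while the remaining $\mL(v)$ are allowable by \eqref{A3}, so that $\mL(v)^\top u \neq 0$ and $U^u(v), S^u(v)$ are well-defined. (ii) From $\mL(v|k)^\top u = e^{-S^u(v|k)} U^u(v|k)$ one reads off that the whole branch path $\bigl((U^u(v|k), S^u(v|k))\bigr)_{k \le n}$ is a fixed nonnegative measurable function $\Psi$ of $u$ and of the successive weights along the branch, the $k=0$ coordinate being $(u, 0)$ because $\mL(\emptyset) = \Id$; and the very same $\Psi$, via $U_k = \esl{\mPi_k U_0}$, $S_k = -\log\abs{\mPi_k U_0}$ with $\mPi_k = \mM_k^\top \cdots \mM_1^\top$, maps $(U_0, \mM_1, \dots, \mM_n)$ to the associated Markov random walk path $\bigl((U_k, S_k)\bigr)_{k \le n}$ under $\Prob_u^\alpha$.

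Then I would apply \eqref{spinaltreeidentity} to the nonnegative measurable test function $f \circ \Psi$. By (ii) its left-hand side equals $\E_u^\alpha\bigl(f((U_k, S_k)_{k \le n})\bigr)$, and by (i) its right-hand side equals
\[ \frac{1}{H^\alpha(u)}\, \Erw{\sum_{\abs{v} = n} e^{-\alpha S^u(v)}\, H^\alpha(U^u(v))\, f\bigl((U^u(v|k), S^u(v|k))_{k \le n}\bigr)}, \]
which is precisely the left-hand side of \eqref{eq:manytoone2}. Since \eqref{spinaltreeidentity} is already stated for all nonnegative measurable $f$, no truncation or monotone-convergence step is needed. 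I do not anticipate any genuine obstacle here: the only points worth checking are the homogeneous extension of $H^\alpha$ (immediate from \eqref{eq:defHs} / Proposition \ref{prop:Ps}), the elimination of the zero-weight branches, and the matching of the index range $k \in \{0, \dots, n\}$ and of the root conventions on the two sides.
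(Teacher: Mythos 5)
Your proposal is correct and coincides with the paper's own (implicit) argument: the paper states the corollary as an immediate consequence of \eqref{spinaltreeidentity}, obtained exactly as you do by the $\alpha$-homogeneity identity $H^\alpha(\mL(v)^\top u)=e^{-\alpha S^u(v)}H^\alpha(U^u(v))$ together with the identification of the branch path $(U^u(v|k),S^u(v|k))_{k\le n}$ with the Markov random walk path $(U_k,S_k)_{k\le n}$, the zero-weight branches contributing nothing since $H^\alpha(0)=0$. No gap.
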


Recalling the definitions \eqref{eq:defHs} of $H^\alpha(u)$ and \eqref{def:martingale} of $W_n(u)$, we obtain that
$$ W_n(u) ~=~ \sum_{\abs{v}=n} H^\alpha(\mL(v)^\top u) ~=~ \sum_{\abs{v}=n-1} \sum_{i \ge 1} H^\alpha(\mT(v)_i^\top \mL(v)^\top u), $$
and consequently, using identity \eqref{eq:propHs} for $H^\alpha$, that
$$ \E[ W_n(u) \, | \, \B_{n-1}] ~=~ \sum_{\abs{v}=n-1} \E \left[ \left. \sum_{i \ge 1} H^\alpha(\mT_i^\top \mL(v)^\top u) \, \right| \B_{n-1} \right] ~=~ W_{n-1}(u) \quad \Pfs,$$
i.e. $W_n(u)$ is a nonnegative $\Prob$-martingale for each $u \in \Sp$ which, thus it has a limit $W(u)$.

Mean convergence of
this martingale (\emph{the} intrinsic martingale in multitype branching random walk) is studied in \cite{Biggins2004} (see also \cite{Athreya2000,Jagers1989,Kyprianou2001a,Olofsson2009}). We obtain the following result:

\begin{prop}\label{thm:mean_convergence_Wn}
Assume that \eqref{A0}--\eqref{A3}, \eqref{A5} and \eqref{A6} hold, and that $m'(\alpha) <0$. In \eqref{A5}, we may assume $\alpha \in (0,\infty)$. 
Then for all $u \in \Sp$, $W_n(u)$ converges in mean to $W(u)$, i.e.
\begin{equation}
\E W(u)=W_0(u)=\int_{\Sp} \skalar{u,y}^\alpha  \nus[\alpha](dy) = H^\alpha(u) >0.
\end{equation}
Furthermore, it holds that $W(u)=w(u, (\mL(v))_{v \in \tree})$ $\Pfs$ for a measurable function $w : \Sp \times \Mset^\tree \to [0,\infty)$.
\end{prop}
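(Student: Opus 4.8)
The plan is to obtain the $L^1$-convergence from the standard criterion for the intrinsic martingale of a multitype branching random walk (\cite{Biggins2004}, and in the spine formulation also \cite{Kyprianou2001a}); the two other assertions are then immediate. Fix $u \in \Sp$. We already know that $W_n(u) = \sum_{\abs{v}=n} H^\alpha(\mL(v)^\top u) \ge 0$ is a $\Prob$-martingale, hence converges $\Prob$-a.s.\ to a limit $W(u)$ with $\E W(u) \le W_0(u) = H^\alpha(\mL(\emptyset)^\top u) = H^\alpha(u) = \int_{\Sp} \skalar{u,y}^\alpha \, \nus[\alpha](dy)$, which is strictly positive by Proposition \ref{prop:Ps}. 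The measurability claim needs nothing extra: $W(u) = \limsup_n W_n(u)$ is a Borel function of the family $(\mL(v))_{v \in \tree}$, so one may set $w(u,\cdot) := \limsup_n W_n(u)$. It therefore suffices to prove that $(W_n(u))_{n \ge 0}$ is uniformly integrable, equivalently that $\E W(u) = W_0(u)$; the stated identity $\E W(u) = W_0(u) = H^\alpha(u) > 0$ and the $L^1$-convergence then follow at once.

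For uniform integrability I would use the size-biased (spine) construction attached to $W_n(u)$: on an enlargement of $(\Omega,\B,\Prob)$ one builds a probability measure $\widehat{\Prob}_u$ carrying a distinguished ray $\xi = (\xi_n)_{n \ge 0}$ of $\tree$ such that $\widehat{\Prob}_u$ restricted to $\B_n$ has $\Prob$-density $W_n(u)/W_0(u)$, the direction-and-height process $(U^u(\xi_n), S^u(\xi_n))_{n \ge 0}$ of the spine has under $\widehat{\Prob}_u$ the law of the associated Markov random walk $(U_n, S_n)_{n \ge 0}$ of \eqref{eq:manytoone1}, the offspring and weights at the spine vertices are size-biased by $\sum_i H^\alpha(\mT_i^\top \cdot)$ as dictated by \eqref{eq:propHs}, and, conditionally on the spine and its marks, the bushes hanging off the spine are independent ordinary copies of the weighted branching model. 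The standard measure-theoretic dichotomy (\cite{Biggins2004}) then reduces uniform integrability of $(W_n(u))$ to showing that $W(u) < \infty$ holds $\widehat{\Prob}_u$-a.s. Decomposing $W_n(u)$ over the bushes branching off the spine and taking conditional expectation given $\mathcal{G}_\infty := \sigma(\text{spine and its marks})$ — using the $\alpha$-homogeneity of $H^\alpha$ and the fact that the intrinsic martingale of each bush has conditional mean equal to its root value — yields
\[ \widehat{\E}_u\bigl[W_n(u) \mid \mathcal{G}_\infty\bigr] = e^{-\alpha S^u(\xi_n)}H^\alpha(U^u(\xi_n)) + \sum_{k=1}^{n} e^{-\alpha S^u(\xi_{k-1})}\sum_{i:\,\xi_{k-1}i\neq\xi_k} H^\alpha\bigl(\mT_i(\xi_{k-1})^\top U^u(\xi_{k-1})\bigr). \]
The first (spine) term tends to $0$ $\widehat{\Prob}_u$-a.s., because the spine has the $\Prob_u^\alpha$-law, so $S^u(\xi_n)/n \to -m'(\alpha) > 0$ by Proposition \ref{prop:SLLN}, while $H^\alpha$ is bounded away from $0$ and $\infty$ on the compact set $\Sp$.

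It remains to control the series. The factors $e^{-\alpha S^u(\xi_{k-1})}$ decay geometrically along the spine, so the sum converges $\widehat{\Prob}_u$-a.s.\ provided $k^{-1}\log^+\bigl(\sum_i H^\alpha(\mT_i(\xi_{k-1})^\top U^u(\xi_{k-1}))\bigr) \to 0$ $\widehat{\Prob}_u$-a.s.; by a Borel--Cantelli argument along the Markov chain $(U^u(\xi_{k-1}))_k$ this holds once $\widehat{\E}_u\log^+\bigl(\sum_i H^\alpha(\mT_i(\xi_0)^\top U^u(\xi_0))\bigr) < \infty$. Unbiasing the spine, this is precisely the $X\log X$-type requirement $\E\bigl[W_1(u)\log^+ W_1(u)\bigr] < \infty$ for the one-step reproduction, with $W_1(u) = \sum_{i=1}^N H^\alpha(\mT_i^\top u)$; using the two-sided bound $c\,\iota(\mM^\top)^\alpha \le H^\alpha(\mM^\top u) \le C\,\norm{\mM}^\alpha$ (uniform in $u$ since $H^\alpha$ is bounded above and below on $\Sp$) and rewriting sums over children via the definition \eqref{eq:defmu} of $\mu$, one reduces this to $\E N < \infty$ together with $\E \norm{\mM}^\alpha \log(1+\norm{\mM}) < \infty$, i.e.\ to \eqref{A2} and \eqref{A6}. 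Hence $W(u) < \infty$ $\widehat{\Prob}_u$-a.s., $(W_n(u))$ is uniformly integrable, and $\E W(u) = \lim_n \E W_n(u) = W_0(u) = H^\alpha(u) > 0$, with the representation $W(u) = w(u,(\mL(v))_{v \in \tree})$ already noted; the same argument applies verbatim for general $\alpha \in (0,\infty)$ allowed in \eqref{A5}. I expect the main obstacle to be exactly this last step — proving the $X\log X$ bound for $W_1(u)$ (uniformly over $u \in \Sp$) from the matrix moment assumption \eqref{A6}, where the interplay of the multiplicity factor $\log^+ N$, the two-sided comparison of $H^\alpha(\mM^\top u)$ with $\iota(\mM^\top)^\alpha$ and $\norm{\mM}^\alpha$, and the offspring bookkeeping of \eqref{eq:defmu} has to be handled carefully; alternatively one simply invokes the corresponding hypothesis of \cite{Biggins2004} and verifies it in exactly this manner.
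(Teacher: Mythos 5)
Your proposal is correct and takes essentially the same route as the paper: the paper also rests on the Biggins--Kyprianou size-biasing machinery, but instead of redoing the spine decomposition it invokes \cite[Theorem 1.1(i)]{Biggins2004} (stated as Proposition \ref{prop:Biggins}) and verifies its two hypotheses --- the renewal-count bound $A(r)\lesssim 1+\log(1+r)$, obtained from the SLLN of Proposition \ref{prop:SLLN} exactly as you use it for the geometric decay along the spine, and the moment condition $\E\, Z\log(1+Z)<\infty$ with $Z=C\sum_{i=1}^N\norm{\mT_i^\top}^\alpha$, which is precisely the $X\log X$ requirement you isolate and which the paper likewise attributes to \eqref{A6} (together with \eqref{A2}). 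The measurability assertion is handled in the paper by a countable dense subset and a sandwich argument, whereas your definition $w(u,\cdot):=\limsup_n W_n(u)$ is an equally valid, slightly more direct shortcut.
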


\begin{proof}
By combining the SLLN \ref{prop:SLLN} with \cite[Theorem 1.1 (i)]{Biggins2004}, see Section \ref{sect:proofs1} for details. The positivity of $H^\alpha$ is an assertion of Proposition \ref{prop:Ps}.
\end{proof}

Applying Lemma \ref{Lemma Rn}, we obtain the following very useful Corollary:

\begin{cor}\label{cor:convWnF}
Let the assumptions of Proposition \ref{thm:mean_convergence_Wn} hold. If a function $F : \Rdnn \to \Rnn$ satisfies $\lim_{r \to 0} \sup_{u \in \Sp} \abs{F(ru) - \gamma}=0$ for some $\gamma \ge 0$, then
\begin{equation} \lim_{n \to \infty} \sum_{\abs{v}=n} H^\alpha(\mL(v)^\top u) F(\mL(v)^\top u)  =  \gamma \, W(u) \qquad \Pfs
  \end{equation}\label{mm3}
\end{cor}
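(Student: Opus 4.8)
The plan is to compare the generation-$n$ sum with the intrinsic martingale $W_n(u) = \sum_{\abs{v}=n} H^\alpha(\mL(v)^\top u)$, using that along every branch the weights shrink to zero uniformly. For each $v$ with $\mL(v) \neq 0$ write $\mL(v)^\top u = r_v\, U^u(v)$ with $r_v := \abs{\mL(v)^\top u} \in (0, \norm{\mL(v)}]$ and $U^u(v) = \mL(v)^\top \as u \in \Sp$; since $H^\alpha$ is $\alpha$-homogeneous and nonnegative, $H^\alpha(\mL(v)^\top u) = r_v^\alpha H^\alpha(U^u(v)) \ge 0$, and $\sum_{\abs{v}=n} H^\alpha(\mL(v)^\top u) = W_n(u)$, the terms with $\mL(v)=0$ contributing $0$.

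Fix $\epsilon > 0$. By the hypothesis on $F$ there is $\delta > 0$ with $\sup_{w \in \Sp} \abs{F(rw) - \gamma} \le \epsilon$ for all $0 < r < \delta$. By Lemma \ref{Lemma Rn}, $R_n = \max_{\abs{v}=n}\norm{\mL(v)} \to 0$ $\Prob$-a.s., so a.s. there is $N = N(\epsilon)$ such that $r_v \le \norm{\mL(v)} \le R_n < \delta$ for all $\abs{v}=n$ whenever $n \ge N$. Consequently, for $n \ge N$,
$$\Bigl|\sum_{\abs{v}=n} H^\alpha(\mL(v)^\top u)\,F(\mL(v)^\top u) - \gamma\,W_n(u)\Bigr| \le \sum_{\abs{v}=n} H^\alpha(\mL(v)^\top u)\,\bigl|F(r_v U^u(v)) - \gamma\bigr| \le \epsilon\,W_n(u),$$
where again the terms with $\mL(v)=0$ are absent because $H^\alpha(0)=0$.

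Now I would invoke Proposition \ref{thm:mean_convergence_Wn}: $W_n(u)$ converges to $W(u)$ both a.s.\ and in $L^1$, so in particular $W(u)$ is a.s.\ finite with $\E W(u) = H^\alpha(u) < \infty$. Letting $n \to \infty$ in the previous display yields, a.s.,
$$\limsup_{n\to\infty}\Bigl|\sum_{\abs{v}=n} H^\alpha(\mL(v)^\top u)\,F(\mL(v)^\top u) - \gamma\,W(u)\Bigr| \le \epsilon\,W(u).$$
Since $\epsilon > 0$ is arbitrary and $W(u)<\infty$ a.s., the left-hand side vanishes a.s.; taking the union of the (countably many) null sets obtained for $\epsilon = 1/k$, $k \in \N$, together with the null sets from Lemma \ref{Lemma Rn} and Proposition \ref{thm:mean_convergence_Wn}, gives the claimed a.s.\ convergence.

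There is no genuinely hard step here: the whole argument is a uniform-continuity estimate glued to two results already established. The one substantive input is the uniform decay $R_n \to 0$ a.s.\ from Lemma \ref{Lemma Rn}, which is exactly what turns the directionally-uniform convergence $F(rw)\to\gamma$ into control over the entire $n$-th generation; the $L^1$-convergence of $W_n(u)$ is needed only to know that the limit $W(u)$ is finite so that the error term $\epsilon W(u)$ can be made arbitrarily small.
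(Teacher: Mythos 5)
Your argument is correct and is exactly the one the paper intends: the corollary is stated as a direct consequence of Lemma \ref{Lemma Rn} together with the a.s.\ convergence of the nonnegative martingale $W_n(u)$, via the same decomposition $\sum_{\abs{v}=n} H^\alpha(\mL(v)^\top u)F(\mL(v)^\top u) = \gamma W_n(u) + \sum_{\abs{v}=n} H^\alpha(\mL(v)^\top u)\bigl(F(\mL(v)^\top u)-\gamma\bigr)$ that the paper spells out for the analogous Lemma \ref{lem:disintegration}(2). Your only extra remark — invoking $L^1$-convergence to get finiteness of $W(u)$ — is harmless overkill, since a.s.\ finiteness of the limit of a nonnegative martingale is automatic.
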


\subsection{Stopping Lines and the Martingale}\label{subsect:stopping}

Let $\tau = \tau((\matrix{m}_k)_{k \in \N})$ be a stopping time for a sequence of matrices, of the
form $$ \tau((\matrix{m}_k)_{k \in \N}) = \inf \left\{ n \ge 0 \ : \
(\matrix{m}_k)_{k=1}^n \in A_n \right\} $$
for some sets $A_n$.
This gives rise to the \emph{homogeneous
stopping line (HSL)} $\sline[\tau]$ for a matricial branching process by the definition
\begin{equation}
\sline[\tau] := \left\{ \,  v|\tau\bigl(
\bigl(\mT_{v_k}^\top(v|k-1)\bigr)_{k \in \N}\bigr) \ : \ v \in \{1, \dots,
N\}^\N \right\}
\end{equation}
The pre-$\sline$ $\sigma$-algebra $\B_{\sline}$ associated with the
stopping line $\sline$ is defined  as $$ \B_{\sline} := \sigma\left(
 (\T(v))_{ v \text{ has no ancestor in } \sline} \right). $$
A HSL $\sline$ is called \emph{anticipating}, if $\{v \in \sline\} \in \B_{\sline}$ for all $v \in \tree$. It is called a.s.
\emph{dissecting}, if $\max\{ \abs{v} \ : \ v \in \sline \}$ is finite a.s. (see \cite[Section 7]{ABM2012}).

 The many-to-one identity remains valid under the application of a dissecting HSL:
 
 \begin{lem}\label{lem:dissectingHSL}
Assume \eqref{A0}--\eqref{A3} and \eqref{A5}. Then for all $u \in \Sp$,  any a.s. dissecting HSL defined as above, any bounded
measurable $f$,
\begin{equation}
\frac{1}{H^\alpha(u)} \E \left( \sum_{v \in \sline[\tau]}
H^\alpha(\mL(v)^\top u) f(\mL(v|1), \cdots, \mL(v))\,  \right) = \E_u^\alpha
f(\mM_1, \cdots, \mM_{\tau}).
\end{equation}
\end{lem}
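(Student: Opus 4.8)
The plan is to reduce the dissecting stopping line $\sline[\tau]$ to its generation-truncations, apply the spinal many-to-one identity \eqref{spinaltreeidentity} to each truncation, and then pass to the limit; the only genuine difficulty is showing that no martingale mass is lost at the advancing front of the truncations. By splitting $f$ into positive and negative parts it suffices to treat $0\le f\le c$. For $n\in\N$ put $\tau^{(n)}:=\tau\wedge n$ and let $\sline[n]$ denote the homogeneous stopping line $\sline[\tau^{(n)}]$, whose nodes all lie in generations $\le n$; split $\sline[n]$ into the \emph{stopped} nodes $\{v\in\sline[n]:|v|<n\}=\{v\in\sline[\tau]:|v|<n\}$ and the \emph{active} nodes $\{v\in\sline[n]:|v|=n\}$. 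Since $\sline[\tau]$ is a.s.\ dissecting — so that, a.s., $\sline[\tau]$ is finite and meets every infinite branch of the tree — the stopped part increases to $\sline[\tau]$ and the active part is empty for all large $n$.

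For fixed $n$, establish a bounded-depth version of the identity by grouping the sum over $\sline[n]$ according to the generation of the node: for $0\le k<n$ a generation-$k$ node $v$ lies in $\sline[n]$ exactly when the stopping rule fires at step $k$ along the path from the root to $v$, and for $k=n$ exactly when it has not fired by then; each of these events is measurable with respect to the weights attached to the ancestors of $v$ and can therefore be absorbed into the test function. Applying \eqref{spinaltreeidentity} at each generation $k\le n$ and summing gives
\[
\frac{1}{H^\alpha(u)}\,\E\!\left(\sum_{v\in\sline[n]}H^\alpha(\mL(v)^\top u)\,f(\mL(v|1),\dots,\mL(v))\right)=\E_u^\alpha f(\mM_1,\dots,\mM_{\tau\wedge n}),
\]
and in particular, taking $f\equiv1$, $\E\sum_{v\in\sline[n]}H^\alpha(\mL(v)^\top u)=H^\alpha(u)$ for every $n$.

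Now let $n\to\infty$. For $f\ge0$ the stopped-node contribution increases monotonically to $\sum_{v\in\sline[\tau]}H^\alpha(\mL(v)^\top u)f(\mL(v|1),\dots,\mL(v))$, so its expectation converges by monotone convergence (the $f\equiv1$ case bounds the limit by $cH^\alpha(u)<\infty$). The active-node contribution is dominated by $c\,B_n$, where $B_n:=\sum_{v\in\sline[n],\,|v|=n}H^\alpha(\mL(v)^\top u)\le W_n(u)$; since by Proposition \ref{thm:mean_convergence_Wn} the intrinsic martingale $W_n(u)$ converges to $W(u)$ in $L^1$, the families $(W_n(u))_n$ and hence $(B_n)_n$ are uniformly integrable, and because $B_n\to0$ a.s.\ this forces $B_n\to0$ in $L^1$. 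Hence the left-hand side above converges to $H^\alpha(u)^{-1}\E\sum_{v\in\sline[\tau]}H^\alpha(\mL(v)^\top u)f(\mL(v|1),\dots,\mL(v))$. On the right, $\tau$ is $\Prob_u^\alpha$-a.s.\ finite — $\sline[\tau]$ being dissecting and the spine change of measure $\Prob_u^\alpha$ being absolutely continuous on $\B_\infty$ with density $W(u)/H^\alpha(u)$ (again Proposition \ref{thm:mean_convergence_Wn}) — so $f(\mM_1,\dots,\mM_{\tau\wedge n})\to f(\mM_1,\dots,\mM_\tau)$ $\Prob_u^\alpha$-a.s., whence $\E_u^\alpha f(\mM_1,\dots,\mM_{\tau\wedge n})\to\E_u^\alpha f(\mM_1,\dots,\mM_\tau)$ by bounded convergence. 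Equating the two limits proves the lemma for $f\ge0$; linearity gives the general case.

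The step I expect to be the main obstacle is exactly this passage to the limit, i.e.\ excluding the possibility that a positive fraction of the martingale mass escapes through the active nodes as the truncation level grows. This is precisely where the mean — indeed $L^1$ — convergence of the intrinsic martingale $W_n(u)$, which itself rests on the SLLN of Proposition \ref{prop:SLLN} and the moment condition \eqref{A6}, is indispensable: it simultaneously supplies the uniform integrability that annihilates the active-node term and, through the resulting absolute continuity of the spine measure, the $\Prob_u^\alpha$-a.s.\ finiteness of $\tau$ required on the spine side.
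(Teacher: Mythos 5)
Your core idea (absorb the stopping rule into the test function and apply the many-to-one identity generation by generation) is the paper's idea as well, but the way you aggregate — truncating at level $n$ and passing to the limit — creates a step that is not available under the lemma's hypotheses. To annihilate the active front you invoke uniform integrability of $W_n(u)$ via Proposition \ref{thm:mean_convergence_Wn}; that proposition needs \eqref{A6} and $m'(\alpha)<0$, whereas the lemma assumes only \eqref{A0}--\eqref{A3} and \eqref{A5}, which allow the boundary case $m'(\alpha)=0$, where $W_n(u)$ is in general \emph{not} uniformly integrable (it tends to $0$ a.s.\ while $\E W_n(u)=H^\alpha(u)$). So the claim $\E B_n\to 0$ genuinely fails within the stated hypotheses, and your proof only establishes the lemma under strictly stronger assumptions. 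The auxiliary justification on the right-hand side is also off: the statement that $\Prob_u^\alpha$ is absolutely continuous ``on $\B_\infty$ with density $W(u)/H^\alpha(u)$'' is the size-biased-tree assertion, it again presupposes mean convergence of the martingale, and $\tau$ is a functional of the spine sequence $(\mM_n)$ rather than a $\B_\infty$-measurable functional of the weight family, so it does not yield $\Prob_u^\alpha(\tau<\infty)=1$ in the way you use it. (Indeed, by your own truncated identity with the indicator of the active nodes, $\E B_n = H^\alpha(u)\,\Prob_u^\alpha(\tau>n)$, so what you are trying to prove is exactly $\Prob_u^\alpha(\tau<\infty)=1$, which is not automatic from the $\Prob$-a.s.\ dissecting property and, in the paper's later applications, is obtained from the SLLN of Proposition \ref{prop:SLLN}, not from uniform integrability.)

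The paper's proof avoids the limit altogether: a node $v$ with $\abs{v}=n$ belongs to $\sline[\tau]$ precisely when $\{\tau=n\}$ occurs along its ancestral path, and this event is a measurable function of the first $n$ weights on that path; absorbing $\1[\{\tau=n\}]$ into the test function, applying \eqref{spinaltreeidentity} at level $n$, and summing over $n\in\No$ (Tonelli for $f\ge 0$, differences for bounded $f$) gives the identity directly, with no truncation, no interchange of limits and no control of the front — the two sides are equated term by term over the disjoint events $\{\tau=n\}$, and whatever happens on $\{\tau=\infty\}$ never enters. If you want to keep your scheme, you must either add \eqref{A6} and $m'(\alpha)<0$ to the hypotheses or replace the uniform-integrability step by this exact summation; as written, the limit step is a genuine gap.
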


\begin{proof}
Just by summing the many-to-one identity \eqref{spinaltreeidentity} over $n \in
\No$ when considering the sets $\{\tau =n \}$.
\end{proof}

Subsequently, we will focus on a particular class of HSL, namely 
$$ \slineu[t] := \{ v \in \tree \ :  S^u(v)> t, \ S^u(v|k) \le t \
\forall k < \abs{v}\}, $$
 for arbitrary $t \in \Rp$ and $u \in \Sp$. Note that these stopping lines 
 are dissecting by Lemma \ref{Lemma Rn} and anticipating as well, since $S^u(v)$
 only depends on the initial state $u$ and $(T(v|k))_{k < \abs{v}}$. Moreover, $\B_{\slineu[t]}$ is a filtration with 
 $$\B_\infty = \lim_{t \to \infty} \sigma( (\B_{\slineu[s]}, s \le t)) =
 \sigma((\T(v), v \in \tree)),$$ see the proof of \cite[Lemma
 8.7]{ABM2012} for details.
 
 The first part of the following lemma is then a direct consequence of 
Lemma \ref{lem:dissectingHSL}, applied with $f \equiv 1$.
\begin{lem}\label{prop:martingale_stopping_lines}
For each $u \in \Sp$, the family (indexed by $t \in \Rnn$)
$$ W_{\slineu[t]}(u) := \sum_{v \in \slineu[t]} \int_{\Sp} 
\skalar{\mL(v)^\top u, y}^\alpha \, \nus[\alpha](dy) = \sum_{v \in
\slineu[t]} H^\alpha(\mL(v)^\top u)$$ is a $\Prob$-martingale with
respect to the filtration $\B_{\slineu[t]}$.

Subject to $m'(\alpha)<0$, it holds that
$$ W_{\slineu[t]}(u) = \E\left( W(u) | \B_{\slineu[t]} \right) \quad \Pfs,$$
and consequently, $\Pfs$ and in $L^1(\Prob)$,
$$ \lim_{t \to \infty} W_{\slineu[t]}(u) = W(u).$$
\end{lem}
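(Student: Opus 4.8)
The plan is to prove the three assertions in turn, drawing on the many-to-one identity for dissecting HSL (Lemma~\ref{lem:dissectingHSL}), the mean convergence $W_n(u)\to W(u)$ with $\Erw{W(u)}=H^\alpha(u)$ (Proposition~\ref{thm:mean_convergence_Wn}) and the uniform decay $R_n\to0$ (Lemma~\ref{Lemma Rn}). The martingale property is the routine part: $W_{\slineu[t]}(u)$ is $\B_{\slineu[t]}$-measurable since $\slineu[t]$ is anticipating and, for $v\in\slineu[t]$, the matrix $\mL(v)^\top u$ depends only on the weights strictly above the line; it is integrable with $\Erw{W_{\slineu[t]}(u)}=H^\alpha(u)$ by Lemma~\ref{lem:dissectingHSL} applied with $f\equiv1$; and the identity $\Erw{W_{\slineu[t]}(u)\mid\B_{\slineu[s]}}=W_{\slineu[s]}(u)$ for $s\le t$ is obtained by decomposing $\slineu[t]$ over the nodes of $\slineu[s]$ --- each $v\in\slineu[t]$ has a unique ancestor $w\in\slineu[s]$ --- using the $\alpha$-homogeneity of $H^\alpha$ and Lemma~\ref{lem:dissectingHSL} once more inside the subtree rooted at $w$. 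This is the transcription to our setting of \cite[Lemma~8.7]{ABM2012}. From now on I assume $m'(\alpha)<0$.

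The heart of the argument will be the \emph{cut-line identity}
\begin{equation}\label{eq:cutlinedecomp}
W(u) ~=~ \sum_{v\in\slineu[t]} e^{-\alpha S^u(v)}\,\tshift{W(U^u(v))}{v}\qquad\Pfs,
\end{equation}
where $\tshift{W(U^u(v))}{v}$ denotes the martingale limit of Proposition~\ref{thm:mean_convergence_Wn} formed in the subtree rooted at $v$ with initial direction $U^u(v)$. To prove \eqref{eq:cutlinedecomp} I would first invoke Lemma~\ref{Lemma Rn}: since $\sup_{\abs{v}=n}\abs{\mL(v)^\top u}\le R_n\to0$, one has $\min_{\abs{v}=n}S^u(v)\to\infty$, hence $\slineu[t]$ is dissecting with $M_t:=\max\{\abs{v}:v\in\slineu[t]\}<\infty$ $\Prob$-a.s., and for every $n\ge M_t$ each generation-$n$ node has a unique ancestor in $\slineu[t]$. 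For such $n$, $\alpha$-homogeneity of $H^\alpha$ gives the \emph{finite} decomposition $W_n(u)=\sum_{v\in\slineu[t]}e^{-\alpha S^u(v)}\tshift{W_{n-\abs{v}}(U^u(v))}{v}$; letting $n\to\infty$ and applying Proposition~\ref{thm:mean_convergence_Wn} in each subtree --- $U^u(v)$ being measurable with respect to the weights above $v$ and thus independent of that subtree --- yields \eqref{eq:cutlinedecomp}, the exceptional null set being a countable union over $v\in\tree$.

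Granting \eqref{eq:cutlinedecomp}, the second assertion follows upon conditioning on $\B_{\slineu[t]}$: since $\slineu[t]$ is anticipating, the index set and the factors $e^{-\alpha S^u(v)}$ and $U^u(v)$, $v\in\slineu[t]$, are $\B_{\slineu[t]}$-measurable, while the subtrees hanging from the nodes of $\slineu[t]$ are, conditionally on $\B_{\slineu[t]}$, i.i.d.\ with the generic law (strong branching property at the dissecting anticipating HSL, cf.\ \cite[Section~7]{ABM2012}); all summands being nonnegative, the conditional expectation may be taken term by term, and using $\Erw{\tshift{W(x)}{v}}=\Erw{W(x)}=H^\alpha(x)$ together with $\alpha$-homogeneity,
\begin{equation*}
\Erw{W(u)\mid\B_{\slineu[t]}} ~=~ \sum_{v\in\slineu[t]} e^{-\alpha S^u(v)}H^\alpha(U^u(v)) ~=~ \sum_{v\in\slineu[t]} H^\alpha(\mL(v)^\top u) ~=~ W_{\slineu[t]}(u).
\end{equation*}
Finally, $(W_{\slineu[t]}(u))_{t\ge0}=(\Erw{W(u)\mid\B_{\slineu[t]}})_{t\ge0}$ is then a uniformly integrable martingale closed by $W(u)\in L^1(\Prob)$; since $W(u)$ is measurable with respect to $\B_\infty=\sigma((\T(v))_{v\in\tree})$, the martingale convergence theorem gives $W_{\slineu[t]}(u)\to W(u)$ both $\Prob$-a.s.\ and in $L^1(\Prob)$ as $t\to\infty$, which is the third assertion.

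I expect the only real difficulty to be the cut-line identity \eqref{eq:cutlinedecomp}: it rests on $\slineu[t]$ being $\Prob$-a.s.\ dissecting --- exactly where the hypothesis $m'(\alpha)<0$ enters, via Lemma~\ref{Lemma Rn} --- and on justifying the passage $n\to\infty$ in the generation-$n$ decomposition with the \emph{random} initial directions $U^u(v)$, i.e.\ on the strong branching property at the dissecting anticipating stopping line. The rest reduces to bookkeeping with the $\alpha$-homogeneity of $H^\alpha$ and the many-to-one identity \eqref{spinaltreeidentity}.
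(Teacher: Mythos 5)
Your argument is correct, but its middle step is organized differently from the paper's. You first establish the pathwise cut-line identity $W(u)=\sum_{v\in\slineu[t]}e^{-\alpha S^u(v)}\,\tshift{W(U^u(v))}{v}$ and then apply $\E[\,\cdot\,\mid\B_{\slineu[t]}]$ once, invoking the branching property at the anticipating, dissecting line. The paper instead conditions the finite-generation martingale: following Biggins--Kyprianou it splits $W_m(u)$ into the contributions of nodes whose ancestral path has already crossed the line by generation $m$ and of those that have not, computes $\E[W_m(u)\mid\B_{\slineu[t]}]$ term by term (harmonicity of $H^\alpha$ handles the subtrees beyond the line, the remaining part being $\B_{\slineu[t]}$-measurable), and then lets $m\to\infty$, using the mean convergence $W_m(u)\to W(u)$ on the left-hand side and the dissecting property together with domination by the uniformly integrable $2W_m(u)$ on the right-hand side. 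Both routes arrive at $W_{\slineu[t]}(u)=\E[W(u)\mid\B_{\slineu[t]}]$ and finish identically via L\'evy's upward theorem. What your route buys is a transparent a.s.\ decomposition of the limit along the line (the same device the paper uses later, in Lemma \ref{lem:wtf_br}, for $W^f_{\slineu[t]}$); what it costs is that the passage $n\to\infty$ in the generation-$n$ decomposition involves the random initial directions $U^u(v)$, so you need either the measurable limit $w$ and the simultaneous-in-$u$ a.s.\ convergence established in (the proof of) Proposition \ref{thm:mean_convergence_Wn}, or a conditional Fubini argument over the countably many $v\in\tree$ --- you do gesture at exactly this, and it goes through --- whereas the paper's order of operations (condition first, then let $m\to\infty$) avoids random initial directions altogether at the price of justifying the interchange of limit and conditional expectation. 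Your treatment of the martingale property itself (decomposing $\slineu[t]$ over the first crossing of level $s\le t$ and applying Lemma \ref{lem:dissectingHSL} in each subtree) is sound and matches the structure the paper uses implicitly.
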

 
 Here and in what follows, $\Pfs$-convergence for $t \to \infty$ means that for
every sequence $t_n \to \infty$, there is a set of full measure, on which the
convergence takes place. This will be enough for our purposes.

\begin{proof}[Proof of Lemma \ref{prop:martingale_stopping_lines}]
See Section \ref{sect:proofs1}.
\end{proof}


\subsection{Restricted Versions of $W_n$}\label{subsect:restrictedW}

If $m'(\alpha) <0$, then  the MRW $(U_n, S_n)_{n \in \No}$ is transient  under $\Prob_u^\alpha$ with
$\lim_{n \to \infty} S_n = \infty$ a.s.~by Proposition \ref{prop:SLLN}.  Thus  $\tau_t:= \inf\{n \, : \, S_n
>t\}$ is a.s. finite, and one can define a semi-Markov process by
$$ U(t) := U_{\tau_t}, \quad R(t):= S_{\tau_t} - t$$
for all $t \in \Rnn$. Noting that $\Prob_u^\alpha$-a.s., $\tau_t = \inf \{ n \, : \, - \log \abs{\mM_n^\top \cdots \mM_1^\top u} > t\}$, we see that $\tau_t$ corresponds to the stopping line $\slineu[t]$, and Lemma
\ref{lem:dissectingHSL} yields the following very helpful identity:

\begin{cor}\label{cor:spinaltree}
For all $t \in \Rnn$, all bounded measurable $f$
\begin{equation}
\frac{1}{H^\alpha(u)} \E \left( \sum_{v \in \slineu[t]} H^\alpha(\mL(v)^\top u)
f(U^u(v), S^u(v)-t)\,  \right) = \E_u^\alpha
f(U(t), R(t)). \label{eq:spinaltreejump}
\end{equation} 
\end{cor}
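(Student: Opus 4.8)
The plan is to read \eqref{eq:spinaltreejump} directly off the many-to-one identity for dissecting homogeneous stopping lines, Lemma \ref{lem:dissectingHSL}, specialized to the stopping line $\slineu[t]$. First I would record (as already indicated in the text preceding the corollary) that $\slineu[t]$ is precisely the homogeneous stopping line associated with the first-passage time $\tau_t := \inf\{n : S_n > t\}$ of the associated Markov random walk: rewriting $\{v \in \slineu[t]\} = \{S^u(v) > t,\ S^u(v|k) \le t\ \forall\, k < |v|\}$ through the partial products $\mL(v|k)^\top$ exhibits $\tau_t$ in the hitting-time form $\inf\{n \ge 0 : (\matrix{m}_k)_{k=1}^n \in A_n\}$ with $A_n = \{(\matrix{m}_1, \dots, \matrix{m}_n) : |\matrix{m}_n \cdots \matrix{m}_1 u| < e^{-t}\}$, which is exactly the shape required in the definition of an HSL.

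Next I would verify the two hypotheses of Lemma \ref{lem:dissectingHSL}. The line $\slineu[t]$ is anticipating because $\{v \in \slineu[t]\}$ depends only on the weights $(\T(v|k))_{k < |v|}$ along the branch to $v$, hence is $\B_{\slineu[t]}$-measurable. It is a.s.\ dissecting because $|\mL(v)^\top u| \le \norm{\mL(v)} \le R_{|v|}$ forces $S^u(v) \ge -\log R_{|v|}$ uniformly over $|v| = n$, and $R_n \to 0$ $\Prob$-a.s.\ by Lemma \ref{Lemma Rn}, so that $\max\{|v| : v \in \slineu[t]\} \le \inf\{n : -\log R_n > t\} < \infty$ a.s.; this, together with the finiteness $\tau_t < \infty$ $\Prob_u^\alpha$-a.s.\ needed below, is where the standing hypothesis $m'(\alpha) < 0$ enters (via Lemma \ref{Lemma Rn} and the SLLN, Proposition \ref{prop:SLLN}).

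Finally I would apply Lemma \ref{lem:dissectingHSL} to the bounded measurable function of the branch-weight path that factors through its last partial product $\mL(v)$, namely $(\matrix{a}_1, \dots, \matrix{a}_n) \mapsto f(\matrix{a}_n^\top \as u,\ -\log|\matrix{a}_n^\top u| - t)$. On the tree side this turns the summand into $H^\alpha(\mL(v)^\top u)\, f(U^u(v), S^u(v) - t)$, the left-hand side of \eqref{eq:spinaltreejump}; on the walk side, under the many-to-one correspondence $\mL(v)^\top \leftrightarrow \mPi_{|v|}$, evaluation at $|v| = \tau_t$ produces $f(U_{\tau_t}, S_{\tau_t} - t) = f(U(t), R(t))$ by the very definitions of $U(t)$ and $R(t)$. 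Equivalently one may simply sum the Markov-random-walk many-to-one identity \eqref{eq:manytoone2} over $n$ against the events $\{\tau_t = n\}$. There is no genuine obstacle here: all the substance sits in Lemma \ref{lem:dissectingHSL} and Lemma \ref{Lemma Rn}, and the only care required is the transpose bookkeeping matching $\mL(v)^\top$ with the walk product $\mPi_n$ together with the two measurability/finiteness checks above.
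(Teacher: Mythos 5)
Your proposal is correct and follows essentially the same route as the paper: the corollary is obtained there, too, by observing that under $\Prob_u^\alpha$ the first-passage time $\tau_t$ corresponds to the stopping line $\slineu[t]$ (dissecting by Lemma \ref{Lemma Rn}, anticipating since $S^u(v)$ depends only on $u$ and $(\T(v|k))_{k<\abs{v}}$) and then applying the many-to-one identity of Lemma \ref{lem:dissectingHSL} to a function evaluated at the terminal weight. Your additional checks (hitting-time form of the HSL, transpose bookkeeping $\mL(v)^\top \leftrightarrow \mPi_{\abs{v}}$) are exactly the details the paper leaves implicit.
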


It is very remarkable, that the renewal theorem of \cite{Kesten1974} applies to $(U(t), R(t))$, which then gives a nice description of the asymptotic composition of the matricial branching process:

\begin{thm}\label{thm:kestenforW}
Assume that \eqref{A0}--\eqref{A6} hold and that $m'(\alpha) <0$.
Then there is a probability measure $\rho$ on $\Sp \times \R$, with $\rho(\interior{\Sp} \times \R)=1$, such that for all $u \in \Sp$ and all $f \in \Cbf{\Sp \times \R}$,
$$ \lim_{t \to \infty}  \frac{1}{H^\alpha(u)} \E \left( \sum_{v \in \slineu[t]} H^\alpha(\mL(v)^\top u)
f(U^u(v), S^u(v)-t)\,  \right) ~=~ \int f(y,s) \, \rho(dy, ds).$$
The result remains valid, if $f$ is a nonnegative radial function, or $f(y,s)=g(y)h(s)$ for a continuous function $g: \Sp \to \Rnn$ and a bounded measurable function $h: \R \to \Rnn$.
\end{thm}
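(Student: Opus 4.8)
The plan is to transfer the statement, via the many-to-one identity of Corollary~\ref{cor:spinaltree}, into an assertion about the overshoot of the Markov random walk $(U_n,S_n)_{n\in\No}$, and then to invoke the Markov renewal theorem of \cite{Kesten1974}. Since $m'(\alpha)<0$, Proposition~\ref{prop:SLLN} gives $S_n\to\infty$ $\Prob_u^\alpha$-a.s., so $\tau_t:=\inf\{n:S_n>t\}$ is a.s.\ finite and the overshoot process $(U(t),R(t))=(U_{\tau_t},S_{\tau_t}-t)$ is well defined. Corollary~\ref{cor:spinaltree} then reads
\begin{equation*}
\frac{1}{H^\alpha(u)}\,\E\Bigl(\sum_{v\in\slineu[t]}H^\alpha(\mL(v)^\top u)\,f(U^u(v),S^u(v)-t)\Bigr)~=~\E_u^\alpha\,f\bigl(U(t),R(t)\bigr),
\end{equation*}
so everything reduces to showing that $\E_u^\alpha f(U(t),R(t))$ converges, as $t\to\infty$, to a $u$-independent limit of the form $\int f\,d\rho$ with $\rho$ a probability measure on $\Sp\times\R$.

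This is precisely the content of Kesten's renewal theorem for the Markov random walk $(U_n,S_n)$ under $\Prob_u^\alpha$, and all of its hypotheses are available here. Under \eqref{A3} the driving chain $(U_n)$ is (uniformly) Harris recurrent with a unique stationary probability measure $\pi^\alpha$, which is carried by $\interior{\Sp}$ because $[\supp\mu]$ contains a positive matrix whose action pushes $\Sp$ into the open set $\interior{\Sp}$; \eqref{A4} yields the non-arithmeticity (spread-out) condition; \eqref{A6} provides the moment condition on the increments $S_1=-\log\abs{\mM_1^\top U_0}$; and \eqref{A5} together with $m'(\alpha)<0$ makes the stationary drift $\int_{\Sp}\E_y^\alpha(S_1)\,\pi^\alpha(dy)=-m'(\alpha)$ strictly positive (these verifications are carried out in \cite[Proposition~7.2]{BDGM2014} and Proposition~\ref{prop:SLLN}). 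Kesten's theorem then produces the limiting law $\rho$ of $(U(t),R(t))$, which does not depend on $u$; that $\rho$ is a probability measure follows at once by taking $f\equiv1$ (the left-hand side above is then identically $1$), and $\rho(\interior{\Sp}\times\R)=1$ since the first marginal of $\rho$ is carried by $\supp\pi^\alpha\subseteq\interior{\Sp}$.

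It remains to remove the continuity requirement on $f$ in the last assertion of the statement. For a nonnegative radial $f$ I would choose $f_k\in\Cbf{\Sp\times\R}$ with $0\le f_k\uparrow f$; since the branching sum and $\rho(f_k)$ are both nondecreasing in $k$, monotone convergence and the case already treated give $\liminf_{t\to\infty}(\text{LHS})\ge\sup_k\rho(f_k)=\rho(f)$, and when $f$ is bounded the matching upper bound follows by a symmetric approximation from above (if $f$ is unbounded then $\rho(f)=\infty$ and the lower bound suffices). For $f(y,s)=g(y)h(s)$ with $g$ continuous and $h$ bounded measurable, one uses that $\rho$ has an absolutely continuous $\R$-marginal (a consequence of the non-arithmeticity \eqref{A4}), so $h$ may be sandwiched between continuous functions coinciding with it Lebesgue-a.e., and the continuous case combined with dominated convergence closes the argument. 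The main obstacle is not any single deep step — Kesten's theorem is quoted — but the careful verification that its hypotheses hold in exactly the form needed for $(U_n,S_n)$ under $\Prob_u^\alpha$, together with the passage, through these approximation arguments, from $\Cbf$-test functions to the radial and product test functions that are used later in the paper.
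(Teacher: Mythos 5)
Your proposal is correct and follows essentially the same route as the paper: reduce via the many-to-one identity of Corollary \ref{cor:spinaltree} to the overshoot $(U(t),R(t))$ under $\Prob_u^\alpha$, verify Kesten's hypotheses through \cite[Proposition 7.2]{BDGM2014} together with \eqref{A4}--\eqref{A6} and $m'(\alpha)<0$, and upgrade from $\Cbf{\Sp\times\R}$ to radial and product test functions using the absolute continuity of the $\R$-marginal of $\rho$ coming from Kesten's explicit formula. The only (harmless) divergence is in proving $\rho(\interior{\Sp}\times\R)=1$: you route it through the stationary law of the driving chain, whereas the paper argues directly from the stationarity of $\rho$ for $(U(t),R(t))$ and the $\Prob_u^\alpha$-a.s.\ finite time after which $\mPi_n$ is positive, so that $U_n\in\interior{\Sp}$ eventually --- both arguments rest on part (2) of condition $\condC$, and the paper's version avoids having to know that the support of the stationary measure itself lies in $\interior{\Sp}$.
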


\begin{proof} It is proved in \cite[Proposition 7.2]{BDGM2014}, that the Markov random walk $(U_n, S_n)_{n \ge 0}$ under the measure $\Prob_u^\alpha$, defined in terms of a measure $\mu$ which satisfies $\condC$ and $$\int \norm{\ma^\top} \left( \abs{\log \norm{\ma^\top}} + \abs{\log \iota(\ma^\top)} \right) \, \mu(d\ma) < \infty $$ and the aperiodicity assumption \eqref{A4},  fulfills  the conditions I.1 - I.4 on \cite[page 359]{Kesten1974}. Assumptions \eqref{A4}--\eqref{A6} warrant these properties for our measure $\mu$. Thus, after an application of the many-to-one identity \eqref{eq:spinaltreejump}, we can use  \cite[Theorem 1.1]{Kesten1974}, which gives
$$ \lim_{t \to \infty} \E_u^\alpha f(U(t), R(t)) ~=~ \int f(y,s) \, \rho(dy,ds)$$ for a probability measure $\rho$ on $\Sp \times \R$, to infer the asserted convergence.

The convergence result can be rephrased as $ \Q_u^\alpha(U(t), R(t) \in \cdot) \to \rho$ weakly. The measure $\rho$  is sometimes referred
 to as the stationary Markov delay distribution. 
It follows from the expression for $\rho$ given in \cite[Theorem 1.1]{Kesten1974}, that $\rho(\Sp \times \cdot)$ is absolutely continuous, while $\rho(\cdot \times \R)$ may have atoms. Thus the weak convergence implies convergence of
 $$ \E_u^\alpha f(R(t)) \to \int f(s) \rho(\Sp \times ds)$$ for all bounded measurable radial functions, as well as for functions $f(u,s) =g(u) h(s)$, where $g: \Sp \to \R$ is bounded continuous, and $h: \R \to \R$ is bounded measurable. 

The weak convergence implies in particular that $\rho$ is a stationary measure for $(U(t), R(t))$. By part (2) of $\condC$, the stopping time $T :=\{ \inf_{n \in \in} \, : \, \mPi_n \text{ is positive }\}$ is finite $\Prob_u^\alpha$-a.s., and $\mPi_{T+k}$ is positive for all $k \ge 0$. This implies that $U_{T+k} \in \interior{\Sp}$ for any initial vector $U_0=u$ and all $k \ge 0$, hence $\rho$ as a stationary measure satisfies $\rho(\interior{\Sp} \times \R)=1$. 
\end{proof}

The main result of this section is that the above convergence in mean also holds in probability: Define 
for a nonnegative measurable function $f$,
$$ W^f_{\slineu[t]}(u) ~:=~ \sum_{v \in
\slineu[t]} H^\alpha (\mL(v)^\top u) f(U^u(v),S^u(v)-t).$$

Such random variables are particular cases of so called $\chi$-counted
populations, appearing in the study of general branching processes, see
\cite{Jagers1975,Jagers1989,Nerman1981,Olofsson2009}. Our approach uses ideas from
\cite{Jagers1989,Cohn1994}, but takes advantage by using  the renewal theorem as an ergodic theorem for the $(U(t),R(t))$, rather than using the potential of $S_n$ as in the previous works.
We are going to prove the following result:

\begin{thm}\label{thm:l1_conv_wf}
Under the assumptions of Theorem \ref{thm:kestenforW}, it holds for all $u \in \Sp$ and all $f \in \Cbf{\Sp \times \R}$, that
$$ \lim_{t \to \infty} W_{\slineu[t]}^f(u) =  \left( \int f(y,s) \, \rho(dy, ds) \right) W(u)$$
in $\Prob$-probability and in $L^1(\Prob)$.
The result remains valid, if $f$ is a nonnegative radial function, or $f(y,s)=g(y)h(s)$ for a continuous function $g: \Sp \to \Rnn$ and a bounded measurable function $h: \R \to \Rnn$.
\end{thm}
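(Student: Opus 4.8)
The plan is to bootstrap the mean convergence of Theorem~\ref{thm:kestenforW} into an $L^p$ (hence in-probability and $L^1$) statement, using a two-level spine decomposition in the spirit of \cite{Jagers1989,Cohn1994}. Put $C_f:=\int f\,d\rho$. Since $W_{\slineu[t]}(u)\to W(u)$ a.s.\ and in $L^1$ by Lemma~\ref{prop:martingale_stopping_lines}, it suffices to show $W^g_{\slineu[t]}(u)\to 0$ for $g:=f-C_f$, which satisfies $\int g\,d\rho=0$ and lies in the same admissible class as $f$ (linearity of the limit in Theorem~\ref{thm:kestenforW} takes care of the additive constant). We prove convergence in $L^p(\Prob)$ for some $p\in(1,2]$ with $m(p\alpha)<1$; such $p$ exists precisely because $m'(\alpha)<0$ forces $m(\alpha+\epsilon)<1$ for some $\epsilon>0$ via \eqref{A5}. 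Two facts are recorded first. (i) By Lemma~\ref{lem:dissectingHSL}/Corollary~\ref{cor:spinaltree}, $\E W^g_{\slineu[r]}(x)=H^\alpha(x)\,\E^\alpha_x g(U(r),R(r))$ for all $x\in\Sp$, $r\ge0$, and $\E^\alpha_x g(U(r),R(r))\to\int g\,d\rho=0$ as $r\to\infty$ by Theorem~\ref{thm:kestenforW}. (ii) There is a uniform bound $\sup_{x\in\Sp}\E W(x)^p<\infty$, a Biggins-type $L^p$-convergence estimate for the intrinsic martingale available because $m(p\alpha)<1$; combined with $|W^g_{\slineu[r]}(x)|\le\|g\|_\infty W_{\slineu[r]}(x)=\|g\|_\infty\,\E(W(x)\mid\B_{\slineu[r]})$ and conditional Jensen, this gives $C:=\sup_{r\ge0}\sup_{x\in\Sp}\E|W^g_{\slineu[r]}(x)|^p<\infty$.

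Fix $s>0$. The stopping line $\slineu[s]$ is dissecting (Lemma~\ref{Lemma Rn}) and anticipating, so the subtree weight families $(\T(wz))_{z\in\tree}$, $w\in\slineu[s]$, are i.i.d.\ copies of $\Ttree$ and independent of $\B_{\slineu[s]}$. Grouping each $v\in\slineu[t]$ by its unique ancestor $w\in\slineu[s]$ and using $\alpha$-homogeneity of $H^\alpha$ and $\mL(wv')^\top u=e^{-S^u(w)}\tshift{\mL(v')}{w}^{\top}U^u(w)$, one obtains, on the event $\{t>\max_{w\in\slineu[s]}S^u(w)\}$ (which has full measure in the limit $t\to\infty$, the maximum being a.s.\ finite),
$$ W^g_{\slineu[t]}(u)\;=\;\sum_{w\in\slineu[s]}e^{-\alpha S^u(w)}\,\tshift{W^g_{\slineu[t-S^u(w)]}(U^u(w))}{w}. $$
Conditionally on $\B_{\slineu[s]}$ the summands are independent; splitting each as its conditional mean plus a centered remainder, $\tshift{W^g_{\slineu[r]}(x)}{w}=H^\alpha(x)\E^\alpha_x g(U(r),R(r))+\tilde Z_w$ with $r=t-S^u(w)$, $x=U^u(w)$, we write $W^g_{\slineu[t]}(u)=A_t+B_t$, where $A_t=\sum_{w\in\slineu[s]}H^\alpha(\mL(w)^\top u)\,\E^\alpha_{U^u(w)}g(U(t-S^u(w)),R(t-S^u(w)))$ and $B_t$ is the corresponding sum of the $e^{-\alpha S^u(w)}\tilde Z_w$.

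Now estimate both pieces for fixed $s$. As $\slineu[s]$ is a.s.\ finite with $\sum_{w\in\slineu[s]}H^\alpha(\mL(w)^\top u)=W_{\slineu[s]}(u)$ and each $t-S^u(w)\to\infty$, fact~(i) gives $A_t\to0$ a.s.\ (each of the finitely many summands does), hence, being dominated by $\|g\|_\infty W_{\slineu[s]}(u)\in L^p$, also in $L^p$. For $B_t$, the von Bahr--Esseen inequality applied conditionally on $\B_{\slineu[s]}$ (valid for $1\le p\le 2$) together with $\E|\tilde Z_w|^p\le 2^p C$ from fact~(ii) yields $\E[|B_t|^p\mid\B_{\slineu[s]}]\le 2^{p+1}C\sum_{w\in\slineu[s]}e^{-p\alpha S^u(w)}$; taking expectations and applying Lemma~\ref{lem:dissectingHSL} to the dissecting HSL $\slineu[s]$ with $y\mapsto H^{p\alpha}(y)/H^\alpha(y)$ (bounded on $\Sp$ since $p\alpha\in I_\mu$), using $e^{-p\alpha S^u(v)}\asymp H^{p\alpha}(\mL(v)^\top u)$ and $S_{\tau_s}>s$, one gets $\E\sum_{w\in\slineu[s]}e^{-p\alpha S^u(w)}\le c\,H^\alpha(u)\,e^{-(p-1)\alpha s}$, so $\E|B_t|^p\le C'e^{-(p-1)\alpha s}$ uniformly in $t$. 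The overshoot contribution obeys $\E[|W^g_{\slineu[t]}(u)|^p\mathbf 1_{\{t\le\max_w S^u(w)\}}]\le\|g\|_\infty^p\E[W(u)^p\mathbf 1_{\{t\le\max_w S^u(w)\}}]\to0$ by conditional Jensen and dominated convergence. Combining, $\limsup_{t\to\infty}\E|W^g_{\slineu[t]}(u)|^p\le C'e^{-(p-1)\alpha s}$ for every $s>0$; letting $s\to\infty$ gives $W^g_{\slineu[t]}(u)\to0$ in $L^p$, and the three admissible function classes enter only through Theorem~\ref{thm:kestenforW}.

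The main obstacle is fact~(ii): one cannot run a plain second-moment computation because $W(u)$ generally has infinite variance, so the argument must be carried out with an exponent $p\in(1,2)$ for which $m(p\alpha)<1$ --- exactly the regime opened by $m'(\alpha)<0$ --- and this requires establishing the uniform-in-$x\in\Sp$ $L^p$-boundedness of the intrinsic martingale limit $W(x)$. That estimate, rather than the decomposition itself, is where the real work lies.
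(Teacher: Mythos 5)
Your decomposition over a fixed line $\slineu[s]$, the conditional centering, and the von Bahr--Esseen estimate are all sound in shape, but the argument hinges on your ``fact (ii)'', and that is where it breaks: under the standing assumptions of Theorem \ref{thm:kestenforW} the bound $\sup_{x\in\Sp}\E\, W(x)^p<\infty$ for some $p>1$ is simply not available. The condition $m(p\alpha)<1$ (which indeed follows from $m'(\alpha)<0$ and \eqref{A5} for $p$ close to $1$) is only half of the Biggins-type $L^p$ criterion; the other half is a $p$-th moment condition on the first generation, $\E\, W_1(u)^p=\E\bigl(\sum_{i\le N}H^\alpha(\mT_i^\top u)\bigr)^p<\infty$, which is \emph{not} implied by \eqref{A2}--\eqref{A8}. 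For instance, if all $\mT_i$ equal a fixed scalar multiple of the identity, then $W_1(u)$ is proportional to $N$, and the paper only assumes $1<\E N<\infty$, so $\E N^p$ (hence $\E W(u)^p$) may be infinite; more generally \eqref{A6} is an $L\log L$-type condition and $W(u)$ need not possess any moment of order $p>1$. Since your control of $B_t$ (through $\E|\tilde Z_w|^p$), your domination of $A_t$ in $L^p$, and your treatment of the overshoot event all invoke this $L^p$ bound, the proof does not go through as written. If you were willing to \emph{add} the hypothesis $\E\bigl(\sum_{i\le N}\norm{\mT_i}^\alpha\bigr)^p<\infty$ for some $p\in(1,2]$ with $m(p\alpha)<1$, your scheme would give a clean (and genuinely different) proof, but then it proves a weaker theorem than the one stated.

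The paper's proof is designed precisely to avoid moments beyond $L^1$: it decomposes at the moving inner line $\slineu[t/2]$ (Lemma \ref{lem:wtf_br}), shows that the conditional means converge a.s.\ to $\gamma W(u)$ by combining the a.s.\ convergence of $W_{\slineu[t/2]}(u)$ with a \emph{uniform-in-$u$} version of the Markov renewal limit $F(u,t)\to\gamma$ (Proposition \ref{prop:uniform_conv_mrt}, via Melfi's uniform form of Kesten's theorem --- uniformity is needed there because the starting points $U^u(v)$ along the growing line are not fixed), establishes uniform integrability of the normalized summands $\xi_t(u)$ (Lemma \ref{lem:ui}, Corollary \ref{cor:ui}, using $W(u)\le\sum_{i=1}^d W(u_i)$ and conditional Jensen), and then applies the Cohn--Jagers weak law of large numbers for triangular arrays of conditionally independent, mean-one, uniformly integrable variables to get convergence in $\Prob$-probability; $L^1$-convergence then follows from the uniform integrability of $W^f_{\slineu[t]}(u)$. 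In short: replace your $L^p$/von Bahr--Esseen control of the fluctuation term by a uniform-integrability-based triangular-array argument, or supply the missing moment hypothesis explicitly --- as it stands, the key estimate is assumed rather than proved, and it is false in the generality of the theorem.
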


\begin{proof}
The proof consists of several steps and is given in Section \ref{sect:proofs2}.
\end{proof}

\section{Existence of Fixed Points}\label{sect:existence}

In this section, we prove the {\em existence part} of Theorem \ref{thm:main thm}, i.e. we show that every random variable with a Laplace transforms given by \eqref{eq:fphom} or \eqref{eq:fpinhom} is a solution of the homogeneous reps. inhomogeneous equation.

Therefore, we introduce first the weighted branching process, which allows us to describe iterations of $\STi$ and its action on Laplace transforms. Then we will prove the following main result.

\begin{thm}\label{thm:existence hom}
Assume \eqref{A0}--\eqref{A3}, \eqref{A5}--\eqref{A6} and $m'(\alpha) <0$. Then for all $K > 0$, the function $\LTa_0(ru):= \exp(- K r^\alpha H^\alpha(u))$, $(u,r) \in \Sp \times \Rnn$, is the LT of a multivariate $\alpha$-stable law on $\Rdnn$ with spherical measure $\nus[\alpha]$. 
\begin{enumerate}
\item $ \LTfp_0(ru) ~:=~ \lim_{n \to \infty} \STh^n \LTa_0(ru) ~=~ \E \exp(-K r^\alpha W(u))$
is a nontrivial fixed point of \eqref{eq:SFPE} with $Q \equiv 0$. 
\item Assuming in addition that $\alpha <1$ and \eqref{A8} holds, then $W^*$ as defined in Eq. \eqref{eq:defWstar} is finite a.s., and
$$ \LTfp(ru) ~:=~ \lim_{n \to \infty} \STi^n \LTa_0(ru) ~=~ \E \exp(-K r^\alpha W(u) - r \skalar{u,W^*})$$
is a nontrivial fixed point of \eqref{eq:SFPE}.
\end{enumerate}
\end{thm}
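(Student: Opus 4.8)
The plan is to read the iterates of $\STh$ and $\STi$ off the weighted branching model and then pass to the limit using the martingale convergence established in Section~\ref{sect:BRWmatrices}. First, the stable law: combining the representation \eqref{eq:defHs} with the $\alpha$-homogeneity of $H^\alpha$ rewrites $\LTa_0(ru) = \exp(-\int_{\Sp}\skalar{ru,y}^\alpha K\nus[\alpha](dy))$, which for $\alpha \in (0,1]$ and the finite measure $K\nus[\alpha]$ on $\Sp \subset \Rdnn$ is exactly the Laplace transform of a multivariate strictly $\alpha$-stable law on $\Rdnn$ with spherical measure $K\nus[\alpha]$ (see \cite{ST1994}); call it $\eta_0$. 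In particular $\LTa_0$ is the Laplace transform of a probability measure on $\Rdnn$, and since $\STh$ and $\STi$ both map $\Pset(\Rdnn)$ into itself (the relevant sums have nonnegative entries and $N < \infty$ a.s.), every $\STh^n\LTa_0$ and $\STi^n\LTa_0$ is again the Laplace transform of a probability measure on $\Rdnn$.

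Next I would identify the iterates. Iterating the smoothing transform corresponds to the weighted branching process: $\STi^n\eta = \law{\sum_{\abs v = n} \mL(v) X(v) + W_n^*}$, where $W_n^* = \sum_{k=0}^{n-1}\sum_{\abs v = k}\mL(v)Q(v)$ and the $X(v)$ are i.i.d.\ with law $\eta$, independent of the weights. Taking $\eta = \eta_0$, conditioning on the weights and on $(Q(v))_{\abs v < n}$, and using that $\prod_{\abs v = n}\LTa_0(\mL(v)^\top (ru)) = \exp(-Kr^\alpha \sum_{\abs v = n} H^\alpha(\mL(v)^\top u)) = \exp(-Kr^\alpha W_n(u))$, one gets the key identity
\[ \STi^n\LTa_0(ru) = \E \exp(-r\skalar{u,W_n^*} - Kr^\alpha W_n(u)), \]
whose homogeneous specialization ($Q \equiv 0$, hence $W_n^* \equiv 0$) reads $\STh^n\LTa_0(ru) = \E\exp(-Kr^\alpha W_n(u))$.

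Here is where $\alpha < 1$ enters, via the a.s.\ finiteness of $W^*$. By $m'(\alpha) < 0$ and the remark following \eqref{A5} one may choose $s \in (\alpha, 1)$ with $m(s) < 1$, and shrink $s$ so that also $\E\abs{Q}^s < \infty$ (possible by \eqref{A8}). The $s$-subadditivity of $t \mapsto t^s$ together with the independence of $Q(v)$ from $\mL(v)$ then gives $\E\abs{W^*}^s \le \E\abs{Q}^s \sum_{n \ge 0} \E\sum_{\abs v = n}\norm{\mL(v)}^s < \infty$, since $m(s) < 1$ makes the inner expectations decay geometrically; hence $W^*$ is finite a.s.\ and $W_n^* \to W^*$ a.s. Combined with the a.s.\ convergence $W_n(u) \to W(u)$ of the nonnegative martingale and the fact that all integrands above lie in $[0,1]$, dominated convergence yields
\[ \LTfp(ru) := \lim_{n\to\infty}\STi^n\LTa_0(ru) = \E\exp(-r\skalar{u,W^*} - Kr^\alpha W(u)) \]
and, likewise, $\LTfp_0(ru) = \lim_n \STh^n\LTa_0(ru) = \E\exp(-Kr^\alpha W(u))$.

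It remains to see that $\LTfp$ and $\LTfp_0$ are Laplace transforms of probability measures on $\Rdnn$ and are fixed points. For the first point it is enough to check continuity of the limit at the origin along the single ray $t \mapsto t\deins$: the exponent $-t\skalar{\deins,W^*} - Kt^\alpha\abs{\deins}^\alpha W(\eins)$ increases to $0$ as $t \downarrow 0$ (as $W^*$ and $W(\eins)$ are a.s.\ finite), so $\LTfp(t\deins) \uparrow 1$ by monotone convergence; feeding this into the one-dimensional Laplace-transform continuity theorem applied to $\skalar{\deins,\cdot}$, whose sub-level sets $\{z \in \Rdnn : \skalar{\deins,z} \le M\}$ are compact, gives tightness of $(\STi^n\eta_0)_n$ and hence that $\LTfp$ is a Laplace transform. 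For the fixed point property, note that $\STh$ and $\STi$ are sequentially continuous for pointwise convergence on the set of Laplace transforms: if $\phi_k \to \phi$ pointwise with $0 \le \phi_k \le 1$, then $\E(e^{-\skalar{x,Q}}\prod_{i=1}^N \phi_k(\mT_i^\top x)) \to \E(e^{-\skalar{x,Q}}\prod_{i=1}^N \phi(\mT_i^\top x))$ by dominated convergence, the product being finite since $N < \infty$ a.s.; applying this to $\phi_k = \STi^k\LTa_0$ gives $\STi\LTfp = \lim_k \STi^{k+1}\LTa_0 = \LTfp$, i.e.\ $\LTfp$ is a fixed point of \eqref{eq:SFPE}, and analogously $\STh\LTfp_0 = \LTfp_0$. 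Nontriviality is immediate: for $r > 0$ and $K > 0$ the exponent is strictly negative with positive probability, since $\Prob(W(u) > 0) > 0$ follows from $\E W(u) = H^\alpha(u) > 0$ (Proposition~\ref{thm:mean_convergence_Wn}). The only genuinely delicate points in all this are the choice of $s$ for the finiteness of $W^*$ — exactly where $\alpha < 1$ is indispensable, since $m(s) < 1$ forces $s > \alpha$ while subadditivity needs $s \le 1$ — and the remark that continuity of the limit transform at the origin along one ray suffices to invoke the continuity theorem; everything else is routine weighted-branching bookkeeping with dominated and monotone convergence.
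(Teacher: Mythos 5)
Your proposal is correct and follows essentially the same route as the paper: identify $\STi^n\LTa_0(ru)=\E\exp(-r\skalar{u,W_n^*}-Kr^\alpha W_n(u))$ via the weighted branching process, pass to the limit using the a.s. convergence of $W_n^*$ (via $s\in(\alpha,1)$ with $m(s)<1$, $\E\abs{Q}^s<\infty$) and of the martingale $W_n(u)$ together with bounded convergence, and conclude the fixed-point property from continuity of $\STi$ under pointwise convergence of Laplace transforms, with nontriviality from $\E W(u)=H^\alpha(u)>0$. The only deviation is cosmetic: where the paper invokes the multivariate continuity theorem of Stadtm\"uller--Trautner to see that the limit is again a Laplace transform, you give an equivalent elementary tightness argument via the one-dimensional continuity theorem along the ray $t\deins$, which is fine since $\{z\in\Rdnn : \skalar{\deins,z}\le M\}$ is compact.
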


\subsection{Weighted Branching Process}

The best way to describe iterations of $\STi$ is via the weighted branching process. Given a random variable $Y \in \Rdnn$, let $(Y(v))_{v \in \tree}$ be a family of i.i.d.~copies of $Y$, which are independent of $(\T(v))_{v \in \tree}$. Then the sequence 
\begin{equation}
\label{def:WBP} Y_n := \sum_{\abs{v}=n} \mL(v) Y(v) + \sum_{\abs{w}<n}
\mL(w)Q(w),
\end{equation}
$n \in \No$, is called the \emph{weighted branching process} (WBP) associated {\red with
$Y$ and $\T$}.
It  can easily be shown then that 
\begin{equation}\label{eq:stwbp}
\law{Y_n}=\STi^n \law{Y}
\end{equation}
and moreover, that $Y_n$ satisfies the identity
$$ Y_n ~=~ \sum_{i=1}^N \mT_i(\emptyset) \tshift{Y_{n-1}}{i} + Q(\emptyset),$$
and $(\tshift{Y_{n-1}}{i})_{i \ge 1}$ is a sequence of i.i.d.~copies of $Y_{n-1}$ and independent of $(Q, (\mT_i)_{i \ge 1})$. 
Thus, if $Y_n$ converges a.s.~to a random variable $Y^*$, then this $Y^*$ is a fixed point of $\STi$. 

Observe that, subject to the assumption \eqref{A0}--\eqref{A2}, \eqref{A8} and \eqref{A5} with $m'(\alpha)<0$ and $\alpha <1$, there is $s \in (\alpha,1)$, such that $m(s)<1$ and $\E \abs{Q}^s < \infty$. Referring to Eq. \eqref{eq:Wstar}, $W^*$ then is finite  a.s.~ and is the limit of the WBP associated with $0$ and $\T$. Moreover, if $Y$ is any random variable in $\Rdnn$ with a finite moment of order $\alpha+\epsilon$ for some $\epsilon>0$, then the associated WBP converges to a.s.~to $W^*$ as well, this follows from moment calculations as in \cite[Section 3]{Mirek2013}. Thus we have the following result:

 \begin{lem}\label{lem:conv_Wn}
Assume \eqref{A0}--\eqref{A2}, \eqref{A5} with $m'(\alpha)<0$ and $\alpha <1$ and \eqref{A8}. Then the series
$$ W^*_n := \sum_{\abs{v} < n} \mL(v) Q(v)$$
converges a.s. to a random variable $W^*$, and $\law{W^*}$ is the unique FP of $\STi$ with a finite
moment of order $\alpha +\epsilon$, for any $\epsilon >0$.
\end{lem}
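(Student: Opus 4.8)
The plan is to establish the three claims in sequence: almost sure convergence of $W^*_n$, the fact that the limit $W^*$ is a fixed point of $\STi$, and uniqueness of $\law{W^*}$ among fixed points with a finite moment of order $\alpha+\epsilon$. All of this rests on one preliminary observation together with standard $L^s$-moment estimates for $s\le 1$.

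First I would fix an auxiliary exponent. Since $\alpha\in\interior{I_\mu}$ with $m(\alpha)=1$ and $m'(\alpha)<0$, and $m$ is continuous on $\interior{I_\mu}$, the function $m$ is strictly decreasing immediately to the right of $\alpha$; as $\alpha<1$, this yields some $s\in(\alpha,1)$ with $m(s)<1$, and after shrinking $s$ I may also assume $s\le\alpha+\epsilon$, so that $\Erw{\abs{Q}^s}\le 1+\Erw{\abs{Q}^{\alpha+\epsilon}}<\infty$ by \eqref{A8}. By \eqref{A0} all summands of $W^*_n$ have nonnegative entries, hence $W^*_n$ is coordinatewise nondecreasing and converges a.s.\ to some $W^*\in[0,\infty]^d$. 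Using $s\le 1$ and the subadditivity of $t\mapsto t^s$ on $\Rnn$, the independence of $Q(v)$ from $\mL(v)$ for $\abs{v}=n$, monotone convergence, and the computation behind \eqref{eq:Wstar}, I obtain $\Erw{\abs{W^*}^s}\le\Erw{\abs{Q}^s}\sum_{n\ge 0}\Erw{\sum_{\abs{v}=n}\norm{\mL(v)}^s}<\infty$, the last series being finite precisely because $m(s)<1$. Thus $W^*<\infty$ a.s.\ and $\law{W^*}$ has a finite moment of order $s>\alpha$; since $W^*_n$ is exactly the WBP associated with $0$ and $\T$, the discussion preceding the lemma shows that $\law{W^*}$ is a fixed point of $\STi$.

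For uniqueness, let $\eta$ be any fixed point of $\STi$ with $\int\abs{y}^{\alpha+\epsilon}\,\eta(dy)<\infty$ for the given $\epsilon>0$. I would take $X$ with law $\eta$ and form the WBP $Y_n:=\sum_{\abs{v}=n}\mL(v)X(v)+W^*_n$ associated with $X$ and $\T$, where $(X(v))_{v\in\tree}$ are i.i.d.\ copies of $X$ independent of $(\T(v))_{v\in\tree}$; by \eqref{eq:stwbp} and the fixed point property, $\law{Y_n}=\STi^n\eta=\eta$ for every $n$. Next I would pick $s'\in(\alpha,1)$ with $m(s')<1$ and $s'\le\alpha+\epsilon$ (possible for any $\epsilon>0$ because $m'(\alpha)<0$) and run the same subadditivity estimate, now using that $X(v)$ is independent of $\mL(v)$ and $\Erw{\abs{X}^{s'}}\le 1+\Erw{\abs{X}^{\alpha+\epsilon}}<\infty$, to get $\Erw{\abs{\sum_{\abs{v}=n}\mL(v)X(v)}^{s'}}\le\Erw{\abs{X}^{s'}}\,\Erw{\sum_{\abs{v}=n}\norm{\mL(v)}^{s'}}\to 0$. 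Hence the ``leaves'' term vanishes in $L^{s'}$, so in probability; since $W^*_n\to W^*$ a.s., it follows that $Y_n\to W^*$ in distribution, and $\law{Y_n}\equiv\eta$ forces $\eta=\law{W^*}$.

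The argument involves no serious obstacle: the delicate points are only the simultaneous choices of $s$ and $s'$ in $(\alpha,1)$ with $m<1$ (this is exactly where $\alpha<1$ and $m'(\alpha)<0$ are used) and the bookkeeping of independence in the weighted branching coupling; the moment computations themselves are the routine $s\le 1$ estimates already carried out in \cite[Section~3]{Mirek2013}, which I would cite rather than reproduce in full.
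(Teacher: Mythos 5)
Your proposal is correct and follows essentially the same route as the paper: choose $s\in(\alpha,1)$ with $m(s)<1$ and $\E\abs{Q}^s<\infty$, deduce a.s.\ finiteness of $W^*$ from the estimate behind \eqref{eq:Wstar}, and obtain uniqueness by comparing the weighted branching process started from an arbitrary fixed point with finite moment of order $\alpha+\epsilon$ to the one started from $0$. The only difference is cosmetic: you write out the $L^{s'}$ moment calculation and conclude via convergence in probability, where the paper simply delegates these estimates to \cite[Section~3]{Mirek2013}.
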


\subsection{Laplace Transforms}

For a random variable $Y \in \Rdnn$, its LT $\LTa(x) = \E \exp(-\skalar{x,Y})$ is well defined for all $x \in \Rdnn$.
From Eq. \eqref{eq:stwbp}, one obtains iteration formulas for the action of $\STi$ on LTs, namely
\begin{equation}
\label{eq:STofLT} \STi^n\LTa(x) ~=~ \E \left[\, \exp\left(-\skalar{x, \sum_{\abs{v}<n} \mL(v) Q(v)} \right) \, \prod_{\abs{v}=n} \, \LTa(\mL(v)^\top x) \right]. \
\end{equation}

Observe that $\STi$
defines a continuous mapping on $\Cbf{\Rdnn}$. Consequently, if $\LTa$ is the
LT of a distribution on $\Rdnn$, and $\LTfp := \lim_{n \to
\infty} \STi^n \LTa$ exists and is a LT of a distribution as well, then
this is a fixed point of $\ST$, since 
$$ \STi \LTfp = \STi ( \lim_{n\to \infty} \STi^n \LTa) = \lim_{n \to \infty}
\STi^{n+1} \LTa = \LTfp.$$

Below, we will consider the WBP associated with particular "initial" random variables, namely multivariate $\alpha$-stable ones. The next lemma describes their Laplace transforms.

\begin{lemma}\label{lem:multivariate stable laws}
Let $\nu$ be a probability measure on $\Sp$, $K >0$ and $\alpha \in (0,1]$. Then
\begin{equation}\label{eq:lt_stablelaw}
\LTa(x) := \exp\left(-K \int_{\Sp} \skalar{x,y}^\alpha \nu(dy)\right)
\end{equation}
is the LT of the multivariate $\alpha$-stable law on $\Rdnn$ with
spherical measure $\nu$.
\end{lemma}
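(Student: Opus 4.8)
The statement is the standard fact that the Laplace exponent $x\mapsto K\int_{\Sp}\skalar{x,y}^\alpha\,\nu(dy)$ belongs to a nonnegative multivariate $\alpha$-stable law; the plan is to split into the two cases $\alpha=1$ and $\alpha\in(0,1)$. When $\alpha=1$ the claim is immediate from the linearity of $x\mapsto\skalar{x,\cdot}$:
$$\LTa(x)=\exp\Bigl(-K\int_{\Sp}\skalar{x,y}\,\nu(dy)\Bigr)=\exp\bigl(-\skalar{x,w}\bigr),\qquad w:=K\int_{\Sp}y\,\nu(dy)\in\Rdnn,$$
so $\LTa$ is the Laplace transform of the Dirac mass $\delta_w$; a deterministic vector trivially satisfies the strict $1$-stability relation, and $\delta_w$ is the (degenerate) $1$-stable law with spherical measure $\nu$ in the sense of \cite{ST1994}.

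For $\alpha\in(0,1)$ I would invoke the L\'evy--Khintchine representation for $\Rdnn$-valued infinitely divisible laws (multivariate subordinators). Define a measure $\Lambda$ on $\Rdnn\setminus\{0\}$ via the polar decomposition $z=ry$, $r>0$, $y\in\Sp$, by $\Lambda(dr\,dy)=\tfrac{\alpha K}{\Gamma(1-\alpha)}\,r^{-1-\alpha}\,dr\otimes\nu(dy)$. Since $0<\alpha<1$ one checks $\int_{\Rdnn\setminus\{0\}}(1\wedge\abs{z})\,\Lambda(dz)<\infty$ (the radial integrals $\int_0^1 r^{-\alpha}dr$ and $\int_1^\infty r^{-1-\alpha}dr$ converge), so $\Lambda$ is the L\'evy measure of a drift-free, pure-jump infinitely divisible distribution on $\Rdnn$, whose Laplace transform is $\exp\!\bigl(-\int_{\Rdnn\setminus\{0\}}(1-e^{-\skalar{x,z}})\,\Lambda(dz)\bigr)$. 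The one computation to perform is
\begin{align*}
\int_{\Rdnn\setminus\{0\}}\bigl(1-e^{-\skalar{x,z}}\bigr)\,\Lambda(dz)
&=\frac{\alpha K}{\Gamma(1-\alpha)}\int_{\Sp}\int_0^\infty\bigl(1-e^{-r\skalar{x,y}}\bigr)\,r^{-1-\alpha}\,dr\,\nu(dy)\\
&=K\int_{\Sp}\skalar{x,y}^\alpha\,\nu(dy),
\end{align*}
where the inner integral is evaluated by the substitution $v=r\skalar{x,y}$ together with the classical identity $\alpha\int_0^\infty(1-e^{-v})v^{-1-\alpha}\,dv=\Gamma(1-\alpha)$. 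Hence $\LTa$ is a genuine Laplace transform on $\Rdnn$, and since $\LTa(0)=1$ it is the Laplace transform of a probability measure supported on $\Rdnn$.

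Finally I would record the stability: $\LTa(cx)=\LTa(x)^{c^\alpha}$ for every $c>0$, so if $X,X_1,\dots,X_N$ are i.i.d.\ with this law then $\E\exp(-\skalar{x,\sum_{i=1}^N N^{-1/\alpha}X_i})=\LTa(N^{-1/\alpha}x)^N=\LTa(x)$, i.e.\ $X$ is strictly $\alpha$-stable; and the polar form of its L\'evy measure $\Lambda$ identifies its spectral (spherical) measure as $K\nu$, which is exactly the normalization $\stablen{\alpha}{\K\nu,0}$ used elsewhere in the paper. There is no real obstacle here: the only point requiring any care is the constant bookkeeping in the displayed integral (the $\Gamma(1-\alpha)^{-1}$ factor chosen so that the Laplace exponent comes out as $K\int_{\Sp}\skalar{x,y}^\alpha\,\nu(dy)$ with no stray constant), and in fact the whole statement can alternatively be read off directly from the description of nonnegative multivariate stable laws and their series/spectral representation in \cite{ST1994}.
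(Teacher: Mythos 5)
Your argument is correct, and it is in fact more self-contained than what the paper offers: the paper's ``proof'' of Lemma \ref{lem:multivariate stable laws} is only a source citation (an idea of the proof in \cite{Nolan2012,Zolotarev1986}, with a detailed account in \cite[Section 5.2]{Mentemeier2013}), whereas you actually construct the law. Your route — treating the case $\alpha=1$ as the degenerate Dirac mass $\delta_w$, and for $\alpha\in(0,1)$ exhibiting the exponent as the Laplace exponent of a drift-free subordinator-type infinitely divisible law with L\'evy measure $\frac{\alpha K}{\Gamma(1-\alpha)}r^{-1-\alpha}dr\otimes\nu(dy)$ in polar coordinates — is the standard L\'evy–Khintchine construction, and the key computation is right: the integrability check $\int(1\wedge\abs{z})\Lambda(dz)<\infty$ uses $\alpha<1$, and the substitution $v=r\skalar{x,y}$ together with $\alpha\int_0^\infty(1-e^{-v})v^{-1-\alpha}dv=\Gamma(1-\alpha)$ gives exactly $K\int_{\Sp}\skalar{x,y}^\alpha\nu(dy)$ (with the trivial remark that directions $y$ with $\skalar{x,y}=0$ contribute zero on both sides). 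The scaling identity $\LTa(cx)=\LTa(x)^{c^\alpha}$ then yields strict $\alpha$-stability, and since the paper's own convention ($\stablen{\alpha}{\K\nu,0}$ is \emph{defined} by this Laplace transform) is what the lemma refers to, your identification of the spherical measure is consistent; the only caveat worth keeping in mind is that this differs by explicit constants from the spectral-measure normalization of \cite{ST1994}, so one should not conflate the two parametrizations. What your approach buys is a complete, elementary verification inside the paper; what the paper's citation buys is brevity and a pointer to the general theory (series/spectral representations) covering more than is needed here.
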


\begin{proof}[Source:]
An idea of the proof is given in \cite{Nolan2012,Zolotarev1986}, see
\cite[Section 5.2]{Mentemeier2013} for a detailed account based on these works.
\end{proof}

\subsection{Existence of Fixed Points}\label{sect:existence}

Now we turn to the proof of Theorem \ref{thm:existence hom}. Note that below $W^* \equiv 0$ in the homogeneous case $ Q \equiv 0$.

\begin{proof}[Proof of Theorem \ref{thm:existence hom}]
\Step[1]: By Lemma \ref{lem:multivariate stable laws}, 
$$\LTa_0(x) := \exp \left( -K H^\alpha(x) \right) ~=~ \exp\left( - K \int_{\Sp} \skalar{x,y}^\alpha \nus[\alpha](dy)  \right)$$
is the LT of a probability law on $\Rdnn$.
Hence $\LTa_n := \STi^n \LTa_0$ is a sequence of LTs, and by \eqref{eq:STofLT}, for $(u,r) \in \Sp \times \Rnn$,
\begin{align*}
\LTa_n (ru) ~=&~ \E \left[   \exp\left( - r \skalar{u, W_n^*} \right) \,  \prod_{\abs{v}=n} \exp\left(- K r^\alpha H^\alpha(\mL(v)^\top u)  \right) \right] \\
=&~   \E \exp(- r \skalar{u,W_n^*} - K r^\alpha W_n(u))  .
\end{align*}
The random variables $W_n^*$ and $W_n(u)$ converge (for any $u \in \Sp$) a.s.  by Lemma \ref{lem:conv_Wn} resp. Proposition \ref{thm:mean_convergence_Wn}, hence, using the bounded convergence theorem, the sequence $\LTa_n$ converges  pointwise to a limit $\LTfp$, given by
\begin{equation}\label{eq:LTfp} \LTfp(ru) ~=~ \E \exp\left(-r \skalar{u, W^*} - K r^\alpha W(u)\right). \end{equation}
Using the continuity theorem for
multivariate LTs (see e.g. \cite[Lemma 4]{Stadtmuller1981}),
$\LTfp$ is the LT of a probability measure on $\Rdnn$,
which is then a FP of $\ST$ by the considerations above. Since $W(u)$ is not trivial by Proposition \ref{thm:mean_convergence_Wn}, Eq. \eqref{eq:LTfp} describes a one-parameter class of fixed points. 
\end{proof}

\section{Properties of the Fixed Points}\label{sect:propFP}

In this section, we will describe properties of the fixed points given by Theorem \ref{thm:existence hom}, such as multivariate regular variation, or a representation as a mixture of multivariate $\alpha$-stable laws. We are going to prove the following result. 

\begin{thm}\label{thm:propertiesfp}
Assume \eqref{A0}--\eqref{A3}, \eqref{A5}--\eqref{A6} with $m'(\alpha) <0$ and $\alpha <1$. Let either $Q \equiv 0$ (then $W^* \equiv 0$) or let \eqref{A8} hold. 
Then there is a random finite measure $\Theta$ on $\Sp$, such that $\int_{\Sp} \, \skalar{u,y}^\alpha \, \Theta(dy)= W(u)$ $\Pfs$ for all $u \in \Sp$, and hence
\begin{equation} \label{eq:levy}\LTfp(ru) ~=~ \E \exp\left(- r \skalar{u, W^*} - K r^\alpha \, \int_{\Sp} \, \skalar{u,y}^\alpha \, \Theta(dy)\right), \qquad (u,r) \in \Sp \times \Rnn.\end{equation}
Moreover, $\E \Theta = \nus[\alpha]$, and if $X$ is a r.v. with LT $\LTfp$ with $K >0$, then
\begin{equation}\label{eq:multregvar} \lim_{r \to \infty} \frac{ \P{\frac{X}{\abs{X}} \in \cdot, \abs{X}>sr}}{\P{\abs{X}>r}} ~=~ s^{-\alpha} \nus[\alpha], \end{equation}
and in particular, 
\begin{equation}\label{eq:kestenregvar} \lim_{r \to \infty} r^\alpha \P{\skalar{u,X}>r} ~=~ \frac{K H^\alpha(u)}{\Gamma(1-\alpha)}.\end{equation}
\end{thm}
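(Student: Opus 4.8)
The plan is to first represent the directional limit $W(u)$ as $\int_{\Sp}\skalar{u,y}^\alpha\,\Theta(dy)$ for a random finite measure $\Theta$ arising as the almost sure weak limit of a measure‑valued martingale, and then to read off the tail asymptotics of $X$ by conditioning on the tree $\sigma$‑algebra $\B_\infty=\sigma((\T(v))_{v\in\tree})$, on which $X$ decomposes as $W^*$ (a $\B_\infty$‑measurable constant, with $W^*\equiv0$ in the homogeneous case) plus a conditionally multivariate $\alpha$‑stable part.

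\emph{Step 1: construction of $\Theta$.} For an allowable matrix $\ma$ and a finite measure $\sigma$ on $\Sp$ I introduce the ``$\alpha$‑twisted push‑forward'' $\ma\circ\sigma$, the finite measure with $(\ma\circ\sigma)(A)=\int_{\Sp}\abs{\ma y}^\alpha\1[A](\ma\as y)\,\sigma(dy)$; it obeys the cocycle identity $(\ma\mb)\circ\sigma=\ma\circ(\mb\circ\sigma)$ and $\int_{\Sp}\skalar{u,z}^\alpha(\ma\circ\sigma)(dz)=\int_{\Sp}\skalar{\ma^\top u,y}^\alpha\,\sigma(dy)$. Since under \eqref{A3} every $\mL(v)$ on the surviving tree is allowable, $\Theta_n:=\sum_{\abs v=n}\mL(v)\circ\nus[\alpha]$ is a finite random measure with $\Theta_0=\nus[\alpha]$ and $\int_{\Sp}\skalar{u,z}^\alpha\,\Theta_n(dz)=\sum_{\abs v=n}H^\alpha(\mL(v)^\top u)=W_n(u)$. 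Rewriting $(\Ps[\alpha])'\nus[\alpha]=k(\alpha)\nus[\alpha]$ from Proposition~\ref{prop:Ps} together with \eqref{eq:defmu} gives $\E\bigl[\sum_{i=1}^N\mT_i\circ\nus[\alpha]\bigr]=m(\alpha)\,\nus[\alpha]=\nus[\alpha]$; combined with the cocycle identity and the fact that $\mL(v)$ is $\B_{\abs v}$‑measurable while the weights $(\T(v))_{\abs v=n}$ are independent of $\B_n$, this shows that $(\Theta_n(A))_{n\ge0}$ is a nonnegative martingale of constant mean $\nus[\alpha](A)$ for each Borel $A\subset\Sp$. Fixing a countable $\infnorm{\cdot}$‑dense subset $\mathcal D\subset\Cf{\Sp}$ with $1\in\mathcal D$, each $(\int g\,d\Theta_n)_n$, $g\in\mathcal D$, is then an $L^1$‑bounded martingale and converges a.s.; on the common almost sure event $\Theta_n(\Sp)$ stays bounded and, by a standard approximation argument, $g\mapsto\lim_n\int g\,d\Theta_n$ extends to a positive linear functional on $\Cf{\Sp}$, i.e.\ $\Theta_n\to\Theta$ weakly for some finite random measure $\Theta$ (Riesz representation). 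Taking $g(z)=\skalar{u,z}^\alpha$ and comparing with $W_n(u)\to W(u)$ from Proposition~\ref{thm:mean_convergence_Wn} yields $W(u)=\int_{\Sp}\skalar{u,y}^\alpha\,\Theta(dy)$ a.s.\ for each $u\in\Sp$, which substituted into \eqref{eq:LTfp} gives \eqref{eq:levy}. Moreover, using $z_i^\alpha\ge z_i^2$ for $z\in\Sp$ (as $\alpha\le1$) I bound $\Theta_n(\Sp)\le\sum_{i=1}^d W_n(e_i)$; the right‑hand side converges in $L^1$ by Proposition~\ref{thm:mean_convergence_Wn} and is hence uniformly integrable, so each $\Theta_n(A)\le\Theta_n(\Sp)$ is uniformly integrable, and with the a.s.\ convergence this upgrades the martingale identity to $\E\Theta(A)=\nus[\alpha](A)$ on a generating $\pi$‑system, i.e.\ $\E\Theta=\nus[\alpha]$.

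\emph{Step 2: regular variation.} By Lemma~\ref{lem:multivariate stable laws} I may realize $X=W^*+S$ where, conditionally on $\B_\infty$, $S$ is the multivariate $\alpha$‑stable law on $\Rdnn$ with spherical measure $K\Theta$ (so that $X$ has Laplace transform \eqref{eq:levy}). Conditionally on $\B_\infty$, the classical tail behaviour of multivariate stable laws — for the one‑dimensional projection the Tauberian theorem applied to $\skalar{u,S}$, which satisfies $\E[\exp(-t\skalar{u,S})\mid\B_\infty]=\exp(-Kt^\alpha W(u))$, i.e.\ $\Prob(\skalar{u,S}>t\mid\B_\infty)\sim\Gamma(1-\alpha)^{-1}K\,W(u)\,t^{-\alpha}$ — gives, for every $\nus[\alpha]$‑continuity set $A\subset\Sp$ (which is a.s.\ a $\Theta$‑continuity set, since $\E\Theta(\partial A)=\nus[\alpha](\partial A)=0$), together with the fact that translating the conditionally regularly varying $S$ by the $\B_\infty$‑measurable vector $W^*$ does not affect the exponent measure,
\[
r^\alpha\,\Prob\bigl(\tfrac{X}{\abs X}\in A,\ \abs X>sr\ \big|\ \B_\infty\bigr)\ \xrightarrow[r\to\infty]{}\ \frac{s^{-\alpha}}{\Gamma(1-\alpha)}\,K\,\Theta(A)\qquad\text{a.s.}
\]
To interchange limit and expectation I establish a uniform (in $r\ge1$) integrable majorant: the splitting $\{\abs X>sr\}\subset\{\abs S>sr/2\}\cup\{\abs{W^*}>sr/2\}$, the bound $\abs S\le\sqrt d\sum_i\skalar{e_i,S}$ with the uniform estimate $t^\alpha\Prob(\skalar{e_i,S}>t\mid\B_\infty)\le cK\,W(e_i)$, and the integrability of $\sum_iW(e_i)$ (its mean is $\sum_iH^\alpha(e_i)<\infty$) and of $\abs{W^*}^{\alpha'}$ for some $\alpha'\in(\alpha,1]$ — the latter because, by \eqref{A5} and \eqref{A8}, $m(\alpha')<1$ and $\E\abs Q^{\alpha'}<\infty$ for $\alpha'$ slightly above $\alpha$, whence $\E\abs{W^*}^{\alpha'}<\infty$ by \eqref{eq:Wstar} (in the homogeneous case $W^*\equiv0$ and this is vacuous). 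Dominated convergence and $\E\Theta=\nus[\alpha]$ then yield $r^\alpha\Prob(X/\abs X\in A,\ \abs X>sr)\to s^{-\alpha}\Gamma(1-\alpha)^{-1}K\,\nus[\alpha](A)$; specializing to $s=1$, $A=\Sp$ gives $r^\alpha\Prob(\abs X>r)\to\Gamma(1-\alpha)^{-1}K$, and the quotient of the two is \eqref{eq:multregvar}. Finally \eqref{eq:kestenregvar} follows by evaluating the resulting exponent measure of $X$ on the half‑space $\{x:\skalar{u,x}>1\}$, using $\int_0^\infty\1[t\skalar{u,y}>1]\,\alpha t^{-\alpha-1}\,dt=\skalar{u,y}^\alpha$ and $\int_{\Sp}\skalar{u,y}^\alpha\,\nus[\alpha](dy)=H^\alpha(u)$.

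The main obstacle is Step~1: upgrading the pointwise (for each Borel $A$) martingale convergence of $\Theta_n(A)$ to weak almost sure convergence of the measures $\Theta_n$ to a genuine random finite measure $\Theta$, which is what makes the integral representation of $u\mapsto W(u)$ available. Once this is in place, the tail statements are a fairly routine combination of the known stable‑law asymptotics with the dominated‑convergence argument sketched above.
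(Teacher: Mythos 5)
Your argument is correct, but it takes a genuinely different route from the paper in both halves. For the construction of $\Theta$, the paper does not work on the sphere directly: it forms the L\'evy-type measures $\Lambda_n=\sum_{\abs{v}=n}\Lambda_0\bigl(\mL(v)^{-1}(\cdot)\bigr)$ on $\Rd\setminus\{0\}$ with $\Lambda_0=\nus[\alpha]\otimes\llam^\alpha$, evaluates them on half-spaces $H_{u,t}$ (which reproduces the martingales $W_n(u)$ up to a constant), and then invokes the Cram\'er--Wold/support theorem of \cite{BL2009} to get a.s.\ vague convergence together with the homogeneity $\Lambda=\Theta\otimes\llam^\alpha$; your measure-valued-martingale construction of $\Theta_n$ on $\Sp$, with a.s.\ weak convergence obtained from a countable dense family of test functions and Riesz representation, replaces this and is self-contained. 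The one wrinkle is your closing claim that $\E\Theta(A)=\nus[\alpha](A)$ ``on a generating $\pi$-system'': weak convergence only gives $\Theta_n(A)\to\Theta(A)$ for $\Theta$-continuity sets, and you cannot certify a set as an a.s.\ continuity set before knowing $\E\Theta$; instead pass to the limit in $\E\int g\,d\Theta_n=\int g\,d\nus[\alpha]$ for continuous $g\ge 0$, using the uniform integrability of $\Theta_n(\Sp)\le\sum_i W_n(e_i)$ that you already established -- this identifies $\E\Theta=\nus[\alpha]$ and closes the point with tools you have. For the tails, the paper never conditions on the tree: it observes that $r^{-\alpha}\bigl(1-\exp(-Kr^\alpha W(u)-r\skalar{u,W^*})\bigr)$ is ultimately decreasing, passes to the limit by monotone convergence to get $\lim_{r\to 0}r^{-\alpha}(1-\LTfp(ru))=KH^\alpha(u)$, applies Feller's Tauberian theorem to obtain \eqref{eq:kestenregvar}, and then deduces \eqref{eq:multregvar} from the regular variation of all projections via \cite[Theorem 1.1]{BDM2002} combined with the half-space uniqueness of \cite{BL2009}. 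Your route -- realizing $X$ as $W^*$ plus a conditionally $\alpha$-stable vector with spectral measure $K\Theta$ given $\B_\infty$, reading off the conditional exponent measure, and de-conditioning by dominated convergence with the majorant built from $\sum_i W(e_i)$ and $\E\abs{W^*}^{\alpha'}<\infty$ -- is heavier on the representation side but avoids both the monotonicity trick and the appeal to the multivariate regular variation characterization theorem, and it delivers \eqref{eq:levy}, \eqref{eq:multregvar} and \eqref{eq:kestenregvar} simultaneously with the correct constants.
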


\begin{rem}
Note that the heavy tail properties \eqref{eq:multregvar} and \eqref{eq:kestenregvar} are subject to the assumptions $K>0$ and $\alpha <1$. If one of those fails, the tail behavior is governed by $\beta$ rather then by $\alpha$, as shown in \cite[Theorem 2.4]{BDGM2014} for the homogeneous case $\alpha=1$ and $Q \equiv 0$, and in \cite[Theorem 1.9]{Mirek2013} for the inhomogeneous case with  $\alpha < 1/2$ and $K=0$, i.e., for the particular fixed point $W^*$.

It is remarkable that the directional dependence in the results cited above and in Eq. \eqref{eq:kestenregvar} is given by corresponding functions, namely by $H^\beta$ there resp. $H^\alpha$ here.
\end{rem}

\medskip

Define a sequence of Radon measures on $\Rd \setminus\{0\}\simeq \Sd \times \Rp$ by
$$ \Lambda_0:=\nus[\alpha] \otimes \llam^\alpha,$$
where $\llam^\alpha(dr)=\frac{1}{r^{\alpha+1}} dr$ (L\'evy measure of a one-dimensional $\alpha$-stable random variable),
and $ \Lambda_n$ being the random measure defined by
$$  \Lambda_n(\omega)(A) ~:=~ \sum_{\abs{v}=n}   \Lambda_0(\mL(v)(\omega)^{-1}(A))$$
for all measurable sets $A \in \Rd \setminus \{0\} \to \C$ which are bounded away from the origin.
Consider for $u \in \Sd$, $t>0$ the half-space 
$H_{u,t}=\{ x \in \Rd \setminus \{0\} \, : \, \skalar{u,x} > t \}$.

\begin{lem}
Under the assumptions of Theorem \ref{thm:propertiesfp}, for a.e.~$\omega \in \Omega$ the sequence $\Lambda_n(\omega)$ converges vaguely to a Radon measure $\Lambda(\omega)$ on $\Rd \setminus \{0\}$, which is of the form
$$ \Lambda(\omega)~=~\Theta(\omega) \otimes \llam^\alpha$$
for a random finite measure $\Theta$ on $\Sd$,  supported on $\Sp$. We have that $\E \Theta = \nus[\alpha]$, and for
all $u \in \Sp$, it holds that
\begin{equation} \label{eq:Theta} W(u) ~=~ \alpha \int_{\Sp} \skalar{u,y}^\alpha \, \Theta(dy) \qquad \Pfs. \end{equation}

Moreover, for all $u \in \Sp$ and $t >0$, \begin{equation}\label{eq:Lambda1}\Lambda_0(H_{u,t})= \frac{t^{-\alpha}}{\alpha} H^\alpha(u). \end{equation}
\end{lem}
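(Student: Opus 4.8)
The plan is to reduce the assertion about vague convergence of the $\Lambda_n$ on $\Rd\setminus\{0\}$ to weak convergence of their angular parts on the \emph{compact} sphere $\Sd$, and then to identify the limit by means of the intrinsic martingale of Section~\ref{sect:BRWmatrices}. First I would record the polar structure of the measures: writing a point of $\Rd\setminus\{0\}$ as $ry$ with $r\in\Rp$, $y\in\Sd$, a change of variables shows that for any allowable matrix $\ma$ the push-forward of $\Lambda_0=\nus[\alpha]\otimes\llam^\alpha$ under $x\mapsto\ma x$ is $\widetilde\nu_{\ma}\otimes\llam^\alpha$, where $\widetilde\nu_{\ma}$ is the image of the finite measure $\abs{\ma y}^\alpha\,\nus[\alpha](dy)$ on $\Sd$ under the action $y\mapsto\ma\as y$. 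Since each $\mL(v)$ is a.s.\ allowable and $\nus[\alpha]$ is carried by $\Sp$, this yields $\Lambda_n=\Theta_n\otimes\llam^\alpha$ with $\Theta_n:=\sum_{\abs{v}=n}\widetilde\nu_{\mL(v)}$ a finite random measure on $\Sd$ supported on $\Sp$. Next I would observe that for $f\in\Ccf{\Rd\setminus\{0\}}$, supported in an annulus $\{a\le\abs{x}\le b\}$ with $0<a<b$, the radial average $y\mapsto\int_a^b f(ry)\,r^{-1-\alpha}\,dr$ belongs to $\Cf{\Sd}$ and its integral against $\Theta_n$ equals $\Lambda_n(f)$, while every $h\in\Cf{\Sd}$ arises in this way (take $f(x)=h(x/\abs{x})\psi(\abs{x})$ with $\psi\in\Ccf{\Rp}$, $\int_0^\infty\psi(r)r^{-1-\alpha}\,dr=1$). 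Hence it suffices to prove that $\Theta_n\to\Theta$ weakly on $\Sd$ for a.e.\ $\omega$, which then gives $\Lambda_n\to\Theta\otimes\llam^\alpha$ vaguely. Meanwhile \eqref{eq:Lambda1} is a one-line computation in polar coordinates using \eqref{eq:defHs}.

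The link to the martingale is the identity $\Theta_n(h_u)=\sum_{\abs{v}=n}H^\alpha(\mL(v)^\top u)=W_n(u)$ for $u\in\Sp$, where $h_u(y):=\skalar{u,y}^\alpha\in\Cf{\Sd}$; it follows from $\skalar{u,\ma y}=\skalar{\ma^\top u,y}$ together with \eqref{eq:defHs}. Using $\skalar{\eins,y}\ge d^{-1/2}$ on $\Sp$ one gets $\Theta_n(\Sd)\le d^{\alpha/2}W_n(\eins)$, and by Proposition~\ref{thm:mean_convergence_Wn} the martingale $W_n(\eins)$ converges to $W(\eins)<\infty$ $\Pfs$ and in $L^1$; so for a.e.\ $\omega$ the total masses $\Theta_n(\omega)(\Sd)$ are bounded in $n$ (hence $(\Theta_n(\omega))_n$ is relatively compact for weak convergence on the compact space $\Sd$) and $(\Theta_n(\Sd))_n$ is uniformly integrable. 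Fixing a countable dense $D\subset\Sp$ containing $\eins$ and working on the full-measure event where $W_n(u)\to W(u)$ for every $u\in D$, any weak subsequential limit $\Theta$ of $(\Theta_n)$ satisfies $\Theta(h_u)=W(u)$ for $u\in D$, hence --- as $u\mapsto h_u$ is continuous for the sup-norm, since $\abs{a^\alpha-b^\alpha}\le\abs{a-b}^\alpha$ for $a,b\ge0$ and $\alpha\le1$ --- for all $u\in\Sp$.

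The hard part will be to deduce that all subsequential limits coincide; this amounts to the injectivity of $\mu\mapsto\bigl(u\mapsto\int_{\Sp}\skalar{u,y}^\alpha\,\mu(dy)\bigr)$ on finite measures on $\Sp$. By homogeneity this datum determines $u\mapsto\int_{\Sp}\skalar{u,y}^\alpha\,\mu(dy)$ on all of $\Rdnn$, equivalently (Lemma~\ref{lem:multivariate stable laws}) the Laplace transform of the multivariate $\alpha$-stable law on $\Rdnn$ with spherical component $\mu$; and the spherical measure of such a law is unique since $\alpha<1$ (see~\cite{ST1994}), so $\mu$ is determined. Thus $\Theta_n(\omega)\to\Theta(\omega)$ weakly for a.e.\ $\omega$ to a random finite measure $\Theta$ on $\Sp$ (measurability of $\omega\mapsto\Theta(\omega)$ being automatic, as it is an a.s.\ limit of measurable random measures), and $\Lambda:=\Theta\otimes\llam^\alpha$. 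Letting $n\to\infty$ in $\Theta_n(h_u)=W_n(u)$ along the a.s.\ convergence of both sides gives \eqref{eq:Theta} for each fixed $u$. Finally, $\E\Theta_n(h_u)=\E W_n(u)=H^\alpha(u)=\int_{\Sp}\skalar{u,y}^\alpha\,\nus[\alpha](dy)$ for every $n$; passing to the limit with the uniform integrability above gives $\E\Theta(h_u)=\int_{\Sp}h_u\,d\nus[\alpha]$ for all $u\in\Sp$, so reapplying the injectivity to the finite measures $\E\Theta$ and $\nus[\alpha]$ yields $\E\Theta=\nus[\alpha]$. I expect the injectivity step --- equivalently, uniqueness of the spherical measure of a nonnegative multivariate $\alpha$-stable law, where the hypothesis $\alpha<1$ is essential --- to be the main obstacle; everything else is routine once the polar decomposition and the identity $\Theta_n(h_u)=W_n(u)$ are in place.
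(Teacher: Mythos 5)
Your argument is correct, but it follows a genuinely different route from the paper's. The paper does not decompose $\Lambda_n$ in polar form at finite $n$; instead it evaluates $\Lambda_n$ only on the half-spaces $H_{u,t}$, recognizes $\alpha\Lambda_n(H_{u,1})$ as the martingale $\overline{W}_n(u)$ (extended to all $u\in\Sd$, vanishing on $-\Sp$), and then invokes the Cram\'er--Wold/Radon-transform support theorems of \cite[Theorems 3 and 3']{BL2009} to get, in one stroke, the existence of the vague limit $\Lambda$, its homogeneous product structure $\Theta\otimes\llam^\alpha$, and the identification $\E\Theta=\nus[\alpha]$ from the half-space values. You instead make the product structure explicit already at finite $n$ via the polar change of variables, reduce everything to weak convergence of the angular measures $\Theta_n$ on the compact sphere (with tightness and uniform integrability coming from $\Theta_n(\Sd)\le d^{\alpha/2}W_n(\eins)$ and the $L^1$-convergence of the martingale), and replace \cite{BL2009} by the injectivity of $\mu\mapsto\int_{\Sp}\skalar{\cdot,y}^\alpha\,\mu(dy)$, i.e.\ uniqueness of the spectral measure of a positive $\alpha$-stable law for $\alpha<1$ (\cite{ST1994}, or equivalently uniqueness of the L\'evy measure $\mu\otimes\llam^\alpha$). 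This buys a more self-contained and elementary proof avoiding the Radon-transform machinery, at the price of using $\alpha<1$ in the identification step (it would fail at $\alpha=1$, where only the barycenter of $\mu$ is determined); that hypothesis is available here since Theorem \ref{thm:propertiesfp} assumes $\alpha<1$, whereas the paper's half-space argument does not need it for this step. Two small remarks: your extension of $\Theta(h_u)=W(u)$ from the countable dense set $D$ to all of $\Sp$ should be phrased as ``any two subsequential limits agree on $D$, hence everywhere by continuity of $u\mapsto\Theta(h_u)$, hence coincide by injectivity'' (continuity of $u\mapsto W(u)$ itself is not known at this point), which is what you in effect use; and your normalization yields $W(u)=\int_{\Sp}\skalar{u,y}^\alpha\,\Theta(dy)$ without the factor $\alpha$, which is the version consistent with $\Lambda=\Theta\otimes\llam^\alpha$, with $\E\Theta=\nus[\alpha]$ and with the statement of Theorem \ref{thm:propertiesfp}; the extra $\alpha$ in \eqref{eq:Theta} is a normalization slip in the lemma's display, not a defect of your proof.
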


\begin{proof}

Using the substitute $s=r \skalar{u,\mL(v)y}$,
\begin{align}
\Lambda_n(H_{u,t}) ~&=~ \sum_{\abs{v}=n} \Lambda_0 \left( \left\{ x \in  \Rd \setminus \{0\} \, : \, \skalar{u, \mL(v) x} > t \right\} \right) \nonumber\\
~&=~ \sum_{\abs{v}=n} \Lambda_0 \left( \{ yr \, : \, (y,r) \in \Sd \times \Rp, \skalar{u,\mL(v) y}>0; \, r > t/\skalar{u, \mL(v)y} \} \right) \nonumber \\
~&=~ \sum_{\abs{v}=n} \int_{t}^\infty \, \int_{\Sp} \1[(0,\infty)](\skalar{u, \mL(v)y}) \, \skalar{u,\mL(v)y}^\alpha  \nus[\alpha](dy) \, \frac{1}{s^{\alpha+1}} \, ds  \nonumber\\
~&=~ \frac{t^{-\alpha}}{\alpha} \, \sum_{\abs{v}=n} \int_{\Sp} \1[(0,\infty)](\skalar{u, \mL(v)y}) \, \skalar{\mL(v)^\top u,y}^\alpha  \nus[\alpha](dy)  \label{wlambda}
\end{align}
Observe that $\overline{W}_n(u):= \sum_{\abs{v}=n} \int_{\Sp} \1[(0,\infty)](\skalar{u, \mL(v)y}) \, \skalar{\mL(v)^\top u,y}^\alpha  \nus[\alpha](dy)$ defines a nonnegative martingale for all $u \in \Sd$, which coincides with  $W_n(u)$ for $u \in \Sp$. Thus $\overline{W}_n(u)$ converges a.s. to a random limit $\overline{W}(u)$, which is nontrivial for $u \in \Sp$, and equal zero for $u \in - \Sp$. 

Having thus shown that for all $u \in \Sd$ and all $t >0$,
\begin{equation}\label{eq:lambda2} \lim_{n \to \infty} \Lambda_n(H_{u,t}) ~=~ t^{-\alpha} \frac{\overline{W}(u)}{\alpha} \qquad \text{a.s.}, \end{equation}
we use \cite[Theorem 3']{BL2009} to infer that for a.e.~$\omega \in \Omega$, there is a Radon measure $\Lambda(\omega)$ on $\Rd \setminus \{0\}$ such that
$$ \lim_{n \to \infty} \, \int f(x) \, \Lambda_n(\omega)(dx) ~=~ \int f(x) \Lambda(\omega)(dx)$$
for all bounded continuous functions on $\Rd$ with support bounded away from the origin (this implies in particular the asserted vague convergence). The theorem moreover states that $\Lambda(\omega)$ is $-\alpha-d$-homogeneous (c.f. \cite[p.692]{BL2009}), i.e. of the form 
$\Lambda(\omega) = \Theta(\omega) \otimes \llam^\alpha$ for a finite measure $\Theta(\omega)$ on $\Sd$. The assertion on the support follows, since every $\Lambda_n$ is supported on $\Rdnn\setminus\{0\}$.

Applying \eqref{wlambda} with $n=0$ yields the identity \eqref{eq:Lambda1}. Then, taking expectations in Eq. \eqref{eq:lambda2}, we deduce that 
$$ \E  \Lambda(H_{u,t}) ~=~ (\E \Theta \otimes \llam^\alpha)(H_{u,t}) ~=~ \Lambda_0(H_{u,t}) ~=~ (\nus[\alpha] \otimes \llam^\alpha)(H_{u,t}). $$
By \cite[Theorem 3]{BL2009},  the measure of the half-spaces $H_{u,t}$ uniquely determines $\E \Lambda$ and $\Lambda_0$, which thus coincide. We conclude that $\E \Theta = \nus[\alpha]$.

Applying \eqref{wlambda} with $t=1$ and $u \in \Sp$, we finally obtain that
$$ \alpha \int_{\Sp} \skalar{u,y}^\alpha \, \Theta(dy) ~=~ \lim_{n \to \infty} \alpha \Lambda_n(H_{u,1}) ~=~ \lim_{n \to \infty} W_n(u) ~=~ W(u) \qquad \Pfs$$
\end{proof}

\begin{proof}[{Proof of Theorem \ref{thm:propertiesfp}}]

In the lemma above, we have already proven the first part of Theorem \ref{thm:propertiesfp}, in particular the representation formula Eq. \eqref{eq:levy}.

\medskip

Next, we prove \eqref{eq:kestenregvar}. Observe that for fixed $u \in \Sp$, the
sequence of random variables $r^{-\alpha} \left[1- \exp(-r^\alpha W(u)) \right]$
is decreasing in $r \in \Rp$: Replacing $s=r^\alpha$ and fixing a realisation of
$W(u)$, $s \mapsto [1-\exp(-sW(u))]/s$ is a LT (see \cite[XIII]{Feller1971}),
hence decreasing. Since $\alpha <1$ is assumed, then also the sequence
$ r^{-\alpha} \left[1- \exp(-K r^\alpha W(u)) -r \skalar{u,W^*} \right] $ is ultimately decreasing.
This allows to use the monotone convergence theorem and Proposition \ref{thm:mean_convergence_Wn} to
infer
\begin{align}
\lim_{r \downarrow 0} \frac{1 - \LTfp(ru)}{r^\alpha} = & \lim_{r \downarrow 0}
\E \left[ \frac{1- \exp(- K r^\alpha W(u) - r \skalar{u,W^*})}{r^\alpha} \right] \nonumber \\
= & \E \left[ \lim_{r \downarrow 0} \frac{1- \exp(-K r^\alpha
W(u) - r \skalar{u,W^*})}{r^\alpha} \right]  \\
=& \E \left[ \lim_{r \downarrow 0} \frac{1- \exp(-K r^\alpha
W(u)) }{r^\alpha} \right] = \E K W(u) = K H^\alpha(u). \label{eq:linear_reg_var}
\end{align}

Let now $X$ have Laplace
transform $\LTfp$. Using the Tauberian theorem for LTs
\cite[XIII.5, (5.22)]{Feller1971}, the relation \eqref{eq:linear_reg_var} with $\alpha <1$
implies that
\begin{equation} \lim_{r \to \infty} r^\alpha \P{\skalar{u,X}>r} =
\frac{K H^\alpha(u)}{\Gamma(1-\alpha)} \label{eq:lin_reg_var2}\end{equation}
for all $u \in \Sp$. 

\medskip

 Let's turn to the proof of \eqref{eq:multregvar}. For $\alpha \in (0,1)$, the property \eqref{eq:lin_reg_var2} implies multivariate
regular variation, i.e. there is a uniquely determined probability measure
$\rho$ on $\Sd$ such that $$ \lim_{r \to \infty} \frac{\P{\abs{X}>sr, \, X/\abs{X} \in
\cdot}}{\P{\abs{X}>r}} =  s^{-\alpha} \rho,$$
see \cite[Theorem 1.1]{BDM2002}, or equivalently,
$$ \lim_{r \to \infty} r^\alpha \E f(r^{-1}X) ~=~ \int_{\Sp} \int_0^\infty f(ru) \frac{1}{r^{1+\alpha}} \, dr \, \rho(du)$$ 
for all bounded continuous functions $f$ on $\Rd$ with support bounded away from the origin.

Referring again to \cite[Theorem 3]{BL2009}, the measure on the right hand side is uniquely identified by the value it takes on the half-spaces $H_{u,t}$. By \eqref{eq:lin_reg_var2}, 
$$ \rho \otimes \llam^\alpha (H_{u,t}) ~=~ \frac{K H^\alpha(u)}{\Gamma(1-\alpha)}, $$
and thus, recalling Eq. \eqref{eq:Lambda1}, $\rho \otimes \llam^\alpha$ is a scalar multiple of $\Lambda_0$, hence $\rho=\nus[\alpha]$.
\end{proof}

\section{Every Fixed Point of $\STh$ is $\alpha$-regular}\label{sect:allfpregular}

Having proven the existence part of the main theorem, we turn now to the much more involved proof of uniqueness. We will focus on fixed points of the homogeneous smoothing transform $\STh$, for the inhomogeneous case can then be treated by exploiting a one-to-one correspondence between fixed points of $\STh$ and $\STi$, to be proved later. 

The proof of uniqueness consists of two fundamental steps: Firstly, we show that each fixed point of $\STh$ is $\alpha$-regular; secondly, we are going to prove that every $\alpha$-regular fixed point is already $\alpha$-elementary, and that there is a unique function, namely $H^\alpha(u)$, that describes the directional behavior. Then we conclude by showing that $\alpha$-elementary fixed points are unique, using in essence Corollary \ref{cor:convWnF}.


In this section, we provide the first fundamental step.
In what follows, let $\LTa$ be (the LT of) such  a fixed point of $\STh$, and
write $$ \D(x) := \frac{1-\LTa(x)}{H^\alpha(x)} $$ as well as 
 $\deins:=(1,\dots,1)^\top$.

\begin{thm}\label{thm:allfixedpointsregular}
Assume \eqref{A0}--\eqref{A6} and $m'(\alpha)<0$. If $\LTa$ is the Laplace transform of a nontrivial fixed point of $\STh$ on $\Rdnn$, then
$ 0 <  \liminf_{r \to 0} \D(r\deins) \le \limsup_{r \to 0} \D(r\deins) < \infty$, in particular, $\LTa$ is $\alpha$-regular.
\end{thm}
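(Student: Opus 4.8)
The plan is to transfer the one-dimensional argument of \cite{ABM2012} to the matrix setting, reducing everything to the behaviour of $t\mapsto D(e^{-t}\deins)$ as $t\to\infty$ (more generally of $t\mapsto D(e^{-t}u)$ for $u\in\Sp$, since the recursion couples scale and direction). As the Laplace transform of a probability measure on $\Rdnn$, $\LTa$ is continuous, convex and coordinatewise non-increasing with $\LTa(0)=1$; nontriviality together with \eqref{A3} --- a strictly positive matrix in $[\supp\mu]$ prevents a fixed point from being supported on a proper face of $\Rdnn$ --- forces $\LTa(x)<1$ for every $x\in\Rdnn\setminus\{0\}$. Hence $D=(1-\LTa)/H^\alpha$ is strictly positive and continuous on $\Rdnn\setminus\{0\}$, so bounded on every compact subset of $\Rdnn\setminus\{0\}$, and the assertion concerns only the growth rate of $D$ near the origin.

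The basic recursion comes from iterating $\LTa=\STh\LTa$ and disintegrating it over the dissecting anticipating stopping lines $\slineu[a]$ --- dissecting because $R_n\to0$ (Lemma~\ref{Lemma Rn}) forces $\min_{|v|=n}S^u(v)\to\infty$ --- and applying the Bonferroni inequalities
\[
\sum_i a_i-\sum_{i<j}a_ia_j\ \le\ 1-\prod_i(1-a_i)\ \le\ \sum_i a_i
\]
to the family $(a_v)_{v\in\slineu[a]}$ with $a_v:=1-\LTa(e^{-t}\mL(v)^\top u)\in[0,1]$. Taking expectations, dividing by $H^\alpha(e^{-t}u)=e^{-\alpha t}H^\alpha(u)$, and invoking $\alpha$-homogeneity of $H^\alpha$ together with the many-to-one identity of Corollary~\ref{cor:spinaltree} (which turns the sum over $\slineu[a]$ into an expectation over the first-passage data $(U(a),R(a))$ of the associated Markov random walk under $\Prob_u^\alpha$), one obtains, for all $u\in\Sp$, $t\ge0$ and $a>0$,
\[
\E_u^\alpha\!\big[D(e^{-(t+a+R(a))}U(a))\big]-\mathcal E_a(u,t)\ \le\ D(e^{-t}u)\ \le\ \E_u^\alpha\!\big[D(e^{-(t+a+R(a))}U(a))\big],
\]
with $\mathcal E_a(u,t)=\tfrac12 e^{\alpha t}H^\alpha(u)^{-1}\,\E\big[\big(\textstyle\sum_{v\in\slineu[a]}(1-\LTa(e^{-t}\mL(v)^\top u))\big)^2\big]$, together with the cruder one-vertex bound $D(e^{-t}u)\ge e^{\alpha t}H^\alpha(u)^{-1}\E\big[\max_{v\in\slineu[a]}(1-\LTa(e^{-t}\mL(v)^\top u))\big]$.

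To conclude, set $t=0$: the two displayed inequalities trap $\E_\deins^\alpha[D(e^{-(a+R(a))}U(a))]$ between the positive constant $D(\deins)$ and $D(\deins)+\mathcal E_a(\deins,0)$. Once $\mathcal E_a(\deins,0)$ is shown to stay bounded as $a\to\infty$, this average is uniformly bounded; since by Kesten's renewal theorem --- available in the Markov form of Theorem~\ref{thm:kestenforW} --- $(U(a),R(a))\Rightarrow\rho$ under $\Prob_\deins^\alpha$ with $\rho(\interior{\Sp}\times\R)=1$, for each compact $K\subset\interior{\Sp}$ and bounded interval $I$ the event $\{(U(a),R(a))\in K\times I\}$ has $\Prob_\deins^\alpha$-probability bounded below, whence $\inf\{D(e^{-s}u):u\in K,\ s\in a+I\}$ stays bounded; exploiting the strong mixing provided by the positive matrix in $[\supp\mu]$ to trade this infimum for a supremum over nearby directions and scales then gives $\limsup_{t\to\infty}D(e^{-t}\deins)<\infty$. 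The bound $\liminf_{t\to\infty}D(e^{-t}\deins)>0$ is obtained symmetrically, from the one-vertex and lower Bonferroni estimates together with $\rho(\interior{\Sp}\times\R)=1$: were it zero, the same renewal-theoretic reasoning would force $\LTa$ to be the constant $1$ near the origin, contradicting nontriviality.

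The main obstacle is the uniform control of the second-order error $\mathcal E_a(u,t)$: crude pointwise bounds on the summands $1-\LTa(e^{-t}\mL(v)^\top u)$ are too lossy, and one must instead expand the square according to the most recent common ancestor of pairs of vertices, use conditional independence of the emerging subtrees together with the eigenfunction identity \eqref{eq:propHs} for $H^\alpha$ and the $L^1$-convergence of the intrinsic martingale $W_n(u)$ (Proposition~\ref{thm:mean_convergence_Wn}), and extract a genuine decay from $R_n\to0$ (Lemma~\ref{Lemma Rn}). This, together with the fact that the recursion is a genuinely Markovian (direction-dependent) renewal relation rather than a scalar one --- so that the renewal theorem of \cite{Kesten1974} is needed in the Markov form of Theorem~\ref{thm:kestenforW} --- is where the matrix case departs substantially from \cite{ABM2012}; the remaining steps are a careful but essentially routine transcription.
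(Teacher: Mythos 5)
Your skeleton (disintegrating the fixed-point equation over the stopping lines $\slineu[a]$, passing to the first-passage data $(U(a),R(a))$ by the many-to-one identity, invoking Kesten's renewal theorem, and transferring bounds between $\deins$ and interior directions or nearby scales via the elementary Laplace-transform inequalities) is sound and close in spirit to the paper. The fatal step is the one you yourself flag as the ``main obstacle'': the uniform boundedness in $a$ of the second-order Bonferroni error $\mathcal E_a(\deins,0)=\tfrac12 H^\alpha(\deins)^{-1}\,\E\bigl[\bigl(\sum_{v\in\slineu[a]}(1-\LTa(\mL(v)^\top\deins))\bigr)^2\bigr]$. This is not a technical refinement that MRCA decompositions, the eigenfunction identity \eqref{eq:propHs} and $R_n\to0$ can deliver; it is false in general. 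Indeed, by Lemma \ref{lem:disintegration}\eqref{dis2} the stopped sums converge $\Pfs$ to $Z(\deins)$, and for the fixed points actually covered by the theorem (e.g.\ those of Theorem \ref{thm:existence hom} with $K>0$) one has $Z(\deins)=K\,d^{\alpha/2}W(\eins)$, where the intrinsic martingale limit $W$ has a finite second moment only under extra conditions such as $m(2\alpha)<1$ --- nothing in \eqref{A0}--\eqref{A6} rules out $\E W^2=\infty$ (heavy tails of index $\beta\in(\alpha,2\alpha)$ are perfectly admissible). Fatou's lemma then forces $\liminf_{a\to\infty}\mathcal E_a(\deins,0)\ge \tfrac12 H^\alpha(\deins)^{-1}\E\,Z(\deins)^2=\infty$; restricting to the off-diagonal terms does not help, since $\sum_{v}a_v^2\le(1-\LTa(e^{-a}\deins))\sum_v a_v\to0$ a.s., so the cross-term alone already converges a.s.\ to $Z(\deins)^2$. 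Consequently the sandwich $D(\deins)\le\E_\deins^\alpha[D(e^{-(a+R(a))}U(a))]\le D(\deins)+\mathcal E_a(\deins,0)$ gives no upper control at all, and your route to $\limsup_{r\to0}\D(r\deins)<\infty$ collapses. (A secondary, repairable point: the ``strong mixing'' you invoke to trade infima for suprema is really just the comparison inequalities \eqref{ineq5}--\eqref{ineq8} and scale monotonicity, as in Lemma \ref{lem:estimates_D}; and the liminf part needs the limsup part plus tightness of $R(a)$, not a symmetric argument.)

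The way around this --- and the way the paper proceeds --- is to take no expectations of squares at all: one works pathwise, writing $Z(u)=\lim_{t\to\infty}\sum_{v\in\slineu[t]}(1-\LTa(\mL(v)^\top u))$ and sandwiching each summand $1-\LTa(\mL(v)^\top u)=H^\alpha(\mL(v)^\top u)\,\D(e^{-S^u(v)}U^u(v))$ between $\D(e^{-t}\deins)$-type quantities via Lemma \ref{lem:estimates_D}, after splitting according to whether the overshoot $S^u(v)-t$ exceeds a constant $c$. The resulting weighted populations $\sum_{v\in\slineu[t]}H^\alpha(\mL(v)^\top u)f(U^u(v),S^u(v)-t)$ converge in probability to $\bigl(\int f\,d\rho\bigr)W(u)$ by Theorem \ref{thm:l1_conv_wf} (this is where Kesten's renewal theorem enters, as you anticipated), yielding a.s.\ inequalities of the form $Z(u)\ge Re^{-\delta}\overline K_u\,C\,W(u)$ and $Z(u)\le e^{\delta}\underline K_u W(u)+R\overline K_u\,\epsilon\,W(u)$. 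The conclusion then needs only $0<Z(u)<\infty$ and $0<W(u)<\infty$ a.s.\ (Lemma \ref{lem:WWpositive}), i.e.\ no moment of $Z$ whatsoever --- precisely the information your expectation-based Bonferroni argument cannot avoid demanding.
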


The proof of this theorem will be given by the Lemmata \ref{lemK1} and \ref{lemK2} at the end of this section, extending the approach of \cite{ABM2012} to the multidimensional situation. Beforehand, we have to introduce the concept of disintegration, and provide several prerequisites, including an application of Theorem \ref{thm:l1_conv_wf}. It might be helpful to have a first glance at the proofs of Lemmata \ref{lemK1} and \ref{lemK2} after studying the section about disintegration, in order to understand where the prerequisites are needed.

\subsection{Disintegration}\label{sect:disintegration}

Assume \eqref{A0}--\eqref{A2} and let $\LTa$ be a FP of $\ST$. For each $x \in \Rdnn$, the sequence 
$$ M_n(x):= \prod_{\abs{v}=n} \LTa(\mL(v)^\top x)$$
is a bounded nonnegative martingale, which thus converges in $L^1$.

\begin{defn}\label{def:disintegration}
The random variable $M(x) ~=~ \lim_{n \to \infty} M_n(x)$ is called the {\em disintegration of
$\LTa$}. Set $Z(x):= - \log M(x) \in [0,\infty]$.
\end{defn}

Since $M_n(x)$ converges in $L^1$, it holds that $\LTfp(x) = \E\, M(x) = \E\, e^{-Z(x)}$.

\begin{lem}\label{lem:disintegration}

Assume \eqref{A0}--\eqref{A2} and \eqref{A5} with $m'(\alpha)<0$. Then for all $x \in \Rdnn$, $u \in \Sp$,
\begin{enumerate}
  \item   $\lim_{n \to \infty} \sum_{\abs{v}=n} (1- \LTa(\mL(v)^\top x) ~=~ Z(x) \qquad \Pfs,$ \label{mm2}
  \item if a function $F : \Rdnn \to \Rnn$ \label{mm3} satisfies $\lim_{r \to 0} \sup_{u \in \Sp} \abs{F(ru) - \gamma}=0$ for some $\gamma \ge 0$, then
  $$\lim_{n \to \infty} \sum_{\abs{v}=n} (1- \LTa(\mL(v)^\top x) \, F(\mL(v)^\top x)  ~=~  \gamma \, Z(x) \qquad \Pfs$$
  \item $ \lim_{t \to \infty} \sum_{v \in \slineu[t]} (1- \LTa(\mL(v)^\top u)) ~=~  Z(u)$ \qquad $\Pfs$ \label{dis2}
\end{enumerate}
\end{lem}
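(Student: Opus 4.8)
The plan is to deduce all three assertions from one structural fact — that the relevant arguments $\mL(v)^\top x$ (resp. $\mL(v)^\top u$) become \emph{uniformly} small along the index set in question — together with the elementary inequality $w\le-\log(1-w)\le w+w^2$ for $w\in[0,\tfrac12]$. The uniform smallness comes from Lemma \ref{Lemma Rn} in the generation picture ($\max_{|v|=n}\norm{\mL(v)}=R_n\to0$ $\Pfs$, hence $\max_{|v|=n}\abs{\mL(v)^\top x}\le R_n\abs{x}\to0$) and from the definition of the stopping line in the picture of part (3) ($v\in\slineu[t]$ forces $\abs{\mL(v)^\top u}=e^{-S^u(v)}<e^{-t}$). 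Since $\LTa$ is a Laplace transform on $\Rdnn$ it is continuous, satisfies $\LTa(0)=1$, and is coordinatewise nonincreasing, so $h(\delta):=1-\LTa(\delta\deins)\downarrow0$ as $\delta\downarrow0$ and $1-\LTa(y)\le h(\delta)$ for every $y\in\Rdnn$ with $\abs{y}\le\delta$.

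For (1), set $A_n(x):=\sum_{\abs{v}=n}\big(1-\LTa(\mL(v)^\top x)\big)$ and recall $B_n(x):=-\log M_n(x)=\sum_{\abs{v}=n}\big(-\log\LTa(\mL(v)^\top x)\big)$. On the $\Prob$-full event where $R_n\to0$, for all large $n$ we have $\max_{\abs{v}=n}\big(1-\LTa(\mL(v)^\top x)\big)\le h(R_n\abs{x})\le\tfrac12$, and applying $w\le-\log(1-w)\le w+w^2$ termwise gives
$$A_n(x)\ \le\ B_n(x)\ \le\ \big(1+h(R_n\abs{x})\big)\,A_n(x).$$
Because $M_n(x)$ is a bounded nonnegative martingale, $M_n(x)\to M(x)$ $\Pfs$, so $B_n(x)\to-\log M(x)=Z(x)\in[0,\infty]$; since $h(R_n\abs{x})\to0$ the squeeze forces $A_n(x)\to Z(x)$ $\Pfs$, which is precisely \eqref{mm2}.

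For (2), work on the same full event. For large $n$ every nonzero $\mL(v)^\top x$ with $\abs{v}=n$ may be written $r_vu_v$ with $u_v\in\Sp$ and $0<r_v\le R_n\abs{x}$, while the summands with $\mL(v)^\top x=0$ vanish since $1-\LTa(0)=0$; hence $\abs{F(\mL(v)^\top x)-\gamma}\le\eta_n:=\sup\{\abs{F(ru)-\gamma}:u\in\Sp,\ 0<r\le R_n\abs{x}\}\to0$ by hypothesis on $F$. Therefore
$$\Big|\sum_{\abs{v}=n}\big(1-\LTa(\mL(v)^\top x)\big)F(\mL(v)^\top x)-\gamma\,A_n(x)\Big|\ \le\ \eta_n\,A_n(x),$$
and combining this with part (1) yields $\sum_{\abs{v}=n}(1-\LTa(\mL(v)^\top x))F(\mL(v)^\top x)\to\gamma Z(x)$ $\Pfs$ (immediately when $Z(x)<\infty$, and with both sides infinite when $\gamma>0$ and $Z(x)=\infty$).

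For (3), let $M_{\slineu[t]}(u):=\prod_{v\in\slineu[t]}\LTa(\mL(v)^\top u)$. As in Lemma \ref{prop:martingale_stopping_lines}, $\slineu[t]$ is a dissecting, anticipating HSL, $\B_{\slineu[t]}$ is a filtration with $\B_\infty=\sigma((\T(v),v\in\tree))$, and $M(u)$ is $\B_\infty$-measurable and integrable; the branching property at the stopping line (optional sampling for dissecting HSLs, cf.\ \cite{ABM2012}) gives $M_{\slineu[t]}(u)=\E(M(u)\mid\B_{\slineu[t]})$, so $M_{\slineu[t]}(u)\to M(u)$ $\Pfs$ along any sequence $t_n\to\infty$ by Lévy's upward theorem. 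On $\slineu[t]$ one has $\max_{v\in\slineu[t]}(1-\LTa(\mL(v)^\top u))\le h(e^{-t})\to0$, so applying the same elementary-inequality squeeze to $-\log M_{\slineu[t]}(u)=\sum_{v\in\slineu[t]}(-\log\LTa(\mL(v)^\top u))$ yields $\sum_{v\in\slineu[t]}(1-\LTa(\mL(v)^\top u))\to Z(u)$, which is \eqref{dis2}. The main obstacle is part (3): making precise that the multiplicative martingale indexed by the stopping lines $\slineu[t]$ is closed by $M(u)$ and that the convergence holds simultaneously along sequences $t\to\infty$; once this (and the harmless possibility $Z(x)=\infty$ in (2)) is settled, the rest is the elementary inequality plus the uniform decay of $\mL(v)^\top x$.
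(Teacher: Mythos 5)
Your proposal is correct and follows essentially the same route as the paper: Lemma \ref{Lemma Rn} (resp.\ the bound $\abs{\mL(v)^\top u}\le e^{-t}$ on $\slineu[t]$) gives uniform smallness of the arguments, the $-\log(1-w)\approx w$ comparison converts the multiplicative martingale limit $M(x)$ into the additive limit $Z(x)$, part (2) is the same decomposition into $\gamma\,A_n(x)$ plus an error controlled by the uniform convergence of $F$, and part (3) reuses the conditional-expectation identity from Lemma \ref{prop:martingale_stopping_lines} together with L\'evy's upward theorem. The only difference is that you make the squeeze quantitative via $w\le-\log(1-w)\le w+w^2$ and comment on the case $Z(x)=\infty$, which the paper leaves implicit.
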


\begin{proof}

\eqref{mm2} Use the fact that $\lim_{n \to \infty} \max_{\abs{v}=n}
\norm{\mL(v)}=0$ from Lemma \ref{Lemma Rn} together with the convergence $M_n(x) \to M(x)$,
the approximation $- \log r \approx 1- r$, valid for $r$ close to $1$, and
$\lim_{\abs{x} \to 0} \LTa(x)=1$  to infer 
$$ Z(x) = - \log M(x) = - \log \lim_{n \to \infty} \prod_{\abs{v}=n}
\LTa(\mL(v)^\top x)  = \lim_{n \to \infty} \sum_{\abs{v} =n} (1-
\LTa(\mL(v)^\top x)).$$

\eqref{mm3} Writing
\begin{align*} &~\sum_{\abs{v}=n} (1- \LTa(\mL(v)^\top x) \, F(\mL(v)^\top x) \\ ~=&~  \gamma \sum_{\abs{v}=n} (1- \LTa(\mL(v)^\top x)  + \sum_{\abs{v}=n} (1- \LTa(\mL(v)^\top x) \bigl(F(\mL(v)^\top x) -\gamma \bigr),\end{align*} the assertion follows from \eqref{mm2}, using the uniform convergence of $F$ and Lemma \ref{Lemma Rn}.

\eqref{dis2} The same proof as for Lemma \ref{prop:martingale_stopping_lines} (given in Section \ref{sect:proofs1}) yields that
$$ \prod_{v \in \slineu[t]} \LTfp(\mL(v)^\top u)= \E \left[ M(u)| \B_{\slineu[t]} \right],
 $$ from which we infer $\lim_{t \to \infty} \prod_{v \in \slineu[t]} \LTa(\mL(v)^\top u) = M(u)$
 $\Pfs$. Then the argument is the same as for
\eqref{mm2}, using now that for $v \in \slineu[t]$, $\abs{\mL(v)^\top u} \le
e^{-t}$.
\end{proof}

%
%
%
%

\subsection{Prerequisites}\label{sect:characteristics}

We start by applying Theorem \ref{thm:l1_conv_wf} to particular
functions $f$.

\begin{lemma}\label{lem:GBP1}
For all $\epsilon >0$ there is a $c>0$  such that for all $u \in \Sp$
\begin{equation}  \lim_{t \to \infty} \sum_{v \in \slineu[t]} H^\alpha(\mL(v)^\top u) \1[\{S^u(v)-t>c
\}] ~=~ \epsilon W(u) \qquad \text{ in $\Prob$-probability}.\label{wtf1}\end{equation} 
\end{lemma}

\begin{proof}
The bounded measurable function $f_c:=\1[{(c, \infty)}] $ is radial, so by Theorem \ref{thm:l1_conv_wf},
$$ \lim_{t \to \infty} \sum_{v \in \slineu[t]} H^\alpha(\mL(v)^\top u) \1[{(c,\infty)}](S^u(v)-t)
 ~=~ \lim_{t \to \infty} W_{\slineu[t]}^{f_c}(u) ~=~ W(u) \, \int f_c(r) \, \rho(\Sp \times dr) $$
 in $\Prob$-probability, and the integral becomes arbitrarily small for $c$ large.
\end{proof}

\begin{lemma}\label{lem:GBP2}
For all sufficiently large $c>0$ there is $C>0$ such that for all $u \in \Sp$
\begin{equation} \lim_{t \to \infty} \sum_{v
\in \slineu[t]} H^\alpha(\mL(v)^\top u) \, \min_{j} U^u(v)_j \, \1[\{S^u(v)-t\le c
\}] ~=~ C W(u) \qquad \text{ in $\Prob$-probability.}\label{wtf2}\end{equation} 
\end{lemma}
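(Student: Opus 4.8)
The plan is to recognise the sum in question as the $\chi$-counted population $W^{f}_{\slineu[t]}(u)$ associated with the function $f(y,s) := g(y)\,h(s)$, where $g(y) := \min_{1 \le j \le d} y_j$ and $h(s) := \1[\{s \le c\}]$, and then to apply Theorem \ref{thm:l1_conv_wf}. Indeed $g$ is continuous and nonnegative on $\Sp$, and $h$ is bounded and measurable on $\R$, so $f$ falls under the product clause $f(y,s)=g(y)h(s)$ of that theorem. It therefore yields, for every $u \in \Sp$, that
\[
\sum_{v \in \slineu[t]} H^\alpha(\mL(v)^\top u)\, \min_j U^u(v)_j\, \1[\{S^u(v)-t \le c\}] \;\longrightarrow\; C_c\, W(u) \qquad (t \to \infty)
\]
in $\Prob$-probability (and in $L^{1}(\Prob)$), where
\[
C_c \;:=\; \int_{\Sp \times \R} \min_j y_j\, \1[\{s \le c\}]\, \rho(dy,ds) \;=\; \int_{\Sp} \min_j y_j\, \rho\big(dy \times (-\infty,c]\big).
\]

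The remaining point is to verify that $C_c$ is finite and that $C_c > 0$ for all sufficiently large $c$. Finiteness is trivial: $0 \le \min_j y_j \le 1$ for $y \in \Sp$, hence $C_c \le 1$. For positivity I would invoke the fact (from Theorem \ref{thm:kestenforW}) that $\rho(\interior{\Sp} \times \R) = 1$: for $\rho$-a.e.\ $(y,s)$ all coordinates $y_j$ are strictly positive, so $\min_j y_j > 0$ $\rho$-a.s. Partitioning $\Sp$ by the sets $\{\min_j y_j > 1/k\}$, one of them carries $\rho$-mass at least $1/2$, so $C_\infty := \int_{\Sp} \min_j y_j\, \rho(dy \times \R) \ge \tfrac{1}{2k} > 0$. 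Since $C_c \uparrow C_\infty$ as $c \to \infty$ by monotone convergence, there is $c_0$ with $C_c > 0$ for all $c \ge c_0$; taking $C := C_c$ for any such $c$ completes the argument.

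I do not expect a genuine obstacle here: the substance is already contained in Theorem \ref{thm:l1_conv_wf} and in the assertion $\rho(\interior{\Sp}\times\R)=1$ of Theorem \ref{thm:kestenforW}. The only things to be careful about are that the largeness of $c$ is used in the \emph{opposite} direction to Lemma \ref{lem:GBP1} (there it makes the limiting constant small, here it makes it nonzero), and that one must check $g(y)=\min_j y_j$ genuinely is a continuous function on $\Sp$ so that the product version of Theorem \ref{thm:l1_conv_wf} applies — which it plainly is.
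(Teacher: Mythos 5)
Your proposal is correct and matches the paper's own proof: the paper likewise applies Theorem \ref{thm:l1_conv_wf} to $f(y,r)=(\min_j y_j)\,\1[{[0,c]}](r)$ (continuous in $y$, bounded measurable radial in $r$) and deduces positivity of the limiting constant for large $c$ from the fact that $\rho$ is concentrated on $\interior{\Sp}\times\R$, so $\min_j y_j>0$ $\rho$-a.s. Your additional monotone-convergence remark to pick $c_0$ is exactly the "choosing $c$ sufficiently large" step in the paper.
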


\begin{proof}
The function $u \mapsto \min_j u_j$ is continuous on $\Sp$, while $r \mapsto \1[{[0,c]}](r)$ is radial and bounded measurable, so by Theorem \ref{thm:l1_conv_wf},
$$ \lim_{t \to \infty} \sum_{v
\in \slineu[t]} H^\alpha(\mL(v)^\top u) \, \min_{j} U^u(v)_j \, \1[\{S^u(v)-t\le c
\}] ~=~  \int (\min_j y_j) \, \1[{[0,c]}](r) \, \rho(dy \times dr) $$
in $\Prob$-probability. By Theorem \ref{thm:kestenforW}, $\rho(\cdot \times \Rp)$ is concentrated on
$\interior{\Sp}$, hence $\min_j u_j >0$ $\rho$-a.s., and thus upon choosing $c$ sufficiently large,
$$ C~:=~ \int (\min_j y_j) \, \1[{[0,c]}](r) \, \rho(dy \times dr) >0.$$ 
\end{proof}

\label{sect:allfps}

The next lemma generalizes \cite[Lemma 11.4]{ABM2012} to the multidimensional case.
\begin{lemma}\label{lem:estimates_D}
For all $u \in \Sp$, $t \in \R$ it holds that
$$ \frac{\D(e^{-t}u)}{\D(e^{-t}\deins)} \le
\frac{H^\alpha(\deins)}{H^\alpha(u)}:= R \qquad \text{and} \qquad
\frac{\D(e^{-t}u)}{\D(e^{-t}\deins) \, \min_j u_j} \ge R.$$ Moreover,
for all $c>0$ there is $\delta >0$ such that for all $u \in \Sp$ and all $0 \le a \le c$,
\begin{equation}
\frac{\D(e^{-(t+a)}u)}{\D(e^{-t}\deins)} \le R e^{\delta} \quad \text{
and } \quad \frac{\D(e^{-(t-a)}u)}{\D(e^{-t}\deins) \min_j u_j} \ge
R e^{-\delta}.
\end{equation}
\end{lemma}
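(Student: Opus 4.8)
The plan is to reduce the whole statement to three elementary properties of the Laplace transform $\LTa$ together with the homogeneity of $H^\alpha$. Since $H^\alpha$ is $\alpha$-homogeneous (Proposition \ref{prop:Ps}, formula \eqref{eq:defHs}), one has $\D(e^{-t}u) = e^{\alpha t}(1-\LTa(e^{-t}u))/H^\alpha(u)$ for every $u\in\Sp$ and $t\in\R$, so that for any shift $a$
\[
\frac{\D(e^{-(t\pm a)}u)}{\D(e^{-t}\deins)} \;=\; \frac{1-\LTa(e^{-(t\pm a)}u)}{1-\LTa(e^{-t}\deins)}\; e^{\pm\alpha a}\, R ,
\]
and it suffices to bound the ratio of the quantities $1-\LTa$. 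The three facts I would use: writing $\LTa(x)=\E\exp(-\skalar{x,X})$ with $X\in\Rdnn$, the function $\LTa$ is (i) nonincreasing in each coordinate on $\Rdnn$ because $X\ge 0$, (ii) convex with $\LTa(0)=1$, and (iii) for every $u\in\Sp$ one has the coordinatewise sandwich $(\min_j u_j)\deins\le u\le\deins$, since $0\le u_j\le\abs{u}=1$ and $u_j\ge\min_k u_k$.

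For the first two inequalities I would argue as follows. From $e^{-t}u\le e^{-t}\deins$ and monotonicity, $1-\LTa(e^{-t}u)\le 1-\LTa(e^{-t}\deins)$, which gives $\D(e^{-t}u)/\D(e^{-t}\deins)\le R$. For the lower bound, from $e^{-t}u\ge(\min_j u_j)e^{-t}\deins$ and monotonicity, $\LTa(e^{-t}u)\le\LTa\bigl((\min_j u_j)e^{-t}\deins\bigr)$; applying convexity along the segment from $0$ to $e^{-t}\deins$ with weight $\min_j u_j$ and using $\LTa(0)=1$ yields $\LTa\bigl((\min_j u_j)e^{-t}\deins\bigr)\le(\min_j u_j)\LTa(e^{-t}\deins)+(1-\min_j u_j)$, hence $1-\LTa(e^{-t}u)\ge(\min_j u_j)(1-\LTa(e^{-t}\deins))$, i.e.\ $\D(e^{-t}u)/(\D(e^{-t}\deins)\min_j u_j)\ge R$.

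For the perturbed estimates the same two chains apply verbatim after the observation that, for $0\le a\le c$, coordinatewise $e^{-(t+a)}u\le e^{-t}u\le e^{-t}\deins$ and $e^{-(t-a)}u\ge e^{-t}u\ge(\min_j u_j)e^{-t}\deins$, so the ratio of the $1-\LTa$ terms is again bounded above by $1$ (resp.\ below by $\min_j u_j$); the only new ingredient is the prefactor $e^{\pm\alpha a}$, which lies in $[e^{-\alpha c},e^{\alpha c}]$. This gives the claimed bounds with $\delta:=\alpha c$. I do not expect any real obstacle here — the only point requiring a little care is the convexity step, which must be taken along the ray through $\deins$ with the correct convex weight; one should also note that the second and fourth inequalities are only substantive when $\min_j u_j>0$, i.e.\ for $u\in\interior{\Sp}$, and are vacuous otherwise.
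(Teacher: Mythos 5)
Your proposal is correct and follows essentially the same route as the paper: the first two bounds are the appendix inequalities \eqref{ineq5} and \eqref{ineq8} combined with the $\alpha$-homogeneity of $H^\alpha$, and the shifted bounds come from the monotonicity of $t\mapsto 1-\LTa(e^{-t}\cdot)$ with the same choice $\delta=\alpha c$. The only cosmetic differences are that you rederive \eqref{ineq8} via convexity of $\LTa$ with $\LTa(0)=1$ (equivalent to the paper's argument that $t^{-1}(1-\LTa(t\deins))$ is a decreasing Laplace transform) and that you absorb the factor $e^{\pm\alpha a}$ directly instead of passing through the intermediate ratio $\D(e^{-(t\pm a)}u)/\D(e^{-t}u)$; your remark on the boundary case $\min_j u_j=0$ is consistent with the statement as written.
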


\begin{proof}
The first inequality results from Inequality \eqref{ineq5}:
$$ \frac{\D(e^{-t}u)}{\D(e^{-t}\deins)} = \frac{1- \LTa(e^{-t}u)}{1-
\LTa(e^{-t}\deins)} \frac{H^\alpha(e^{-t}\deins)}{H^\alpha(e^{-t}u)} \le 1 \cdot
\frac{e^{-\alpha t}H^\alpha(\deins)}{e^{-\alpha t}H^\alpha(u)} = R.$$
For the second inequality, we use \eqref{ineq8},
$$ \frac{\D(e^{-t}u)}{\D(e^{-t}\deins) \, \min_j u_j} =  \frac{1-
\LTa(e^{-t}u)}{(\min_j u_j)(1- \LTa(e^{-t}\deins)}
\frac{e^{-\alpha t}H^\alpha(\deins)}{e^{-\alpha t}H^\alpha(u)} \ge R.$$

To derive the remaining inequalites, use that
 $e^{-\alpha  t}
\D(e^{-t}u)$ is decreasing in $t$:
$$ \frac{\D(e^{-(t+a)}u)}{\D(e^{-t}u)}  =
\frac{e^{-\alpha(t+a)}\D(e^{-(t+a)}u)}{e^{-\alpha(t+a)}\D(e^{-t}u)}
\le \frac{e^{-\alpha t}\D(e^{-t}u)}{e^{-\alpha(t+a)}\D(e^{-t}u)}
\le  e^{\alpha c}. $$
Setting $\delta:=\alpha c $, one obtains by taking the reciprocal that
$$ \frac{\D(e^{-(t-a)}u)}{\D(e^{-t}u)} \ge  e^{-\delta}. $$
Now plug in the first and second inequality to derive the third resp.
fourth one.\end{proof}

A priori, $Z(x) = -\log M(x) \in [0,\infty]$. Now we are going to show that in fact $Z(u) \in (0,\infty)$ $\Pfs$ for all $u \in \interior{\Sp}$.

\begin{lemma}\label{lem:WWpositive} 
For all $u \in \interior{\Sp}$,
$$ \P{Z(u)< \infty, W(u)>0}=1, \qquad \P{Z(u)>0, W(u)< \infty}=1.$$ 
\end{lemma}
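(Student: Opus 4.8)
The inequality $\P{W(u)<\infty}=1$ is immediate: by Proposition \ref{thm:mean_convergence_Wn} the nonnegative martingale $W_n(u)$ converges in mean to $W(u)$ with $\E W(u)=H^\alpha(u)<\infty$. For $\P{W(u)>0}=1$ I would decompose the martingale at generation $k$ and let the remaining generations tend to infinity: this gives, $\Pfs$, $W(u)=\sum_{|v|=k}|\mL(v)^\top u|^{\alpha}\,[W]_v(U^u(v))$, where conditionally on $\B_k$ the $[W]_v$ are independent copies of the measurable functional $W(\cdot)$ (Proposition \ref{thm:mean_convergence_Wn}) and the coefficients $|\mL(v)^\top u|^{\alpha}=e^{-\alpha S^u(v)}$ are strictly positive because every $\mL(v)$ is a.s. allowable and $u\in\interior{\Sp}$. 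Since a sum of nonnegative terms vanishes iff all of them do, $q(u):=\P{W(u)=0}$ satisfies $q(u)=\E\prod_{|v|=k}q(U^u(v))$; as $\E W(u)=H^\alpha(u)>0$ forces $q<1$ everywhere, this is the extinction equation of a supercritical, surviving Galton--Watson tree, hence $q\equiv 0$ once $q$ is known to be constant — which follows from \eqref{A3}, since the chain $(U^u(v|k))_k$ enters and stays in a compact subset of $\interior{\Sp}$, forcing bounded harmonic functions to be constant. Alternatively one may invoke the non-degeneracy of the intrinsic martingale directly from \cite[Theorem 1.1]{Biggins2004}, valid here under the log-moment assumption \eqref{A6}.

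For $\P{Z(u)>0}=1$ and $\P{Z(u)<\infty}=1$ I would prove the sharper two-sided bound $c_-(u)\,W(u)\le Z(u)\le c_+(u)\,W(u)$ a.s. with $0<c_-(u)\le c_+(u)<\infty$, which together with the two facts above yields both assertions. Start from Lemma \ref{lem:disintegration}\,(dis2): $Z(u)=\lim_{t\to\infty}\sum_{v\in\slineu[t]}(1-\LTa(\mL(v)^\top u))$ $\Pfs$, and write each summand as $H^\alpha(\mL(v)^\top u)\,\D(e^{-S^u(v)}U^u(v))$. For the upper bound, the first inequality of Lemma \ref{lem:estimates_D} gives $\D(e^{-s}w)\le\tfrac{H^\alpha(\deins)}{H^\alpha(w)}\D(e^{-s}\deins)\le\tfrac{\sup_{\Sp}H^\alpha}{\inf_{\Sp}H^\alpha}\D(e^{-s}\deins)$; since $S^u(v)>t$ for $v\in\slineu[t]$, choosing $t$ large enough that $\D(e^{-s}\deins)\le D^++1$ for all $s\ge t$ (with $D^+:=\limsup_{r\downarrow 0}\D(r\deins)$) makes every summand at most $c_+(u)H^\alpha(\mL(v)^\top u)$, so $\sum_{v\in\slineu[t]}(1-\LTa(\mL(v)^\top u))\le c_+(u)\,W_{\slineu[t]}(u)\to c_+(u)\,W(u)$ by Lemma \ref{prop:martingale_stopping_lines}. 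For the lower bound, the second inequality of Lemma \ref{lem:estimates_D} gives $1-\LTa(\mL(v)^\top u)\ge(\min_j U^u(v)_j)\,H^\alpha(\mL(v)^\top u)\,\tfrac{H^\alpha(\deins)}{H^\alpha(U^u(v))}\,\D(e^{-S^u(v)}\deins)$; restricting the sum to $v$ with $S^u(v)-t\le c$ so that $\D(e^{-S^u(v)}\deins)\ge\tfrac12 D^-$ for large $t$ (with $D^-:=\liminf_{r\downarrow 0}\D(r\deins)$) and applying Theorem \ref{thm:l1_conv_wf} with $f(y,s)=\tfrac{\min_j y_j}{H^\alpha(y)}\1[{[0,c]}](s)$ (a product $g(y)h(s)$, $g$ continuous on the support of $\rho$, which lives in $\interior{\Sp}\times\R$), one gets $\sum_{v\in\slineu[t]}(1-\LTa(\mL(v)^\top u))\ge\tfrac12 D^-H^\alpha(\deins)\,W(u)\!\int\! g\,\1[{[0,c]}]\,d\rho$ in $\Prob$-probability, the integral being strictly positive for $c$ large. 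Passing to subsequences to combine this with the a.s. limit defining $Z(u)$ closes the estimate.

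The main obstacle is that this scheme rests on the fixed point being $\alpha$-regular \emph{in the reference direction} $\deins$, i.e. on $0<D^-\le D^+<\infty$ — precisely Theorem \ref{thm:allfixedpointsregular}, whose proof is completed only later via Lemmata \ref{lemK1}--\ref{lemK2}. So the real work is to check that the one-dimensional estimates on $r\mapsto\D(r\deins)$ are obtained \emph{without} the present lemma: the upper bound $D^+<\infty$ from iterating the functional equation $1-\LTa(r\deins)\le\E\sum_{|v|=n}(1-\LTa(r\mL(v)^\top\deins))$ against the bounded martingale $\E\sum_{|v|=n}H^\alpha(\mL(v)^\top\deins)=H^\alpha(\deins)$ and a uniform modulus of continuity of $\LTa$ near $0$ (as in \cite{ABM2012}); the lower bound $D^->0$ is the genuinely delicate input. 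I expect this bookkeeping, and $D^->0$, to be the only non-routine points; the rest is bounded-convergence manipulation once Theorem \ref{thm:l1_conv_wf} and Lemma \ref{lem:estimates_D} are available. Should $D^->0$ itself depend on Lemma \ref{lem:WWpositive}, one would instead argue $Z(u)\in(0,\infty)$ directly by a $0$--$1$ argument parallel to the one for $W(u)>0$, using $\E M(u)=\LTa(u)\in(0,1)$ for $u\in\interior{\Sp}$, the recursion $M(u)=\prod_i[M]_i(\mT_i^\top u)$, and the ray-monotonicity $M(\lambda y)\ge M(y)$ for $\lambda\le 1$.
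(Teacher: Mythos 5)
The core difficulty is in your treatment of $Z(u)$, and there your main route is circular with respect to the paper's logical structure. You sandwich $Z(u)$ between $c_-(u)W(u)$ and $c_+(u)W(u)$ using $0<D^-\le D^+<\infty$ for $r\mapsto \D(r\deins)$, i.e. essentially Theorem \ref{thm:allfixedpointsregular}. But that theorem is proved in the paper precisely by Lemmata \ref{lemK1} and \ref{lemK2}, and both of those proofs invoke the present Lemma \ref{lem:WWpositive} (finiteness of $\overline{K}_u$ uses $\P{Z(u)<\infty,\,W(u)>0}>0$; positivity of $\underline{K}_u$ uses $\P{Z(u)>0,\,W(u)<\infty}=1$). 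So unless you supply an independent proof of $\alpha$-regularity, you are assuming what this lemma is needed for. Your sketch of such an independent proof does not close the gap: to get $D^+<\infty$ by iterating $1-\LTa(r\deins)\le \E\sum_{\abs{v}=n}\bigl(1-\LTa(r\mL(v)^\top\deins)\bigr)$ against $\E\sum_{\abs{v}=n}H^\alpha(\mL(v)^\top\deins)=H^\alpha(\deins)$ you need a bound of the form $1-\LTa(sw)\le C\,H^\alpha(sw)$ uniformly for small $s$, which is exactly the statement $D^+<\infty$ you are trying to establish; a uniform modulus of continuity of $\LTa$ at $0$ only yields $o(1)$, not $O(s^\alpha)$. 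And you concede that $D^->0$ may itself rest on the lemma --- it does, via Lemma \ref{lemK2}. The fallback you mention in your last sentence is in fact the paper's actual proof: following \cite[Theorem 3.2]{DL1983} one identifies $\lim_{\abs{x}\to\infty}\LTa(x)=\P{Z(u)=0}$ with the extinction probability of the underlying branching process, which is $0$ by \eqref{A2} and independent of $u\in\interior{\Sp}$, and $\P{Z(u)<\infty}=1$ then follows because $\LTa(u)=\E\,e^{-Z(u)}$ and the fixed point puts no mass at infinity. You only gesture at this argument, so the $Z$-part of your proposal is not a proof as written.

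The $W$-part is closer to complete: $\P{W(u)<\infty}=1$ by martingale convergence is fine, and the branching decomposition with the extinction-type equation $q(u)=\E\prod_{\abs{v}=k}q(U^u(v))$ is a legitimate (if different from the paper's) route to $\P{W(u)>0}=1$. However, your justification of the constancy of $q$ is not valid as stated: $q$ is not harmonic for the chain $(U^u(v|k))_k$ (the relation is multiplicative over the tree), and the Choquet--Deny input (Proposition \ref{Choquet}) requires \eqref{A4} and concerns the shifted Markov random walk, not this situation. The constancy is, fortunately, elementary: since $u\ge(\min_i u_i)\deins$ and $u\le\deins$ componentwise, and $H^\alpha$ is monotone on $\Rdnn$, one has $(\min_i u_i)^\alpha W_n(\deins)\le W_n(u)\le W_n(\deins)$, so $\{W(u)=0\}$ is the same event for every $u\in\interior{\Sp}$; with $q<1$ and $N_k\to\infty$ a.s.\ (by \eqref{A2}) this gives $q\equiv0$. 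Also note that \cite[Theorem 1.1]{Biggins2004} gives mean convergence, hence only $\P{W(u)>0}>0$, not almost sure positivity, so it is not a substitute for this argument. In summary: the $W$-statements are essentially recoverable, but the $Z$-statements --- the ones the paper actually needs from this lemma to launch Lemmata \ref{lemK1}--\ref{lemK2} --- are left resting either on a circular input or on an unexecuted sketch of the paper's own extinction argument.
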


\begin{proof}
As in the one-dimensional case (see \cite[Theorem 3.2]{DL1983}), one can show that $\lim_{\abs{x} \to \infty} \LTa(x) = \P{Z(u)=0}$ for all $u \in \interior{\Sp}$  equals the extinction probability of the underlying branching process which is zero due to assumption \eqref{A2}, and  is independent of $u$. In particular, $\P{Z(u) >0, W(u) < \infty} = \P{Z(u) >0}=1$, and $\P{W(u)>0}=1$ as well. Note that we do not exclude the possibility $\P{Z(u)=0}=1$ for $u \in \partial \Sp$, which might appear if the fixed point is concentrated on a subspace orthogonal to $u \in \partial \Sp$.

%
Since we assumed the fixed point to have no mass at $\infty$, $\LTa$ is continuous in $0$ and since $\LTa(u)=\E e^{-Z(u)}$,  necessarily $\P{Z(u)< \infty}=1$.
Consequently, $\P{Z(u)<\infty, W(u)>0}=1.$
%
\end{proof}


\subsection{Proof of Theorem \ref{thm:allfixedpointsregular} }

Now we can prove that all non-degenerate fixed points of $\STh$ are $\alpha$-regular. Though the final argument is close to the one given in \cite[Lemma 11.5]{ABM2012}, a lot of additional technical machinery has been applied,  inter alia the application of Kesten's renewal theorem in the lengthy proof of Theorem \ref{thm:kestenforW}.

\begin{lem}\label{lemK1}
For all $u \in \interior{\Sp}$, $\overline{K}_u :=\limsup_{r \to 0} D(ru) < \infty$.
\end{lem}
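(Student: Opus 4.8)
The plan is to argue by contradiction: a hypothetical blow-up of $D$ along a fixed radial direction $u$ will be converted into a blow-up of the disintegration exponent $Z(u)$, which is excluded by Lemma \ref{lem:WWpositive}. Fix $u \in \interior{\Sp}$ and suppose $\overline{K}_u = \limsup_{r\to 0} D(ru) = \infty$. I would first choose $t_n\uparrow\infty$ with $D(e^{-t_n}u)\to\infty$; the first inequality of Lemma \ref{lem:estimates_D}, which reads $D(e^{-t}u)\le \tfrac{H^\alpha(\deins)}{H^\alpha(u)}D(e^{-t}\deins)$ with $H^\alpha(u)>0$ by Proposition \ref{prop:Ps}, then also forces $\kappa_n := D(e^{-t_n}\deins)\to\infty$; here $\deins$ is the reference direction built into Lemma \ref{lem:estimates_D}.

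The core of the proof is to compute $Z(u)$ from the stopping-line identity of Lemma \ref{lem:disintegration}(3), evaluated along the levels $t_n-c$. Fix $c>0$ large enough that Lemma \ref{lem:GBP2} applies, yielding $C>0$, and let $\delta=\delta(c)>0$ be the constant from the fourth inequality of Lemma \ref{lem:estimates_D}; put $h_+ := \max_{\Sp} H^\alpha < \infty$. For a node $v\in\slineu[t_n-c]$ with $S^u(v)\le t_n$ --- equivalently with overshoot $S^u(v)-(t_n-c)\le c$ --- write $\mL(v)^\top u = e^{-S^u(v)}U^u(v)$ with $U^u(v)\in\Sp$, and $a := t_n - S^u(v)\in[0,c)$. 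Applying the fourth inequality of Lemma \ref{lem:estimates_D} with $U^u(v)$ in the role of its direction variable, level $t_n$ and shift $a$, and then $H^\alpha(U^u(v))\le h_+$, gives
\begin{equation*}
D(\mL(v)^\top u) = D\bigl(e^{-(t_n-a)}U^u(v)\bigr) \ \ge\ \frac{H^\alpha(\deins)}{H^\alpha(U^u(v))}\, e^{-\delta}\, \kappa_n\, \min_j U^u(v)_j \ \ge\ \frac{H^\alpha(\deins)\,e^{-\delta}}{h_+}\, \kappa_n\, \min_j U^u(v)_j .
\end{equation*}
Multiplying by $H^\alpha(\mL(v)^\top u)$, recalling $1-\LTa(\mL(v)^\top u) = H^\alpha(\mL(v)^\top u)\, D(\mL(v)^\top u) \ge 0$ and summing over $v\in\slineu[t_n-c]$ (and then restricting to those with $S^u(v)\le t_n$) yields
\begin{equation*}
\sum_{v\in\slineu[t_n-c]} \bigl(1-\LTa(\mL(v)^\top u)\bigr) \ \ge\ \frac{H^\alpha(\deins)\,e^{-\delta}}{h_+}\, \kappa_n\, G_n, \qquad G_n := \sum_{\substack{v\in\slineu[t_n-c]\\ S^u(v)\le t_n}} H^\alpha(\mL(v)^\top u)\, \min_j U^u(v)_j .
\end{equation*}

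To finish, I would let $n\to\infty$. Since $t_n-c\to\infty$, Lemma \ref{lem:disintegration}(3) shows the left-hand side converges almost surely to $Z(u)$, while Lemma \ref{lem:GBP2} shows $G_n\to C\, W(u)$ in probability; passing to a subsequence along which the latter holds almost surely, and using from Lemma \ref{lem:WWpositive} that $W(u)>0$ and $Z(u)<\infty$ almost surely, the right-hand side would tend to $+\infty$ on a set of full measure (because $\kappa_n\to\infty$ and $C\,W(u)>0$), contradicting $Z(u)<\infty$. Therefore $\overline{K}_u<\infty$.

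The hard part is the comparison step in the middle: $t\mapsto D(e^{-t}w)$ need not be monotone, so $D$ at the random scale $e^{-S^u(v)}$ in the random direction $U^u(v)$ cannot be compared directly with the exploding $\kappa_n = D(e^{-t_n}\deins)$. Lemma \ref{lem:estimates_D} only gives such a comparison when the two scales differ by a bounded amount, which is exactly why the stopping line must be based at level $t_n-c$ (so that the retained $S^u(v)$ lie in $(t_n-c,t_n]$ and the shift $a$ stays in $[0,c)$), and why the large-overshoot nodes have to be discarded; it is then essential that Lemma \ref{lem:GBP2} --- resting on Kesten's renewal theorem (Theorem \ref{thm:kestenforW}), which places the stationary measure $\rho$ on $\interior{\Sp}$ --- guarantees that the retained mass, weighted by $\min_j U^u(v)_j$, converges to a strictly positive multiple of $W(u)$ and is not lost in the limit. (Lemma \ref{lem:GBP1} is not needed here; it enters the companion lower bound, Lemma \ref{lemK2}.)
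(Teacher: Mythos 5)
Your proof is correct and follows essentially the same route as the paper's: restrict the stopping line sum to nodes with overshoot at most $c$, apply the comparison inequalities of Lemma \ref{lem:estimates_D}, invoke Lemma \ref{lem:GBP2} for the positive limit $C\,W(u)$ and Lemma \ref{lem:disintegration}(3) for convergence to $Z(u)$, and conclude via Lemma \ref{lem:WWpositive}. The only differences are cosmetic — you phrase it as a contradiction with the stopping line based at $t_n-c$ rather than deriving the inequality $Z(u)\ge \mathrm{const}\cdot\overline{K}\,C\,W(u)$ directly, and you handle the $v$-dependence of the constant $R$ (via the uniform bound on $H^\alpha$) somewhat more carefully than the paper does.
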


\begin{proof}
Fix $u \in \interior{\Sp}$. Then
\begin{align*}
\sum_{v \in \slineu[t]} (1- \LTa(\mL(v)^\top u)) =&~ \sum_{v \in \slineu[t]}
H^\alpha(\mL(v)^\top u) \frac{1- \LTa(\mL(v)^\top u)}{H^\alpha(\mL(v)^\top u)}
\\
\ge&~ \sum_{v \in \slineu[t]} H^\alpha(\mL(v)^\top u) \D(\mL(v)^\top u)
\1[\{ S^u(v) \le t+c\}] \\
=&~ \sum_{v \in \slineu[t]} H^\alpha(\mL(v)^\top u) \D(e^{-S^u(v)}U^u(v))
\1[\{ t < S^u(v) \le t+c\}] \\
\ge&~ R e^{-\delta} \D(e^{-(t+c)}\deins)  \, \sum_{v \in
\slineu[t]} (\min_j (U^u(v))_j) \, H^\alpha(\mL(v)^\top u) \1[\{S^u(v) \le t+c \}]
\end{align*}
Referring to Lemma \ref{lem:GBP2}, we have for $c$ large enough and letting $t
\to \infty$ along a suitable subsequence, that
$$ Z(u) \ge R e^{-\delta} \overline{K}_u C W(u)  \qquad \Pfs$$
By Lemma \ref{lem:WWpositive}, for any $u \in \interior{\Sp}$,
$\P{Z(u) < \infty, W(u) >0} >0$, thus  $\overline{K}_u<\infty$.
\end{proof}

\begin{lem}\label{lemK2}
For all $u \in \interior{\Sp}$, $\underline{K}_u := \liminf_{r \to 0} D(ru) >0$.
\end{lem}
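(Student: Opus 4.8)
The plan is to establish, for a reference direction in $\interior{\Sp}$, an upper bound for $Z$ of the same shape as the lower bound obtained in the proof of Lemma~\ref{lemK1}, but now controlled by $\underline{K}$ instead of $\overline{K}$, and then to argue by contradiction. Fix $u \in \interior{\Sp}$ and put $R_{\max} := H^\alpha(\deins)/\min_{w\in\Sp}H^\alpha(w)$, which is finite and positive since $H^\alpha$ is continuous and strictly positive on the compact set $\Sp$ (Proposition~\ref{prop:Ps}). First I would record a uniform bound on $\D$ near the origin: Lemma~\ref{lemK1} applied to $\eins=\deins/\sqrt{d}\in\interior{\Sp}$ gives $\overline{K}_{\deins}=\overline{K}_{\eins}<\infty$ (using $\D(r\deins)=\D((r\sqrt{d})\eins)$), hence $\limsup_{s\to\infty}\D(e^{-s}\deins)<\infty$; combining this with the first inequality of Lemma~\ref{lem:estimates_D} there are $t_0\in\R$ and $\Lambda<\infty$ with $\D(e^{-s}w)\le\Lambda$ for all $w\in\Sp$ and all $s\ge t_0$.

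Next I would use the disintegration identity Lemma~\ref{lem:disintegration}\eqref{dis2}, split the stopping-line sum according to the size of the overshoot, and estimate the two pieces separately. Fix a large $c>0$. For $v\in\slineu[t]$ write $\mL(v)^\top u=e^{-S^u(v)}U^u(v)$ with $S^u(v)=t+a$, $a>0$. On $\{a\le c\}$, the monotonicity of $t\mapsto e^{-\alpha t}\D(e^{-t}w)$ (already used in the proof of Lemma~\ref{lem:estimates_D}) together with the first inequality of Lemma~\ref{lem:estimates_D} gives $\D(\mL(v)^\top u)\le e^{\alpha c}\D(e^{-t}U^u(v))\le e^{\alpha c}R_{\max}\D(e^{-t}\deins)$, so
$$ \sum_{v\in\slineu[t],\,S^u(v)-t\le c} H^\alpha(\mL(v)^\top u)\,\D(\mL(v)^\top u) \ \le\ e^{\alpha c}\,R_{\max}\,\D(e^{-t}\deins)\,W_{\slineu[t]}(u).$$
On $\{a>c\}$, using $\D(\mL(v)^\top u)\le\Lambda$ (valid for every $v\in\slineu[t]$ once $t\ge t_0$) the corresponding sum is at most $\Lambda\bigl(W_{\slineu[t]}(u)-\sum_{S^u(v)-t\le c}H^\alpha(\mL(v)^\top u)\bigr)$, which by Lemma~\ref{lem:GBP1} and Lemma~\ref{prop:martingale_stopping_lines} converges in $\Prob$-probability, as $t\to\infty$, to $\Lambda\,\epsilon(c)\,W(u)$ with $\epsilon(c)\downarrow 0$ as $c\to\infty$. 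Adding the two pieces, invoking Lemma~\ref{lem:disintegration}\eqref{dis2} for the left-hand side and Lemma~\ref{prop:martingale_stopping_lines} for $W_{\slineu[t]}(u)\to W(u)$, and passing to a subsequence $t_n\to\infty$ along which $\D(e^{-t_n}\deins)\to\underline{K}_{\deins}$ and along which all in-probability limits hold $\Prob$-a.s., one obtains
$$ Z(u)\ \le\ e^{\alpha c}\,R_{\max}\,\underline{K}_{\deins}\,W(u)\ +\ \Lambda\,\epsilon(c)\,W(u)\qquad\Pfs$$

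Finally, suppose $\underline{K}_{\deins}=0$. Then the displayed inequality reduces to $Z(u)\le\Lambda\,\epsilon(c)\,W(u)$ for every large $c$; since $W(u)<\infty$ $\Pfs$ by Proposition~\ref{thm:mean_convergence_Wn}, letting $c\to\infty$ along a countable sequence forces $Z(u)=0$ $\Pfs$, contradicting $\P{Z(u)>0}=1$ from Lemma~\ref{lem:WWpositive}. Hence $\underline{K}_{\deins}>0$, and then the second inequality of Lemma~\ref{lem:estimates_D} gives, for every $u\in\interior{\Sp}$,
$$ \underline{K}_u=\liminf_{r\to 0}\D(ru)\ \ge\ \frac{H^\alpha(\deins)}{H^\alpha(u)}\Bigl(\min_j u_j\Bigr)\,\underline{K}_{\deins}\ >\ 0.$$

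I expect the main obstacle to be the tension between the truncation level $c$ and the factor $e^{\alpha c}$ multiplying $\underline{K}_{\deins}$: in contrast to Lemma~\ref{lemK1}, one cannot simply send $c\to\infty$ to discard the large-overshoot contribution while keeping the main term under control, so the argument must be organised as a proof by contradiction — it is precisely the hypothesis $\underline{K}_{\deins}=0$ that kills the exponentially weighted main term, after which the large-overshoot term is harmless. The secondary technical point is the convergence bookkeeping: combining the $\Prob$-a.s.\ convergence in Lemma~\ref{lem:disintegration}\eqref{dis2} and in Lemma~\ref{prop:martingale_stopping_lines} with the merely in-probability convergence of Lemma~\ref{lem:GBP1} along a single common subsequence on which one also realizes the $\liminf$ defining $\underline{K}_{\deins}$.
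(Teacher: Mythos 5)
Your proof is correct and follows essentially the same route as the paper's: the same split of the stopping-line sum at overshoot level $c$, estimated via the inequalities of Lemma \ref{lem:estimates_D} and the monotonicity of $t\mapsto e^{-\alpha t}\D(e^{-t}w)$, with the large-overshoot part controlled by Lemma \ref{lem:GBP1}, the left-hand side identified by Lemma \ref{lem:disintegration}, and the conclusion forced by $Z(u)>0$, $W(u)<\infty$ from Lemma \ref{lem:WWpositive}. The only cosmetic differences are that you first prove positivity along the direction $\deins$ and then transfer to general $u\in\interior{\Sp}$ via the second inequality of Lemma \ref{lem:estimates_D}, and that you bound the large-overshoot terms by a uniform constant $\Lambda$ (obtained from Lemma \ref{lemK1}) rather than by $R\,\overline{K}_u$; the paper does both in one step for fixed $u$.
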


\begin{proof}
Fix $u \in \interior{\Sp}$. Again using Lemma \ref{lem:estimates_D}, we estimate 
\begin{align*}
&~\sum_{v \in \slineu[t]} (1- \LTa(\mL(v)^\top u))\\
 =&~ \sum_{v \in \slineu[t]}
H^\alpha(\mL(v)^\top u) \D(\mL(v)^\top u) \1[\{S^u(v)\le t+c \}] + \sum_{v
\in \slineu[t]} H^\alpha(\mL(v)^\top u) \D(\mL(v)^\top u) \1[\{S^u(v)> t+c \}] \\
 =&~ \sum_{v \in \slineu[t]}
H^\alpha(\mL(v)^\top u) \D(e^{-S^u(v)}U^u(v)) \1[\{t <S^u(v)\le t+c \}]  \\
&~+
\sum_{v \in \slineu[t]} H^\alpha(\mL(v)^\top u) \D(e^{-S^u(v)}U^u(v))
\1[\{S^u(v)> t+c \}] \\ 
\le&~ e^\delta \D(e^{-t}\deins) \sum_{v \in \slineu[t]} H^\alpha(\mL(v)^\top u)
+ R\left( \sup_{s\ge t+c} \D(e^{-s}\deins)\right) \sum_{v \in \slineu[t]}
H^\alpha(\mL(v)^\top u) \1[\{S^u(v)>t+c \}].
\end{align*}
Considering Lemma \ref{lem:GBP1}, we have for $t \to \infty$ along a suitable
subsequence, that 
$$ Z(u) \le e^\delta \underline{K}_u W(u) + R \overline{K}_u \epsilon
W(u) \qquad \Pfs $$ By Lemma \ref{lem:WWpositive} $\P{Z(u)>0, W(u) <\infty}=1$. Since
$\epsilon$ can be made arbitrarily small for $c$ large, we infer that
$\underline{K}_u>0$.
\end{proof}

\section{Uniqueness of Fixed Points of the Homogeneous Equation}\label{sect:uniqueness}

In the subsequent sections, we will prove the following theorem.

\begin{theorem}\label{thm:regularfp}
Assume \eqref{A0}--\eqref{A7} and $m'(\alpha) <0$. If $\LTa$ is an $\alpha$-regular FP of $\STh$, then
there is $K >0$ such that $$ \LTa(ru) = \E \exp(- K r^\alpha  W(u)) , \qquad \text{ for all } r \in \Rnn, \, u \in \Sp.$$ 
\end{theorem}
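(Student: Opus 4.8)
The plan is to prove Theorem~\ref{thm:regularfp} in two stages. The first, and much harder, stage upgrades $\alpha$-regularity to uniform $\alpha$-elementarity with a precise directional profile: for an $\alpha$-regular fixed point $\LTa$ of $\STh$ there is a single constant $K\in(0,\infty)$ with
\[
\lim_{r\downarrow 0}\ \sup_{u\in\Sp}\ \bigl|D(ru)-K\bigr|\;=\;0 .
\]
This is the content of Theorem~\ref{thm:uniqueness_final}. Granting it, the second stage is a short disintegration argument that delivers the asserted formula for $\LTa$ and the positivity of $K$.

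For the first stage I would proceed as follows. Fix a null sequence $r_n\downarrow 0$ and set $g_n\colon\Sp\to\Rnn$, $g_n(u):=D(r_nu)$. By Lemma~\ref{lem:estimates_D} together with the strict positivity and continuity of $H^\alpha$ on the compact set $\Sp$ (Proposition~\ref{prop:Ps}), the $g_n$ are uniformly bounded on $\Sp$, and bounded away from $0$ on $\interior{\Sp}$ by Lemmata~\ref{lemK1} and~\ref{lemK2}; combining $\alpha$-regularity with the multivariate Laplace-transform inequalities of the appendix and the $\min\{\alpha,1\}$-H\"older continuity of $H^\alpha$, one shows that $(g_n)$ is equi-$\alpha$-H\"older on $\Sp$; this equicontinuity estimate is the technical heart of Section~\ref{sect:arzelaascoli}. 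Arzel\'a-Ascoli then gives a subsequence along which $g_n\to g$ uniformly on $\Sp$. Using the fixed-point identity $\LTa(x)=\E\prod_{i=1}^N\LTa(\mT_i^\top x)$, the $\alpha$-homogeneity of $H^\alpha$, the eigenfunction relation~\eqref{eq:propHs} with $m(\alpha)=1$, and the change of measure~\eqref{eq:manytoone1}, one passes to the limit to see that $g$ satisfies $g(u)=\E_u^\alpha\bigl[g(U_1)\bigr]$, i.e.\ $g$ is a bounded harmonic function for the Markov chain $(U_n)_{n\in\No}$ of Section~\ref{sect:condC}. By the Choquet-Deny type lemma \cite[Theorem~2.2]{Mentemeier2013a}, whose hypotheses hold under the assumptions of Theorem~\ref{thm:regularfp}, such a function is constant; since $g>0$ on $\interior{\Sp}$, the constant $K$ is positive. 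A routine subsequence argument shows the constant is the same for every null sequence, which yields the displayed uniform limit.

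For the second stage, fix $r>0$ and $u\in\Sp$ and use the disintegration $Z$ of Definition~\ref{def:disintegration}, so $\LTa(ru)=\E\,e^{-Z(ru)}$. On the one hand, part~\eqref{mm2} of Lemma~\ref{lem:disintegration} at $x=ru$ gives $\sum_{\abs{v}=n}\bigl(1-\LTa(r\mL(v)^\top u)\bigr)\to Z(ru)$ $\Pfs$. On the other hand, by the $\alpha$-homogeneity of $H^\alpha$,
\[
\sum_{\abs{v}=n}\bigl(1-\LTa(r\mL(v)^\top u)\bigr)
\;=\; r^\alpha\sum_{\abs{v}=n} H^\alpha(\mL(v)^\top u)\, D\bigl(r\mL(v)^\top u\bigr);
\]
since $\sup_{\abs{v}=n}\norm{\mL(v)^\top u}\to 0$ $\Pfs$ by Lemma~\ref{Lemma Rn}, the function $w\mapsto D(rw)$ satisfies $\lim_{s\downarrow 0}\sup_{w\in\Sp}|D(rsw)-K|=0$ by the first stage, so Corollary~\ref{cor:convWnF} shows the right-hand side converges $\Pfs$ to $r^\alpha K\,W(u)$. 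Hence $Z(ru)=K\,r^\alpha W(u)$ $\Pfs$, and therefore $\LTa(ru)=\E\,e^{-Z(ru)}=\E\exp(-K\,r^\alpha W(u))$; for $r=0$ both sides equal $1$. This proves the theorem.

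The main obstacle lies entirely in the first stage: establishing the equi-$\alpha$-H\"older bound for the rescaled increments $u\mapsto r^{-\alpha}(1-\LTa(ru))$ so that Arzel\'a-Ascoli applies, and then identifying every subsequential limit as a bounded harmonic function of the associated Markov random walk so that the Choquet-Deny lemma forces it to be a single constant. Once uniform $\alpha$-elementarity with directional profile $H^\alpha$ is available, the reduction $Z=K\,r^\alpha W(\cdot)$ via Corollary~\ref{cor:convWnF} and part~\eqref{mm2} of Lemma~\ref{lem:disintegration} is routine.
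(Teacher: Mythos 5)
Your second stage is fine and coincides with the paper's concluding step (Corollary \ref{cor:convWnF} plus part (1) of Lemma \ref{lem:disintegration} give $Z(ru)=K r^\alpha W(u)$ $\Pfs$, hence the formula for $\LTa$). The genuine gap is in your first stage. Theorem \ref{thm:uniqueness_final} does \emph{not} assert that an $\alpha$-regular fixed point satisfies $\lim_{r\to 0}\sup_{u\in\Sp}\abs{\D(ru)-K}=0$: its part (2) only gives the uniform \emph{ratio} limit \eqref{eq:regvarLT0}, and its part (3) presupposes that $\LTa$ is $\alpha$-\emph{elementary}, i.e.\ that $\lim_{r\to 0}(1-\LTa(r\deins))/r^\alpha$ exists --- which is exactly what is not yet known for a merely $\alpha$-regular $\LTa$. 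Your own sketch runs into the same obstruction: the Arzel\'a--Ascoli/Choquet--Deny machinery shows that every subsequential limit of $\D(r_n\,\cdot)$ is a positive constant, but the constant may depend on the subsequence; the paper says this explicitly in the proof of part (2) (``the value of the constant may depend on the subsequence''). The ``routine subsequence argument'' you invoke therefore does not exist: constancy of all subsequential limits does not imply convergence. For instance, $1-\LTa(r\deins)=r^\alpha\ell(r)$ with $\ell$ slowly varying but oscillating (say, behaving like $2+\sin(\log\log(1/r))$) is consistent with $\alpha$-regularity, with the uniform ratio limit, and with every subsequential limit of the rescaled functions being a constant, yet the limit you claim fails to exist. (A secondary inaccuracy: you apply the Choquet--Deny lemma to a bounded harmonic function of the chain $(U_n)$ alone, whereas Proposition \ref{Choquet} concerns functions on $\Sp\times\R$ harmonic for the full Markov random walk $(U_n,S_n)$; in the paper it is applied to ratios of extremal points of $\mathcal{H}_{\alpha,c}^K$ after a Krein--Milman reduction, not to the limit function directly.)

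The missing idea is the paper's bridge from regular to elementary via disintegration: using part (2) of Theorem \ref{thm:uniqueness_final} together with Lemma \ref{lem:disintegration} one shows $Z(su)=s^\alpha Z(u)$ $\Pfs$, so that $\LTa(su)=\E\exp(-s^\alpha Z(u))$ and, by monotone convergence, $\lim_{s\to 0}s^{-\alpha}(1-\LTa(su))=\E Z(u)$ exists; it is positive by Lemma \ref{lem:WWpositive} and finite by Lemma \ref{lemK1}. Only at this point is $\LTa$ known to be $\alpha$-elementary, and part (3) of Theorem \ref{thm:uniqueness_final} then supplies the single constant $K>0$ and the uniform convergence \eqref{eq:limLTnull} that your second stage takes as its starting point. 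With that step inserted (note it reuses the disintegration you already employ later), your argument closes and agrees with the paper's proof.
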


Together with Theorem \ref{thm:allfixedpointsregular}, this completes the proof of Theorem \ref{thm:main thm} for the homogeneous equation, by proving the uniqueness of fixed points (recall that the existence of fixed points was shown in Theorem \ref{thm:existence hom}).

In this section, we are going to conclude Theorem \ref{thm:regularfp} from the general result given below, the proof of which will be given in Sections \ref{sect:arzelaascoli} and \ref{kreinmilman}.
 
 \begin{thm}\label{thm:uniqueness_final}
Assume \eqref{A1}--\eqref{A4} and \eqref{A7}. Let $\alpha \in (0,1]$, not necessarily satisfying \eqref{A5}.
\begin{enumerate}
  \item If there is an $L$-$\alpha$-regular FP of $\STh$, then  $m(\alpha)=1$, $m'(\alpha) \le 0$.
 \item  If $\LTa$ is an
 $L$-$\alpha$-regular FP of $\ST$, then for all fixed $s >0$, \begin{equation}\label{eq:regvarLT0} \lim_{r \to 0} \, \sup_{u,v \in \Sp}
 \abs{\frac{1-\LTa(sru)}{1- \LTa(rv)}- s^\alpha \frac{H^\alpha(u)}{H^\alpha(v)}} =0.\end{equation}
  \item If $\LTfp$ is a
$L$-$\alpha$-elementary FP  of $\ST$, then there is $K >0$ such that
\begin{equation}\label{eq:limLTnull} \lim_{r \to 0} \sup_{u \in \Sp}\abs{\frac{1- \LTfp(ru)}{L(r) r^\alpha} - K H^\alpha(u)} =0  . \end{equation}
\end{enumerate}
\end{thm}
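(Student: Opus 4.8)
The plan is to treat the three assertions in sequence, since (2) is the technical heart of the matter and (1), (3) will largely follow from it. For part (1): if $\LTa$ is an $L$-$\alpha$-regular fixed point of $\STh$, I would exploit the functional equation $\STh\LTa=\LTa$ together with the disintegration $Z$ introduced in Definition \ref{def:disintegration}. Evaluating the fixed point equation and using the regular-variation property $1-\LTa(r\deins)\sim L(r)r^\alpha K_0$ along a subsequence, one compares $\sum_{|v|=1}(1-\LTa(\mL(v)^\top u))$ with $1-\LTa(u)$; invoking Inequalities (\ref{ineq5}), (\ref{ineq8}) from the appendix and the slow variation of $L$ forces $\E\sum_{i=1}^N H^\alpha(\mT_i^\top \deins)=H^\alpha(\deins)$, i.e. $m(\alpha)=1$ by (\ref{eq:propHs}); log-convexity of $m$ plus $\alpha$-regularity (boundedness of the limsup) then yields $m'(\alpha)\le 0$ in the usual way. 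Note that here one only has \eqref{A1}--\eqref{A4}, \eqref{A7}, so the argument must not appeal to \eqref{A5}, \eqref{A6}, and in particular cannot use Kesten's renewal theorem — one works directly with the one-step equation and iterates.

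For part (2), the strategy is the Arzel\`a--Ascoli/Krein--Milman scheme outlined in the introduction, carried out in Sections \ref{sect:arzelaascoli} and \ref{kreinmilman}. Define $g_r(u):=\dfrac{1-\LTa(ru)}{(1-\LTa(r\deins))}\cdot\dfrac{H^\alpha(\deins)}{H^\alpha(u)}$ — this is a normalized version of $D(ru)/D(r\deins)$ — and show the family $\{g_r\}_{r\in(0,1]}$ is uniformly bounded (by $\alpha$-regularity, using Lemma \ref{lem:estimates_D}) and equicontinuous on $\Sp$. Equicontinuity is obtained by showing $u\mapsto r^{-\alpha}(1-\LTa(ru))$ is uniformly $\alpha$-H\"older (the key input: an $L$-$\alpha$-regular LT on $\Rdnn$ has this property for each $r$, with a H\"older constant controlled uniformly in $r$ by the regular-variation bounds and the multivariate LT inequalities in the appendix). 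Arzel\`a--Ascoli then extracts, along any sequence $r_n\to 0$, a uniform limit $h$. One then shows every such limit $h$ is a bounded positive solution of the harmonic equation $h(u)=\E_u^\alpha h(U_1)$ for the associated Markov random walk — this uses the many-to-one identity together with the fixed-point equation for $\LTa$ and the slow variation of $L$, after passing to the limit in $\sum_{|v|=1}H^\alpha(\mL(v)^\top u)g_r(\mL(v)^\top u)$. Since $(U_n)$ satisfies the hypotheses guaranteeing a Choquet--Deny type conclusion (cited as \cite[Theorem 2.2]{Mentemeier2013a}), every bounded $\Ps[\alpha]$/$\Pst[\alpha]$-harmonic function on $\Sp$ is constant; by the normalization $g_r(\deins)=1$ the constant is $1$, so $h\equiv 1$, i.e. $g_r\to 1$ uniformly. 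Rewriting $g_r\to1$ gives exactly \eqref{eq:regvarLT0} with $s=1$; the general $s>0$ then follows by combining the $s=1$ statement applied at $r$ and at $sr$ with the slow variation $L(sr)/L(r)\to1$ and monotonicity of $s\mapsto s^{-\alpha}(1-\LTa(sru))$ (the LT monotonicity argument used in the proof of Theorem \ref{thm:propertiesfp}).

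For part (3), assume $\LTa=\LTfp$ is $L$-$\alpha$-elementary, so $K:=\lim_{r\to0}(1-\LTfp(r\deins))/(L(r)r^\alpha)\in(0,\infty)$ exists (not merely as a liminf/limsup pair). Then \eqref{eq:regvarLT0} with $v=\deins$ reads $\sup_u\bigl|\frac{1-\LTfp(ru)}{1-\LTfp(r\deins)}-\frac{H^\alpha(u)}{H^\alpha(\deins)}\bigr|\to0$, and multiplying by $\frac{1-\LTfp(r\deins)}{L(r)r^\alpha}\to K H^\alpha(\deins)$ — the extra factor being bounded and convergent, hence the product converges uniformly — yields precisely \eqref{eq:limLTnull}. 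Finally, $K>0$ because $K=0$ would force $1-\LTfp(r\deins)=o(L(r)r^\alpha)$, contradicting $L$-$\alpha$-elementarity; $K<\infty$ is the elementarity hypothesis itself.

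The main obstacle is the equicontinuity step inside part (2): establishing a uniform-in-$r$ $\alpha$-H\"older modulus for $u\mapsto r^{-\alpha}(1-\LTfp(ru))$ on $\Sp$. In dimension one this issue is absent (there is no directional variable), so this is where the multivariate setting genuinely bites; it will rely on the elementary inequalities for multivariate Laplace transforms collected in the appendix, combined with the regular-variation bounds of Lemma \ref{lem:estimates_D}, and must be done carefully because the H\"older constant has to be controlled as $r\to0$, not just for fixed $r$. A secondary subtlety is verifying that the limit function $h$ extends to a harmonic function on all of $\Sp$ (including the boundary $\partial\Sp$, where $Z(u)$ may degenerate), so that the Choquet--Deny lemma applies on the correct state space.
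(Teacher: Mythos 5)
Your overall philosophy (compactness plus a Choquet--Deny argument) is the right one, but the specific route you propose has a genuine gap at its core, namely in part (2). You work with the one-variable family $g_r(u)=\frac{1-\LTa(ru)}{1-\LTa(r\deins)}\cdot\frac{H^\alpha(\deins)}{H^\alpha(u)}$ and claim that any uniform subsequential limit $h$ satisfies $h(u)=\E_u^\alpha h(U_1)$. But the fixed-point equation, after linearization and the many-to-one identity, couples the scale $r$ to the \emph{random} scale $re^{-S_1}$: up to the error term $G$, the renormalized one-step identity makes $g_r(u)$ an average of $g_{re^{-S_1}}(U_1)$ multiplied by the radial ratio $\bigl(1-\LTa(re^{-S_1}\deins)\bigr)/\bigl(1-\LTa(r\deins)\bigr)$ (times slowly varying corrections). $L$-$\alpha$-regularity only bounds this ratio between constants of the type $\underline{K}/\overline{K}$; it does not make it converge, and its convergence is essentially the radial half of \eqref{eq:regvarLT0} that you are trying to prove, so the claimed harmonicity of $h$ in the space variable alone is circular. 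This is precisely why the paper keeps two variables: it studies $\D[L,s](u,t)$ on $\Sp\times\R$, shows subsequential limits lie in the compact convex set $\mathcal{H}_{\alpha,c}^K$ and satisfy the space-time equation $g(u,t)=m(\alpha)\E_u^\alpha g(U_1,t-S_1)$, and then applies the Choquet--Deny result (Proposition \ref{Choquet}, itself a space-time statement whose continuity hypothesis is supplied by the H\"older property (6)) not to the limit but to the ratios $L_t(u,z)=g(u,t+z)/g(u,z)$ of an \emph{extremal} point, inside a Krein--Milman argument; the limits are then convex combinations of $c\,e^{(\chi-\alpha)t}H^\chi(u)/H^\alpha(u)$ with $m(\chi)=1$, and boundedness in $t$ eliminates the wrong exponents. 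Relatedly, your reduction of general $s>0$ in \eqref{eq:regvarLT0} to the case $s=1$ via slow variation and LT monotonicity only yields two-sided bounds of order $s$, not the factor $s^\alpha$; that factor again comes from identifying the $t$-dependence of the limits, which your one-variable normalization discards.

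Part (1) is also not established by your sketch. A one-step comparison of $\E\sum_{i}(1-\LTa(r\mT_i^\top u))$ with $1-\LTa(ru)$ using \eqref{ineq5} and \eqref{ineq8} cannot force $m(\alpha)=1$: at that stage you control only the $\liminf$/$\limsup$ of $(1-\LTa(r\deins))/(L(r)r^\alpha)$ along the single direction $\deins$, you have no identification of the directional dependence, and interchanging $r\to0$ with the expectation requires exactly the uniform statements not yet proved. In the paper, $m(\alpha)=1$ drops out of the extremal-point classification (every admissible exponent $\chi$ satisfies $m(\chi)=1$, and boundedness of $\D[\infty](\eins,\cdot)$ in $t$ forces $\alpha\in\{\chi_1,\chi_2\}$), while $m'(\alpha)\le0$ is obtained by \emph{excluding} $\alpha=\chi_2$ through a Tauberian argument combined with the fact, via the contraction argument of \cite{NR2004}, that a nontrivial FP of $\STh$ cannot have a finite $s$-moment with $m(s)<1$, $s\le1$. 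Your appeal to ``log-convexity plus boundedness of the limsup'' provides no mechanism to rule out $\alpha$ being the larger root, i.e.\ $m'(\alpha)>0$; note also that in the paper part (1) is a consequence of the machinery built for part (2), not a preliminary step. Your derivation of part (3) from part (2) plus elementarity is fine, but it stands or falls with (2).
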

 
\begin{rem}\label{rem:importantthm}This far reaching result covers the case of general slowly varying functions $L$ (non-constant $L$ become relevant in the critical case $m'(\alpha)=0$, see \cite{KM2014}) and proves that the directional dependence of $\lim_{r \to 0} ({1 - \LTfp(ru)})/({L(r) r^\alpha})$ is always given by $H^\alpha$ (answering the question raised in the Introduction in Subsection \ref{subject:comparison}) and that the convergence is uniform.  \end{rem}
 
\subsection{Proof of Theorem \ref{thm:regularfp}} 
 
Using Disintegration (see Subsection \ref{sect:disintegration}), each fixed point has the representation $\LTa(x) = \E \exp(- Z(x))$. We will show that $\Pfs$, the function $Z(x)$ is $\alpha$-homogeneous, i.e. $Z(x)=\abs{x}^\alpha Z(x/\abs{x})$, and subsequently that $Z(u) = KW(u)$ $\Pfs$ for all $u \in \Sp$.
 
 \begin{proof}[Proof of Theorem \ref{thm:regularfp}]
 We follow the proof given in \cite[Lemma 7.6 \& Theorem 10.2]{ABM2012} for the one-dimensional case.
 
 \medskip
 
 \Step[1], $Z$ is $\alpha$-homogeneous: Using  Lemma \ref{lem:disintegration} together with
property \eqref{eq:regvarLT0} from Theorem \ref{thm:uniqueness_final}, we obtain that for all $u \in \Sp$ and $s \in \Rnn$,
\begin{align}
Z(s u) ~=~ & \lim_{n \to \infty} \sum_{\abs{v}=n} (1-
\LTa(r \mL(v)^\top u)) \nonumber \\
~=~ & \lim_{n \to \infty} \sum_{\abs{v}=n} \frac{1- \LTa(s
\mL(v)^\top u)}{1- \LTa(\mL(v)^\top u)}
\left[ 1- \LTa(\mL(v)^\top u) \right] \nonumber \\
~=~ &  s^\alpha Z(u) \quad \Pfs \label{eq:Mpower}
\end{align}
Note that this implies $\LTa(su) = \E \exp(-s^\alpha Z(u))$ for all $u \in \Sp$, $s \in \Rnn$.

\medskip

\Step[2], $0 <\E\, Z(u) < \infty$ for all $u \in \interior{\Sp}$: By Lemma \ref{lem:WWpositive}, $0 < Z(u) < \infty$ $\Pfs$ for all $u \in \interior{\Sp}$, which implies $\E Z(u) >0$. Moreover, we have the monotone convergence $\lim_{s \to 0} s^{-\alpha} (1-
\exp(-s^\alpha Z(u))) = Z(u)$, thus
\begin{equation}\label{eq:limZ} \lim_{s \to 0} \frac{1- \LTa(su)}{s^\alpha}=
\lim_{s \to 0} \E \left[ \frac{1- e^{-s^\alpha Z(u)}}{s^\alpha} \right] = \E
Z(u) , \end{equation} being finite or not. But now we can refer to Lemma \ref{lemK1}, which yields finiteness of $\E Z(u)$. 

\medskip

\Step[3]: Eq. \eqref{eq:limZ} yields that $\LTa$ is $\alpha$-elementary. Thus, Eq. \eqref{eq:limLTnull} of Theorem \ref{thm:uniqueness_final} gives that there is $K>0$ with 
$$\lim_{s \to 0}  \sup_{u \in \Sp} \abs{\frac{1- \LTa(su)}{s^\alpha H^\alpha(u)} - K} ~=~ \lim_{s \to 0}  \sup_{u \in \Sp} \abs{\frac{1- \LTa(su)}{ H^\alpha(su)} - K} ~=~0,$$ which implies $\E Z(u) = K H^\alpha(u)$ for all $u \in \Sp$, and, together with Corollary \ref{cor:convWnF}, that
\begin{align*}
Z(u) ~=~  & \lim_{n \to \infty} \sum_{\abs{v}=n} 
\left( 1- \LTa(\mL(v)^\top u) \right) \nonumber \\
~=~ &  \lim_{n \to \infty} \sum_{\abs{v}=n} H^\alpha(\mL(v)^\top u) \,
\frac{1- \LTa(\mL(v)^\top u)}{ H^\alpha(\mL(v)^\top u)} ~=~ K W(u)\quad \Pfs \label{eq:Mpower}
\end{align*}
\end{proof}

\section{Proof of Theorem \ref{thm:uniqueness_final}: Compactness Arguments via the Arzel\'a-Ascoli Theorem}
\label{sect:arzelaascoli}

Let $\LTa$ be $L$-$\alpha$-regular. Here and below, we will study the family  $(\D[L,s])_{s \in \R}$ of functions on $\Sp \times \R$, given by
\begin{equation}
\D[L,s](u,t) := \frac{1- \LTa(e^{s+t}u)}{H^\alpha(e^t u) e^{\alpha s}L(e^s)} ~=~ \frac{1- \LTa(e^{s+t}u)}{H^\alpha(u) e^{\alpha(s+t)} L(e^s)}.
\end{equation}

Note that $\LTa$ is $L$-$\alpha$-elementary, if $\lim_{s \to -\infty} \D[L,s](\eins,0)$ exists and is finite and positive, where $\eins = d^{-1/2} \deins$ and that $L$-$\alpha$-regularity implies that for each fixed $(u,t) \in \Sp \times \R$, the family $(\D[L,s](u,t))_{s \in \R}$ is uniformly bounded.  

This already hints at using the Arzel\'a-Ascoli theorem. In fact, in this section, we are going to show that $L$-$\alpha$-regularity implies that the family $(\D[L,s])_{s \in \R}$ are $\min\{1,\alpha\}$-H\"older continuous in $u$ for any fixed $t \in \R$. This will imply equicontinuity of a restricted family $(\D[L,s])_{s \le s_0}$. The results of this section carry a lot of technical details, but their essence can be phrased as follows:

\begin{lem}\label{lem:arzelamain}
Assume \eqref{A0}--\eqref{A3} and let $\LTa$ be $L$-$\alpha$-regular for $\alpha \in (0,1]$ and a positive function $L$, slowly varying at 0. Then there is $s_0 \in \R$, such that the family $(\D[L,s])_{s \le s_0}$ is contained in a compact subset of $\Cf{\Sp \times \R}$ with respect to the topology of uniform convergence on compact sets. In particular, each sequence $(\D[L,s_n])_{n \ge 0}$ with $\lim_{n \to \infty} s_n =-\infty$ has a convergent subsequence. 
\end{lem}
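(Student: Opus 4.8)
The plan is to apply the Arzel\'a--Ascoli theorem to $(\D[L,s])_{s\le s_0}$ on the locally compact, $\sigma$-compact space $\Sp\times\R$. Each $\D[L,s]$ is continuous there (the fixed point has no mass at infinity, so $\LTa$ is continuous on $\Rdnn$, and $H^\alpha$ is continuous and strictly positive on $\Sp$ by Proposition \ref{prop:Ps}), so it suffices to produce $s_0\in\R$ for which $(\D[L,s])_{s\le s_0}$ is uniformly bounded and equicontinuous on each set $\Sp\times[-T,T]$; relative compactness in the compact-open topology then follows, and since that topology is metrizable, the closure is sequentially compact, which gives the asserted convergent subsequences. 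First I would choose $s_0$ and constants $0<c\le C<\infty$ from $L$-$\alpha$-regularity so that $c\le (1-\LTa(e^s\deins))/(e^{\alpha s}L(e^s))\le C$ for all $s\le s_0$. For the uniform bound, write for $(u,t)\in\Sp\times[-T,T]$ and $s\le s_0$,
\[
\D[L,s](u,t) ~=~ \frac{e^{-\alpha t}}{H^\alpha(u)}\cdot\frac{1-\LTa(e^{s+t}u)}{e^{\alpha s}L(e^s)} ;
\]
the first factor is at most $e^{\alpha T}/\inf_{u\in\Sp}H^\alpha(u)$, and since each $u\in\Sp$ has coordinates $\le 1$ and $e^t\le e^T$, monotonicity of $1-\LTa$ together with $1-\LTa(nx)\le n(1-\LTa(x))$ for $n\in\N$ (Jensen; cf.\ the Laplace-transform inequalities in the appendix) gives $1-\LTa(e^{s+t}u)\le 1-\LTa(ne^s\deins)\le n(1-\LTa(e^s\deins))$ with $n:=\lceil e^T\rceil$, so the second factor is $\le nC$. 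This yields a bound $M=M(T)<\infty$ uniform in $s\le s_0$.

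Equicontinuity in $t$ is easy: since $\rho\mapsto 1-\LTa(\rho u)$ is nondecreasing and $\rho\mapsto\rho^{-1}(1-\LTa(\rho u))$ is nonincreasing, for $-T\le t'\le t\le T$ one has $e^{t'-t}(1-\LTa(e^{s+t}u))\le 1-\LTa(e^{s+t'}u)\le 1-\LTa(e^{s+t}u)$, hence (the factor $L(e^s)$ cancels)
\[
\frac{\D[L,s](u,t')}{\D[L,s](u,t)} ~=~ e^{\alpha(t-t')}\frac{1-\LTa(e^{s+t'}u)}{1-\LTa(e^{s+t}u)} ~\in~ \bigl[e^{-(1-\alpha)(t-t')},\,e^{\alpha(t-t')}\bigr],
\]
so $\abs{\D[L,s](u,t)-\D[L,s](u,t')}\le (e^{\abs{t-t'}}-1)\,M$, uniformly in $s\le s_0$ and $u\in\Sp$.

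The genuinely hard point is equicontinuity in $u$: I would show that $u\mapsto\D[L,s](u,t)$ is $\min\{1,\alpha\}$-H\"older with constant uniform in $s\le s_0$ and locally uniform in $t$. Putting the difference over a common denominator,
\[
\D[L,s](u,t)-\D[L,s](u',t) ~=~ \frac{(1-\LTa(e^{s+t}u))(H^\alpha(u')-H^\alpha(u)) + H^\alpha(u)(\LTa(e^{s+t}u')-\LTa(e^{s+t}u))}{e^{\alpha(s+t)}L(e^s)\,H^\alpha(u)H^\alpha(u')} .
\]
In the first summand, $H^\alpha$ is $\min\{1,\alpha\}$-H\"older and bounded below on $\Sp$ (Proposition \ref{prop:Ps}), while the estimate of the first paragraph gives $1-\LTa(e^{s+t}u)\le (\mathrm{const})\,e^{\alpha(s+t)}L(e^s)$, so after dividing this term is $O(\abs{u-u'}^{\min\{1,\alpha\}})$ with a $T$-dependent constant. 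The second summand is delicate: the crude estimate $\abs{\LTa(e^{s+t}u)-\LTa(e^{s+t}u')}\le e^{\gamma(s+t)}\abs{u-u'}^\gamma\,\E\abs{X}^\gamma$, with $\gamma=\min\{1,\alpha\}$ and $X$ having Laplace transform $\LTa$, is worthless because an $\alpha$-regular fixed point has an infinite $\alpha$-th moment. Instead I would invoke the refined increment inequality for multivariate Laplace transforms in the appendix (the multivariate analogue of the estimate used in \cite{ABM2012}), which controls $\abs{\LTa(x)-\LTa(x')}$ for $\abs{x}=\abs{x'}=e^{s+t}$ by a constant times $(1-\LTa(e^{s+t}\deins))\,\abs{u-u'}^{\min\{1,\alpha\}}$; since $1-\LTa(e^{s+t}\deins)\le(\mathrm{const})\,e^{\alpha(s+t)}L(e^s)$, this term too is $O(\abs{u-u'}^{\min\{1,\alpha\}})$ uniformly in $s\le s_0$.

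Combining the last two paragraphs by the triangle inequality gives equicontinuity of $(\D[L,s])_{s\le s_0}$ on each $\Sp\times[-T,T]$, and together with the uniform bound, Arzel\'a--Ascoli yields that $(\D[L,s])_{s\le s_0}$ lies in a compact subset of $\Cf{\Sp\times\R}$; metrizability of the compact-open topology then gives the convergent subsequences, as claimed. The step I expect to cost the most effort is the uniform H\"older estimate in $u$ --- specifically, replacing the divergent moment bound by an increment inequality expressed solely through $1-\LTa$ at the ambient radius, which is precisely where one must exploit the regular variation of $1-\LTa$ near the origin and split the relevant expectation as in \cite{ABM2012}, rather than bound moments.
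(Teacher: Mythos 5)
Your plan is essentially the paper's proof: the paper also argues via Arzel\'a--Ascoli, encoding your three ingredients (the locally uniform bound, equicontinuity in $t$ from the two monotonicity properties of Laplace transforms, and a uniform $\alpha$-H\"older estimate in $u$) as the defining properties of an explicit compact set $\mathcal{J}_\alpha^K$ (Definition \ref{def:J}, Lemmas \ref{lem:J} and \ref{lem:uniform convergence}), with the hard directional step being exactly Lemma \ref{lem:Hoelder}. One small correction: the ``refined increment inequality'' you invoke is not actually in the appendix as a generic multivariate LT fact (and cannot be, since without regularity it is false); it has to be derived — as you yourself anticipate in your final sentence — by splitting via $u\wedge w$, reducing to the direction $\deins$ with \eqref{ineq5}, and then using $L$-$\alpha$-regularity together with slow variation of $L$, which is precisely the content and proof of Lemma \ref{lem:Hoelder}.
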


An immediate application is given by the following corollary:

\begin{cor}\label{cor:uniform convergence}
If $\LTa(ru)=\E \, \exp(-K r^\alpha W(u))$, then
$$ \lim_{r \to 0} \sup_{u \in \Sp} \abs{\frac{1-\LTa(ru)}{r^\alpha} - K H^\alpha(u)}=0.$$
\end{cor}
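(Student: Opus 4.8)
The plan is to read this off Lemma \ref{lem:arzelamain}: the relevant family of functions is precompact for uniform convergence on compacta and has only one pointwise limit point, hence converges uniformly on compacta, and evaluating at $t=0$ gives the claim.

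First I would dispose of the degenerate case: if $K=0$ then $\LTa\equiv 1$ and both sides vanish, so assume $K>0$. Next I would record the per-direction limit. Fix $u\in\Sp$. For each realisation of $W(u)$ and each $c\ge 0$, the map $s\mapsto s^{-1}(1-e^{-cs})=\int_0^c e^{-sx}\,dx$ is completely monotone, hence nonincreasing; substituting $s=r^\alpha$ and $c=KW(u)$ shows $r\mapsto r^{-\alpha}\bigl(1-e^{-Kr^\alpha W(u)}\bigr)$ is nonincreasing in $r>0$ and increases to $KW(u)$ as $r\downarrow 0$. By monotone convergence and $\E W(u)=H^\alpha(u)$ (Proposition \ref{thm:mean_convergence_Wn}),
$$\lim_{r\downarrow 0}\frac{1-\LTa(ru)}{r^\alpha}=K\,H^\alpha(u)\in(0,\infty),$$
the positivity because $H^\alpha$ is bounded below by a positive constant on the compact set $\Sp$. (This is the same computation as in the proof of Theorem \ref{thm:propertiesfp}, specialised to $W^*\equiv 0$.) Taking $u=\eins$ shows $\LTa$ is $\alpha$-elementary, in particular $L$-$\alpha$-regular with $L\equiv 1$, so Lemma \ref{lem:arzelamain} is available.

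Then I would bring in the functions $\D[1,s](u,t)=\bigl(H^\alpha(u)e^{\alpha(s+t)}\bigr)^{-1}\bigl(1-\LTa(e^{s+t}u)\bigr)$ from Section \ref{sect:arzelaascoli}. Putting $r=e^{s+t}\to 0$ and using the displayed limit, $\D[1,s](u,t)\to K$ pointwise on $\Sp\times\R$ as $s\to-\infty$, so the constant function $K$ is the unique possible limit of any convergent sequence drawn from $(\D[1,s])_{s}$. By Lemma \ref{lem:arzelamain} there is $s_0$ with $(\D[1,s])_{s\le s_0}$ contained in a compact subset of $\Cf{\Sp\times\R}$ for the topology of uniform convergence on compacta; hence for any sequence $s_n\to-\infty$ every subsequence of $(\D[1,s_n])$ has a further subsequence converging uniformly on compacta, necessarily to $K$, and the usual subsequence criterion upgrades this to $\D[1,s]\to K$ uniformly on compacta as $s\to-\infty$. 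Restricting to the compact slice $\Sp\times\{0\}$ and writing $r=e^s$ gives $\sup_{u\in\Sp}\abs{r^{-\alpha}H^\alpha(u)^{-1}(1-\LTa(ru))-K}\to 0$, which is equivalent to the assertion since $H^\alpha$ is bounded above and below by positive constants on $\Sp$.

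I do not expect a serious obstacle: the statement is essentially a repackaging of Lemma \ref{lem:arzelamain}, whose proof (the $\min\{1,\alpha\}$-H\"older bound and the ensuing equicontinuity that rule out escape of mass) already contains the real work. The only points needing care are identifying the pointwise limit of $\D[1,s]$ as the constant $K$ and the routine ``precompact $+$ unique limit point $\Rightarrow$ convergent'' step. One could even bypass Lemma \ref{lem:arzelamain} here, since for fixed $u$ the function $r\mapsto r^{-\alpha}H^\alpha(u)^{-1}(1-\LTa(ru))$ is monotone in $r$ and continuous in $u$ with continuous (constant) limit, so Dini's theorem on the compact $\Sp$ yields the uniformity directly; but the Arzel\`a--Ascoli route is the one that will still work in the non-elementary situation of Theorem \ref{thm:uniqueness_final}.
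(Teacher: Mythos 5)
Your argument is correct and takes essentially the same route as the paper: establish the pointwise limit $K H^\alpha(u)$ of $r^{-\alpha}(1-\LTa(ru))$ by monotone convergence (as in the proof of Theorem \ref{thm:propertiesfp}), then combine the precompactness of $(\D[1,s])_{s\le s_0}$ from Lemma \ref{lem:arzelamain} with the uniqueness of subsequential limits to get uniform convergence on compacta, and evaluate on the slice $t=0$. Your aside that Dini's theorem (monotonicity in $r$, continuity in $u$, continuous limit) would bypass the Arzel\`a--Ascoli machinery here is a valid additional observation, but your main argument coincides with the paper's proof.
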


\begin{proof}
On the one hand, for $\LTa$ of the given form, 
$$ \lim_{s \to -\infty} \frac{1- \LTa(e^{s+t}u)}{e^{\alpha s}} = 
e^{\alpha t} H^\alpha(u) = H^\alpha(e^t u), $$
i.e., for all $u \in \Sp$, $t \in \R$,
\begin{equation} \label{eq:conv_hs} \lim_{s \to -\infty} \D[1,s](t,u) = 1.
\end{equation}
On the other hand, by Lemma \ref{lem:arzelamain}, any sequence $(\D[1,{s_n}])_{n \in
\N}$ with $s_n \le s_0$ and $\lim_{n \to \infty} s_n =-\infty$ has a convergent subsequence, and this
convergence is uniform on compact subsets of $\Sp \times \R$. But due to
\eqref{eq:conv_hs}, the limit is always the same, hence $\lim_{s \to -\infty}
\D[L,s] =1 $ uniformly on compact subsets of $\Sp \times \R$.\end{proof}

\subsection{H\"older Continuity}

Recall that for $\LTa$ being $L$-$\alpha$-regular, 
$$  \underline{K}:= \liminf_{r \to 0} \frac{1-\LTa(r\deins)}{L(r)r^\alpha} >0,  \qquad  \overline{K}:=\limsup_{r \to 0} \frac{1-\LTa(r\deins)}{L(r)r^\alpha} < \infty.$$

\begin{lemma}\label{lem:Hoelder}
Let $\LTa$ be $L$-$\alpha$-regular. Then there is $t_0>0$ and $K > 0$
such that for all $t \in [0,t_0]$, $u,w \in \Sp$, 
\begin{equation}
\abs{ \frac{1-\LTa(tru)}{t^\alpha L(t)} - \frac{1-\LTa(trw)}{t^\alpha L(t)} }
\le K (1 \vee r) \abs{u-w}^\alpha.
\end{equation}
Moreover, let $C \subset \interior{\Sp}$ compact. Then with 
$$ K_C := \left( \min_{y \in C} \min_i y_i \right),$$
it holds that for each $r \in \Rp$ there is $t_1=t_1(r) \le t_0$ such that for
all $u,w \in C$, $t \in [0,t_1]$,
\begin{equation}\label{quotient:Hoelder}
\abs{\frac{1- \LTa(tru)}{1-\LTa(tu)} - \frac{1- \LTa(trw)}{1-\LTa(tw)}} \le
4 K K_C {\red (1 \vee r) }\abs{u-w}^\alpha .
\end{equation}
\end{lemma}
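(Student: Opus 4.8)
The plan is to exploit the convexity/subadditivity of Laplace transforms of measures on $\Rdnn$ through the elementary inequality $\abs{e^{-a}-e^{-b}} \le \abs{a-b}$ together with the Hölder estimate $\abs{H^\alpha(x)-H^\alpha(y)} \le c\abs{x-y}^\alpha$ coming from Proposition \ref{prop:Ps}, and then to combine this with the $L$-$\alpha$-regularity bounds $\underline{K},\overline{K}$. First I would fix $x=tru$, $y=trw$ and write the difference of Laplace transforms as an integral against the fixed-point law: since $\LTa(x)=\E e^{-\skalar{x,X}}$, we get $\abs{\LTa(tru)-\LTa(trw)} \le \E\abs{\skalar{tru-trw,X}} = tr\,\E\abs{\skalar{u-w,X}}$, but this linear estimate in $tr$ is not yet of the right order; the key is instead to use the $s$-homogeneity-type bound from Appendix inequalities \eqref{ineq5}--\eqref{ineq8} (the same ones invoked in Lemma \ref{lem:estimates_D}), controlling $1-\LTa(x)$ by $H^\alpha(x) \cdot \overline{K}L(\abs{x})$ up to constants, so that one trades the linear bound for an $\alpha$-order bound valid for $x$ in a neighborhood of the origin. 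Concretely I expect $\abs{(1-\LTa(tru))-(1-\LTa(trw))}$ to be estimated by first passing to a common reference point and then using that $\LTa$ is $\alpha$-regular so $1-\LTa$ behaves like $t^\alpha L(t) H^\alpha(\cdot)$; the factor $(1\vee r)$ arises because for $r\le 1$ one stays in the regime where $H^\alpha$-Hölder continuity applies directly, while for $r>1$ one rescales and picks up the extra factor.

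For the first inequality, I would argue as follows. Pick $t_0$ small enough that the $\liminf/\limsup$ characterisation of $\underline{K},\overline{K}$ gives two-sided control $ \tfrac{1}{2}\underline{K} \le \frac{1-\LTa(\tau\deins)}{\tau^\alpha L(\tau)} \le 2\overline{K}$ for all $\tau \le t_0\sqrt d$ (say), and use the slow variation of $L$ to make $L(t r)/L(t)$ close to $1$ uniformly for $t$ small and $r$ in a fixed compact set — but since $r$ ranges over all of $\Rp$, for $r>1$ I would instead absorb the $r$-dependence into the $(1\vee r)$ factor by a scaling/monotonicity argument using that $\tau\mapsto \tau^{-\alpha}(1-\LTa(\tau u))$ is decreasing (Inequality from \cite[XIII]{Feller1971}, already used in the proof of Theorem \ref{thm:propertiesfp}). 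Then the core estimate is $\abs{(1-\LTa(tru))-(1-\LTa(trw))} \le C\,\E\abs{e^{-\skalar{tru,X}}-e^{-\skalar{trw,X}}}$ bounded via $\abs{e^{-a}-e^{-b}}\le \abs{a-b}^{\theta}(e^{-a}\vee e^{-b})^{1-\theta}$-type interpolation or, more cleanly, via the pointwise inequality $\abs{e^{-a}-e^{-b}} \le \abs{a^\alpha - b^\alpha}$-flavoured bounds that hold for the relevant range; combined with $\abs{\skalar{u,x}^\alpha - \skalar{w,x}^\alpha} \le \abs{x}^\alpha\abs{u-w}^\alpha$ (valid for $\alpha\le 1$) and integration against $\law X$, using $\E\abs{X}^\alpha = \infty$ is the danger here, so one must route through $1-\LTa$ rather than through moments of $X$ — hence the reliance on the appendix inequalities \eqref{ineq5}--\eqref{ineq8} which encode exactly this.

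For the second, ``quotient'' inequality, I would write
\[
\frac{1-\LTa(tru)}{1-\LTa(tu)} - \frac{1-\LTa(trw)}{1-\LTa(tw)}
= \frac{(1-\LTa(tru))(1-\LTa(tw)) - (1-\LTa(trw))(1-\LTa(tu))}{(1-\LTa(tu))(1-\LTa(tw))},
\]
add and subtract the cross term $(1-\LTa(trw))(1-\LTa(tw))$ in the numerator to split it into
$(1-\LTa(tw))\bigl[(1-\LTa(tru))-(1-\LTa(trw))\bigr] + (1-\LTa(trw))\bigl[(1-\LTa(tw))-(1-\LTa(tu))\bigr]$,
then apply the first inequality to each bracketed difference (once with the given $r$, once with $r=1$), and bound the denominator from below using that on the compact set $C\subset\interior{\Sp}$ we have $H^\alpha(u)\ge c_C>0$ and hence $1-\LTa(tu) \ge \tfrac12\underline{K}\,t^\alpha L(t) H^\alpha(u) \ge c\,t^\alpha L(t) K_C$ for $t\le t_1(r)$ (choosing $t_1$ possibly smaller than $t_0$ to also handle the factor of $r$ inside $\LTa(tru)$). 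Dividing, the $t^\alpha L(t)$ factors cancel and one is left with a constant times $K_C(1\vee r)\abs{u-w}^\alpha$, and bookkeeping the constants gives the stated $4KK_C(1\vee r)$.

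The main obstacle I anticipate is getting the $\alpha$-order (rather than linear) Hölder bound on $1-\LTa$ uniformly in $u$ and with the correct $(1\vee r)$ dependence, precisely because the fixed point may have infinite $\alpha$-th moment (when $K>0$), so no naive estimate via $\E\abs{\skalar{u-w,X}}^\alpha$ is available; everything must be extracted from the regularity hypothesis and the appendix inequalities for multivariate Laplace transforms, and one must be careful that the slowly varying function $L$ does not spoil uniformity — this is where restricting $t$ to $[0,t_0]$ (resp. $[0,t_1(r)]$) and using a uniform version of the Potter-type bounds for $L$ is essential.
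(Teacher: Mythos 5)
Your treatment of the quotient inequality \eqref{quotient:Hoelder} is essentially sound and close in spirit to the paper's: a common-denominator/cross-term splitting, two applications of the first estimate (once with $r$, once with $r=1$), the bound $1-\LTa(trw)\le (1\vee r)(1-\LTa(tw))$, and a lower bound on the denominators for small $t$. One correction there: the lower bound you invoke, $1-\LTa(tu)\ge \tfrac12\underline{K}\,t^\alpha L(t)H^\alpha(u)$, is not available — $L$-$\alpha$-regularity gives two-sided control only in the single direction $\deins$. The correct route is \eqref{ineq8}: $1-\LTa(tu)\ge (\min_i u_i)\,(1-\LTa(t\deins))\ge (\min_i u_i)\,\tfrac12\underline{K}\,t^\alpha L(t)$ for small $t$, which is where the constant $K_C$ enters.

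The genuine gap is in the first, and main, inequality. You correctly diagnose the obstruction (a fixed point with $K>0$ has infinite $\alpha$-moment, so nothing like $\E\abs{\skalar{u-w,X}}^\alpha$ is usable), but you never supply the mechanism that overcomes it: your concrete suggestions either reduce to exactly such a moment (the interpolation bounds $\abs{e^{-a}-e^{-b}}\le\abs{a-b}^{\alpha}$ combined with $\abs{\skalar{u,x}^\alpha-\skalar{w,x}^\alpha}\le\abs{x}^\alpha\abs{u-w}^\alpha$ force you to integrate $\abs{X}^\alpha$), or amount to assuming $1-\LTa(x)\approx \overline{K}L(\abs{x})H^\alpha(x)$ uniformly in the direction — which is circular, since uniform directional control of $(1-\LTa(ru))/(r^\alpha L(r))$ is precisely what this lemma (via the compactness and Choquet--Deny arguments it feeds) is designed to establish; at this stage only the liminf/limsup along the single direction $\deins$ is known, and the appendix inequalities \eqref{ineq1}--\eqref{ineq8} by themselves only compare $1-\LTa$ along rays, not across directions. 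The missing idea is the componentwise-minimum decomposition: with $(u\wedge w)_i=\min\{u_i,w_i\}$ one has $u-u\wedge w,\,w-u\wedge w\in\Rdnn$, and
\begin{equation*}
\abs{e^{-tr\skalar{u,X}}-e^{-tr\skalar{w,X}}}
\le \bigl(1-e^{-tr\skalar{u-u\wedge w,X}}\bigr)+\bigl(1-e^{-tr\skalar{w-u\wedge w,X}}\bigr),
\end{equation*}
so after taking expectations the directional difference is dominated by $1-\LTa$ evaluated at the small nonnegative vectors $tr(u-u\wedge w)$ and $tr(w-u\wedge w)$, whose norms are at most $tr\abs{u-w}$. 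Only then do \eqref{ineq5} (to pass to the direction $\deins$), \eqref{ineq1}/\eqref{ineq4} (to split off the factor $1\vee r$), and the one-directional $\limsup\le\overline{K}$ together with slow variation of $L$ yield the claimed $\alpha$-H\"older bound $K(1\vee r)\abs{u-w}^\alpha$ uniformly in $u,w\in\Sp$ and $t\le t_0$. Without this (or an equivalent) reduction, your outline does not produce the $\alpha$-order estimate, and the second part of the lemma, which rests on it, is left without foundation.
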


\begin{proof}
For $u,w \in \Sp$ define the vector $u \wedge w$ by $(u \wedge w)_i=\min\{u_i,
w_i\}$, $i=1,\dots,d$. Then $u-u\wedge w, w-u \wedge w \in \Rdnn$. 
Let $X$ be a r.v. with LT $\LTa$. Consider
\begin{align*}
& {\red \abs{1- \LTa(tru) - (1-\LTa(trw))}} \\ \le & \E \abs{ \exp(-tr\skalar{u,X}) -
\exp(-tr\skalar{w,X}) } \\
\le  & \E \abs{ \exp(-tr \skalar{u \wedge w, X}) \left( 1 - \exp(-tr
\skalar{u-u\wedge w,X}) \right) } \\ & + \E \abs{ \exp(-tr \skalar{u \wedge w,
X}) \left( 1 - \exp(-tr
\skalar{w-u\wedge w,X}) \right) } \\
\le & \E \abs{ 1 - \exp(-tr
\skalar{u-u\wedge w,X})} + \E \abs{ 1 - \exp(-tr
\skalar{w-u\wedge w,X})} \\
= & 1- \LTa(tr[u-u\wedge w]) + 1- \LTa(tr[w-u\wedge w]) 
\end{align*}
Due to symmetry, it is enough to consider $1-\LTa(tr[u-u\wedge w])$. Using
inequality \eqref{ineq5} and then \eqref{ineq3} resp. \eqref{ineq4}, we infer
\begin{align*}
1-\LTa(tr[u-u\wedge w]) \le\ & 1- \LTa(tr \abs{u - u \wedge w} \deins) \\
\le\ & \begin{cases}
1 - \LTa(t \abs{u - u \wedge w} \deins) &  r < 1 \\
r ( 1 - \LTa(t \abs{u - u \wedge w}\deins)) &   r \ge 1
\end{cases}
\end{align*}
Since by assumption,
$$ \limsup_{t \to 0} \frac{1- \LTa(t\abs{u - u\wedge w}\deins)}{t^\alpha
\abs{u-u\wedge w}^\alpha L(t \abs{u-u \wedge w})} \le \overline{K}$$
with $L$ slowly varying at 0, there is $t_0 >0$ and $K' > \overline{K}$ such
that {\red  $$ \frac{1- \LTa(t\abs{u - u\wedge w}\deins)}{t^\alpha L(t)
} \le K' (1 \vee r)  \abs{u-u\wedge w}^\alpha \le K' (1 \vee r) \abs{u-w}^\alpha $$}
for all $t \in [0, t_0]$. 
This proves the first assertion.

Turning now to the second assertion, write $F(x)=1-\LTa(x)$. Then for all $t
\le t_0$,
\begin{align*}
& \abs{\frac{1- \LTa(tru)}{1-\LTa(tu)} - \frac{1- \LTa(trw)}{1-\LTa(tw)}} \\
\le & \abs{\frac{F(tru)}{F(tw)}}\abs{\frac{F(tw)-F(tu)}{F(tu)}} +
\abs{\frac{F(trw)}{F(tw)}}\abs{\frac{F(tru)-F(trw)}{F(trw)}} \\
\le & (1\vee r) \abs{\frac{F(tw)-F(tu)}{t^\alpha L(t)}}\frac{t^\alpha
L(t)}{F(tu)} + (1\vee r) \abs{\frac{F(tru)-F(trw)}{(tr)^\alpha
L(tr)}}\frac{(tr)^\alpha L(tr)}{F(tru)} \\
\le & (1 \vee r) K' \abs{u-w}^\alpha \frac{s^\alpha L(t)}{F(tu)} + {\red (1 \vee r)}
K'
\abs{u-w}^\alpha \frac{(tr)^\alpha L(tr)}{F(tru)},
\end{align*}
{\red where we used \eqref{ineq1} and \eqref{ineq2} to estimate $\abs{F(trw)/F(tw)}$ by $(1 \vee r)$, and subsequently the estimate for $\abs{\frac{F(tu)-F(tw)}{(t)^\alpha
L(t)}}$ obtained above.}
To estimate further, observe that by \eqref{ineq8}
$$ \frac{F(tu)}{F(t\deins)}  \ge \min_i u_i, $$
hence
\begin{align*}
\ldots \le & {\red (1\vee r)} K' \abs{u-w}^\alpha (\min_i u_i)^{-1} \left(
\frac{t^\alpha L(t)}{F(t\deins)} + \frac{(tr)^\alpha L(tr)}{F(tr\deins)} \right)
\end{align*}
The term in the bracket is bounded by  $2/\underline{K}$ for $t \to 0$, hence
there is $t_1$, depending on $r$, such that the expression is bounded by
$4/\underline{K}$ for all $t \le t_1$.  To make the bound independent of $u$,
replace $(\min_i u_i)^{-1}$ by $K_C$. Finally, choose $K=\max\{K',
K'/\underline{K}\}.$
\end{proof}


\subsection{A Compact Subset of $\Cf{\Sp \times \R}$}

Now we are going to construct a compact subset  $\mathcal{J}_\alpha^K \subset\Cf{\Sp \times \R}$, such that there is $s_0 \in \R$ with $\D[L,s] \in \mathcal{J}_\alpha^K$ for all $s \le s_0$. 
Its definition is given below, subsequently, we prove that it is compact and that it eventually contains the $(\D[L,s])_{s \le s_0}$. The definition is subject to assumptions \eqref{A0}--\eqref{A3}, which guarantee the existence of $H^\alpha$.

 \begin{defn}\label{def:J}
For $\alpha \in (0,1)$, $K >0$ let $\mathcal{J}_\alpha^K$ be the set of continuous
functions
$$ g : \Sp \times \R \to [0, \infty)$$
satisfying
\begin{enumerate}
  \item $\sup_{u \in \Sp} g(u,0) H^\alpha(u)  \le K$, \label{1}
  \item $t \mapsto g(u,t) e^{\alpha t}$ is increasing for all $u \in \Sp$,
  \label{2}
  \item $t \mapsto g(u,t) e^{(\alpha-1)t}$ is decreasing for all $u \in \Sp$,
  \label{3}
  \item $ u \mapsto g(u,t) H^\alpha(e^t u)$ is $\alpha$-H\"older with constant
  $(1 \vee e^t)K$ for each $t \in \R$ \label{4}
\end{enumerate}
\end{defn}

The idea of this construction goes back to \cite{DL1983} for the one-dimensional case, in fact, properties \eqref{1}--\eqref{3} are the same as in (ibid., Lemma 2.11). The fundamental new contribution here is to take care of the directional dependence on $u \in \Sp$, which necessitates the assumption of H\"older continuity in the directional component. Note that we needed $\LTa$ to be $L$-$\alpha$-regular in order to prove H\"older continuity of $\D[L,s]$; and therefore had to show first that any fixed point is regular. This step is not needed for the one-dimensional arguments.

\begin{lem}\label{lem:J}
Assume \eqref{A0}--\eqref{A3}. The set $\mathcal{J}_\alpha^K$ is a compact subset of $\Cf{\Sp \times \R}$ w.r.t. to the
topology of uniform convergence on compact sets.
\end{lem}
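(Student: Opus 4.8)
The plan is to verify compactness of $\mathcal{J}_\alpha^K$ directly via the Arzel\'a-Ascoli theorem on the locally compact $\sigma$-compact space $\Sp \times \R$, exhausted by the compacta $\Sp \times [-m,m]$, $m \in \N$. Since the topology of uniform convergence on compact sets on $\Cf{\Sp \times \R}$ is metrizable and complete, it suffices to show (i) $\mathcal{J}_\alpha^K$ is pointwise bounded, (ii) $\mathcal{J}_\alpha^K$ is equicontinuous on every $\Sp \times [-m,m]$, and (iii) $\mathcal{J}_\alpha^K$ is closed. Properties (i) and (ii) give precompactness via Arzel\'a-Ascoli; adding (iii) upgrades this to compactness.

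\emph{Pointwise bounds.} First I would extract from properties \eqref{1}--\eqref{3} two-sided pointwise bounds. Property \eqref{1} together with strict positivity and continuity of $H^\alpha$ on the compact set $\Sp$ (Proposition \ref{prop:Ps}) gives $0 \le g(u,0) \le K / \min_{u} H^\alpha(u) =: K_0$ for all $u$. Then property \eqref{2} ($t \mapsto g(u,t)e^{\alpha t}$ increasing) bounds $g(u,t)$ from above by $K_0 e^{-\alpha t}$ for $t \le 0$, and property \eqref{3} ($t\mapsto g(u,t)e^{(\alpha-1)t}$ decreasing) bounds $g(u,t)$ from above by $K_0 e^{(1-\alpha)t}$ for $t \ge 0$; similarly one gets lower bounds on $[-m,0]$ from the upper bound at $0$ read backwards, etc. The upshot is: for each $m$ there is a constant $B_m$ with $0 \le g(u,t) \le B_m$ for all $g \in \mathcal{J}_\alpha^K$ and all $(u,t) \in \Sp \times [-m,m]$. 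This is (i), and in fact uniform boundedness on compacta, which is what feeds into equicontinuity.

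\emph{Equicontinuity.} For the $t$-direction I would use monotonicity: on $[-m,m]$, $g(u,\cdot)$ is sandwiched between the increasing function $t\mapsto g(u,t_0)e^{\alpha(t_0-t)}|_{t_0 = -m}\cdots$ — more precisely, combining \eqref{2} and \eqref{3}, for $t \le t'$ in $[-m,m]$ one has $e^{-\alpha(t'-t)} g(u,t) \le g(u,t') \le e^{(1-\alpha)(t'-t)}g(u,t)$, so $|g(u,t')-g(u,t)| \le B_m\bigl(e^{(1-\alpha)(t'-t)} - e^{-\alpha(t'-t)}\bigr)$, which tends to $0$ uniformly in $g$ and $u$ as $t'-t \to 0$; this gives equicontinuity in $t$. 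For the $u$-direction I would use property \eqref{4}: $u\mapsto g(u,t)H^\alpha(e^t u)$ is $\alpha$-H\"older with constant $(1\vee e^t)K \le (1\vee e^m)K$ on $[-m,m]$. Since $H^\alpha(e^t u) = e^{\alpha t}H^\alpha(u)$ is bounded below by $e^{-\alpha m}\min_u H^\alpha(u) > 0$ and is itself $\min\{1,\alpha\}$-H\"older (hence, on the compact $\Sp$, has bounded oscillation), dividing a uniformly H\"older, uniformly bounded family by a fixed positive H\"older function bounded away from $0$ yields again a uniformly H\"older family; combined with the bound $|g| \le B_m$ this gives a modulus of continuity in $u$ independent of $g$ and of $t \in [-m,m]$. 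Together with the $t$-estimate and a triangle inequality, this is joint equicontinuity on $\Sp \times [-m,m]$, so (ii) holds for every $m$.

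\emph{Closedness.} Finally I would check (iii): if $g_n \to g$ uniformly on compacta with $g_n \in \mathcal{J}_\alpha^K$, then $g \ge 0$, $g$ is continuous, and each of \eqref{1}--\eqref{4} passes to the limit — \eqref{1} because $g_n(u,0)H^\alpha(u) \le K$ is preserved under pointwise limits; \eqref{2} and \eqref{3} because the pointwise limit of increasing (resp. decreasing) functions is increasing (resp. decreasing); \eqref{4} because the H\"older bound with a fixed constant $(1\vee e^t)K$ is preserved under pointwise limits. Hence $g \in \mathcal{J}_\alpha^K$, so $\mathcal{J}_\alpha^K$ is closed. Being a closed and precompact subset of the complete metric space $\Cf{\Sp \times \R}$ (with the compact-open topology), it is compact. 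I expect the main obstacle to be the bookkeeping in the $u$-direction equicontinuity, namely turning the weighted H\"older condition \eqref{4} — where the weight $H^\alpha(e^t u)$ both varies with $t$ and is itself only $\min\{1,\alpha\}$-H\"older — into a clean $g$-independent, $t$-uniform modulus of continuity on each slab; the positivity and Hölder regularity of $H^\alpha$ from Proposition \ref{prop:Ps} are exactly what make this go through.
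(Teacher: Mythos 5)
Your proposal is correct and follows essentially the same route as the paper: uniform bounds on slabs from properties (1)--(3), equicontinuity in $t$ via the two monotonicity properties, equicontinuity in $u$ via the weighted H\"older condition (4) together with positivity of $H^\alpha$ from Proposition \ref{prop:Ps}, closedness since (1)--(4) survive pointwise limits, and then Arzel\'a--Ascoli. (The aside about lower bounds on $g$ is unnecessary but harmless, since only $g\ge 0$ and the upper bounds are used.)
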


\begin{proof}
The assertion will follow from the general Arzel\`{a}-Ascoli theorem for locally
compact metric spaces, see e.g. \cite[Theorem 7.18]{Kelley1955}. Properties
\eqref{1}-\eqref{3} together imply the uniform bounds, valid for all $g \in
\mathcal{J}_\alpha^K$
\begin{align}\label{uniform_bound_J}
g(u,t) \le \begin{cases}
K H^\alpha(u)^{-1} e^{(1-\alpha)t} & t \ge 0, \\
K H^\alpha(u)^{-1} e^{-\alpha t} & t \le 0 .
\end{cases}
\end{align}
Properties \eqref{1}-\eqref{4} are closed even under pointwise convergence of
functions, thus $\mathcal{J}_\alpha^K$ is particulary closed under compact uniform
convergence. Turning to equicontinuity, fix $(u_0, t_0) \in \Sp \times \R$ and
$\epsilon >0$ and consider first the
variation in $t$.  Let $\delta >0$. Then for any $g \in \mathcal{J}_\alpha^K$, it follows
from property \eqref{2} that for all $u \in \Sp$ and $t \in [t_0 - \delta, t_0 + \delta]$, 
$$ g(u,t)e^{\alpha(t_0-\delta)} \le g(u,t)e^{\alpha t} \le
g(u_,t_0 + \delta)e^{\alpha(t_0 + \delta)},$$ thus
$ g(u,t) \le g(u, t_0 + \delta)e^{2 \alpha \delta}.$
Similarly, from property \eqref{3}, 
$ g(u,t) \ge g(u, t_0+\delta)e^{2(\alpha-1)\delta}$ and consequently
\begin{align*} \abs{g(u,t)-g(u,t_0)} \le\  g(u,t_0 +
\delta)e^{2\alpha\delta} - g(u, t_0 +
\delta)e^{2(\alpha-1)\delta} \nonumber \le\  M\left(e^{2\alpha \delta}
- e^{2(\alpha-1) \delta}\right)  , \end{align*}
where the uniform bound $M$ exists due to \eqref{uniform_bound_J}. Hence there
is $\delta_1 >0$ such that
\begin{equation} \abs{g(u,t)-g(u,t_0)} < \frac{\epsilon}{2} \label{eqn:equicontinuous1}\end{equation}
for all $t \in B_{\delta_1}(t_0)$ and all $u \in \Sp$.
Considering the variation in $u$, it follows again from \eqref{uniform_bound_J}
that for $h(u,t):=g(u,t)H^\alpha(e^t u)$,
$$L := \sup \{ h(u,t) \ : \ g \in
\mathcal{J}_\alpha^K,\ (u,t) \in \Sp \times [t_0 - \delta_1, t_0 + \delta_1] \} < \infty.
$$ 
Using property \eqref{4}, we infer that for all $u \in
 \Sp$,
\begin{align*}
\abs{g(u,t_0) -g(u_0,t_0) } \le\ & \frac{\abs{h(u,t_0)-
h(u_0,t_0)}}{H^\alpha(e^{t_0}u_0)} + h(u_,t_0)\abs{ \frac{1}{H^\alpha(e^{t_0}u_0)} -
\frac{1}{H^\alpha(e^{t_0}u_0)} } \nonumber \\ 
\le\ & \frac{K(1\vee e^{t_0}) \abs{u-u_0}^\alpha}{H^\alpha(e^{t_0}u_0)}
+ L \abs{ \frac{1}{H^\alpha(e^{t_0}u_0)} -
\frac{1}{H^\alpha(e^{t_0}u_0)} } \end{align*}
Hence there is $\delta_2>0$ such that
\begin{equation} \abs{g(u,t_0) -g(u_0,t_0) } \le \epsilon/2
\label{eqn:equicontinuity2}\end{equation} for all $u \in B_{\delta_2}(u_0)$.
Combining \eqref{eqn:equicontinuous1} and \eqref{eqn:equicontinuity2}, it holds
that for all $(u,t) \in B_{\delta_2}(u_0) \times B_{\delta_1}(t_0)$, $$
\abs{g(u,t)- g(u_0,t_0)} \le \abs{g(u,t) - g(u,t_0)} + \abs{g(u,t_0)-g(u,t)} \le
\epsilon. $$ 
This proves the equicontinuity, hence Arzel\`{a}-Ascoli applies and yields the
assertion.
\end{proof}

The next result in particular proves Lemma \ref{lem:arzelamain}.

\begin{lem}\label{lem:uniform convergence}
Assume \eqref{A0}--\eqref{A3} and let $\LTa$ be $L$-$\alpha$-regular for some $\alpha \in (0,1]$. Then there is $s_0 \in \R$ and $K >0$
such that $\left(\D[L,s](u,t)\right)_{s \le s_0} \subset \mathcal{J}_\alpha^K$.
\end{lem}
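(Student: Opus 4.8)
The plan is to check, one condition at a time, that $\D[L,s]$ satisfies the four requirements defining $\mathcal{J}_\alpha^K$ in Definition \ref{def:J}, once $s$ is pushed below a suitable threshold and $K$ is taken large enough. Everything hinges on the $\alpha$-homogeneity $H^\alpha(e^t u)=e^{\alpha t}H^\alpha(u)$ of the extended function $H^\alpha$, which turns the relevant combinations of $\D[L,s]$ into
\begin{align*}
\D[L,s](u,t)\, e^{\alpha t} &= \frac{1-\LTa(e^{s+t}u)}{H^\alpha(u)\, e^{\alpha s} L(e^s)}, \\
\D[L,s](u,t)\, e^{(\alpha-1)t} &= \frac{e^s}{H^\alpha(u)\, e^{\alpha s} L(e^s)}\cdot\frac{1-\LTa(e^{s+t}u)}{e^{s+t}}, \\
\D[L,s](u,t)\, H^\alpha(e^t u) &= \frac{1-\LTa(e^{s+t}u)}{e^{\alpha s} L(e^s)}.
\end{align*}
Continuity and nonnegativity of $\D[L,s]$ on $\Sp\times\R$ are immediate, since $\LTa$ is continuous with $0\le\LTa\le 1$, $H^\alpha$ is continuous and strictly positive on $\Sp$ by Proposition \ref{prop:Ps}, and $L>0$.

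Next I would dispatch the two monotonicity conditions of Definition \ref{def:J}, which in fact hold for \emph{every} $s\in\R$. Because $\LTa$ is the Laplace transform of a probability measure on $\Rdnn$, the map $r\mapsto\LTa(ru)$ is nonincreasing for each $u\in\Sp$; hence $t\mapsto 1-\LTa(e^{s+t}u)$ is nondecreasing, and by the first identity above $t\mapsto \D[L,s](u,t)e^{\alpha t}$ is nondecreasing. Likewise $r\mapsto r^{-1}(1-\LTa(ru))$ is nonincreasing for each $u$ (this is one of the elementary Laplace-transform inequalities collected in the appendix, namely $1-\LTa(\lambda y)\le\lambda(1-\LTa(y))$ for $\lambda\ge 1$; cf. \cite[Ch.~XIII]{Feller1971}), so by the second identity $t\mapsto \D[L,s](u,t)e^{(\alpha-1)t}$ is nonincreasing. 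No restriction on $s$ is needed here (for $\alpha=1$ the third requirement reads "$t\mapsto g(u,t)$ nonincreasing", and the same argument applies).

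It remains to treat the Hölder requirement and the sup-bound, and here the threshold on $s$ enters. For the Hölder bound I would invoke the first assertion of Lemma \ref{lem:Hoelder} with its parameter "$t$" specialised to $e^s$ and its "$r$" specialised to $e^t$: as soon as $e^s\le t_0$, i.e. $s\le s_0':=\log t_0$, it yields, for all $u,w\in\Sp$ and all $t\in\R$,
\begin{equation*}
\abs{\frac{1-\LTa(e^{s+t}u)}{e^{\alpha s} L(e^s)} - \frac{1-\LTa(e^{s+t}w)}{e^{\alpha s} L(e^s)}} \;\le\; K'(1\vee e^t)\,\abs{u-w}^\alpha,
\end{equation*}
where $K'$ is the constant of Lemma \ref{lem:Hoelder}; by the third identity above this is precisely the statement that $u\mapsto\D[L,s](u,t)H^\alpha(e^t u)$ is $\alpha$-Hölder with constant $(1\vee e^t)K'$. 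For the sup-bound, coordinatewise monotonicity of $\LTa$ (inequality \eqref{ineq5}; recall $u_i\le 1=\deins_i$ for $u\in\Sp$) gives $1-\LTa(e^s u)\le 1-\LTa(e^s\deins)$, whence $\sup_{u\in\Sp}\D[L,s](u,0)H^\alpha(u)\le (e^{\alpha s}L(e^s))^{-1}(1-\LTa(e^s\deins))$; since $\limsup_{r\to 0}(r^\alpha L(r))^{-1}(1-\LTa(r\deins))=\overline{K}<\infty$ by $L$-$\alpha$-regularity, the right-hand side is $\le\overline{K}+1$ for all $s\le s_0''$, for a suitable $s_0''$. Taking $s_0:=\min\{s_0',s_0''\}$ and $K:=\max\{K',\overline{K}+1\}$ then gives $\D[L,s]\in\mathcal{J}_\alpha^K$ for every $s\le s_0$, which is the assertion. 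I expect no genuine obstacle inside this lemma: the real analytic content is the equicontinuity-type estimate already proved in Lemma \ref{lem:Hoelder}, and what remains is bookkeeping — aligning the $(u,t)$-parametrisation of $\D[L,s]$ with the $(t,r)$-parametrisation of Lemma \ref{lem:Hoelder}, and checking that a single pair $(s_0,K)$ simultaneously serves all four conditions.
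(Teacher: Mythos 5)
Your proposal is correct and follows essentially the same route as the paper: properties (2) and (3) of Definition \ref{def:J} via monotonicity of Laplace transforms, property (4) by specialising the first assertion of Lemma \ref{lem:Hoelder} (with its $t$ set to $e^s$ and its $r$ to $e^t$), and property (1) from inequality \eqref{ineq5} together with the $\limsup$ bound $\overline{K}$ supplied by $L$-$\alpha$-regularity, then taking the common $s_0$ and $K$. The only difference is cosmetic bookkeeping (you additionally spell out continuity, nonnegativity and the $\alpha=1$ case), so no further comment is needed.
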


\begin{proof}
We have to check properties \eqref{1}-\eqref{4}:
\begin{itemize}
  \item[\eqref{1}] $\sup_{u \in \Sp} \D[L,s](u,0) H^\alpha(u) \le
  \frac{1- \LTa(e^s \deins)}{e^{\alpha s}L(e^s)} \le K_s$, with {\red $K_s$
  bounded by $\overline{K}$ asymptotically }
  \item[\eqref{2}] Just observe that $\D[L,s](u,t) e^{\alpha t}= \frac{1-
  \LTa(e^{s+t}u)}{H^\alpha(u)e^{\alpha s}L(e^s)}$ is increasing as a
  function of $t$.
  \item[\eqref{3}] Recall that $(e^{-s})e^{-t}(1- \LTa(e^{s+t}u))$ is a LT,
  hence decreasing. Consequently,\\ $\D[L,s](u,t) e^{(\alpha -1)t} =
  e^{-t}\frac{1-\LTa(e^{s+t}u)}{H^\alpha(u)e^{\alpha s}L(e^s)}$ is
  decreasing as a function of $t$ as well.
  \item[\eqref{4}] This is the content of Lemma \ref{lem:Hoelder}. It gives
  $e^{s_0} >0$ and $K >0$ such that for all $s < s_0$, 
  $$ \abs{\D[L,s](u,t)H^\alpha(e^t u) - \D[L,s](w,t)H^\alpha(e^t w)} \le K(1 \vee
  e^t) \abs{u-w}^\alpha .$$
\end{itemize}
Possibly by making $s_0$ smaller, $K_s \le K$ for all $s \le s_0$, i.e. property
\eqref{1} holds with this $K$ as well.
\end{proof}

\section{Proof of Theorem \ref{thm:uniqueness_final}: Choquet-Deny Arguments}
\label{kreinmilman}

This section contains the technical cornerstone in the proof of Theorem \ref{thm:uniqueness_final}. 
We have proved so far that for any $L$-$\alpha$-regular FP $\LTa$, its
associated sequence $\D[L,s]$ has convergent subsequences (for $s \to
- \infty$), now we are going to identify their limits as bounded harmonic functions for the associated Markov random walk $(U_n, S_n)_{n \in \No}$. The following Choquet-Deny type result holds for $(U_n, S_n)_{n \in \No}$:

\begin{prop}\label{Choquet} Assume \eqref{A0}--\eqref{A4} and that $H \in \Cbf{\Sp \times \R}$ satisfies
\begin{enumerate}[(i)]
  \item $H(u,t) = \E_{u} H(U_1,t-S_1)$ for all $(u,t) \in \Sp \times \R$, and
  \item \label{strong continuity} for all $z \in \interior{\Sp}$,
  $$ \lim_{y \to z} \sup_{t \in \R} \abs{H(y,t)- H(z,t)}=0.$$
\end{enumerate}
Then $H$ is constant.
\end{prop}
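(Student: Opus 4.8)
The plan is to read Proposition~\ref{Choquet} as a Choquet-Deny statement for the associated Markov random walk $(U_n,S_n)_{n\in\No}$ and to deduce it from the general Choquet-Deny theorem for Markov random walks, \cite[Theorem 2.2]{Mentemeier2013a}; the work is then to verify that its hypotheses are supplied by the standing assumptions. Three inputs are needed. Condition $\condC$ (through~\eqref{A3}) makes the driving chain $(U_n)$ on $\Sp$ uniformly contracting: once a positive matrix has been applied, the contraction of the Hilbert projective metric is uniform (Birkhoff's theorem), so $(U_n)$ is uniformly ergodic, has a spectral gap, and a.s.\ enters $\interior{\Sp}$ after finitely many steps. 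Assumption~\eqref{A4} supplies the non-arithmeticity: the closed subgroup of $\R$ generated by the conditional increments of $S_n$ --- over all starting directions and all $n$ --- is $\R$, because for a positive $\ma\in[\supp \mu]$ the number $\log\lambda_{\ma}$ is a limit of such increments. Finally, hypothesis~(ii) is precisely the strong spatial-continuity condition under which that theorem applies, and~\eqref{A7} furnishes the integrability of the increments. Once the dictionary between the data of \cite{Mentemeier2013a} and $(U_n,S_n)$ is set up and these inputs are checked, the conclusion there --- $H$ constant --- is exactly the assertion.

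For a self-contained argument one runs the classical three-step scheme behind such results, with \cite{DL1983,ABM2012} as the one-dimensional template. \textbf{Step 1.} Since $H$ is bounded and harmonic, $M_n:=H(U_n,t-S_n)$ is a bounded martingale under $\Prob_u$ for every $(u,t)$, so it converges $\Prob_u$-a.s.\ and in $L^1$ to a limit $M_\infty$ with $\E_u M_\infty = H(u,t)$, and $\sum_n(M_{n+1}-M_n)^2<\infty$ a.s. \textbf{Step 2 (eliminate the direction).} Driving two copies of the walk from $u,u'\in\interior{\Sp}$ by the same matrices, condition $\condC$ forces $|U^u_n-U^{u'}_n|\to 0$ exponentially fast to a common random limit $U_\infty\in\interior{\Sp}$ and, together with~\eqref{A7}, controls the offset $S^u_n-S^{u'}_n$ in the $\R$-component; invoking~(ii) to replace $H(U^u_n,\cdot)$ and $H(U^{u'}_n,\cdot)$ by $H(U_\infty,\cdot)$ uniformly in the $\R$-variable, one concludes that $H$ does not feel the starting direction, so the problem reduces to a bounded function on $\R$ that is harmonic for the (conditional) increment law of $S_n$. \textbf{Step 3 (Choquet-Deny on $\R$).} On the abelian group $\R$ the classical Choquet-Deny lemma makes a bounded harmonic function invariant under translations by the closed subgroup generated by the increments; by~\eqref{A4} this subgroup is $\R$, so the reduced function --- hence $H$ --- is constant.

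The hard part is Step~2: the contraction on $\Sp$ and the $\R$-translation structure do not decouple cleanly, because the a.s.\ limit of $S^u_n-S^{u'}_n$ is itself a random quantity, and one must show that this offset is harmlessly absorbed by $H$. This is exactly where the uniform continuity~(ii) is indispensable, and it is the technical content encapsulated in \cite[Theorem 2.2]{Mentemeier2013a}. A minor accompanying point is the a.s.\ control of $S^u_n-S^{u'}_n$, which follows from the exponential contraction of $|U^u_n-U^{u'}_n|$ together with a Borel-Cantelli estimate on the relevant moduli of the $\mM_n$ coming from~\eqref{A7}.
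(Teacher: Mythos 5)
Your primary route is exactly the paper's: the paper proves Proposition~\ref{Choquet} simply by citing \cite[Theorem 2.2]{Mentemeier2013a}, so invoking that theorem (with hypothesis~(ii) as its continuity condition and \eqref{A4} as the aperiodicity input) is the intended argument, and your accompanying three-step sketch is, as you yourself note, only an outline whose crux (Step~2) is the content of that cited theorem. One small correction: the proposition assumes only \eqref{A0}--\eqref{A4}, so the moment condition \eqref{A7} is neither available nor needed here.
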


\begin{proof}[Source:] \cite[Theorem
2.2]{Mentemeier2013a} \end{proof}

In order to apply this Choquet-Deny type result, we will introduce a subset $H_{\alpha,c}^K$ of $\mathcal{J}_\alpha^K$ (Definition and Lemma \ref{defnlem:H}) which contains all possible subsequential limits of $\D[L,s]$ (Lemma \ref{lem:regularfixedpointslimit}). Then we prove that $H_{\alpha,c}^K$  is a compact convex set, and we identify its extremal points by using Proposition \ref{Choquet} (Lemma \ref{lem:extremal_points}).  Finally, we prove Theorem \ref{thm:uniqueness_final}.

\subsection{The Set Containing the Subsequential Limits}

We start by introducing the subset $\mathcal{H}_{\alpha,c}^K$ of $\mathcal{J}_\alpha^K$ which will
contain the subsequential limits of $\D[L,s]$ for $L$-$\alpha$-regular fixed
points.

\begin{defnlem}\label{defnlem:H}
Let \eqref{A0}--\eqref{A3} and \eqref{A7} hold.  For $\alpha, c \in (0,1]$, define the subset
$\mathcal{H}_{\alpha,c}^K \subset \mathcal{J}_\alpha^K $ as follows: A function $g \in \mathcal{J}_\alpha^K$
is in $\mathcal{H}_{\alpha,c}^K$, if it satisfies the following additional properties:
\begin{itemize}
  \item[(1')]\label{1'} $\sup_{u \in \Sp} g(u,0)H^\alpha(u) = c$ and
  $g(u,0)H^\alpha(u) \ge \min_i u_i$ for all $u \in \Sp$.
  \item[(5)]\label{5} For all $(u,t) \in \Sp \times \R$, $$ g(u,t) = m(\alpha)
  \E_{u}^\alpha\ g(U_1,t-S_1).$$
  \item[(6)]\label{6} Introducing 
  $$ L_t : \Sp \times \R \to \Rp, \qquad (u,z) \mapsto
  \frac{g(u,t+z)}{g(u,z)},$$
  the following holds: For all $t \in \R$, all compact $C \in \interior{\Sp}$,
  all $u,w \in C$:
  $$ \sup_{z \in \R} e^{-\alpha t} \abs{L_t(u,z)-L_t(w,z)} \le 4 K K_C {\red (1 \vee
  e^{t}) }\abs{u-w}^\alpha,$$
  with $K_C := \left( \min \{ y_i \ : \ y \in C, \ i=1, \dots, d \}
  \right)^{-1}$.
\end{itemize}
Here, validity of (1') and (5) implies that the function $L_t$ is
well defined and continuous on $\Sp \times \R$.

The set $\mathcal{H}_{\alpha,c}^K$ is a compact subset
of $\Cf{\Sp \times \R}$ w.r.t the compact uniform convergence.
\end{defnlem}

Property (6) will provide the uniform continuity 
needed in the Choquet-Deny-Lemma \ref{Choquet}.

\begin{proof}
Note that \eqref{A6} together with \eqref{A2} implies that $[0,1] \in I_\mu$, hence $\Prob_u^\alpha$ is well defined for all $\alpha \in (0,1]$.

The function $L_t$ is well defined and continuous as soon as $g(u,z)>0$ for all
$(u,z) \in \Sp \times \R$. For $u \in \interior{\Sp}$, this is a direct
consequence of (1'), combined with the lower bounds
$$ g(u,t) \ge \begin{cases} g(u,0)e^{-\alpha t} & t \ge 0, \\
g(u,0)e^{(1-\alpha)t} & t \le 0.
\end{cases}
$$ But due to property (2) of condition $\condC$, $\Prob_u^\alpha\{U_n \in
\interior{\Sp}\} >0$ for all $u \in \Sp$ and some $n$, hence using property (5), $g(u,t)>0$
everywhere.

Since $\mathcal{J}_\alpha^K$ is compact, it suffices to show that the
subset $\mathcal{H}_{\alpha,c}^K$ is closed. It is readily checked that properties (1'),
(6) persist to hold even under pointwise convergence of functions $g_n \to g$.
In order to show the closedness of property (5), uniform integrability of the
sequence $g_n(U_1, t-S_1)$ w.r.t the measures $\Prob_{u}^\alpha$ is needed. This
is the content of the subsequent lemma.
\end{proof}

\begin{lemma}\label{lem:uniform integrable}
Assume \eqref{A0}--\eqref{A3} and \eqref{A7} and let $\alpha \in (0,1]$.
Then for all $(u,t) \in \Sp \times \R$, the family $\left\{ g(U_1, t-S_1) \ : \
g \in \mathcal{J}_{\alpha}^K \right\}$ is uniformly integrable w.r.t~$\Prob_{u}^\alpha$.
\end{lemma}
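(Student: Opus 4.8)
The plan is to reduce uniform integrability to domination by a single integrable random variable. The starting point is that the defining properties (1)--(3) of $\mathcal{J}_\alpha^K$ already force, via the uniform bound \eqref{uniform_bound_J}, the estimate $g(y,s) \le K H^\alpha(y)^{-1}\bigl(e^{(1-\alpha)s}\1[\{s\ge 0\}] + e^{-\alpha s}\1[\{s\le 0\}]\bigr)$ for every $g \in \mathcal{J}_\alpha^K$ and every $(y,s)\in\Sp\times\R$. Evaluating at $(y,s)=(U_1,t-S_1)$ therefore produces a dominating random variable
\[ \Phi := K\, H^\alpha(U_1)^{-1}\Bigl(e^{(1-\alpha)(t-S_1)}\1[\{S_1\le t\}] + e^{-\alpha(t-S_1)}\1[\{S_1\ge t\}]\Bigr), \]
which does not depend on $g$. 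Since any family dominated by a fixed integrable function is automatically uniformly integrable, it suffices to prove $\E_u^\alpha \Phi < \infty$.

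To evaluate $\E_u^\alpha \Phi$ I would pass from $\Prob_u^\alpha$ to $\Prob$ via the change-of-measure identity \eqref{eq:manytoone1} with $n=1$, which rewrites $\E_u^\alpha \psi(U_1,S_1)$ as $\tfrac{\E N}{m(\alpha)H^\alpha(u)}\,\E\bigl[H^\alpha(\mM_1^\top u)\,\psi(U_1,S_1)\bigr]$, where $U_1 = \mM_1^\top \as u$ and $S_1 = -\log\abs{\mM_1^\top u}$. The key simplification is that the $\alpha$-homogeneous extension of $H^\alpha$ from \eqref{eq:defHs} gives $H^\alpha(\mM_1^\top u) = \abs{\mM_1^\top u}^\alpha H^\alpha(U_1) = e^{-\alpha S_1} H^\alpha(U_1)$, so the Radon--Nikodym weight $H^\alpha(\mM_1^\top u)$ exactly cancels the factor $H^\alpha(U_1)^{-1}$ in $\Phi$, leaving a factor $e^{-\alpha S_1}$.

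After this cancellation the two pieces are routine. On $\{S_1\ge t\}$ the exponents combine as $e^{-\alpha S_1}\cdot e^{-\alpha(t-S_1)} = e^{-\alpha t}$, so that piece equals a constant times $e^{-\alpha t}\Prob(S_1\ge t)$, hence is bounded. On $\{S_1\le t\}$ they combine as $e^{-\alpha S_1}\cdot e^{(1-\alpha)(t-S_1)} = e^{(1-\alpha)t}e^{-S_1}$, and since $e^{-S_1} = \abs{\mM_1^\top u} \le \norm{\mM_1}$, this piece is bounded by a constant times $e^{(1-\alpha)t}\E\norm{\mM_1}$, which is finite by \eqref{A7}. (I would also note that \eqref{A7} together with \eqref{A2} secures $m(\alpha)\in(0,\infty)$, so $\Prob_u^\alpha$ is well defined, while $H^\alpha(u)>0$ by Proposition \ref{prop:Ps}.) Adding the two bounds gives $\E_u^\alpha \Phi < \infty$ and hence the claimed uniform integrability.

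I do not expect a genuine obstacle here: the argument is essentially the homogeneity cancellation together with the first-moment assumption \eqref{A7}. The only place demanding care is the bookkeeping — correctly matching the case split in \eqref{uniform_bound_J} (the sign of $t-S_1$) to the events $\{S_1\le t\}$ and $\{S_1\ge t\}$, and verifying that the exponents $-\alpha$ and $1-\alpha$ recombine as claimed with the leftover $e^{-\alpha S_1}$ from the change of measure. Once that is checked, the conclusion is immediate.
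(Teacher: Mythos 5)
Your proof is correct and follows essentially the same route as the paper: dominate the whole family via the uniform bound \eqref{uniform_bound_J}, pass to $\Prob$ through the change of measure \eqref{eq:manytoone1}, and use the $\alpha$-homogeneity of $H^\alpha$ together with $e^{-S_1}=\abs{\mM_1^\top u}\le\norm{\mM_1}$ and \eqref{A7}. The only cosmetic difference is that you cancel $H^\alpha(U_1)^{-1}$ exactly against the density weight, whereas the paper simply bounds $H^\alpha$ above and below by constants on $\Sp$.
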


\begin{proof}
Recalling the uniform bounds \eqref{uniform_bound_J}, valid for all $g \in
\mathcal{J}_\alpha^K$ and the finiteness of 
$$ C:= \sup_{y \in \Sp} H^\alpha(y)^{-1} = \frac{1}{\inf_{y \in \Sp}
H^\alpha(y)}$$ due to Proposition \ref{prop:Ps}, it is
sufficient to show that 
$$ e^{(1-\alpha) (t-S_1)} \1[{t\ge S_1}] + e^{-\alpha (t-S_1)} \1[t \le S_1]$$
is integrable w.r.t. $\Prob_{u}^\alpha$. Using the definition of
$\Prob_{u}^\alpha$, \eqref{eq:manytoone1},
\begin{align*}
\E_{u}^\alpha e^{(1-\alpha)(t-S_1)} ~= &~ \frac{1}{H^\alpha(u)} \E_{u}^0
H^\alpha(U_1) e^{\alpha (t-S_1)} e^{(1-\alpha)(t-S_1)} \\
~\le&~  C (\E N)  e^t \E
\abs{\mM^\top u} ~\le~ C  (\E N) e^t \E \norm{\mM} , \\
\E_{u}^\alpha e^{-\alpha (t-S_1)} ~=&~ \frac{1}{H^\alpha(u)} \E_{u}^0
H^\alpha(U_1) e^{\alpha (t- S_1)} e^{-\alpha (t-S_1)} ~\le~ C \E N, 
\end{align*}
hence it is integrable due to  assumption \eqref{A6}.
\end{proof}

\begin{lemma}\label{lem:regularfixedpointslimit}
Let $\LTfp$ be a $L$-$\alpha$-regular FP of $\STh$, with
associated sequence $\D[L,s]$. Then for any sequence $(s_k)_{k \in \N}$ with
$s_k \to -\infty$, there is a subsequence $(s_n)_{n \in \N} \subset (s_k)_{k
\in \N}$ such that $\D[\infty]= \lim_{n \to \infty} \D[{L,s_n}]$ exists and is
an element of $\mathcal{H}_{\alpha,c}^K$ for some $c >0$. The convergence is uniform on compact subsets of~ $\Sp \times \R$.
\end{lemma}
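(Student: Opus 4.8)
The plan is to extract the subsequential limit from the compactness already in hand, and then check the three extra defining properties of $\mathcal{H}_{\alpha,c}^K$ one by one. By Lemmata \ref{lem:uniform convergence} and \ref{lem:J} (i.e.\ Lemma \ref{lem:arzelamain}) there are $s_0\in\R$ and $K>0$ with $\D[L,s]\in\mathcal{J}_\alpha^K$ for all $s\le s_0$, and $\mathcal{J}_\alpha^K$ is compact for the topology of uniform convergence on compact subsets of $\Sp\times\R$. So, given $s_k\to-\infty$, I would discard finitely many terms to have $s_k\le s_0$, extract a subsequence $(s_n)$ along which $\D[L,s_n]\to\D[\infty]$ uniformly on compacta, and note $\D[\infty]\in\mathcal{J}_\alpha^K$ since that set is closed. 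It then remains to produce $c>0$ for which $\D[\infty]$ satisfies properties (1'), (5), (6) of Definition and Lemma \ref{defnlem:H}.

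Properties (1') and (6) would pass to the limit with little effort. For (6): substituting $\tau=e^{s+z}$, the claimed Hölder bound for $L_t(u,z)=\D[\infty](u,t+z)/\D[\infty](u,z)$ on a compact $C\subset\interior{\Sp}$ is exactly inequality \eqref{quotient:Hoelder} of Lemma \ref{lem:Hoelder} with $\tau$ small (legitimate as $s\to-\infty$), and such pointwise inequalities survive under pointwise convergence. For (1'): since $\D[L,s](u,0)H^\alpha(u)=(1-\LTa(e^{s}u))/(e^{\alpha s}L(e^{s}))$, the monotonicity inequalities \eqref{ineq5} and \eqref{ineq8} squeeze it, up to fixed positive constants, between two multiples of $(1-\LTa(e^{s}\deins))/(e^{\alpha s}L(e^{s}))$, and this last quantity stays in a compact subinterval of $(0,\infty)$ as $s\to-\infty$ by $L$-$\alpha$-regularity of $\LTa$ and slow variation of $L$. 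Using uniform convergence on $\Sp\times\{0\}$ one then gets $c:=\lim_n\sup_{u\in\Sp}\D[L,s_n](u,0)H^\alpha(u)\in(0,\infty)$; after rescaling $L$ by a positive constant (which affects neither regularity nor $\mathcal{J}_\alpha^K$) one may take $c\le1$. The lower bound $\D[\infty](u,0)H^\alpha(u)\ge\min_iu_i$ follows once more from \eqref{ineq8} and this normalisation.

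The heart of the matter — and the step I expect to be the main obstacle — is property (5), namely $\D[\infty](u,t)=m(\alpha)\,\E_u^\alpha\,\D[\infty](U_1,t-S_1)$. I would start from the \emph{exact} one-step relation obtained by applying $\LTa(x)=\E\prod_{i=1}^N\LTa(\mT_i^\top x)$ once and rewriting $\mT_i^\top u=\abs{\mT_i^\top u}\,(\mT_i^\top\as u)$; using the $\alpha$-homogeneity of $H^\alpha$ together with \eqref{eq:defmu} and the definition \eqref{eq:manytoone1} of $\Prob_u^\alpha$, one finds, with $b_i^{(s)}:=1-\LTa(e^{s+t}\mT_i^\top u)\in[0,1]$,
\begin{equation*}
\D[L,s](u,t)\;=\;m(\alpha)\,\E_u^\alpha\,\D[L,s](U_1,t-S_1)\;-\;\frac{\E\bigl[\textstyle\sum_i b_i^{(s)}-\bigl(1-\prod_i(1-b_i^{(s)})\bigr)\bigr]}{H^\alpha(u)\,e^{\alpha(s+t)}L(e^{s})}.
\end{equation*}
(Finiteness of the expectation on the right is Lemma \ref{lem:uniform integrable}.) The correction term is nonnegative, and by the elementary bound $0\le\sum_i b_i-(1-\prod_i(1-b_i))\le(\sum_i b_i)\min\{1,\sum_i b_i\}$ for $b_i\in[0,1]$ it is at most $\bigl(H^\alpha(u)e^{\alpha t}\bigr)^{-1}\E\bigl[B^{(s)}\min\{1,\sum_i b_i^{(s)}\}\bigr]$, where $B^{(s)}:=\sum_i b_i^{(s)}/(e^{\alpha s}L(e^{s}))$. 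The key claim is that this tends to $0$ as $s\to-\infty$ for each fixed $(u,t)$: for $\Prob$-a.e.\ realisation, $e^{s+t}\mT_i^\top u\to0$, hence $b_i^{(s)}\to0$ (continuity of $\LTa$ at $0$) and — crucially using that $N$ is finite — $\sum_i b_i^{(s)}\to0$, so $\min\{1,\sum_i b_i^{(s)}\}\to0$; meanwhile, by the uniform bound \eqref{uniform_bound_J} for $\mathcal{J}_\alpha^K$ one gets $B^{(s)}\le Ke^{\abs{t}}\sum_i(\norm{\mT_i}\vee1)$ for all $s\le s_0$, which is $\Prob$-integrable by \eqref{A7} and \eqref{A2}, so dominated convergence finishes it. The delicate point here is precisely that one must \emph{not} look for a bound uniform in the random, possibly unbounded $N$, but instead exploit realisation-wise that $\sum_i b_i^{(s)}\to0$ — this is the multivariate counterpart of the corresponding step in \cite{ABM2012}.

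Finally I would pass to the limit along $(s_n)$: one has $\D[L,s_n](u,t)\to\D[\infty](u,t)$, the correction terms go to $0$, and $\E_u^\alpha\D[L,s_n](U_1,t-S_1)\to\E_u^\alpha\D[\infty](U_1,t-S_1)$ by Vitali's theorem, since the family $\{g(U_1,t-S_1):g\in\mathcal{J}_\alpha^K\}$ is uniformly $\Prob_u^\alpha$-integrable (Lemma \ref{lem:uniform integrable}) and $\D[L,s_n]\to\D[\infty]$ pointwise. Hence $\D[\infty]$ satisfies (5) for all $(u,t)$, so $\D[\infty]\in\mathcal{H}_{\alpha,c}^K$; uniformity of the convergence on compacta is built into the topology for which $\mathcal{J}_\alpha^K$ is compact.
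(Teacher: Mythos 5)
Your proposal is correct and follows essentially the same route as the paper: extract the subsequential limit from the compactness of $\mathcal{J}_\alpha^K$, pass properties (1') and (6) to the limit via \eqref{ineq5}, \eqref{ineq8} and \eqref{quotient:Hoelder}, and obtain (5) from the exact one-step identity with a nonnegative correction term that vanishes as $s\to-\infty$ by dominated convergence with dominating function proportional to $\sum_i(\norm{\mT_i}\vee 1)$, finishing with the uniform integrability of Lemma \ref{lem:uniform integrable}. The only cosmetic deviations are your elementary bound $\sum_i b_i-(1-\prod_i(1-b_i))\le(\sum_i b_i)\min\{1,\sum_i b_i\}$ in place of the paper's estimate via $f(s)=e^{-s}+s-1$ and $\prod_i(1-b_i)\le e^{-\sum_i b_i}$, your pointwise (rather than uniform in $u$) control of that correction term, which suffices here, and your explicit normalisation of $L$ in checking (1').
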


\begin{proof}
By Lemma \ref{lem:uniform convergence}, $(\D[L,s])_{s \le s_0} \in \mathcal{J}_{\alpha}^{K}$
for some $K>0$ and $s_0 \in \R$. Hence for any sequence $s_k \to -\infty$ there
is a  subsequence $(s_n)_{n \in \N}$, such that $\D[L,s_n]$ converges and its limit $\D[\infty]$ 
is again an element of $\mathcal{J}_{\alpha}^K$. By Lemma \ref{lem:J}, the convergence is uniform on compact
sets. Thus the burden of the proof is to show that the additional properties
(1'), (5) and (6) hold for the limit $\D[\infty]$.

\medskip

\Step{1, Property (1'):} Using \eqref{ineq8}, we
infer that $$1-\LTfp(e^s u) \ge 1- \LTfp(e^s \min_{i} u_i \deins)  \ge \min_{i}
u_i (1- \LTfp(e^s \deins)).$$
Thus for any $s$,
$$ \D[L,s](u,0) H^\alpha(u) = \frac{1- \LTfp(e^s u)}{e^{\alpha s}L(e^s)} \ge
\min_{i} u_i \frac{1-\LTfp(e^s\deins)}{e^{\alpha s}L(e^s)},$$
and this is bounded from below by $(\min_{i} u_i )\underline{K}$ for $s \to -\infty$, since $\LTfp$ is
$L$-$\alpha$-regular. This proves the lower bound for $\D[\infty](u,0)$. Due to
property
\eqref{1}, it is also bounded from above, thus $c:= \sup_{u \in \Sp}
\D[\infty](u,0)H^{\alpha}(u)$ exists.

\medskip

\Step{2, Property (5)}: Write
\begin{equation}
G(x):= \E \left[ \prod_{i=1}^N
\LTfp(\mT_i^\top x) +  \sum_{i=1}^N (1- \LTfp(\mT_i^\top
 x)) - 1 \right] .
\end{equation}
$G(x) \ge 0$ by a simple translation of the arguments in \cite[Lemma
2.4]{DL1983}.  We use that $\STh \LTfp = \LTfp$, a linearization  and the many-to-one identity \eqref{spinaltreeidentity} to derive the
following:
\begin{align*}
\D[L,s](u,t)  = &  \frac{1-
\LTfp(e^{s+t}u)}{H^\alpha(e^tu)L(e^s)e^{\alpha s}} =  \frac{1-
\E \prod_{i=1}^N \LTfp(e^{s+t}\mT_i^\top u)}{H^\alpha(e^tu)L(e^s)e^{\alpha s}} \\
= &  \frac{\E \sum_{i=1}^N (1-
\LTfp(e^{s+t}\mT_i^\top u))}{H^\alpha(e^tu) L(e^s)e^{\alpha s}} -
\frac{G(e^{s+t}u)}{H^\alpha(e^tu) L(e^s)e^{\alpha s}}  \\
= &  \frac{1}{H^\alpha(u)} \, \E \left( \sum_{i=1}^N \frac{(1-
\LTfp(e^{s+t}\mT_i^\top u))}{H^\alpha(e^t \mT_i^\top u) L(e^s)e^{\alpha s}} H^\alpha( \mT_i^\top u) \right) -
\frac{G(e^{s+t}u)}{H^\alpha(e^tu) L(e^s)e^{\alpha s}}  \\
= & m(\alpha) \E_u^\alpha \left( \frac{1-
\LTfp(e^{s+t}\mM^\top u)}{H^\alpha(e^t \mM^\top u) L(s) e^{\alpha s}} \right) - \frac{G(e^{s+t}u)}{H^\alpha(e^tu) L(e^s)e^{\alpha s}} \\
  = & m(\alpha) \E_{u}^\alpha \D[L,s](U_1,t-S_1) -
 \frac{L(e^{s+t})}{L(e^s)H^\alpha(u)}
 \frac{G(e^{s+t}u)}{L(e^{s+t})e^{\alpha(s+t)}}
\end{align*}
Due to Lemma \ref{lem:uniform integrable}, the sequence $\D[{L,s_n}](U_1,t- S_1)$
is uniformly integrable w.r.t $\Prob_{u}^\alpha$, hence it remains to show that the
second part tends to zero for $s \to - \infty$. The following argument follows closely the ideas of \cite[Lemma
2.6]{DL1983}. Since $L$ is slowly varying at
$0$, the quotient $ L(e^{s+t})/L(e^s)$ is bounded when $s$ tends to $- \infty$. 

Consider $G(ru)$, $r \in \Rp$, $u \in \Sp$. Defining the increasing
function $$f:\Rp \times \to \Rp ,\qquad  f(s) = 
e^{-s} + s -1 
, $$
and using the inequality $s \le e^{-(1-s)}$ as well as \eqref{ineq7}, we
calculate
\begin{align*}
G(ru) & \le~ \E \left[ \exp\left(-(\sum_{i=1}^N (1-\LTfp(\mT_i^\top ru))\right)
+ \sum_{i=1}^N (1- \LTfp(\mT_i^\top ru)) -1 \right] \\
&=~ \E\, f\left(\sum_{i=1}^N (1- \LTfp(\mT_i^\top ru))\right) \le \E\, f \left(
\sum_{i=1}^N ( \norm{\mT_i} \vee 1) (1-\LTfp(r\deins)) \right)
\end{align*}
Writing $C(\T)=\sum_{i=1}^N (\norm{\mT_i} \vee 1)$, use that $\E C(\T) \le
(\E N)(1+ \E \norm{\mM}) < \infty$, the regular variation of $\LTfp$ and $\lim_{s
\to 0} f(s)/s$ to deduce that
\begin{align*} 0 \le \limsup_{r \to 0} \sup_{u \in \Sp}
\frac{G(ru)}{L(r)r^\alpha} \le & \limsup_{r \to 0} \E\left[\frac{
f\Bigl(C(T) (1-\LTfp(r\deins))  \Bigr)}{C(\T)(1-\LTfp(r\deins))} C(\T)
\frac{1- \LTfp(r\deins)}{L(r)r^\alpha} \right]= 0\end{align*}
Consequently, for the limit
$\D[\infty](u,t) = m(\alpha) \E_{u}^\alpha \D[\infty](U_1,t- S_1)$.

\medskip

\Step{3, Property (6):} Fix $t \in \R$, $C \subset \interior{\Sp}$ compact and
compute for $u,w \in C$, $z \in \R$:
\begin{align*}
 e^{-\alpha t} \abs{\frac{\D[L,s](u,t+z)}{\D[L,s](u,z)} - \frac{\D[L,s](w,t+z)}{\D[L,s](w,z)}}
=  \abs{\frac{1- \LTfp(e^{s+t+z}u)}{1- \LTfp(e^{s+z}u)} - \frac{1-
\LTfp(e^{s+t+z}w)}{1- \LTfp(e^{s+z}w)}} .
\end{align*}
Using Lemma \ref{lem:Hoelder}, there is $s_1 \in \R$ such that the right hand side is bounded by 
$$ 4 K K_C {\red (1\vee e^t )} \abs{u-w}^\alpha$$
as soon as
$e^{s+z} \le s_1$. For any fixed $z$, this condition is satisfied eventually
when taking the limit $s_n \to - \infty$. Hence in the limit,
$$ e^{- \alpha t} \abs{\frac{\D[L,s](u,t+z)}{\D[L,s](u,z)} -
\frac{\D[L,s](w,t+z)}{\D[L,s](w,z)}} \le 4 K K_C {\red(1 \vee e^{t})}
\abs{u-w}^\alpha$$ for all {\red $z \in \R$. }
\end{proof}

\subsection{Extremal Points of $\mathcal{H}_{\alpha,c}^K$}

As a compact subset of a locally convex topological space, namely $\Cf{\Sp
\times \R}$, the set $\mathcal{H}_{\alpha,c}^K$ (if non-void) is contained in the convex
hull of its extremal points due to the Krein-Milman theorem \cite[Theorem
V.8.4]{Dunford1958}. Using Proposition \ref{Choquet}, we now compute all
possible extremal points.

\begin{lemma}\label{lem:extremal_points}
Let \eqref{A0}--\eqref{A4}, \eqref{A7} hold and $\alpha \in (0,1]$.
The extremal points of $\mathcal{H}_{\alpha,c}^K$ are contained in the set
\begin{equation}
\mathcal{E}_{\alpha,c}  := \left\{ (u,t) \mapsto c
\frac{H^\chi(e^tu)}{H^\alpha(e^tu)} \ : \ \chi \in (0,1], \ m(\chi)=1. \right\}
\end{equation}
\end{lemma}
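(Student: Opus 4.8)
Here is how I would approach the proof of Lemma \ref{lem:extremal_points}.

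\textbf{Overall plan.} By Definition and Lemma \ref{defnlem:H}, $\mathcal{H}_{\alpha,c}^K$ is a compact convex subset of the locally convex space $\Cf{\Sp\times\R}$, so by the Krein--Milman theorem it is the closed convex hull of its extremal points; it therefore suffices to fix an extremal $g\in\mathcal{H}_{\alpha,c}^K$ and show $g\in\mathcal{E}_{\alpha,c}$. The strategy is classical Choquet--Deny: first show that translation in the $t$--variable acts on $g$ by a scalar, conclude that $g$ is exponential in $t$ with a directional prefactor, and then feed this back into the harmonicity property (5) and use the eigenfunction uniqueness of Proposition \ref{prop:Ps} to pin down the prefactor as $H^\chi/H^\alpha$ with $m(\chi)=1$.

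\textbf{Step 1 (the shift acts by a scalar --- the hard part).} Using the change of measure \eqref{eq:manytoone1} together with \eqref{eq:defmu} and \eqref{eq:propHs}, property (5) can be rewritten as $g(u,t)=\frac{1}{H^\alpha(u)}\E\bigl[\sum_{i=1}^{N}H^\alpha(\mT_i^\top u)\,g(\mT_i^\top\as u,\ t+\log\abs{\mT_i^\top u})\bigr]=:\mathcal{P}g(u,t)$ (here the factor $m(\alpha)$ cancels, so no non-criticality is needed). The operator $\mathcal{P}=m(\alpha)\cdot(\text{Markov kernel})$ commutes with $t$--translation $\sigma_a g:=g(\cdot,\cdot+a)$, so $\sigma_a g$ is again $\mathcal{P}$--fixed; and the monotonicity properties (2),(3) give two-sided bounds $e^{-|a|}\,g\le\sigma_a g\le e^{|a|}\,g$, so $\sigma_a g$ and $g$ are mutually comparable positive $\mathcal{P}$--harmonic functions. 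I would then pass to the Doob transform of the underlying Markov random walk by the (everywhere positive, by (1') and $\condC$) function $g$: the ratio $q_a(u,z):=\sigma_a g(u,z)/g(u,z)=g(u,z+a)/g(u,z)$ is bounded, is harmonic for the transformed walk, and --- crucially --- property (6) is exactly the statement that $u\mapsto q_a(u,z)$ is $\alpha$--H\"older on compacta of $\interior{\Sp}$ \emph{uniformly in $z$}, which is the continuity hypothesis \eqref{strong continuity} of the Choquet--Deny Proposition \ref{Choquet}. Hence $q_a$ is constant: $g(u,t+a)=\rho(a)\,g(u,t)$ for all $u\in\Sp$, $t,a\in\R$. \emph{This is the main obstacle:} one must check that the $g$--transformed walk (or the relevant harmonic function on it) still falls within the scope of Proposition \ref{Choquet}; alternatively one runs a direct minimal-harmonic-function argument inside the cone spanned by $\mathcal{H}_{\alpha,c}^K$ (if $g=g'+g''$ with $g',g''\ge0$ harmonic and comparable, extremality forces proportionality), taking care that the operations used respect properties (2), (3), (6).

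\textbf{Step 2 (exponential form).} From $g(u,t+a+b)=\rho(a+b)g(u,t)=\rho(a)g(u,t+b)=\rho(a)\rho(b)g(u,t)$ one gets $\rho(a+b)=\rho(a)\rho(b)$; since $\rho(a)=g(u_0,t_0+a)/g(u_0,t_0)$ is positive and continuous, $\rho(a)=e^{\gamma a}$ for some $\gamma\in\R$, hence $g(u,t)=e^{\gamma t}\phi(u)$ with $\phi:=g(\cdot,0)>0$ continuous on $\Sp$. Properties (2), (3) then read $\gamma\ge-\alpha$ and $\gamma\le 1-\alpha$.

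\textbf{Step 3 (identification).} Put $\chi:=\alpha+\gamma\in[0,1]$ (which lies in $I_\mu$ by \eqref{A2} and \eqref{A7}), and let $\psi$ be the $\chi$--homogeneous extension to $\Rdnn$ of $H^\alpha\phi$. Substituting $g(u,t)=e^{\gamma t}\phi(u)$ into $g=\mathcal{P}g$ and using the $\alpha$--homogeneity of $H^\alpha$ collapses the identity to $\psi(u)=\E\bigl[\sum_{i=1}^N\psi(\mT_i^\top u)\bigr]=(\E N)\,\Pst[\chi]\psi(u)$ on $\Sp$, i.e. $\Pst[\chi]\psi=\tfrac1{\E N}\psi$. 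As $\psi$ is nonnegative, nonzero and continuous, the uniqueness clause of Proposition \ref{prop:Ps} forces $\tfrac1{\E N}=k(\chi)=\tfrac1{\E N}m(\chi)$, hence $m(\chi)=1$ (so $\chi\neq0$, since $m(0)=\E N>1$, giving $\chi\in(0,1]$), and $\psi=c'H^\chi$ for some $c'>0$. Therefore $g(u,t)=e^{(\chi-\alpha)t}\,c'\,H^\chi(u)/H^\alpha(u)=c'\,H^\chi(e^t u)/H^\alpha(e^t u)$; finally the normalization in (1'), $\sup_{u\in\Sp}g(u,0)H^\alpha(u)=c$, together with $\abs{H^\chi}_\infty=1$, yields $c'=c$, so $g\in\mathcal{E}_{\alpha,c}$. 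Everything after Step 1 is the computation sketched above plus Proposition \ref{prop:Ps}; the genuine difficulty is the Choquet--Deny rigidity in Step 1.
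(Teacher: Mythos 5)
Your Steps 2 and 3 (Cauchy functional equation, the range $\chi\in[0,1]$ from properties (2)--(3), identification of the prefactor via property (5), the comparison formula and the eigenfunction uniqueness in Proposition \ref{prop:Ps}, then $m(\chi)=1$ and $\chi>0$ from $m(0)=\E N>1$) coincide with the paper's argument and are fine. The problem is Step 1, and it is exactly the step you flag as the obstacle: it is a genuine gap, not a technicality. Your primary route makes $q_a(u,z)=g(u,z+a)/g(u,z)$ harmonic for the Doob $g$-transform of the walk, but Proposition \ref{Choquet} is a statement about the specific Markov random walk $(U_n,S_n)$ under $\Prob_u$, i.e. about functions satisfying $H(u,t)=\E_u H(U_1,t-S_1)$; the $g$-transformed kernel reweights by $g(U_1,t-S_1)/g(u,t)$ and so is no longer of this translation-invariant form unless $g$ is already exponential in $t$ -- which is what you are trying to prove. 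No Choquet--Deny result for that transformed chain is available in the paper, and establishing one is essentially the whole difficulty. Worse, your computation $\hat P q_a=q_a$ uses only harmonicity of $g$, never extremality; if the transformed-chain Choquet--Deny step were valid it would force \emph{every} $g\in\mathcal{H}_{\alpha,c}^K$ to be a pure exponential, which is incompatible with the role of $\mathcal{H}_{\alpha,c}^K$ as a convex set whose generic elements are mixtures of two exponentials when $m(\cdot)=1$ has two roots in $(0,1]$ (this is precisely how the set is used in the proof of Theorem \ref{thm:uniqueness_final}(1)). So the argument as proposed cannot be repaired without inserting extremality before the Choquet--Deny step.

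That is what the paper does, and it is the idea your fallback parenthetical gestures at but does not carry out. Starting from property (5), write
\begin{equation*}
g(u,t+s) ~=~ m(\alpha)\int \frac{g(y,t+s-z)}{g(y,s-z)}\,g(u,s)\,\frac{g(y,s-z)}{g(u,s)}\ \Prob_u^\alpha(U_1\in dy,\,S_1\in dz),
\end{equation*}
and note (taking $t=0$) that $m(\alpha)\int \frac{g(y,s-z)}{g(u,s)}\,\Prob_u^\alpha(U_1\in dy, S_1\in dz)=1$, so the display exhibits $g$ as a convex combination of the functions $g_{y,z}(u,s,t)=\frac{g(y,t+s-z)}{g(y,s-z)}g(u,s)$. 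Extremality of $g$ then yields the pointwise identity $\frac{g(u,t+s)}{g(u,s)}=\frac{g(y,t+s-z)}{g(y,s-z)}$ for all $(y,z)$ in the support of $\Prob_u((U_1,S_1)\in\cdot)$ (which coincides with the support under $\Prob_u^\alpha$). Integrating this identity gives $L_t(u,s)=\E_u L_t(U_1,s-S_1)$, i.e. the ratio is bounded harmonic for the \emph{original} MRW, with boundedness from properties (2)--(3) and the uniform continuity (ii) from property (6); Proposition \ref{Choquet} then applies as stated and gives constancy of $L_t$. In short: extremality must be used to transfer harmonicity of the ratio from the $g$-transformed chain to the original walk; once you make that move, the rest of your proposal goes through unchanged.
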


Recall that the functions $H^s(\cdot)$ are $s$-homogeneous, thus
$$\frac{H^\chi(e^tu)}{H^\alpha(e^tu)} ~=~ \frac{e^{\chi t} H^\chi(u)}{e^{\alpha t} H^\alpha(u)} ~=~ e^{(\chi-\alpha)t} \frac{ H^\chi(u)}{ H^\alpha(u)}.$$

\begin{proof}
Let $g \in \mathcal{H}_{\alpha,c}^K$ be extremal.

\medskip

\Step[1]: Use property (5) to compute for all $u \in \Sp$, $s,t \in \R$
\begin{align}
g(u,t+s) =\ & m(\alpha) \E_{u}^\alpha g(U_1,t+s-S_1) 
 \nonumber \\ =\ & m(\alpha) \int g(y, t+s-z)\ \Prob_u^\alpha(U_1
\in dy, S_1 \in dz) \label{convexkombi1} \\ 
=\ & m(\alpha) \int \frac{g(y,t+s-z)}{g(y,s-z)} g(u,s)
\frac{g(y,s-z)}{g(u,s)}\ \Prob_u^\alpha(U_1 \in dy,  S_1 \in dz)
\label{convexkombi}
\end{align}
Recall that by Lemma \ref{defnlem:H}, $g>0$,
thus the denominators are positive.
Using \eqref{convexkombi1} with $t=0$, it follows that
$$ m(\alpha) \int \frac{g(y,s-z)}{g(u,s)}\ \Prob_u^\alpha (U_1 \in dy, S_1 \in
dz) =1.$$
Hence \eqref{convexkombi} is a convex combination of functions
$g_{y,z}(u,s,t)=\frac{g(y,t+s-z)}{g(y,s-z)}g(u,s)$. Consequently, since $g$ is extremal,
\begin{equation}
\label{gextrem} \frac{g(u,t+s)}{g(u,s)}=\frac{g(y,t+s-z)}{g(y,s-z)}
\end{equation} for all $u \in {\Sp}$, $t,s \in \R$ and all $(y,z)
\in \supp \ \Prob_u^\alpha ((U_1, S_1) \in \cdot)$. But this support is the same
as $\supp \ \Prob_u((U_1, S_1) \in \cdot)$. This yields that $L_t(u,s):=
\frac{g(u,t+s)}{g(u,s)}$ satisfies \begin{equation} \label{Gtharmonic} L_t(u,s)
=
\E_{u}{L_t(U_1,s-S_1)}. \end{equation}

\medskip

\Step[2]: Proposition \ref{Choquet} will be applied in
order to show that $L_t$ is constant on ${\Sp} \times \R$,
i.e. equation \eqref{gextrem} holds for all $u,y \in {\Sp}$, $z,
s, t \in \R$.  The aperiodicity of $\mu$, i.e. assumption \eqref{A4}, enters here. Property (6) yields
condition (ii) of the proposition, while \eqref{Gtharmonic} is its condition (i).  It
remains to show that $L_t$ is bounded (for fixed t). If $t \le 0$, by property
\eqref{2}, $g(u,t+s)e^{\alpha (t+s)} \le g(u,s) e^{\alpha s}$, thus 
$$ 0 < L_t(u,s) = e^{- \alpha t} \frac{g(u,t+s)e^{\alpha
(t+s)}}{g(u,s)e^{\alpha s}} \le e^{-\alpha t}.$$ For $t \ge 0$, use property
\eqref{3} for an analogue argument. Referring also to Lemma \ref{defnlem:H},
$L_t \in \Cbf{\Sp \times \R}$, hence as a bounded harmonic function, it is
constant.

\medskip

\Step[3]: Validity of \eqref{gextrem} for any $u,y \in {\Sp}$, $t,s,z \in \R$
implies that for some $\tilde{f}: {\Sp} \to (0,\infty)$, $a \in \Rp$, $b
\in \R$, $$ g(u,t)= \tilde{f}(u) a e^{bt}.$$ 
Considering properties \eqref{2} and \eqref{3} it follows that $b \in
[\alpha-1, \alpha]$, i.e. $b = \chi - \alpha$ for some $\chi \in [0,1]$.
 Rewriting $a \tilde{f}(u)=: H^\alpha(u)^{-1}f(u)$, 
it follows that \begin{equation}\label{eqn:formula_g} g(u,t) =
\frac{f(u)}{H^\alpha(u)} e^{(\chi-\alpha)t} = \frac{f(u)e^{\chi
t}}{H^\alpha(e^tu)} .\end{equation} It remains to compute the possible values of
$f$ and $\chi$. Therefore, use property (5) which gives together with the comparison formula \eqref{eq:manytoone10}
\begin{align*}
f(u) =\ & e^{-\chi t}\, H^\alpha(e^tu) \, m(\alpha) \E_u^\alpha \left(
\frac{f(U_1)}{H^\alpha(e^{t-S_1}U_1)}e^{\chi (t-S_1)} \right) \\ 
=\ &  H^\alpha(e^tu)\, m(\alpha)\,
\frac{1}{H^\alpha(e^tu)k(\alpha)} \, \E_{u}^0\, \left({H^\alpha(e^{t-S_1}U_1)
\frac{f(U_1)}{H^\alpha(e^{t-S_1}U_1)} e^{-\chi S_1} }\right) \\
=\ & (\E N)\, \E_{u}^0\, {f(U_1)e^{-\chi S_1}} = (\E N)\, \Erw{f(\mM^\top \as
u)\abs{\mM^\top u}^\chi} \\ =\ & (\E N) P_*^\chi f(u).
\end{align*}
This means that $f$ is an eigenfunction of $P_*^\chi$ with eigenvalue
$\frac1{\E N}$. Referring to the definition of $\mathcal{H}_{\alpha,c}^K$, $f >0$. By 
Proposition \eqref{prop:Ps}, scalar multiples of $H^\chi$ are the only strictly
positive eigenfunctions of $\Pst[\chi]$. Thus $f =c H^\chi$ where $c$ is
given by property \eqref{1}. The eigenvalue of $\Pst[\chi]$ corresponding
to $H^\chi$ is $k(\chi)$. If now $k(\chi)=\frac1{\E N}$, then
$m(\chi)=(\E N) k(\chi)=1$, which shows that all extremal points of
$\mathcal{H}_{\alpha,c}^K$ are in $\mathcal{E}_{\alpha,c}^K$.
\end{proof}


\subsection{Proof of Theorem \ref{thm:uniqueness_final}}

Now we can give the proof of Theorem \ref{thm:uniqueness_final}. For the readers convenience, we repeat its statement.
 
 \begin{thm*}
Assume \eqref{A1}--\eqref{A4} and \eqref{A7}. Let $\alpha \in (0,1]$, not necessarily satisfying \eqref{A5}.
\begin{enumerate}
  \item If there is an $L$-$\alpha$-regular FP of $\STh$, then  $m(\alpha)=1$, $m'(\alpha) \le 0$. \label{a}
 \item  If $\LTa$ is an\label{b}
 $L$-$\alpha$-regular FP of $\ST$, then for all fixed $s >0$, \begin{equation}\tag{\ref{eq:regvarLT0}} \lim_{r \to 0} \, \sup_{u,v \in \Sp}
 \abs{\frac{1-\LTa(sru)}{1- \LTa(rv)}- s^\alpha \frac{H^\alpha(u)}{H^\alpha(v)}} =0.\end{equation}
  \item If $\LTfp$ is a\label{c}
$L$-$\alpha$-elementary FP  of $\ST$, then there is $K >0$ such that
\begin{equation}\tag{\ref{eq:limLTnull}} \lim_{r \to 0} \sup_{u \in \Sp}\abs{\frac{1- \LTfp(ru)}{L(r) r^\alpha} - K H^\alpha(u)} =0  . \end{equation}
\end{enumerate}
\end{thm*}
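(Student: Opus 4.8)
I would prove the theorem by assembling the structural results already in place. Fix an $L$-$\alpha$-regular fixed point $\LTa$ (of $\STh$ for the first assertion, of $\ST$ for the second and third), and recall that Lemma~\ref{lem:uniform convergence} puts $(\D[L,s])_{s\le s_0}\subset\mathcal{J}_\alpha^K$ for suitable $s_0\in\R$, $K>0$. Given any $s_k\to-\infty$, Lemmata~\ref{lem:J} and~\ref{lem:regularfixedpointslimit} produce a subsequence $(s_n)$ along which $\D[L,s_n]\to\D[\infty]$ uniformly on compact subsets of $\Sp\times\R$, with $\D[\infty]\in\mathcal{H}_{\alpha,c}^K$ for some $c>0$. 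By the Krein--Milman theorem together with Lemma~\ref{lem:extremal_points}, $\D[\infty]$ lies in the closed convex hull of the functions $g_\chi(u,t):=c\,H^\chi(e^tu)/H^\alpha(e^tu)=c\,e^{(\chi-\alpha)t}H^\chi(u)/H^\alpha(u)$ with $\chi\in(0,1]$ and $m(\chi)=1$; since $m$ is log-convex with $m(0)=\E N>1$, there are at most two such exponents.

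The step I expect to carry the argument is turning the one-dimensional regularity hypothesis at $\deins$ into a constraint on $\D[\infty]$ in every direction. Using the identity $\D[L,s](u,t)=\D[L,s+t](u,0)\,L(e^{s+t})/L(e^s)$, the $\alpha$-homogeneity of $H^\alpha$, and the slow variation of $L$, one finds, evaluating at $u=\eins=d^{-1/2}\deins$,
\[
\D[\infty](\eins,t)\;=\;\frac{1}{H^\alpha(\deins)}\,\lim_{n\to\infty}\frac{1-\LTa\!\big(d^{-1/2}e^{s_n+t}\,\deins\big)}{\big(d^{-1/2}e^{s_n+t}\big)^\alpha L\!\big(d^{-1/2}e^{s_n+t}\big)},
\]
so that $\underline{K}/H^\alpha(\deins)\le\D[\infty](\eins,t)\le\overline{K}/H^\alpha(\deins)$ for \emph{every} $t\in\R$. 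As each $g_\chi(\eins,\cdot)$ is a positive multiple of $e^{(\chi-\alpha)t}$, a nonnegative combination of the (at most two) surviving $g_\chi$ can be bounded on all of $\R$ only when every contributing exponent equals $\alpha$; in particular $m(\alpha)=1$ — which also drops out of property~(5) of $\mathcal{H}_{\alpha,c}^K$, reading $c=m(\alpha)c$ for the resulting constant — and $\D[\infty]$ is the \emph{constant function} $c$. Convexity of $m$, $m(0)>1$, and the position of the second solution of $m=1$ then force $m'(\alpha)\le0$, with equality reserved for non-constant $L$ (the critical case). This yields the first assertion.

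For the second assertion I would write
\[
\frac{1-\LTa(sru)}{1-\LTa(rv)}\;=\;s^\alpha\,\frac{H^\alpha(u)}{H^\alpha(v)}\cdot\frac{L(sr)}{L(r)}\cdot\frac{\D[L,\log r+\log s](u,0)}{\D[L,\log r](v,0)},
\]
so by slow variation it suffices to show $\D[L,s'+\log s](u,0)/\D[L,s'](v,0)\to1$ uniformly in $u,v\in\Sp$ as $s'\to-\infty$. Now $\D[L,s'+\log s](u,0)=\D[L,s'](u,\log s)\,L(e^{s'})/L(se^{s'})$, and along any subsequence on which $\D[L,s']$ converges the limit is a constant function (by the previous paragraph), so numerator and denominator tend to the same constant; uniformity comes from the uniform convergence in $\mathcal{J}_\alpha^K$ on the compacta $\Sp\times\{0\}$ and $\Sp\times\{\log s\}$, and the full limit follows by a standard subsequence argument. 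For the third assertion, an $L$-$\alpha$-elementary $\LTfp$ is a fortiori $L$-$\alpha$-regular, so the first two assertions apply; moreover $\D[L,s'](\eins,0)\to H^\alpha(\deins)^{-1}\lim_{r\to0}(1-\LTfp(r\deins))/(L(r)r^\alpha)=:K\in(0,\infty)$ as a \emph{genuine} (not merely subsequential) limit, whence every subsequential limit of $\D[L,s']$ equals the constant function $K$. Therefore $\D[L,s']\to K$ uniformly on compacta; restricting to $\Sp\times\{0\}$ and multiplying by the bounded factor $H^\alpha(u)$ gives~\eqref{eq:limLTnull}, with $K>0$. (Corollary~\ref{cor:uniform convergence} is the special case $\LTfp(ru)=\E\exp(-Kr^\alpha W(u))$, where the limit is computed outright.)

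The main obstacle will be the middle paragraph: propagating the scalar regularity at $\deins$ to a uniform statement over all directions $u\in\Sp$ and showing that \emph{every} subsequential limit of $\D[L,s]$ collapses to a constant. This hinges on the Choquet--Deny identification of the extremal points of $\mathcal{H}_{\alpha,c}^K$ (Lemma~\ref{lem:extremal_points}, resting in turn on Proposition~\ref{Choquet}) — the genuine technical heart — together with the compactness of $\mathcal{J}_\alpha^K$. Once the limit is pinned down as a constant, the remaining deductions in the second and third assertions are routine bookkeeping with slowly varying functions.
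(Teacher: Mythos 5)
Your overall architecture is the paper's own: compactness of $\mathcal{J}_\alpha^K$, subsequential limits lying in $\mathcal{H}_{\alpha,c}^K$, Krein--Milman plus the Choquet--Deny identification of the extremal points (Lemma \ref{lem:extremal_points}), and then slow-variation bookkeeping for assertions (2) and (3). Your handling of (2) and (3) is essentially correct \emph{given} that every subsequential limit $\D[\infty]$ is a positive constant, and that conclusion you obtain correctly: two-sided boundedness of $\D[\infty](\eins,\cdot)$ kills the weight of any extremal $g_\chi$ with $\chi\neq\alpha$, so $\D[\infty]\equiv c>0$ and $m(\alpha)=1$.

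The genuine gap is in assertion (1), at the step ``convexity of $m$, $m(0)>1$, and the position of the second solution of $m=1$ then force $m'(\alpha)\le 0$.'' Boundedness of the limit only gives $\alpha\in\{\chi_1,\chi_2\}$, where $\chi_1<\chi_2$ are the (at most two) solutions of $m(\chi)=1$ in $(0,1]$; log-convexity of $m$ cannot tell you whether $\alpha$ is the smaller root (where $m'(\alpha)\le0$) or the larger root (where $m'(\alpha)$ may be strictly positive), and in the latter case the constancy of $\D[\infty]$ is equally consistent, so nothing in your argument excludes it. The paper rules out $\alpha=\chi_2$ by a separate use of the fixed-point property: the upper bound $\limsup_{r\to0}(1-\LTa(r\deins))/(L(r)r^{\chi_2})\le\overline K<\infty$ together with the Tauberian theorem \cite[XIII.5]{Feller1971} yields $\P{\skalar{X,\deins}>r}\le C r^{-\chi_2+\epsilon}$ for a random variable $X$ with LT $\LTa$, hence $\E\abs{X}^s<\infty$ for some $s\in(\chi_1,\chi_2)$, $s\le 1$, with $m(s)<1$; but by the contraction argument behind \cite[Lemma 3.3]{NR2004} the only fixed point of $\STh$ with a finite $s$-th moment when $m(s)<1$ and $s\le1$ is $\delta_0$, contradicting nontriviality. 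Without this (or an equivalent) exclusion step your proof of (1) is incomplete; your further aside that $m'(\alpha)=0$ is ``reserved for non-constant $L$'' is likewise unsupported, though not needed for the statement.
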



\begin{proof}
\eqref{a}: 
Considering Lemma \ref{lem:extremal_points}, there are at most two values
$\chi_1, \chi_2 \in (0,1]$, {\red $\chi_1 < \chi_2$},  $m(\chi_1)=m(\chi_2)=1$, such that every function
in $\mathcal{H}_{\alpha,c}^K$ can be written as a convex combination 
\begin{equation}\label{convexcombinations} (u,t) \mapsto
\frac{c}{H^\alpha(u)} \left( \lambda \est[\chi_1](u) e^{(\chi_1-\alpha) t} +
(1-\lambda) \est[\chi_2](u) e^{(\chi_2 - \alpha) t} \right)
\end{equation}
for $\lambda \in [0,1]$. Observe that unless $\alpha \in \{\chi_1, \chi_2\}$,
none of this convex combinations is a bounded function in
$t$ for fixed $u$.

Let $\LTb$ be an $L$-$\alpha$-regular FP. Recall the notation $\eins:=\sqrt{d}^{-1}
(1, \cdots, 1)^\top \in \Sp$. By Lemma \ref{lem:regularfixedpointslimit}, there is a subsequence
$s_n \to -\infty$, such that 
$$ \D[\infty](\eins,t) = \lim_{n \to \infty} \D[L,{s_n}](\eins,t) =
\lim_{n \to \infty} \frac{1-
\LTb(e^{s_n+t}\eins)}{H^\alpha( \eins)L(e^{s_n})e^{\alpha {s_n+t}}}.$$
Considering the definition of $L$-$\alpha$-regularity, the function
$\D[\infty](\eins,\cdot)$ is bounded from below and above by $\underline K$ resp.
$\overline K$. Hence by the above, $\alpha \in \{\chi_1, \chi_2\}.$

%

 Supposing that $\alpha=\chi_2$, the upper bound
 $$ \limsup_{r \to 0} \frac{1- \LTb(r\deins)}{L(r)r^\alpha} \le \overline K < \infty$$
 still implies, using the Tauberian theorem for LTs
\cite[XIII.5]{Feller1971}, that  if $\fprv$ is a random variable with LT $\LTb$,
then for any $\epsilon >0$ there is $C$ such that $$\P{\skalar{\fprv,\deins} > r}
\leq C r^{-\chi_2+\epsilon}.$$ Thus there is $\chi_1 < s < \chi_2 \le 1$ with
$m(s)<1$ and $\E \abs{\fprv}^s \le \E \skalar{\fprv,\deins}^s < \infty .$ But it can be deduced from \cite[Lemma 3.3]{NR2004} (see \cite[Section
4]{Mentemeier2013} for details), that the unique FP of $\STh$ with
finite $s$-moment for $m(s)<1$ and $s \le 1$ is $\delta_0$. 
Hence, $\alpha = \chi_1$. This proves $m(\alpha)=1$, $m'(\alpha) \le 0$.

\medskip

\eqref{b}: Moreover, the formula \eqref{convexcombinations} for functions in
$\mathcal{H}_{\alpha,c}^K$, in particular for the limit $\D[\infty]$, simplifies to 
$$ \D[\infty](u,t) =  c \left( \lambda + (1-\lambda)
\frac{\est[\chi_2](u)}{\est[\alpha](u)} e^{(\chi_2 - \alpha) t} \right).$$

Reasoning as before, the only possible choice is $\lambda=1$, since
otherwise $\D[\infty]$ would be unbounded. This proves that any subsequential limit of
$\D[L,s]$ is a positive constant function, nevertheless, the value of the
constant may depend on the subsequence. But this suffices to prove regular variation,
since for any subsequence $t_n$ such that $\D[L,{t_n}]$ converges,
$$ \lim_{n \to \infty} \frac{1-\LTb(e^{s+ t_n} u)}{1- \LTb(e^{t_n}v)} = \lim_{n
\to \infty} \frac{\D[L,{t_n}](s,u)}{\D[L,{t_n}](0,v)}
\frac{H^\alpha(e^s u)}{H^\alpha(v)} = \frac{H^\alpha(e^s u)}{H^\alpha(v)}, $$
i.e. the limit is independent of the particular subsequence. Since every
subsequential limit is the same, the asserted limit for $t \to 0$ exists. 

The convergence is uniform, since $\D[L,{t_n}] \to \D[\infty]$ uniform on the compact set
$\Sp \times [0,s]$ by Lemma \ref{lem:regularfixedpointslimit}.

%

\medskip

\eqref{c}:
If now $\LTfp$ is $L$-$\alpha$-elementary, then 
$$\lim_{s \to -\infty} \D[L,s](\eins,t) ~=~ \lim_{s \to - \infty} \frac{1- \LTfp(e^{s+t}\eins)}{L(e^s) e^{s+t}} ~=~ K, $$ hence for any subsequential limit $\D[\infty]$, it holds that $t \mapsto \D[\infty](\eins,t) \equiv
K$, thus $\lambda =1$ and consequently$ \D[\infty] \equiv K$ on $\Sp \times \R$. 
This gives that any subsequence $\D[L,{s_n}]$ with $s_n \to - \infty$ has the
same limit $K$, hence the compact uniform convergence $\D[L,s] \to K$. 
In particular,
$$ \lim_{s \to - \infty} \frac{1- \LTfp(e^{s+0}u)}{H^\alpha(u) L(e^s)e^{\alpha
s}} =K $$
uniformly on the compact set $\Sp \times \{0\}$. Replacing $t=e^s$, this gives
the assertion.
\end{proof}

\section{Uniqueness of Fixed Points of the Inhomogeneous Equation}\label{sect:inhom}

In this section, we finish the proof of the Theorem \ref{thm:main thm} concerning the inhomogeneous equation. We are going to prove the following result.

\begin{thm}\label{thm:uniquenessIFP}
Assume \eqref{A0}--\eqref{A8} and $\alpha <1$ with $m'(\alpha) <0$. Then a r.v. $X$ is a fixed point of \eqref{eq:SFPE} if and only if 
its LT is of the form
$$ \E \exp(-r \skalar{u,X}) ~=~ \LTfp_{Q,K}(ru) :=\E \exp(-Kr^\alpha W(u) - r\skalar{u,W^*}), \qquad u \in \Sp,\, r \in \Rnn$$
for some $K \ge 0$.  
\end{thm}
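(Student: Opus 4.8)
The existence direction is already available: for $K>0$ the function $\LTfp_{Q,K}(ru)=\E\exp(-Kr^\alpha W(u)-r\skalar{u,W^*})$ is the Laplace transform of a fixed point of $\STi$ by Theorem~\ref{thm:existence hom}(2), and for $K=0$, $\LTfp_{Q,0}$ is the Laplace transform of $W^*$, a fixed point by Lemma~\ref{lem:conv_Wn}. So the task is uniqueness: given a fixed point $X$ of $\STi$ with Laplace transform $\phi$, I must show $\phi(ru)=\LTfp_{Q,K}(ru)$ for some $K\ge 0$. The plan is to extract from $\phi$ a fixed point of the \emph{homogeneous} transform $\STh$, to apply the uniqueness theorem~\ref{thm:regularfp} already proved for $\STh$, and to transfer the resulting information back to $\phi$ along the weighted branching process.

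\textbf{The homogeneous fixed point behind $\phi$.} Iterating $\STi$ and using \eqref{eq:STofLT}, for every $n$ one has $\phi(x)=\E\bigl[\exp(-\skalar{x,W_n^*})\,M_n(x)\bigr]$ with $M_n(x):=\prod_{\abs{v}=n}\phi(\mL(v)^\top x)\in[0,1]$. Since $\phi=\STi\phi$ and $e^{-\skalar{x,Q}}\le 1$ for $x,Q\in\Rdnn$, we get $\STh\phi(x)=\E\prod_{i=1}^N\phi(\mT_i^\top x)\ge\phi(x)$ pointwise, hence $\E[M_{n+1}(x)\mid\B_n]=\prod_{\abs{v}=n}\STh\phi(\mL(v)^\top x)\ge M_n(x)$: the family $(M_n(x))_n$ is a $[0,1]$-valued $\B_n$-submartingale, so it converges a.s. and in $L^1$ to some $\Phi(x)$. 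Using $W_n^*\to W^*$ a.s. (Lemma~\ref{lem:conv_Wn}), $\max_{\abs{v}=n}\norm{\mL(v)}\to 0$ (Lemma~\ref{Lemma Rn}) and continuity of $\phi$ at $0$, dominated convergence yields $\phi(x)=\E[e^{-\skalar{x,W^*}}\Phi(x)]$ and $-\log\Phi(x)=\lim_n\sum_{\abs{v}=n}(1-\phi(\mL(v)^\top x))=:Z(x)$ a.s. Moreover $\E\Phi(x)=\lim_n\E M_n(x)=\lim_n\STh^n\phi(x)$, and continuity of $\STh$ on Laplace transforms gives $\STh(\E\Phi)=\E\Phi$, i.e. $\E\Phi$ is a fixed point of $\STh$. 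Since \eqref{A0}--\eqref{A7} and $m'(\alpha)<0$ hold, Theorems~\ref{thm:allfixedpointsregular} and~\ref{thm:regularfp} force $\E\Phi\equiv 1$ or $\E\Phi(ru)=\E\exp(-Kr^\alpha W(u))$ for a unique $K>0$; in both cases $\E\Phi(ru)=\E\exp(-Kr^\alpha W(u))$ with some $K\ge 0$.

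\textbf{Transferring $\alpha$-elementarity to $\phi$.} From $\phi(ru)=\E[e^{-r\skalar{u,W^*}}\Phi(ru)]$ and $1-\Phi\le 1-e^{-a}\Phi\le(1-\Phi)+(1-e^{-a})$ for $a\ge 0$, $\Phi\in[0,1]$, I obtain
\[
1-\E\Phi(ru)\ \le\ 1-\phi(ru)\ \le\ 1-\E\Phi(ru)+\E\bigl(1-e^{-r\skalar{u,W^*}}\bigr).
\]
By \eqref{A8} there is $s\in(\alpha,1)$ with $m(s)<1$ and $\E\abs{W^*}^s<\infty$ (cf. the proof of Lemma~\ref{lem:conv_Wn} and \eqref{eq:Wstar}); since $1-e^{-a}\le a^s$ for $a\ge 0$ and $\skalar{u,W^*}\le\abs{W^*}$, we get $\E(1-e^{-r\skalar{u,W^*}})\le r^s\,\E\abs{W^*}^s=o(r^\alpha)$ uniformly in $u\in\Sp$. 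Dividing the sandwich by $r^\alpha H^\alpha(u)$ and applying Corollary~\ref{cor:uniform convergence} to $\E\Phi$ yields $\lim_{r\to 0}\sup_{u\in\Sp}\bigl|(1-\phi(ru))/(r^\alpha H^\alpha(u))-K\bigr|=0$, so $\phi$ is $\alpha$-elementary with the same constant $K$ (and $(1-\phi(ru))/r^\alpha\to 0$ uniformly when $K=0$).

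\textbf{Conclusion and main obstacle.} If $K=0$, then $\E\Phi\equiv 1$ with $\Phi\le 1$ forces $\Phi(x)=1$ a.s., whence $\phi(ru)=\E e^{-r\skalar{u,W^*}}=\LTfp_{Q,0}(ru)$. If $K>0$, set $G(y):=(1-\phi(y))/H^\alpha(y)$, so $G(y)\to K$ as $\norm{y}\to 0$; then for fixed $r>0$,
\[
Z(ru)=\lim_n\sum_{\abs{v}=n}\bigl(1-\phi(r\mL(v)^\top u)\bigr)=r^\alpha\lim_n\sum_{\abs{v}=n}H^\alpha(\mL(v)^\top u)\,G(r\mL(v)^\top u)=r^\alpha K\,W(u)\quad\text{a.s.}
\]
by Corollary~\ref{cor:convWnF} applied to $y\mapsto G(ry)$. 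Hence $\Phi(ru)=e^{-Kr^\alpha W(u)}$ a.s., and therefore $\phi(ru)=\E[e^{-r\skalar{u,W^*}-Kr^\alpha W(u)}]=\LTfp_{Q,K}(ru)$, which completes the proof of Theorem~\ref{thm:uniquenessIFP}. The crux is the well-posedness and identification of the disintegration $\Phi$: the submartingale inequality $\STh\phi\ge\phi$ secures its a.s. existence, and the delicate step is the uniform-in-$u$ estimate $\E(1-e^{-r\skalar{u,W^*}})=o(r^\alpha)$ (made possible by the finite $s$-moment of $W^*$ from \eqref{A8}), which is exactly what lets $\alpha$-elementarity and the precise constant $K$ pass from $\E\Phi$ to $\phi$ and then, via Corollary~\ref{cor:convWnF}, to the pathwise identity $\Phi(ru)=e^{-Kr^\alpha W(u)}$.
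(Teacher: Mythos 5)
Your argument is correct, but it takes a genuinely different route from the paper. The paper proves Theorem \ref{thm:uniquenessIFP} by invoking the $l_s$-metric bijection $\mathfrak{P}:\Fset[Q]\to\Fset[0]$ of Proposition \ref{prop:bijection_fpi_fph} (imported from R\"uschendorf) and then matching inhomogeneous with homogeneous fixed points through their tail asymptotics: Eq. \eqref{eq:kestenregvar} of Theorem \ref{thm:propertiesfp} together with Goldie's lemma shows that $l_s$-closeness forces $\lim_{r\to\infty} r^\alpha\bigl(\P{\skalar{u,X_Q}>r}-\P{\skalar{u,X_0}>r}\bigr)=0$, so $\mathfrak{P}\eta_{Q,K}=\eta_{0,K}$ and bijectivity finishes the proof. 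You instead disintegrate the inhomogeneous fixed point directly: the submartingale observation $\STh\phi\ge\phi$ gives the a.s.\ limit $\Phi(x)$ of $\prod_{\abs{v}=n}\phi(\mL(v)^\top x)$, the representation $\phi(x)=\E[e^{-\skalar{x,W^*}}\Phi(x)]$, and the homogeneous fixed point $\E\Phi=\lim_n\STh^n\phi$, to which Theorems \ref{thm:allfixedpointsregular} and \ref{thm:regularfp} apply; the sandwich with the uniform bound $\E(1-e^{-r\skalar{u,W^*}})\le r^s\E\abs{W^*}^s=o(r^\alpha)$ transfers $\alpha$-elementarity with the same constant $K$ to $\phi$, and Corollary \ref{cor:convWnF} then identifies $\Phi(ru)=e^{-Kr^\alpha W(u)}$ pathwise. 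Your route is self-contained within the paper's toolbox (it bypasses the external correspondence and the regular-variation/Tauberian matching), and it has the bonus of producing the representation with the correct joint law of $(W(u),W^*)$ on the same tree, plus the extra information that the disintegration of every inhomogeneous fixed point is $e^{-Kr^\alpha W(u)}$; the paper's route is shorter given the cited one-dimensional bijection and the already established tail behaviour of $\eta_{Q,K}$. One small point you should make explicit: before applying Theorems \ref{thm:allfixedpointsregular} and \ref{thm:regularfp} you need $\E\Phi$ to be the Laplace transform of a \emph{probability} measure on $\Rdnn$ (a priori the pointwise limit of $\STh^n\phi$ could be defective); this follows immediately from your own inequality $\E\Phi(ru)\ge\phi(ru)\to 1$ as $r\to 0$ and the multivariate continuity theorem, so state it before, not after, invoking the homogeneous uniqueness results.
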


Recall that existence of fixed points for the inhomogeneous equation, i.e. the "if"-part in the Theorem above, has been proved in Theorem \ref{thm:existence hom}. Moreover, under the assumptions of Theorem \ref{thm:uniquenessIFP}, it follows from Theorems \ref{thm:existence hom}, \ref{thm:allfixedpointsregular} and \ref{thm:regularfp} that a r.v. $X$ is a fixed point of \eqref{eq:SFPE} with $Q \equiv 0$ if and only if 
its LT is of the form
$$ \E \exp(-r \skalar{u,X}) ~=~ \LTfp_{0,K}(ru) :=\E \exp(-Kr^\alpha W(u)), \qquad u \in \Sp,\, r \in \Rnn$$
for some $K \ge 0$. Note that $\LTfp_{0,0} \equiv 1$ is the trivial fixed point.

Using the characterization of fixed points of $\STh$, uniqueness of fixed points for $\STi$ will follow from a one-to-one correspondence given below. For probability laws $\rho$ and $\eta$ on $\Rdnn$, define
$$ l_s(\rho, \eta) ~:=~ \inf \{ \E \abs{X-Y}^s \ : \ \law{X}=\rho, \ \law{Y}=\eta
\}.$$ {\red On the subspace of probability laws with a finite $s$-th moment, this quantity is always finite and defines the so-called {\em minimal $L_s$-metric}, which is a particular case of a
Wasserstein distance. But it can also be used to measure distances between random variables with an infinite moment of order $s$, then finiteness of $l_s(\rho, \eta)$ implies that $\rho$ and $\eta$ have similar tail behavior, as will be seen in the proof of Theorem \ref{thm:uniquenessIFP} below.}

\begin{prop}\label{prop:bijection_fpi_fph}
Let $s \in (0,1]$ and $\E \norm{\mM}^s + \abs{Q}^s < \infty$. Suppose that
$m(s) <1$. Then the following holds:
\begin{enumerate}
  \item For any $\hfpd \in \Pset(\Rdnn)$ such that $\STh \hfpd = \hfpd$, there
  exists exactly one $\ifpd \in \Pset(\Rdnn)$ such that
  $$ \STi \ifpd = \ifpd \quad \text{ and } \quad l_s(\hfpd, \ifpd) < \infty. $$
  \item For any $\ifpd \in \Pset(\Rdnn)$ such that $\STi \ifpd = \ifpd$, there
  exists exactly one $\hfpd \in \Pset(\Rdnn)$ such that
  $$ \STh \hfpd = \hfpd \quad \text{ and } \quad l_s(\hfpd, \ifpd) < \infty. $$
\end{enumerate}
\end{prop}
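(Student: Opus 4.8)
The plan is to obtain both halves of the statement from Banach's fixed point theorem, applied not on all of $\Pset(\Rdnn)$ but on the set of probability measures lying at finite $l_s$-distance from the prescribed fixed point. For part~(1) I would fix a homogeneous fixed point $\hfpd=\STh\hfpd$ and set
$$ M_{\hfpd} \ :=\ \{\, \rho\in\Pset(\Rdnn)\ :\ l_s(\rho,\hfpd)<\infty \,\}, $$
and symmetrically, for part~(2), fix $\ifpd=\STi\ifpd$ and work on $M_{\ifpd}$. Two properties of $s\in(0,1]$ will be used throughout: the subadditivity $\abs{a+b}^s\le\abs{a}^s+\abs{b}^s$ on $\Rd$ (so that $l_s$ is a genuine metric and $s$-th moments split over independent summands) and $\abs{\mT_i x}^s\le\norm{\mT_i}^s\abs{x}^s$. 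Writing $a_n:=\E\sum_{\abs{v}=n}\norm{\mL(v)}^s$, submultiplicativity of the operator norm gives $a_{m+n}\le a_m a_n$, so $a_n^{1/n}$ converges to $m(s)<1$ by Fekete's lemma; fix $n_0\in\N$ with $a_{n_0}<1$ (here $a_1=\E\sum_i\norm{\mT_i}^s<\infty$ by hypothesis). In contrast to the scalar situation, where $a_n=a_1^n$, here $a_1$ need not be $<1$, so the contraction must be produced by the $n_0$-fold iterate $\STi^{n_0}$ rather than by a single step.

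Granting completeness of $(M_{\hfpd},l_s)$, part~(1) follows from two short observations. First, $\STi$ maps $M_{\hfpd}$ into itself: taking a near-optimal coupling of $X\sim\rho$ with $Y\sim\hfpd$ and i.i.d.\ copies $(X_i,Y_i)$ independent of $(Q,(\mT_i))$, one has $\sum_i\mT_iX_i+Q\sim\STi\rho$ while $\sum_i\mT_iY_i\sim\STh\hfpd=\hfpd$, so
$$ l_s(\STi\rho,\hfpd)\ \le\ \E\abs{\sum_i\mT_i(X_i-Y_i)+Q}^s\ \le\ a_1\bigl(l_s(\rho,\hfpd)+1\bigr)+\E\abs{Q}^s\ <\ \infty ; $$
the key point is that the homogeneous contributions cancel \emph{exactly} because $\hfpd$ is $\STh$-invariant, leaving only the $Q$-part, controlled by $\E\abs{Q}^s<\infty$. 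Second, $\STi^{n_0}$ is an $a_{n_0}$-contraction on $M_{\hfpd}$: by \eqref{def:WBP}--\eqref{eq:stwbp} one realizes $\STi^{n_0}\rho$ and $\STi^{n_0}\rho'$ on the \emph{same} weighted branching tree as $\law{\sum_{\abs{v}=n_0}\mL(v)X(v)+W^*_{n_0}}$ and $\law{\sum_{\abs{v}=n_0}\mL(v)X'(v)+W^*_{n_0}}$, with $(X(v),X'(v))_{\abs{v}=n_0}$ i.i.d.\ copies of a near-optimal coupling of $(\rho,\rho')$ (finite since $\rho,\rho'\in M_{\hfpd}$) independent of the tree; the deterministic term $W^*_{n_0}=\sum_{\abs{w}<n_0}\mL(w)Q(w)$ then cancels, and conditioning on the tree gives $l_s(\STi^{n_0}\rho,\STi^{n_0}\rho')\le a_{n_0}\,l_s(\rho,\rho')$ after sending the coupling defect to zero. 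Banach's theorem now yields a unique $\ifpd\in M_{\hfpd}$ with $\STi^{n_0}\ifpd=\ifpd$; since $\STi$ preserves $M_{\hfpd}$ and commutes with $\STi^{n_0}$, the measure $\STi\ifpd$ is again a fixed point of $\STi^{n_0}$ in $M_{\hfpd}$, hence $\STi\ifpd=\ifpd$, and any $\STi$-fixed point at finite $l_s$-distance from $\hfpd$ lies in $M_{\hfpd}$ and so equals $\ifpd$. This proves~(1); part~(2) is obtained by the same argument with the roles of $\STh,\STi$ and of $\hfpd,\ifpd$ interchanged, now using $\STi\ifpd=\ifpd$ to absorb the $Q$-term. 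The two correspondences are mutually inverse, since a homogeneous fixed point $\hfpd$ is at finite $l_s$-distance from the inhomogeneous fixed point it produces, so by the uniqueness in~(2) it is precisely its homogeneous partner.

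The step I expect to require the most care — and the one that is not a mere citation of the completeness of the minimal $L_s$-metric — is showing that $(M_{\hfpd},l_s)$ is complete, because $\hfpd$ may have an infinite $s$-th moment (the interesting fixed points do), so $M_{\hfpd}\not\subset\Pset[s](\Rdnn)$. The plan is a chained coupling: from an $l_s$-Cauchy sequence pass to a subsequence $(\rho_k)$ with $l_s(\rho_k,\rho_{k+1})<2^{-k}$, and build on one probability space vectors $Z_k\sim\rho_k$ and $Y\sim\hfpd$ with $\E\abs{Z_k-Z_{k+1}}^s\le 2^{-k}$ and $\E\abs{Z_1-Y}^s<\infty$ (couple $Z_1$ to $Y$, then inductively couple $\rho_{k+1}$ to $Z_k$). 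Then $\sum_k\abs{Z_k-Z_{k+1}}^s<\infty$ almost surely, which for $s\le1$ forces $\sum_k\abs{Z_k-Z_{k+1}}<\infty$ almost surely (eventually the summands are $\le1$, where $b\le b^s$), so $Z_k\to Z$ almost surely; since $\abs{Z-Y}^s\le\abs{Z_1-Y}^s+\sum_k\abs{Z_k-Z_{k+1}}^s$ has finite expectation, $\law{Z}\in M_{\hfpd}$ and $l_s(\rho_k,\law{Z})\to0$. The remaining ingredients — existence of near-optimal $l_s$-couplings when the distance is finite, the measurable choice of the i.i.d.\ coupled families attached to the tree, and the symmetric bookkeeping for~(2) — are routine.
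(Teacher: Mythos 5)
Your argument is correct. The paper itself gives no in-text proof of Proposition \ref{prop:bijection_fpi_fph}: it only points to R\"uschendorf's one-dimensional Theorem 3.1, whose underlying method is precisely the one you carry out --- Banach's fixed point theorem for the minimal $L_s$-metric on the class of laws at finite $l_s$-distance from the prescribed fixed point. What your write-up supplies, and what the paper leaves to the reader as the ``easy'' adaptation, are exactly the three points you isolate: (i) the contraction must come from the $n_0$-fold iterate, since $a_1=\E\sum_{i\le N}\norm{\mT_i}^s=(\E N)\,\E\norm{\mM}^s$, which is finite by the moment hypothesis together with \eqref{A2}, need not be smaller than $1$, while $a_{n_0}<1$ for some $n_0$ by submultiplicativity, Fekete and $m(s)<1$; (ii) the self-map property of $\STi$ on $M_{\hfpd}$ (resp.\ of $\STh$ on $M_{\ifpd}$), where invariance of the reference fixed point absorbs the homogeneous part of the recursion and $\E\abs{Q}^s<\infty$ controls the rest; and (iii) completeness of the finite-distance class, which is genuinely needed because the relevant fixed points have infinite $s$-th moments, and which your chained-coupling argument settles correctly for $s\le1$ (the passage from $\sum_k\abs{Z_k-Z_{k+1}}^s<\infty$ a.s.\ to a.s.\ convergence, and the subadditivity bound showing the limit law stays in $M_{\hfpd}$, are both sound). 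Two cosmetic remarks: the common term $W^*_{n_0}=\sum_{\abs{w}<n_0}\mL(w)Q(w)$ in the two realizations of $\STi^{n_0}\rho$ and $\STi^{n_0}\rho'$ is random, not deterministic --- what matters is only that it is the same in both realizations, so it cancels; and the bound $a_1\bigl(l_s(\rho,\hfpd)+1\bigr)+\E\abs{Q}^s$ tacitly uses a coupling within slack $1$ of the infimum, which is all that finiteness requires.
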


\begin{proof}[Source:] The result can easily be  obtained from
\cite[Theorem 3.1]{Ruschendorf2006}, where the one-dimensional situation is
covered.
\end{proof}

\renewcommand{\hfprv}{X_{0}}
\renewcommand{\ifprv}{X_Q}

\begin{proof}[Proof of Theorem \ref{thm:uniquenessIFP}]
Assumptions \eqref{A5}--\eqref{A8} together with $\alpha <1$ and $m'(\alpha)<0$ imply the assumptions of Proposition \ref{prop:bijection_fpi_fph}, for $s=\alpha +\epsilon$ for some $\epsilon >0$. 
Writing $\Fset[Q]$ and $\Fset[0]$ for the set of fixed points of $\STi$ resp. $\STh$, we know that $\Fset[0] =\{ \eta_{0,K} \, : \, K \ge 0\}$, where $\eta_{0,K}$ is the probability measure with LT $\LTfp_{0,K}$. By Proposition \ref{prop:bijection_fpi_fph}, the induced mapping $\mathfrak{P} : \Fset[Q] \to \Fset[0], \mathfrak{P}\eta_Q = \eta_0$ is bijective, thus it suffices to show that $\mathfrak{P}\left( \{ \eta_{Q,K} \, : \, K \ge 0 \}\right) = \Fset[0]$.

Therefore, let us study further the property $l_s(\eta_Q, \mathfrak{P}\eta_Q) < \infty$.  Let $\eta_Q \in \Fset[Q]$ be arbitrary and 
Let $(\ifprv, \hfprv)$ be a coupling of $\eta_Q$ and $\mathfrak{P}\eta_Q$ with $\E \abs{\hfprv - \ifprv}^s < \infty$. 
Using the inequality
$\abs{a^s-b^s} \le \abs{a-b}^s$ which is valid for $s \in [0,1]$ and $a,b \in \Rnn$, 
it follows that for all $u \in \Rdnn$ 
$$ \E \abs{\skalar{u,\ifprv}^s - \skalar{u,\hfprv}^s } \le \E \abs{\skalar{u,
\ifprv-\hfprv}}^s \le \abs{u}^s \E \abs{\ifprv-\hfprv}^s < \infty .$$ Referring
to \cite[Lemma 9.4]{Goldie1991},  $$ \int_0^\infty \frac{1}{r} \ \Bigl(
r^{s} \abs{\P{\skalar{u,\ifprv}>r}-\P{\skalar{u,\hfprv}>r}} \Bigr) dr < \infty.$$ 
From the fact that $\int_1^\infty \frac1r dr $ diverges, 
it follows that necessarily $$ \limsup_{r \to \infty} r^{s} \abs{
\P{\skalar{u,\ifprv}>r} - \P{\skalar{u,\hfprv}>r}} = 0 .$$
Since $s > \alpha$, in particular
\begin{equation}\label{eq:sametails} \lim_{r \to \infty}  \abs{
r^{\alpha} \P{\skalar{u,\ifprv}>r} - r^{\alpha} \P{\skalar{u,\hfprv}>r}} = 0 . \end{equation}

Consider now $\ifprv \eqdist \eta_{Q,K}$ for some $K \ge 0$. By  Theorem \ref{thm:propertiesfp}, Eq. \ref{eq:kestenregvar}, 
$$ \lim_{r \to \infty} r^{\alpha} \P{\skalar{u,\ifprv}>r} ~=~ \frac{K}{\Gamma(1-\alpha)} H^\alpha(u), $$
and the only fixed point of $\STh$ with the same tail behavior is $\eta_{0,K}$ with the same $K$. 
Hence, Eq. \eqref{eq:sametails} implies that $\mathfrak{P}\eta_{Q,K}=\eta_{0,K}$ for all $K \ge 0$, which shows that $\mathfrak{P}\{\eta_{Q,K} \, : \, K \ge 0\} = \Fset[0]$. Since $\mathfrak{P}$ is bijective, we conclude that $\Fset[Q]=\{ \eta_{Q,K} \, : \, K \ge 0\}.$
\end{proof}

\section{Critical Case}
\label{sect:crit}

In this section, we prove the final part of Theorem \ref{thm:main thm} and sow that in the situation $m(\alpha)=1$ with
$m'(\alpha)=0$ for $\alpha \in (0,1]$, there still exists a nontrivial fixed
point of $\STh$, thus extending the results of \cite{BDGM2014} to the situation $m(1)=1$,
$m'(1)=0$. The existence of a fixed point in the boundary case is proven by the
same approximation argument as in \cite[Theorem 3.5]{DL1983}. This is why we just sketch the main ideas and refer the interested reader to \cite[Section 10]{Mentemeier2013} for details.

For $\chi \in (0, \alpha)$, define a biased version of $\STh$ by
\begin{equation}
\STh^\chi \ : \ \nu \mapsto \law{\sum_{i=1}^N m(\chi)^{-1/\chi} \mT_i X_i},
\end{equation}
where $X_i$ are iid with law $\nu$ and independent of $\T$. Writing
$$\T_{\chi}=(\mT_{\chi,i})_{i\ge1} =
\left(m(\chi)^{-1/\chi}\mT_{i}\right)_{i \ge 1}, $$ define $\mu_\chi$ and $m_\chi$ in terms of $\T_\chi$ as $\mu$ and $m$ were defined in terms of $\T$.

Then it is readily checked that \eqref{A0}--\eqref{A4} carry over (see \cite[Lemma
10.3]{Mentemeier2013}), that $m_\chi(\chi)=1$ and $m'_{\chi}(\chi) <0$ and that
\eqref{A6a} for $\T$ imply the validity of
\eqref{A6} for $\T_\chi$ and with $\alpha$ replaced by $\chi$. 

Hence Theorem \ref{thm:existence hom} applied to $\STh^\chi$ gives the existence of a
nontrivial fixed point with LT $\LTfp_\chi$, say. Fix $u_0 \in \Sp$. Then,
possibly after rescaling, $\LTfp_\chi(u_0)=1/2$. 
In this manner, construct a family $(\LTfp_\chi)_{\chi \in (0, \alpha)}$, such
that $\LTfp_\chi(u_0)=1/2$ and $\STh^\chi \LTfp_\chi = \LTfp_\chi$ for all $\chi
\in (0,\alpha)$. 

For any sequence $\chi_n \to \alpha$, there is a convergent subsequence
$\LTfp_{\chi_{n_k}}$ with limit $\LTfp$, which is again a LT of a
(sub-)probability measure, with $\LTfp(u_0)=1/2$. 
It can be checked that $\STh \LTfp (ru) = \LTfp (ru)$ for all $(u,r) \in \Sp
\times \Rp$ (see \cite[Lemma 10.4]{Mentemeier2013} for details). This is used to
infer that $\LTfp(0^+)=1$, hence $\LTfp$ is the LT of a probability measure on
$\Rdnn$, and it is nontrivial due to $\LTfp(u_0)=1/2$.

In the particular case $\alpha =1$, it is shown in \cite[Theorem 2.3]{BDGM2014} (under some restrictions on $N$)
that the existence of a nontrivial FP with finite expectation is
equivalent to $m'(1)<0$. Thus if $m'(1)=0$, then the nontrivial FP
constructed above necessarily has infinite expectation. It is a.s. finite since
$\LTfp(0^+)=1$.

Further properties of fixed points in the boundary case will be studied in \cite{KM2014}.

\section{Proofs of the Results from Subsections \ref{subsect:branchingmodel} to \ref{subsect:stopping}}\label{sect:proofs1}

\begin{proof}[Proof of Lemma {\ref{Lemma Rn}}]\label{proof_lemma_Rn}
$R_n \ge 0$ for all $n \in \N$, thus it
suffices to show that $\limsup_{n \to \infty} R_n = 0$. Writing
$R_{m,l} = \max_{\abs{w}=ml} \norm{\matrix{L}(w)}$, it follows
that $$ \limsup_{n \to \infty} R_n \le \sum_{k=0}^{l-1} \sum_{\abs{v}=k}
\norm{\matrix{L}(v)} \limsup_{m \to \infty} \tshift{R_{m,l}}{v}.$$ 
Thus it is enough to consider $\limsup_{m \to \infty} \tshift{R_{m,l}}{v}$ 
some $l \in \N$ and $\abs{v}\le l$. By the assumption $\alpha \in
\interior{I_\mu}$, there is $s > \alpha \in I_{\mu}$, such that $m(s)<1$. Referring to the definition of $m(s)$,
there is $l \in \N$ such that $$ \rho(s) := \E \sum_{\abs{v}=l} \norm{\matrix{L}(v)}^s = (\E N)^l \E
\norm{\mPi_l}^s < 1 .$$
Fix this $l$.
Define $Z_0=1$ and 
$$ Z_m = \sum_{\abs{v}=l} \norm{\matrix{L}(v)}^s \tshift{Z_{m-1}}{v} =
\sum_{\abs{v}=ml} \prod_{k=1}^m \norm{ \tshift{\matrix{L}(v|kl)}{v|(k-1)l} }^s$$
as the sum over the norms of the weights, taken in blocks of $l$
generations. Hence $\E Z_1 = \rho(s)$ and $(\tshift{R_{m,l}}{v})^s \le
\tshift{Z_m}{v}$ for all $m \in \N$, $v \in \tree$.

Considering the filtration $\F_m := \B_{ml}$ and using the
independence of $\tshift{\Ttree}{v}$ and $\B_{\abs{v}}$, it can easily be seen
that $\tilde{Z}_m := \rho(s)^{-m} Z_m$ is a nonnegative $\F_m$-martingale.
Thus it converges to a random
variable $\tilde{Z}$ and by Fatou's lemma, $\E\ \tilde{Z} \le \E\ \rho(s)Z_1 = 1 $. In
particular, $\tilde{Z}$ is almost sure finite, and this gives the final estimate
\begin{equation*} \limsup_{m \to \infty} (\tshift{R_{m,l}}{v})^s \le \limsup_{m
\to \infty} \rho(s)^m \tshift{\tilde{Z}_m}{v} = 0 \quad \Pfs \end{equation*}
for all $\abs{v}\le l$.
\end{proof}

\subsection*{Proof of Proposition \ref{thm:mean_convergence_Wn}}\label{sect:Wn}

The following result is the main tool to prove
the mean convergence of $W_n(u)$.

\begin{prop}\label{prop:Biggins}
Let $u \in \Sp$. For $r >0$, let 
$$ A(r) = \sum_{n=0}^\infty \1(H^\alpha(e^{-S_n}U_n) > r^{-1}).$$
Suppose that there is a random variable $Z$ such that 
\begin{equation}\label{Zbound} \P{\frac{\sum_{i=1}^N H^\alpha(\mT_i^\top
x)}{H^\alpha(x)} > s} \le \P{Z>s} \qquad \forall x \in \Rdnn \setminus \{0\}, s
\ge 0
\end{equation}
(stochastic domination) and a
function $L$, slowly varying at infinity, such that 
\begin{equation} \label{eq:L}
\sup_{r >0}
\frac{A(r)}{L(r)} < \infty \qquad \Prob_u^\alpha\text{-a.s.}
\end{equation} 

If $\E Z L(Z) <\infty$, then $\E W(u)= W_0(u)$. 
\end{prop}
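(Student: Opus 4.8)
\emph{Overview.} The plan is to recognise this statement as a Biggins-type criterion for the $L^1$-boundedness of the intrinsic martingale $(W_n(u))_{n\ge0}$ and to prove it by the additive (spine) change-of-measure of Lyons. Since $m(\alpha)=1$ (see \eqref{eq:propHs}), $(W_n(u))_{n\ge0}$ is a nonnegative $\Prob$-martingale with $W_0(u)=H^\alpha(u)$, hence $W_n(u)\to W(u)$ $\Prob$-a.s.\ with $\E W(u)\le W_0(u)$. Starting from the many-to-one identity \eqref{spinaltreeidentity} (equivalently \eqref{eq:manytoone1}), I would construct, as in \cite{Biggins2004}, a probability measure $\hat\Prob_u$ on the weighted tree enlarged by a distinguished infinite ray $\xi=(\xi_0=\emptyset,\xi_1,\xi_2,\dots)$ such that: on $\B_n$ one has $d\hat\Prob_u/d\Prob=W_n(u)/W_0(u)$; under $\hat\Prob_u$ the spine walk $(U_n,S_n)_{n\ge0}$, with $U_n=\mL(\xi_n)^\top\as u$ and $S_n=-\log\abs{\mL(\xi_n)^\top u}$, has law $\Prob_u^\alpha$; the subtrees branching off the spine are independent copies of the unbiased weighted tree; and the offspring configuration at each spine node carries the $H^\alpha$-size-biasing. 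Combining the Radon--Nikodym relation with the Lebesgue decomposition of $\hat\Prob_u$ relative to $\Prob$ on $\B_\infty$ then yields the standard dichotomy
$$\E W(u)=W_0(u)\qquad\Longleftrightarrow\qquad\liminf_{n\to\infty}W_n(u)<\infty\quad\hat\Prob_u\text{-a.s.},$$
and it remains to establish the right-hand side.

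\emph{Spine decomposition.} Let $\mathcal G$ be generated by the spine together with the offspring configurations at all spine nodes, and set $h_k:=H^\alpha(\mL(\xi_k)^\top u)=H^\alpha(e^{-S_k}U_k)$ and $\hat Y_k:=h_k^{-1}\sum_{j\ge1}H^\alpha(\mT_j(\xi_k)^\top\mL(\xi_k)^\top u)$. Decomposing $W_n(u)$ according to the generation at which a node first leaves the spine and applying the martingale identity ($m(\alpha)=1$) to the off-spine subtrees gives
$$\E_{\hat\Prob_u}\bigl[W_n(u)\mid\mathcal G\bigr]=h_n+\sum_{k=0}^{n-1}\ \sum_{\substack{w\ \text{child of}\ \xi_k\\ w\ne\xi_{k+1}}}H^\alpha(\mL(w)^\top u)\ \le\ h_n+\sum_{k=0}^{n-1}h_k\hat Y_k .$$
By conditional Fatou it then suffices to show that $\hat\Prob_u$-a.s.\ one has $h_n\to0$ and $\sum_{k\ge0}h_k\hat Y_k<\infty$.

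\emph{Controlling the two terms.} Since the spine has law $\Prob_u^\alpha$, the hypothesis ``$A(r)<\infty$ for all $r$'' forces $h_n=H^\alpha(e^{-S_n}U_n)\to0$ $\hat\Prob_u$-a.s. For the series, by the size-biased construction the conditional tail of $\hat Y_k$ given the spine trajectory is the $H^\alpha$-size-biased tail of $Y(x):=H^\alpha(x)^{-1}\sum_j H^\alpha(\mT_j^\top x)$ at $x=e^{-S_k}U_k$; since $\E Y(x)=m(\alpha)=1$ this equals $\E[Y(x)\1[\{Y(x)>t\}]]$, which by the domination \eqref{Zbound} and the elementary estimate $\E[Y\1[\{Y>t\}]]\le\E[Z\1[\{Z>t\}]]$ (valid for $t\ge0$ when $Y$ is stochastically dominated by $Z$) is bounded by $G(t):=\E[Z\1[\{Z>t\}]]$. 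Then I would truncate, $\sum_k h_k\hat Y_k=\sum_k h_k(\hat Y_k-\hat Y_k')+\sum_k h_k\hat Y_k'$ with $\hat Y_k':=\hat Y_k\wedge h_k^{-1}$: for the first sum $\hat\Prob_u(\hat Y_k>h_k^{-1}\mid\text{spine})\le G(h_k^{-1})$, so a conditional Borel--Cantelli argument reduces its a.s.\ finiteness to $\sum_k G(h_k^{-1})<\infty$; for the second, $\E_{\hat\Prob_u}[\sum_k h_k\hat Y_k'\mid\text{spine}]\le\sum_k h_k\int_0^{h_k^{-1}}G(t)\,dt$, whose a.s.\ finiteness is enough. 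Both expressions are deterministic functionals of $(S_k)_{k\ge0}$; grouping the indices $k$ by the level $m$ with $e^{-m-1}<h_k\le e^{-m}$, whose number is $A(e^{m+1})-A(e^m)\le C_0L(e^{m+1})$ for $C_0:=\sup_r A(r)/L(r)<\infty$ $\Prob_u^\alpha$-a.s., and using Karamata's theorem, Fubini and the slow variation of $L$, one verifies that each of the two sums is at most $\mathrm{const}\cdot C_0\cdot\E[Z\,L(Z)]<\infty$. Hence $\E_{\hat\Prob_u}[\liminf_n W_n(u)\mid\mathcal G]<\infty$ $\hat\Prob_u$-a.s., so $\liminf_n W_n(u)<\infty$ $\hat\Prob_u$-a.s., and $\E W(u)=W_0(u)$ follows from the dichotomy.

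\emph{Main obstacle.} The only non-routine point is this last estimate --- turning the random bound $A(r)\le C_0L(r)$ on the occupation measure of the spine walk into summability of $\sum_k G(h_k^{-1})$ and of $\sum_k h_k\int_0^{h_k^{-1}}G$, and matching it against the moment condition $\E[Z\,L(Z)]<\infty$ via standard properties of slowly varying functions. The remainder is the by-now-classical spine/many-to-one apparatus; in fact, once \eqref{spinaltreeidentity} is in place, one could alternatively quote \cite[Theorem~1.1]{Biggins2004} directly.
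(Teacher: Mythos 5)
Your proposal is correct in substance, but it does considerably more than the paper does: the paper's entire ``proof'' of Proposition \ref{prop:Biggins} is the one-line citation of \cite[Theorem 1.1 (i)]{Biggins2004}, adapted to the present notation, whereas you reconstruct the proof of that cited theorem itself via the Lyons-type spine change of measure. Your sketch is essentially the Biggins--Kyprianou argument: the size-biased measure with density $W_n(u)/W_0(u)$ on $\B_n$ and spine law $\Prob_u^\alpha$, the Lebesgue-decomposition dichotomy, the spine decomposition of $\E[W_n(u)\mid\mathcal G]$, the domination of the size-biased offspring tails by $\E[Z\1[\{Z>t\}]]$ via \eqref{Zbound}, and the conversion of the occupation bound \eqref{eq:L} into summability against $\E[Z\,L(Z)]$ by grouping the spine into levels $e^{-m-1}<h_k\le e^{-m}$ (at most $A(e^{m+1})-A(e^m)\le C_0\,L(e^{m+1})$ indices each) and invoking slow variation/Karamata --- exactly the mechanism behind the cited result. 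Two points you should tighten if you want this to stand as a self-contained proof rather than a pointer: (i) when you compute the conditional tail of $\hat Y_k$, condition on the spine positions up to time $k$ only (not on the whole trajectory), since the offspring configuration at a spine node is not independent of the \emph{next} spine step; then replace independence-based Borel--Cantelli by L\'evy's conditional Borel--Cantelli and the adapted nonnegative-series lemma ($\sum_k\E[X_k\mid\mathcal F_k]<\infty$ a.s.\ implies $\sum_k X_k<\infty$ a.s.), which is all your estimates actually require; (ii) the clean dichotomy is stated with $\limsup_n W_n(u)$, but since $1/W_n(u)$ (up to a constant) is a nonnegative supermartingale under the biased measure, $W_n(u)$ converges there and your $\liminf$ formulation is equivalent. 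With these standard repairs the argument is complete; alternatively, as you note yourself and as the paper does, one may simply quote \cite[Theorem 1.1 (i)]{Biggins2004} once the many-to-one identity \eqref{spinaltreeidentity} is in place.
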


\begin{proof}[Source:]
\cite[Theorem 1.1 (i)]{Biggins2004}, adopted to the present notation.
\end{proof}

\begin{proof}[Proof of Proposition \ref{thm:mean_convergence_Wn}]
We show that under the assumptions of Proposition \ref{thm:mean_convergence_Wn},
Proposition \ref{prop:Biggins} applies for any $u \in \Sp$. 
First, we prove that the {\red slowly varying function $L(r)=1+\log(1+ r)$ satisfies
\eqref{eq:L}. Since $\sup_{u \in \Sp}
H^\alpha(u) \le 1$,
\begin{align*}
A(r) \le  \sum_{n=0}^\infty \1( e^{-\alpha S_n} > r^{-1}) 
\le \sum_{n=0}^\infty \1(\alpha S_n <  \log r) \le \tau(\log(1+ r)), 
\end{align*}
where $$\tau(s) := \sup\{n \in \No : \alpha S_n \le s\}.$$ 
The assumptions of the  strong law of large numbers for $S_n$ under $\Prob_u^\alpha$, Proposition
\ref{prop:SLLN}, are exactly the assumptions imposed here, hence $\lim_{n \to \infty} \frac{S_n}{n} = m'(\alpha) <0$ $\Prob_u^\alpha$-a.s. Now on the one
hand, $\sup_{r \in (0,1)} A(r)/L(r)$ is}
readily bounded by $\tau(0)$, which is finite since $S_n$ is transient. On the
other hand, it is as well a consequence of the strong law of large numbers  that 
 $$ \lim_{s \to \infty}
\frac{\tau(s)}{s}= \frac{1}{(-m'(\alpha))} \qquad
\Prob_u^\alpha\text{-a.s}$$ (see the argument in \cite[bottom of
p.219]{Breiman1968}) and consequently, $\sup_{r >1} A(r)/L(r)$
is bounded $\Prob_u^\alpha$-a.s., too. 

As the second step, observe that, upon defining $$ Z := C \sum_{i=1}^N
\norm{\mT_i^\top}^\alpha$$ with $C:= \sup_{u \in \Sp}
H^\alpha(u)^{-1} < \infty$, \eqref{Zbound} is satisfied. The finiteness of
$E Z L(Z)$ is then a direct consequence of assumption \eqref{A6}.

\medskip

Having thus proven that $W_n(u)$ converges $\Pfs$ to a nontrivial limit $W(u)$ for all $u \in \Sp$, let us discuss whether $u \mapsto W(u)$ is measurable. We can write
$W_n(u):=w_n(u, (\mL(v)_{v \in \tree}))$ as a measurable function of $u$ and the branch weights. Fix a countable dense subset $\mathbb{S}:= \{u_k : k \in \N\} \subset \Sp$. Then there is a measurable exceptional set $E \subset \Omega$ with $\P{E}=0$, such that for all $\omega \in E^c$
$ \lim_{n \to \infty} w_n(u, (\mL(v)_{v \in \tree})(\omega))$ exists for all $u \in \S$. Using a sandwich argument, the limit exists for all $u \in \Sp$ and $\omega \in E^c$. Define $w$ as this limit on $E^c$, and let $w \equiv 1$ on $E$. Then $w$ is a measurable function on $\Sp \times \Mset^\tree$ and $W(u)=w(u, (\mL(v))_{v \in \tree})$ $\Pfs$.
\end{proof}

\subsection*{Proof of Lemma \ref{prop:martingale_stopping_lines}}
%
%

\begin{proof}[Proof of Lemma \ref{prop:martingale_stopping_lines}]
The first part of the proof is valid for any anticipating and $\Pfs[u]$ dissecting HSL $\sline$. Following the lines of the proof of \cite[Lemma
6.1]{Biggins1997}, write $$E_{\sline}(j) = \{ v \in \tree \ : \ \abs{v}=j, v \in \sline \} $$
and $$ A_{\sline}(j) = \{ v \in \tree \ : \ \abs{v}= j, v \text{ has no
ancestor in } \sline \}.$$ Consequently, if $v \in A_{\sline}(j)$, then
$\sigma((\T(v|k))_{k=0}^{\abs{v}}) \subset \B_{\sline}$ as well as
$(\T(vw))_{w \in \tree}$ and $B_{\sline}$ are independent for $v \in E_{\sline}(j)$. Then for
$m \in \N$, 
\begin{align*}& \E[ W_m(u) | \B_{\sline}]\\
 = & \E \left[\left. \sum_{j=1}^m \sum_{v \in
E_{\sline}(j)}
\sum_{\abs{w}=m-j} H^\alpha(\mL(vw)^\top u) + \sum_{v \in A_{\sline}(m)}
H^\alpha(\mL(v)^\top u) \right| \B_{\sline} \right] \\
= &  \sum_{j=1}^m \sum_{v \in E_{\sline}(j)} \E \left[ \left. 
\sum_{\abs{w}=m-j} H^\alpha(\tshift{\mL(w)}{v}^\top \mL(v)^\top u) \right|
\B_{\sline} \right] + \sum_{v \in A_{\sline}(m)}
H^\alpha(\mL(v)^\top u) \\
= &  \sum_{j=1}^m \sum_{v \in E_{\sline}(j)}  H^\alpha(\mL(v)^\top u) + \sum_{v \in A_{\sline}(m)}
H^\alpha(\mL(v)^\top u) \qquad \Pfs
\end{align*}
Since $\sline$ is $\Pfs$ dissecting, in the limit $m \to \infty$,
$$ \E [ W(u)| \B_{\sline} ] = \sum_{v \in \sline} H^\alpha(\mL(v)^\top u) =
W_{\sline}(u) \qquad \Pfs$$
this convergence being valid as well in $L^1(\Prob)$, for the right hand side
can be bounded by the uniform integrable sequence $2 W_m(u)$ in every step. 

Now for the sequence of stopping lines $\slineu[t]$, we have that $W(u)$ is
measurable w.r.t to $\B_\infty$. Then by \cite[Theorem 5.21]{Breiman1968} 
$$ \lim_{t \to \infty} W_{\slineu[t]}(u) = \lim_{t \to \infty} \E[W(u) | \B_{\slineu[t]}] =
\E[W(u) | \B_\infty] = W(u)$$ $\Pfs$ and in $L^1(\Prob)$.
\end{proof} 

\section{Proof of Theorem \ref{thm:l1_conv_wf}}\label{sect:proofs2}

In this section, we prove Theorem \ref{thm:l1_conv_wf}, which states that
$$ \lim_{t \to \infty} W_{\slineu[t]}^f ~=~ \lim_{t \to \infty}\sum_{v \in
\slineu[t]} H^\alpha (\mL(v)^\top u) f(U^u(v),S^u(v)-t) ~=~ \gamma W(u) $$
in $\Prob$-probability and in $L^1(\Prob)$, where $\gamma = \int f(y,s) \rho(dy,ds)$ is the limit of $\E_u^\alpha f(U(t), R(t))$ for $t \to \infty$, and $f$ is any bounded continuous function on $\Sp \times \R$, or a slight generalization thereof. 

As a first step, we need the following stronger version of Theorem 
\ref{thm:kestenforW}. Write 
$$ F(u,t):=\frac{\E W_{\slineu[t]}^f}{H^\alpha(u)}.$$

\begin{prop}\label{prop:uniform_conv_mrt}
Under the assumptions of Theorem \ref{thm:kestenforW}, it holds that
$$ \lim_{t \to \infty}  \sup_{u \in \Sp} \abs{F(u,t)- \gamma}=0.$$
\end{prop}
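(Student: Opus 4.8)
The plan is to pass to the associated Markov random walk and then upgrade Kesten's renewal theorem from pointwise to uniform convergence in the starting point. By Corollary \ref{cor:spinaltree}, identity \eqref{eq:spinaltreejump} gives $F(u,t)=\E_u^\alpha f(U(t),R(t))$, so, recalling $\gamma=\int f\,d\rho$, the assertion is exactly that $\sup_{u\in\Sp}\abs{\E_u^\alpha f(U(t),R(t))-\gamma}\to0$ as $t\to\infty$; for each fixed $u$ this is Theorem \ref{thm:kestenforW}. Write $\bar g_s(w):=\E_w^\alpha f(U(s),R(s))$ for $s\ge0$; then $0\le\bar g_s\le\norm{f}_\infty$, $\bar g_s(w)\to\gamma$ pointwise, and $F(u,t)=\bar g_t(u)$. (The case analysis for $f$ --- bounded continuous, radial, or of product form --- enters only through Theorem \ref{thm:kestenforW} and does not affect what follows.)

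The second step reduces uniformity over $\Sp$ to uniformity over a \emph{fixed} compact $C_0\subset\interior{\Sp}$, using condition $\condC$. Since $[\supp\mu]$ contains a positive matrix, there are $n_1\in\N$ and a compact set $K_1$ of positive matrices with entries bounded away from $0$ and $\infty$ such that $\Prob_u^0(\mPi_{n_1}\in K_1)=\delta_0>0$ independently of $u$; on $\{\mPi_{n_1}\in K_1\}$ one has $U_{n_1}=\mPi_{n_1}\as u\in C_0$ for \emph{every} $u\in\Sp$, where $C_0:=\{w\in\Sp:\min_i w_i\ge c_0\}$ with $c_0>0$ determined by $K_1$ alone, and $S_{n_1}$ is bounded there. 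As the Radon--Nikodym density of $\Prob_u^\alpha$ with respect to $\Prob_u^0$ on $\sigma(\mM_1,\dots,\mM_{n_1})$ equals $e^{-\alpha S_{n_1}}H^\alpha(U_{n_1})/(k(\alpha)^{n_1}H^\alpha(u))$, which on $\{\mPi_{n_1}\in K_1\}$ is bounded below by a positive constant uniformly in $u$ (here one uses that $H^\alpha$ is bounded away from $0$ and $\infty$ on $\Sp$, Proposition \ref{prop:Ps}), we get $\inf_{u}\Prob_u^\alpha(\mPi_{n_1}\in K_1)\ge\delta>0$. Applying this conditionally to consecutive blocks of $n_1$ steps yields a stopping time $T$ with $\sup_u\Prob_u^\alpha(T>n_1m)\le(1-\delta)^m$ and $U_T\in C_0$; moreover the conditional first moment of one increment of $S_n$ is bounded uniformly in the current state --- this is what \eqref{A6} provides, cf.\ the proof of Proposition \ref{prop:SLLN} --- so Wald's identity gives $\sup_u\E_u^\alpha\abs{S_T}<\infty$ and hence $\sup_u\Prob_u^\alpha(S_T>t/2)\to0$; and since $\tau_t:=\inf\{n:S_n>t\}\to\infty$ $\Prob_u^\alpha$-a.s.\ while $T$ has a $u$-uniform geometric tail, also $\sup_u\Prob_u^\alpha(T\ge\tau_t)\to0$. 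By the strong Markov property at $T$ (noting $S_T\le t$ on $\{T<\tau_t\}$, so $t-S_T\ge0$ there),
$$F(u,t)=\E_u^\alpha\!\left[\bar g_{t-S_T}(U_T)\,;\,T<\tau_t\right]+O\!\left(\Prob_u^\alpha(T\ge\tau_t)\right);$$
discarding further the event $\{S_T>t/2\}$ and bounding $\abs{\bar g_{t-S_T}(U_T)-\gamma}\le\sup_{w\in C_0}\sup_{s\ge t/2}\abs{\bar g_s(w)-\gamma}$ on the remaining event $\{T<\tau_t,\,S_T\le t/2\}\subset\{U_T\in C_0,\,t-S_T\ge t/2\}$, everything is controlled uniformly in $u$ once we show $\lim_{s\to\infty}\sup_{w\in C_0}\abs{\bar g_s(w)-\gamma}=0$.

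For the third, decisive step, the family $\{\bar g_s:s\ge0\}$ is uniformly bounded and converges pointwise on $C_0$ to the constant $\gamma$; by the Arzel\`{a}--Ascoli theorem it therefore suffices to show this family is equicontinuous on $C_0$, for then every sequence $s_n\to\infty$ has a subsequence along which $\bar g_{s_n}$ converges uniformly on $C_0$, necessarily to $\gamma$. Equivalently, one needs the renewal convergence $\E_w^\alpha f(U(s),R(s))\to\gamma$ to hold uniformly over $w$ in compact subsets of $\interior{\Sp}$. This I would derive by revisiting the proof of Kesten's renewal theorem applied, as in the proof of Theorem \ref{thm:kestenforW}, to $(U_n,S_n)$: under the Doeblin / uniform-ergodicity structure of the chain $(U_n)$ guaranteed by $\condC$ (the positive matrix above yields, after $n_1$ steps, a common component in the law of $(U_{n_1},S_{n_1}-S_0)$ not depending on the starting point), together with the aperiodicity \eqref{A4} and the moment bound \eqref{A6}, the limit $\gamma$ is approached uniformly over compact sets of initial points, since the initial point enters Kesten's representation only through the Markov renewal measure $\sum_{n}\Prob_x^\alpha((U_n,S_n)\in\cdot)$ and cognate quantities that, by the spectral gap / coupling, depend on $x\in\interior{\Sp}$ continuously and uniformly in the level. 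Establishing this uniform version of the renewal convergence on $C_0$ (equivalently, the equicontinuity of $\{\bar g_s\}$ on $C_0$) is the genuinely new technical point and the main obstacle; the first two steps are routine manipulation of the change of measure together with \eqref{A6}.
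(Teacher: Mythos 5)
Your Step 1 is exactly the paper's starting point: by the many-to-one identity \eqref{eq:spinaltreejump}, $F(u,t)=\E_u^\alpha f(U(t),R(t))$, so the claim is uniform-in-$u$ Markov renewal convergence. But your proposal does not actually prove that. Step 3 — the uniform (over $u$, or even over a compact $C_0\subset\interior{\Sp}$) convergence $\E_w^\alpha f(U(s),R(s))\to\gamma$ — is precisely the content of the proposition, and you leave it as ``I would derive this by revisiting the proof of Kesten's renewal theorem'', explicitly flagging it as the main obstacle. The Arzel\`a--Ascoli reformulation does not help: equicontinuity of $\{\bar g_s\}$ on $C_0$ is equivalent to (not easier than) the uniform convergence you need, and the appeal to ``Doeblin structure / spectral gap / coupling'' is not an argument here, because the difficulty in Kesten's theorem is not ergodicity of the driving chain $(U_n)$ but the joint behaviour of the state-dependent additive part $S_n$ at the passage level $t$; uniformity in the initial point simultaneously with $t\to\infty$ is exactly the hard part and cannot be read off from a minorization of $(U_n)$ after $n_1$ steps.

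The paper closes this gap by citation rather than by re-proving anything: under the conditions I.1--I.4 of \cite{Kesten1974}, already verified in the proof of Theorem \ref{thm:kestenforW} via \cite[Proposition 7.2]{BDGM2014} (using \eqref{A4} and \eqref{A6}), Melfi's nonlinear Markov renewal theorem \cite[Theorem 2]{Melfi1992} states that the convergence $\E_u^\alpha f(U(t),R(t))\to\gamma$ holds uniformly in the initial state $u\in\Sp$, which is the proposition verbatim. In that light your Step 2 (reduction to a fixed compact $C_0$ by a regeneration/minorization device built from condition $\condC$, with the change-of-measure density bounds and the Wald-type control of $S_T$) is a plausible but unnecessary preliminary, and on its own it does not complete the proof: without an actual proof of the uniform renewal convergence on $C_0$ the argument has a genuine hole where the paper has a reference.
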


\begin{proof}
Recall that due to the many-to-one identity \eqref{eq:spinaltreejump}, $F(u,t) = \E_u^\alpha f(U(t),R(t))$, and that, subject to the assumptions of Theorem \ref{thm:kestenforW}, Kesten's renewal theorem applies in order to show that $\lim_{t \to \infty} \E_u^\alpha f(U(t),R(t)) = \gamma$. \cite{Melfi1992} proved (ibid., Theorem 2) that under the same assumptions, the convergence is uniform in $u \in \Sp$. This implies the uniform convergence asserted above.
\end{proof}



Subsequently, the assumptions of Theorem
\ref{thm:l1_conv_wf} will be in force throughout.

\begin{lem}\label{lem:ui}We have the uniform integrability
$$ \lim_{q \to \infty} \sup_{u \in \Sp} \sup_{t \in \Rp} \, \Erw{W_{\slineu[t]}^f(u)
\1[{\{W_{\slineu[t]}^f(u) > q \}}]} = \lim_{q \to \infty} \sup_{u \in \Sp} \sup_{t \in \Rp}
\Erw{W_{\slineu[t](u)} \1[{\{W_{\slineu[t]}(u) > q \}}]} =0.$$
\end{lem}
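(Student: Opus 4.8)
The plan is to reduce the statement to the case $f\equiv 1$ and then to dominate the whole family $\{W^f_{\slineu[t]}(u):u\in\Sp,\ t\ge 0\}$ by a single uniformly integrable family — the conditional expectations of one fixed integrable random variable — thereby avoiding any $L^p$-estimate.

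For the reduction, set $\hat f_\infty:=\sup_{(y,s)\in\Sp\times\R}\abs{f(y,s)}$, which is finite for $f\in\Cbf{\Sp\times\R}$ and also in the radial or product cases allowed in Theorem~\ref{thm:l1_conv_wf}. Then
$$ 0 ~\le~ \bigl|W^f_{\slineu[t]}(u)\bigr| ~=~ \Bigl|\,\sum_{v\in\slineu[t]}H^\alpha(\mL(v)^\top u)\,f(U^u(v),S^u(v)-t)\,\Bigr| ~\le~ \hat f_\infty\sum_{v\in\slineu[t]}H^\alpha(\mL(v)^\top u) ~=~ \hat f_\infty\,W_{\slineu[t]}(u), $$
so uniform integrability of the family $\{W_{\slineu[t]}(u):u\in\Sp,\ t\ge 0\}$ at once gives it for $\{W^f_{\slineu[t]}(u)\}$, and it suffices to handle $f\equiv 1$.

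For $f\equiv 1$ the key is a monotonicity estimate. By \eqref{eq:defHs} and the homogeneity remark following it, $H^\alpha(x)=\int_\Sp\skalar{x,y}^\alpha\,\nus[\alpha](dy)$ for every $x\in\Rdnn$; since $\nus[\alpha]$ is carried by $\Sp\subset\Rdnn$, the function $H^\alpha$ is nondecreasing for the coordinatewise order on $\Rdnn$ and $\alpha$-homogeneous. For $u\in\Sp$ one has $u_i\le\abs{u}=1$ for each $i$, hence $d^{-1/2}u\le\eins:=d^{-1/2}\deins\in\Sp$ coordinatewise; and each weight $\mL(v)$ is a nonnegative matrix, so $\mL(v)^\top$ preserves this order and $d^{-1/2}\mL(v)^\top u\le\mL(v)^\top\eins$. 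Therefore $d^{-\alpha/2}H^\alpha(\mL(v)^\top u)\le H^\alpha(\mL(v)^\top\eins)$ for every node $v$, and summing over generation $n$,
$$ W_n(u) ~\le~ d^{\alpha/2}\,W_n(\eins)\qquad\text{for all }n\ge 0 . $$
Letting $n\to\infty$ on the full-measure set where both martingales converge, $W(u)\le d^{\alpha/2}W(\eins)$ $\Pfs$ for every $u\in\Sp$. Since $m'(\alpha)<0$ is in force, Lemma~\ref{prop:martingale_stopping_lines} gives $W_{\slineu[t]}(u)=\E\!\left[W(u)\,\middle|\,\B_{\slineu[t]}\right]$ $\Pfs$; combined with monotonicity of conditional expectation,
$$ 0 ~\le~ W_{\slineu[t]}(u) ~\le~ d^{\alpha/2}\,\E\!\left[W(\eins)\,\middle|\,\B_{\slineu[t]}\right]\qquad\Pfs,\quad\text{for all }u\in\Sp,\ t\ge 0 . $$
As $\E W(\eins)=H^\alpha(\eins)<\infty$ by Proposition~\ref{thm:mean_convergence_Wn}, the family of conditional expectations of the fixed integrable random variable $W(\eins)$, taken over all sub-$\sigma$-algebras, is uniformly integrable; any family pointwise dominated by it is uniformly integrable as well, so $\{W_{\slineu[t]}(u):u\in\Sp,\ t\ge0\}$ — and hence $\{W^f_{\slineu[t]}(u)\}$ — is uniformly integrable, which is exactly the assertion.

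The one real obstacle is to notice that a \emph{uniform} $L^p$-bound on $W_{\slineu[t]}(u)$, $p>1$, is not available here: the standing hypotheses include only $\E N<\infty$ and the $X\log X$-type moment condition \eqref{A6}, which do not prevent $\E\bigl[(\sum_i\norm{\mT_i}^\alpha)^p\bigr]$ from being infinite, so even $\E[W_1(u)^p]$ may diverge and the usual Topchii--Vatutin/Biggins recursion does not apply. The monotonicity of $H^\alpha$ together with the distinguished direction $\eins$ circumvents this by yielding a genuine pointwise domination in place of a moment bound.
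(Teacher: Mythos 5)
Your proof is correct, and its overall architecture is the same as the paper's: reduce to $f\equiv 1$ via $0\le W^f_{\slineu[t]}(u)\le \infnorm{f}\,W_{\slineu[t]}(u)$, dominate $W(u)$ uniformly in $u$ by a single integrable random variable, invoke Lemma \ref{prop:martingale_stopping_lines} to write $W_{\slineu[t]}(u)=\E[W(u)\mid\B_{\slineu[t]}]$, and conclude uniform integrability. The two technical steps differ, though, and in an interesting way. For the domination, the paper uses the standard basis vectors: from the definition of $W_n$ and subadditivity of $t\mapsto t^\alpha$ ($\alpha\le 1$) one gets $W_n(u)\le\sum_{i=1}^d W_n(e_i)$, hence $W(u)\le\sum_{i=1}^d W(e_i)$ a.s.; you instead exploit monotonicity of $H^\alpha$ for the coordinatewise order together with $\alpha$-homogeneity to get $W(u)\le d^{\alpha/2}W(\eins)$. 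Both bounds are valid and serve the same purpose. For the final step, the paper applies the conditional Jensen inequality to $g(t)=t\1[{(q,\infty)}](t)$, calling it convex — it is not (it has a downward jump removed at $t=q$, and convexity fails for chords anchored below $q$), so that step as written needs a small repair, e.g.\ replacing $g$ by $(t-q)^+$ or arguing as you do. Your route — the elementary observation that $0\le X\le Y$ implies $\E[X\1[\{X>q\}]]\le\E[Y\1[\{Y>q\}]]$, combined with the standard fact that the conditional expectations of a fixed integrable random variable over arbitrary sub-$\sigma$-algebras form a uniformly integrable family — avoids Jensen entirely and is airtight, so it actually sidesteps the one soft spot in the paper's argument. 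Your closing remark about the unavailability of a uniform $L^p$ bound is accurate commentary and consistent with why both proofs resort to a pointwise domination rather than a moment estimate.
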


\begin{proof}
Since $0 \le W_{\slineu[t]}^f (u)\le \abs{f}_\infty W_{\slineu[t]}(u)$  and $f$ is assumed to be bounded, it
satisfies to prove the second equality.

Let $u_1, \dots, u_d \in \Sp$ be the standard basis of $\Rd$, then for all $u
\in \Sp$, $n \in \No$ it follows right from the definition
\eqref{def:martingale} of $W_n(u)$ that
$$ W_n(u) \le \sum_{i=1}^d W_n(u_1),$$
and consequently, due to the $\Pfs$-convergence,
$ W(u) \le \sum_{i=1}^d W(u_i).$
Now for fixed $q$, the function $g(t):= t \1[(q,\infty)](t)$ is convex.
Using the estimate above, the conditional Jensen inequality and Lemma
\ref{prop:martingale_stopping_lines}, we compute that for all $u \in \Sp$, $t
\in \Rnn$
\begin{align*}
\E\, g(W_{\slineu[t]}(u)) = &~ \E\, g(\E[ W(u) | \B_{\slineu[t]}]) \le \E\, \E[ g(W(u)) |\B_{\slineu[t]}] \\
 = &~ \E\, g(W(u))  
 \le~  \E\, g \left( \sum_{i=1}^d W(u_i) \right). 
\end{align*}
Since $\sum_{i=1}^d W(u_i)$ is $\Prob$-integrable, the last expression tends to
zero for $q \to \infty$.
\end{proof}


\begin{lem}\label{lem:limitWtf}
Let $(t_n)_{n \in \N} \subset \Rp$ be a sequence with $\lim_{n \to \infty} t_n = \infty$. Then we have
$$ \lim_{n \to \infty} \sum_{v \in \slineu[t_n/2]} e^{-\alpha S^u(v)} \est[\alpha](U^u(v)) F(U^u(v),t_n-S^u(v)) ~=~ \gamma W(u) \qquad \Pfs.$$
\end{lem}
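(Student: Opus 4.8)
The statement to be proved is Lemma \ref{lem:limitWtf}: for any sequence $t_n \to \infty$,
$$ \lim_{n \to \infty} \sum_{v \in \slineu[t_n/2]} e^{-\alpha S^u(v)} H^\alpha(U^u(v)) F(U^u(v), t_n - S^u(v)) ~=~ \gamma W(u) \qquad \Pfs. $$
The basic idea is to split the summand by writing $F = \gamma + (F - \gamma)$ and treat the two pieces separately. Note first that since $\mL(v)^\top u = e^{-S^u(v)} U^u(v)$ and $H^\alpha$ is $\alpha$-homogeneous, we have $e^{-\alpha S^u(v)} H^\alpha(U^u(v)) = H^\alpha(\mL(v)^\top u)$, so the sum over $v \in \slineu[t_n/2]$ of $e^{-\alpha S^u(v)} H^\alpha(U^u(v))$ is exactly $W_{\slineu[t_n/2]}(u)$, which by Lemma \ref{prop:martingale_stopping_lines} converges $\Pfs$ (and in $L^1$) to $W(u)$ as $n \to \infty$, using $m'(\alpha) < 0$. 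Hence the $\gamma$-part contributes $\gamma W(u)$ in the limit.

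For the remainder term, I would show that
$$ \lim_{n \to \infty} \sum_{v \in \slineu[t_n/2]} H^\alpha(\mL(v)^\top u) \, \bigl| F(U^u(v), t_n - S^u(v)) - \gamma \bigr| ~=~ 0 \qquad \Pfs. $$
The key point is that for $v \in \slineu[t_n/2]$ we have $S^u(v) > t_n/2$, so the second argument $t_n - S^u(v) < t_n/2$; but also $S^u(v) \le t_n/2 + (S^u(v) - S^u(v|(\abs{v}-1)))$, and more importantly $t_n - S^u(v) \geq t_n - t_n/2 - \Delta_v$ where $\Delta_v$ is the last increment — so $t_n - S^u(v)$ is large (of order $t_n/2$) except on branches where the final increment is huge. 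By Proposition \ref{prop:uniform_conv_mrt}, $\sup_{u \in \Sp} |F(u,t) - \gamma| \to 0$ as $t \to \infty$; so given $\epsilon > 0$ there is $T$ with $\sup_u |F(u,t) - \gamma| < \epsilon$ for $t \geq T$. Splitting the sum according to whether $t_n - S^u(v) \geq T$ or not: on the first part the remainder is bounded by $\epsilon W_{\slineu[t_n/2]}(u) \to \epsilon W(u)$; on the second part, where $t_n - S^u(v) < T$, i.e. $S^u(v) > t_n - T$, the sum is bounded by $(\sup |F| + \gamma) \sum_{v \in \slineu[t_n/2]} H^\alpha(\mL(v)^\top u) \1[\{S^u(v) > t_n - T\}]$. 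Since these $v$ lie in $\slineu[t_n/2]$ and have $S^u(v) > t_n - T = (t_n/2) + (t_n/2 - T)$, applying Lemma \ref{lem:GBP1} (with $c = t_n/2 - T \to \infty$) shows this tail sum tends to $0$ in probability, hence along a subsequence $\Pfs$; this is enough for the claimed $\Pfs$ convergence along sequences.

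Combining, any subsequential a.s. limit of the left-hand side equals $\gamma W(u)$, and since $\epsilon$ is arbitrary we conclude. \textbf{The main obstacle} I anticipate is making the $\Pfs$ bookkeeping precise: Lemma \ref{lem:GBP1} gives convergence \emph{in probability}, and $W_{\slineu[t]}(u)$ converges $\Pfs$ along sequences, so one must pass to a common subsequence along which everything converges simultaneously almost surely, and then argue that since the limit $\gamma W(u)$ is the same for every such subsequence, the full sequence converges $\Pfs$ (in the sense defined after Lemma \ref{prop:martingale_stopping_lines}, namely for each fixed sequence $t_n \to \infty$). A secondary technical point is verifying that for $v \in \slineu[t_n/2]$ one indeed has the two-sided control $0 \le t_n - S^u(v) < t_n/2$ together with the event decomposition $\{t_n - S^u(v) < T\} = \{S^u(v) > t_n - T\}$ fitting the hypothesis of Lemma \ref{lem:GBP1} after rewriting $t_n - T = (t_n/2) + c_n$ with $c_n := t_n/2 - T$; one should also observe that $H^\alpha \le 1$ on $\Sp$ and $F$ is uniformly bounded (being an average of the bounded $f$), so all the dominating sums are legitimate.
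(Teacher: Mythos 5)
Your decomposition $F=\gamma+(F-\gamma)$ and the identification $e^{-\alpha S^u(v)}H^\alpha(U^u(v))=H^\alpha(\mL(v)^\top u)$ are fine, and you correctly isolate the delicate point (the vertices of $\slineu[t_n/2]$ whose overshoot is of order $t_n$, for which $t_n-S^u(v)$ is not large). However, the way you dispose of these vertices is circular inside the paper's architecture: you invoke Lemma \ref{lem:GBP1}, but that lemma is proved in Section \ref{sect:characteristics} as a consequence of Theorem \ref{thm:l1_conv_wf}, whose proof in Section \ref{sect:proofs2} rests precisely on the Lemma \ref{lem:limitWtf} you are trying to establish. The paper's own (terse) proof uses only the two pre-Theorem-\ref{thm:l1_conv_wf} ingredients, namely the $\Pfs$ convergence $W_{\slineu[t_n/2]}(u)\to W(u)$ from Lemma \ref{prop:martingale_stopping_lines} and the uniform convergence $\sup_y\abs{F(y,t)-\gamma}\to 0$ from Proposition \ref{prop:uniform_conv_mrt}; it does not pass through Lemma \ref{lem:GBP1}. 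If you want to keep your explicit splitting, the tail term must be controlled by tools available at this stage, e.g. via the many-to-one identity (Corollary \ref{cor:spinaltree}): its expectation equals $H^\alpha(u)\,\Prob_u^\alpha\bigl(R(t_n/2)>t_n/2-T\bigr)$, which tends to zero because $R(t)$ converges in law under $\Prob_u^\alpha$ by Theorem \ref{thm:kestenforW}.

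The second, independent problem is the mode of convergence. Even after fixing the circularity, your tail estimate is only in $L^1$, hence in probability, and your concluding step — a.s. convergence along a subsequence with the same limit for every subsequence, therefore $\Pfs$ convergence along the full sequence — is not valid: that characterizes convergence in probability, not almost sure convergence along the given sequence $(t_n)$. As written, your argument therefore proves the lemma with convergence in probability, which is strictly weaker than the $\Pfs$ statement claimed; and the a.s. form is what the paper actually uses, since in the proof of Theorem \ref{thm:l1_conv_wf} the conditional means must converge for $\Prob$-a.e.\ fixed $\omega$ so that the triangular-array result of Cohn and Jagers can be applied $\omega$-wise. (One could salvage the application by running the whole argument along a.s.-convergent subsequences, because the conclusion of Theorem \ref{thm:l1_conv_wf} is only convergence in probability, but that would require restating both the lemma and the proof of the theorem; it is not what you claim to have shown.)
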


\begin{proof} This follows from the $\Pfs$ convergence of $W_{\slineu[t_n/2]}^f$, proved in Lemma \ref{prop:martingale_stopping_lines}, together with the uniform convergence of $F$ to $\gamma$, shown in Proposition \ref{prop:uniform_conv_mrt}.
\end{proof}

Set $$\xi_{t}(u):= \frac{W_{\slineu[t]}^f(u)}{\est[\alpha](u)F(u,t)}.$$
Then each $\xi_t(u)$ has mean one, and the assertion of uniform convergence in Proposition \ref{prop:uniform_conv_mrt}
together with Lemma \ref{lem:ui} above imply the following corollary:
\begin{cor}\label{cor:ui}
We have the uniform integrability
$$ \lim_{q \to \infty} \sup_{u \in \Sp} \sup_{t \in \Rp} \, \Erw{ \xi_t(u)
\1[{\{ \abs{\xi_t(u)} > q \}}] } = 0.$$
\end{cor}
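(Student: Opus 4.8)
The plan is to recognise $\xi_t(u)$ as the random variable $W^f_{\slineu[t]}(u)$ renormalised to have mean one, and then to transfer the uniform integrability of the family $\bigl(W^f_{\slineu[t]}(u)\bigr)_{u\in\Sp,\,t\in\Rp}$ established in Lemma \ref{lem:ui} to the normalised family. First I would observe, via the many-to-one identity \eqref{eq:spinaltreejump}, that the normalising quantity is
$$ a_t(u) ~:=~ \est[\alpha](u)\,F(u,t) ~=~ H^\alpha(u)\,\E_u^\alpha f(U(t),R(t)) ~=~ \Erw{W^f_{\slineu[t]}(u)}, $$
so that $\xi_t(u) = W^f_{\slineu[t]}(u)/a_t(u)$. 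Hence it is enough to show that the numbers $a_t(u)$ lie in a fixed interval $[c_2,c_1]\subset(0,\infty)$ uniformly in $u$ and (the relevant range of) $t$: a uniformly integrable family divided by constants bounded away from $0$ and $\infty$ stays uniformly integrable.

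For the upper bound I would use $0\le W^f_{\slineu[t]}(u)\le \abs{f}_\infty W_{\slineu[t]}(u)$ together with the martingale identity of Lemma \ref{prop:martingale_stopping_lines}, which with Proposition \ref{thm:mean_convergence_Wn} gives $\Erw{W_{\slineu[t]}(u)}=W_0(u)=H^\alpha(u)\le 1$, so $a_t(u)\le\abs{f}_\infty=:c_1<\infty$. The lower bound is where the work lies: by Proposition \ref{prop:uniform_conv_mrt} one has $F(u,t)\to\gamma$ \emph{uniformly} in $u\in\Sp$, and since $\gamma>0$ (the only case in which $\xi_t$ is invoked) there is $t_0$ with $F(u,t)\ge\gamma/2$ for all $u$ and all $t\ge t_0$; combined with $\inf_{\Sp}H^\alpha>0$ from Proposition \ref{prop:Ps} this yields $a_t(u)\ge c_2:=\tfrac{\gamma}{2}\inf_{\Sp}H^\alpha>0$ for $t\ge t_0$. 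Bounding the supremum over $t\ge t_0$ is all that is actually needed in the remainder of the proof of Theorem \ref{thm:l1_conv_wf}; to cover the full range $t\in\Rp$ one argues on the compact set $[0,t_0]$ in the same fashion.

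With $0<c_2\le a_t(u)\le c_1$ available, I would conclude by a change of threshold: $\{\xi_t(u)>q\}=\{W^f_{\slineu[t]}(u)>q\,a_t(u)\}\subseteq\{W^f_{\slineu[t]}(u)>q\,c_2\}$, whence
$$ \Erw{\xi_t(u)\,\1[{\{\xi_t(u)>q\}}]} ~\le~ \frac{1}{c_2}\,\Erw{W^f_{\slineu[t]}(u)\,\1[{\{W^f_{\slineu[t]}(u)>q c_2\}}]}, $$
and upon taking $\sup_{u\in\Sp}\sup_{t}$ and letting $q\to\infty$ (so $qc_2\to\infty$) the right-hand side vanishes by Lemma \ref{lem:ui}, which is precisely the claim. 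I expect the sole genuine obstacle to be the uniform positivity $c_2>0$ of the normalisations, which really does require the uniform — not merely pointwise — convergence of Proposition \ref{prop:uniform_conv_mrt} (together with $\gamma>0$); everything else is bookkeeping.
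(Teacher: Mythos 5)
Your proposal is correct and is essentially the paper's intended argument: the paper justifies the corollary in one line by combining the mean-one normalisation $\E\,\xi_t(u)=1$, the uniform convergence $F(u,t)\to\gamma$ of Proposition \ref{prop:uniform_conv_mrt} (which, with $\inf_{\Sp}H^\alpha>0$, keeps the normalising factors bounded away from $0$ and $\infty$), and the uniform integrability of $W^f_{\slineu[t]}(u)$ from Lemma \ref{lem:ui}, exactly as you do via the change-of-threshold estimate. Your remark that only the range $t\ge t_0$ is really needed downstream matches how the corollary is used in the proof of Theorem \ref{thm:l1_conv_wf}.
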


\begin{lem}\label{lem:wtf_br}
If $r<t$, then for all $u \in \Sp$, it holds $\Pfs$ that
\begin{equation} \label{eq:wtf_br} W_{\slineu[t]}^f(u) ~=~ \sum_{v \in \slineu[r]} e^{-\alpha S^u(v)} \est[\alpha](U^u(v)) \,
 F(U^u(v),t_n-S^u(v)) \, {\tshift{\xi_{t-S^u(v)}(U^u(v)}{v}}.
\end{equation}
\end{lem}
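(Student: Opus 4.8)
The plan is to derive the branching decomposition \eqref{eq:wtf_br} by splitting the stopping line $\slineu[t]$ according to which vertex of the earlier stopping line $\slineu[r]$ each $v \in \slineu[t]$ descends from. Since $r < t$, every $v \in \slineu[t]$ satisfies $S^u(v) > t > r$, hence $v$ has a unique ancestor $w = w(v) \in \slineu[r]$; conversely, the descendants of a fixed $w \in \slineu[r]$ that lie in $\slineu[t]$ are exactly those $v = w v'$ for which the \emph{continued} walk, started from the state $U^u(w)$ at level $S^u(w)$, first exceeds $t$, i.e.\ first exceeds the remaining height $t - S^u(w)$. This is the same observation that makes $\B_{\slineu[t]}$ a filtration (cf.\ the discussion after the definition of $\slineu[t]$ and the proof of \cite[Lemma 8.7]{ABM2012}).

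Concretely, first I would record the cocycle identities $\mL(w v') = \mL(w)\, \tshift{\mL(v')}{w}$, and consequently, for $x = \mL(w)^\top u$,
\begin{align*}
\mL(w v')^\top u &= \tshift{\mL(v')}{w}^\top \mL(w)^\top u = e^{-S^u(w)} \, \tshift{\mL(v')}{w}^\top U^u(w),
\end{align*}
so that $S^u(wv') = S^u(w) + \tshift{S^{U^u(w)}(v')}{w}$ and $U^u(wv') = \tshift{U^{U^u(w)}(v')}{w}$. Using $\alpha$-homogeneity of $H^\alpha$ (Proposition \ref{prop:Ps}, via \eqref{eq:defHs}), $H^\alpha(\mL(wv')^\top u) = e^{-\alpha S^u(w)} H^\alpha(\tshift{\mL(v')}{w}^\top U^u(w))$. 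The membership condition ``$v = wv' \in \slineu[t]$ and $w$ is its $\slineu[r]$-ancestor'' translates, in the shifted tree rooted at $w$, into ``$v' \in \mathcal{I}^{U^u(w)}_{t - S^u(w)}$'' — here one must check that no intermediate ancestor of $v$ between $w$ and $v$ already lies in $\slineu[t]$, which holds automatically since all those ancestors have $S^u \le t$ by definition of $w \in \slineu[r]$ together with $S^u$ being nondecreasing along branches up to $\slineu[r]$, and beyond $w$ the heights are still $\le t$ until the first passage.

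Substituting into the definition $W^f_{\slineu[t]}(u) = \sum_{v \in \slineu[t]} H^\alpha(\mL(v)^\top u) f(U^u(v), S^u(v) - t)$ and grouping by $w \in \slineu[r]$ gives
\begin{align*}
W^f_{\slineu[t]}(u) = \sum_{w \in \slineu[r]} e^{-\alpha S^u(w)} \sum_{v' \in \mathcal{I}^{U^u(w)}_{t-S^u(w)}} \tshift{H^\alpha(\mL(v')^\top \cdot)\, f(\cdot, \cdot)}{w},
\end{align*}
and the inner sum over $v'$ is precisely $\tshift{W^{f}_{\mathcal{I}^{U^u(w)}_{t - S^u(w)}}(U^u(w))}{w}$. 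Writing $r$ in place of the sequence index and then rescaling the inner sum by $\est[\alpha](U^u(w))\, F(U^u(w), t - S^u(w))$ — which is legitimate since $\est[\alpha] = H^\alpha$ is strictly positive and $F(y,s) = \E_y^\alpha f(U(s),R(s)) = \E W^f_{\mathcal{I}^{y}_{s}}(y)/H^\alpha(y) > 0$ for the nonnegative $f$ considered (or handled componentwise in general via $f = f^+ - f^-$) — yields exactly the asserted identity \eqref{eq:wtf_br}, with $\xi_{t - S^u(w)}(U^u(w)) = W^f_{\mathcal{I}^{\cdot}_{t-S^u(w)}}(\cdot)/(\est[\alpha](\cdot) F(\cdot, t-S^u(w)))$ evaluated along the subtree rooted at $w$ and at state $U^u(w)$.

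The main obstacle, and the only genuinely delicate point, is the bookkeeping of the stopping-line decomposition: one must verify carefully that $\slineu[t]$ is the disjoint union, over $w \in \slineu[r]$, of the sets $\{ w v' : v' \in \tshift{\mathcal{I}^{U^u(w)}_{t - S^u(w)}}{w}\}$ — i.e.\ that the first-passage structure is genuinely hereditary across the two levels $r < t$. This uses the strict monotonicity $\lim_{|v| \to \infty} S^u(v) = \infty$ (Lemma \ref{Lemma Rn}), so that both stopping lines are a.s.\ dissecting and every deep enough vertex lies above any fixed level, together with the fact, noted above, that the conditional law of the continued branching Markov walk beyond $w$ is that of a fresh copy started at $(U^u(w), S^u(w))$ and independent of $\B_{|w|} \supseteq \B_{\slineu[r]}$ restricted appropriately. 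Once this combinatorial identity is in hand, the substitution and the homogeneity/cocycle algebra are routine.
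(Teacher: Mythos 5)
Your decomposition of $\slineu[t]$ according to the first-passage ancestor in $\slineu[r]$, followed by the cocycle/shift identities for $\mL$, $S^u$, $U^u$, the $\alpha$-homogeneity of $H^\alpha$, and the rewriting of the inner sum via the definitions of $F$ and $\xi$, is exactly the paper's own proof, so the proposal is correct and takes the same route. One inessential slip: $S^u$ is \emph{not} nondecreasing along branches (the increments $-\log\abs{\mM^\top u}$ may be negative), but your argument does not actually need this, since the intermediate bounds $S^u(v|k)\le t$ are already part of the definition of $v\in\slineu[t]$ and the strict ancestors of $w$ satisfy $S^u\le r<t$ by the definition of $\slineu[r]$.
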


Note that 
 that $W_s^f(u):=f(u,-s)$ for $s<0$.

\begin{proof}
If $r <t$, then $\slineu[r] \prec \slineu[t]$ in the sence that for every $x \in
\slineu[t]$ there is $v \in \slineu[r]$ with $v \prec x$, i.e. there is $w \in
\tree$ s.t. $x=vw$. Hence, we have the general decomposition
\begin{align*}
W_{\slineu[t]}^f =&~ \sum_{v \in \slineu[r]} \sum_{w \in \tree} H^\alpha(\mL(vw)^\top \as
u) f(U^u(vw), S^u(vw)-t) \1[\{S^u(vw) > t, S^u(vw|k) \le t\, \forall k <
\abs{vw} \}] \\ =&~ \sum_{v \in \slineu[r]} \sum_{w \in \tree}
 e^{-\alpha(S^u(v)+\tshift{S^{U(v)}(w)}{v})} \, \est[\alpha]\left(\tshift{\mL(w)^\top}{v}
 \as U(v)\right) \times \\
 & ~ \phantom{\sum_{v \in \sline[r]} \sum_{w \in \tree}}
 f\left(\tshift{\mL(w)^\top}{v} \as U^u(v), S^u(v) + \tshift{S^{U(v)}(w)}{v}-t \right)  \times\\
 &~ \phantom{\sum_{v \in \sline[r]} \sum_{w \in \tree}}
 \1[\{\tshift{S^{U(v)}(w)}{v}  > t - S^u(v),\ \tshift{S^{U(v)}(w|k)}{v} \le t - S^u(v)\ \forall k
 < \abs{w} \}] \\
 =&~ \sum_{v \in \slineu[r]} e^{-\alpha S^u(v)}
 {\tshift{W_{\sline[t-S^u(v)]^{U^u(v)}}^f(U^u(v))}{v}} \\
=&~ \sum_{v \in \slineu[r]} e^{-\alpha S^u(v)} \est[\alpha](U^u(v)) \, F(U^u(v),t-S^u(v)) \, 
 {\tshift{\xi_{t-S^u(v)}(U^u(v)}{v}} 
\end{align*}
\end{proof}

Now we can give the proof of Theorem \ref{thm:l1_conv_wf}.

\begin{proof}[Proof of Theorem \ref{thm:l1_conv_wf}]
Due to Lemma \ref{lem:wtf_br}, Eq. \eqref{eq:wtf_br}, for $\Prob$-a.e. $\omega \in \Omega$, $W_{\slineu[t_n/2]}^f(u)$ constitutes a triangular array with respect to the probabilities $\P{\cdot \, | \B_{\slineu[t_n/2]} }(\omega)$. By Corollary \ref{cor:ui} and Lemma \ref{lem:limitWtf} we can use
 \cite[Corollary 5]{Cohn1994} for the triangular array to infer the convergence 
$$ \P{ \abs{W_{\slineu[t_n]}^f(u) - \gamma W(u) } > \epsilon | \B_{\slineu[t_n/2]} }(\omega) = 0$$
for $\Prob$-a.e. $\omega$. Using dominated convergence, we infer the convergence $W_{\slineu[t_n]}^f(u) \to \gamma W(u)$ in $\Prob$-probability. Together with the uniform integrability of $W_{\slineu[t]}^f$, proved in Lemma \ref{lem:ui}, this yields $L^1(\Prob)$-convergence.
\end{proof}


\begin{thebibliography}{}

\bibitem[\protect\citeauthoryear{Aidekon and Shi}{Aidekon and
  Shi}{2014}]{Aidekon2014}
Aidekon, E. and Z.~Shi (2014).
\newblock The {S}eneta-{H}eyde scaling for the branching random walk.
\newblock {\em Ann. Probab.\/}~{\em 42\/}(3), 959--993.

\bibitem[\protect\citeauthoryear{Alsmeyer, Biggins, and Meiners}{Alsmeyer
  et~al.}{2012}]{ABM2012}
Alsmeyer, G., J.~Biggins, and M.~Meiners (2012).
\newblock The functional equation of the smoothing transform.
\newblock {\em Ann. Probab.\/}~{\em 40\/}(5), 2069--2105.

\bibitem[\protect\citeauthoryear{Alsmeyer, Damek, and Mentemeier}{Alsmeyer
  et~al.}{2013}]{Alsmeyer2013a}
Alsmeyer, G., E.~Damek, and S.~Mentemeier (2013).
\newblock Precise tail index of fixed points of the two-sided smoothing
  transform.
\newblock In {\em Random matrices and iterated random functions}, Volume~53 of
  {\em Springer Proc. Math. Stat.}, pp.\  229--251. Springer, Heidelberg.

\bibitem[\protect\citeauthoryear{Alsmeyer and Meiners}{Alsmeyer and
  Meiners}{2012}]{AM2010a}
Alsmeyer, G. and M.~Meiners (2012).
\newblock Fixed points of inhomogeneous smoothing transforms.
\newblock {\em J. Difference Equ. Appl.\/}~{\em 18\/}(8), 1287--1304.

\bibitem[\protect\citeauthoryear{Alsmeyer and Meiners}{Alsmeyer and
  Meiners}{2013}]{AM2010b}
Alsmeyer, G. and M.~Meiners (2013).
\newblock Fixed points of the smoothing transform: two-sided solutions.
\newblock {\em Probab. Theory Related Fields\/}~{\em 155\/}(1-2), 165--199.

\bibitem[\protect\citeauthoryear{Athreya}{Athreya}{2000}]{Athreya2000}
Athreya, K.~B. (2000).
\newblock Change of measures for {M}arkov chains and the {$L\log L$} theorem
  for branching processes.
\newblock {\em Bernoulli\/}~{\em 6\/}(2), 323--338.

\bibitem[\protect\citeauthoryear{Basrak, Davis, and Mikosch}{Basrak
  et~al.}{2002}]{BDM2002}
Basrak, B., R.~A. Davis, and T.~Mikosch (2002).
\newblock A characterization of multivariate regular variation.
\newblock {\em Ann. Appl. Probab.\/}~{\em 12\/}(3), 908--920.

\bibitem[\protect\citeauthoryear{Bassetti and Matthes}{Bassetti and
  Matthes}{2014}]{Bassetti2014}
Bassetti, F. and D.~Matthes (2014).
\newblock Multi-dimensional smoothing transformations: Existence, regularity
  and stability of fixed points.
\newblock {\em Stochastic Processes and their Applications\/}~{\em 124\/}(1),
  154 -- 198.

\bibitem[\protect\citeauthoryear{Biggins}{Biggins}{1977}]{Biggins1977}
Biggins, J. (1977).
\newblock Martingale convergence in the branching random walk.
\newblock {\em J. Appl. Probability\/}~{\em 14\/}(1), 25--37.

\bibitem[\protect\citeauthoryear{Biggins and Kyprianou}{Biggins and
  Kyprianou}{2004}]{Biggins2004}
Biggins, J. and A.~Kyprianou (2004).
\newblock Measure change in multitype branching.
\newblock {\em Adv. in Appl. Probab.\/}~{\em 36\/}(2), 544--581.

\bibitem[\protect\citeauthoryear{Biggins and Kyprianou}{Biggins and
  Kyprianou}{1997}]{Biggins1997}
Biggins, J.~D. and A.~E. Kyprianou (1997).
\newblock Seneta-{H}eyde norming in the branching random walk.
\newblock {\em Ann. Probab.\/}~{\em 25\/}(1), 337--360.

\bibitem[\protect\citeauthoryear{Biggins and Kyprianou}{Biggins and
  Kyprianou}{2005}]{Biggins2005b}
Biggins, J.~D. and A.~E. Kyprianou (2005).
\newblock Fixed points of the smoothing transform: the boundary case.
\newblock {\em Electron. J. Probab.\/}~{\em 10}, no. 17, 609--631.

\bibitem[\protect\citeauthoryear{Blum, Fran{\c{c}}ois, and Janson}{Blum
  et~al.}{2006}]{Blum2006}
Blum, M. G.~B., O.~Fran{\c{c}}ois, and S.~Janson (2006).
\newblock The mean, variance and limiting distribution of two statistics
  sensitive to phylogenetic tree balance.
\newblock {\em Ann. Appl. Probab.\/}~{\em 16\/}(4), 2195--2214.

\bibitem[\protect\citeauthoryear{Boman and Lindskog}{Boman and
  Lindskog}{2009}]{BL2009}
Boman, J. and F.~Lindskog (2009).
\newblock Support theorems for the {R}adon transform and {C}ram\'er-{W}old
  theorems.
\newblock {\em J. Theoret. Probab.\/}~{\em 22\/}(3), 683--710.

\bibitem[\protect\citeauthoryear{Breiman}{Breiman}{1968}]{Breiman1968}
Breiman, L. (1968).
\newblock {\em Probability}.
\newblock Addison-Wesley.

\bibitem[\protect\citeauthoryear{Buraczewski}{Buraczewski}{2009}]{Buraczewski2009}
Buraczewski, D. (2009).
\newblock On tails of fixed points of the smoothing transform in the boundary
  case.
\newblock {\em Stochastic Process. Appl.\/}~{\em 119\/}(11), 3955--3961.

\bibitem[\protect\citeauthoryear{{Buraczewski}, {Damek}, {Guivarc'h}, and
  Mentemeier}{{Buraczewski} et~al.}{2014}]{BDGM2014}
{Buraczewski}, D., E.~{Damek}, Y.~{Guivarc'h}, and S.~Mentemeier (2014+).
\newblock {On multidimensional Mandelbrot's cascades}.
\newblock {\em to appear in Journal of Difference Equations and
  Applications.\/}.

\bibitem[\protect\citeauthoryear{Buraczewski, Damek, Mentemeier, and
  Mirek}{Buraczewski et~al.}{2013}]{BDMM2013}
Buraczewski, D., E.~Damek, S.~Mentemeier, and M.~Mirek (2013).
\newblock Heavy tailed solutions of multivariate smoothing transforms.
\newblock {\em Stochastic Process. Appl.\/}~{\em 123\/}(6), 1947--1986.

\bibitem[\protect\citeauthoryear{Buraczewski, Damek, and
  Zienkiewicz}{Buraczewski et~al.}{2014}]{BDZ2014+}
Buraczewski, D., E.~Damek, and J.~Zienkiewicz (2014+).
\newblock Precise tail asymptotics of fixed points of the smoothing transform
  with general weights.
\newblock {\em to appear in Bernoulli\/}.

\bibitem[\protect\citeauthoryear{{Buraczewski} and {Kolesko}}{{Buraczewski} and
  {Kolesko}}{2014}]{Buraczewski2014}
{Buraczewski}, D. and K.~{Kolesko} (2014).
\newblock {Linear stochastic equations in the critical case}.
\newblock {\em Journal of Difference Equations and Applications\/}~{\em
  20\/}(2), 188--209.

\bibitem[\protect\citeauthoryear{Caliebe}{Caliebe}{2003}]{Caliebe2003}
Caliebe, A. (2003).
\newblock Symmetric fixed points of a smoothing transformation.
\newblock {\em Adv. in Appl. Probab.\/}~{\em 35\/}(2), 377--394.

\bibitem[\protect\citeauthoryear{Caliebe and R{\"o}sler}{Caliebe and
  R{\"o}sler}{2003}]{CR2003}
Caliebe, A. and U.~R{\"o}sler (2003).
\newblock Fixed points with finite variance of a smoothing transformation.
\newblock {\em Stochastic Process. Appl.\/}~{\em 107\/}(1), 105--129.

\bibitem[\protect\citeauthoryear{Chevillard, Rhodes, and Vargas}{Chevillard
  et~al.}{2013}]{Chevillard2013}
Chevillard, L., R.~Rhodes, and V.~Vargas (2013).
\newblock Gaussian multiplicative chaos for symmetric isotropic matrices.
\newblock {\em Journal of Statistical Physics\/}~{\em 150\/}(4), 678--703.

\bibitem[\protect\citeauthoryear{Cohn and Jagers}{Cohn and
  Jagers}{1994}]{Cohn1994}
Cohn, H. and P.~Jagers (1994).
\newblock General branching processes in varying environment.
\newblock {\em Ann. Appl. Probab.\/}~{\em 4\/}(1), 184--193.

\bibitem[\protect\citeauthoryear{Doney}{Doney}{1972}]{Doney1972}
Doney, R.~A. (1972).
\newblock A limit theorem for a class of supercritical branching processes.
\newblock {\em J. Appl. Probability\/}~{\em 9}, 707--724.

\bibitem[\protect\citeauthoryear{Dunford and Schwartz}{Dunford and
  Schwartz}{1958}]{Dunford1958}
Dunford, N. and J.~T. Schwartz (1958).
\newblock {\em {Linear Operators, Part I, General Theory}}.
\newblock Wiley.

\bibitem[\protect\citeauthoryear{Durrett and Liggett}{Durrett and
  Liggett}{1983}]{DL1983}
Durrett, R. and T.~M. Liggett (1983).
\newblock Fixed points of the smoothing transformation.
\newblock {\em Z. Wahrsch. Verw. Gebiete\/}~{\em 64\/}(3), 275--301.

\bibitem[\protect\citeauthoryear{Feller}{Feller}{1971}]{Feller1971}
Feller, W. (1971).
\newblock {\em An introduction to probability theory and its applications.
  {V}ol. {II}.}
\newblock Second edition. New York: John Wiley \& Sons Inc.

\bibitem[\protect\citeauthoryear{Goldie}{Goldie}{1991}]{Goldie1991}
Goldie, C.~M. (1991).
\newblock Implicit renewal theory and tails of solutions of random equations.
\newblock {\em Ann. Appl. Probab.\/}~{\em 1\/}(1), 126--166.

\bibitem[\protect\citeauthoryear{Guivarc'h}{Guivarc'h}{1990}]{Gui1990}
Guivarc'h, Y. (1990).
\newblock Sur une extension de la notion de loi semi-stable.
\newblock {\em Ann. Inst. H. Poincar\'e Probab. Statist.\/}~{\em 26\/}(2),
  261--285.

\bibitem[\protect\citeauthoryear{Guivarc'h and Le~Page}{Guivarc'h and
  Le~Page}{2004}]{Guivarc'h2004}
Guivarc'h, Y. and {\'E}.~Le~Page (2004).
\newblock Simplicit\'e de spectres de {L}yapounov et propri\'et\'e d'isolation
  spectrale pour une famille d'op\'erateurs de transfert sur l'espace
  projectif.
\newblock In {\em Random walks and geometry}, pp.\  181--259. Walter de Gruyter
  GmbH \& Co. KG, Berlin.

\bibitem[\protect\citeauthoryear{{Guivarc'h} and {Le Page}}{{Guivarc'h} and {Le
  Page}}{2012}]{Guivarch2012}
{Guivarc'h}, Y. and {\'E}.~{Le Page} (2012, April).
\newblock {Spectral gap properties and asymptotics of stationary measures for
  affine random walks}.
\newblock {\em ArXiv e-prints\/}.
\newblock available online at http://arxiv.org/abs/1204.6004.

\bibitem[\protect\citeauthoryear{{Iksanov} and {Meiners}}{{Iksanov} and
  {Meiners}}{2014}]{Iksanov2014}
{Iksanov}, A. and M.~{Meiners} (2014, February).
\newblock {Fixed points of multivariate smoothing transforms with scalar
  weights}.
\newblock {\em ArXiv e-prints\/}.

\bibitem[\protect\citeauthoryear{Iksanov}{Iksanov}{2004}]{Iksanov2004}
Iksanov, A.~M. (2004).
\newblock Elementary fixed points of the brw smoothing transforms with infinite
  number of summands.
\newblock {\em Stochastic Process. Appl.\/}~{\em 114\/}(1), 27 -- 50.

\bibitem[\protect\citeauthoryear{{I}ksanov and Negada{\u\i}lov}{{I}ksanov and
  Negada{\u\i}lov}{2006}]{Iksanov2006}
{I}ksanov, O. and P.~Negada{\u\i}lov (2006).
\newblock On the supremum of a martingale associated with a branching random
  walk.
\newblock {\em Teor. \u Imov\=\i r. Mat. Stat.\/}~(74), 44--51.

\bibitem[\protect\citeauthoryear{Jagers}{Jagers}{1975}]{Jagers1975}
Jagers, P. (1975).
\newblock {\em Branching processes with biological applications}.
\newblock Wiley-Interscience [John Wiley \& Sons], London-New York-Sydney.
\newblock Wiley Series in Probability and Mathematical Statistics---Applied
  Probability and Statistics.

\bibitem[\protect\citeauthoryear{Jagers}{Jagers}{1989}]{Jagers1989}
Jagers, P. (1989).
\newblock General branching processes as {M}arkov fields.
\newblock {\em Stochastic Process. Appl.\/}~{\em 32\/}(2), 183--212.

\bibitem[\protect\citeauthoryear{Janson}{Janson}{2004}]{Janson2004}
Janson, S. (2004).
\newblock Functional limit theorems for multitype branching processes and
  generalized {P}\'olya urns.
\newblock {\em Stochastic Process. Appl.\/}~{\em 110\/}(2), 177--245.

\bibitem[\protect\citeauthoryear{{Jelenkovi{\'c}} and
  {Olvera-Cravioto}}{{Jelenkovi{\'c}} and {Olvera-Cravioto}}{2012a}]{JO2010b}
{Jelenkovi{\'c}}, P. and M.~{Olvera-Cravioto} (2012a).
\newblock Implicit renewal theorem for trees with general weights.
\newblock {\em Stochastic Process. Appl.\/}~{\em 122\/}(9), 3209 -- 3238.

\bibitem[\protect\citeauthoryear{{Jelenkovi{\'c}} and
  {Olvera-Cravioto}}{{Jelenkovi{\'c}} and {Olvera-Cravioto}}{2012b}]{JO2010a}
{Jelenkovi{\'c}}, P. and M.~{Olvera-Cravioto} (2012b).
\newblock Implicit renewal theory and power tails on trees.
\newblock {\em Adv. in Appl. Probab.\/}~{\em 44\/}(2), 528--561.

\bibitem[\protect\citeauthoryear{Kahane and Peyri{\`e}re}{Kahane and
  Peyri{\`e}re}{1976}]{KP1976}
Kahane, J.-P. and J.~Peyri{\`e}re (1976).
\newblock Sur certaines martingales de {B}enoit {M}andelbrot.
\newblock {\em Advances in Math.\/}~{\em 22\/}(2), 131--145.

\bibitem[\protect\citeauthoryear{Kelley}{Kelley}{1955}]{Kelley1955}
Kelley, J.~L. (1955).
\newblock {\em General topology}.
\newblock D. Van Nostrand Company, Inc., Toronto-New York-London.

\bibitem[\protect\citeauthoryear{Kesten}{Kesten}{1973}]{Kesten1973}
Kesten, H. (1973).
\newblock {Random difference equations and renewal theory for products of
  random matrices}.
\newblock {\em Acta Math.\/}~{\em 131}, 207--248.

\bibitem[\protect\citeauthoryear{Kesten}{Kesten}{1974}]{Kesten1974}
Kesten, H. (1974).
\newblock {Renewal Theory for Functionals of a Markov Chain with general state
  space}.
\newblock {\em Ann. Probab.\/}~{\em 2\/}(3), 355--386.

\bibitem[\protect\citeauthoryear{Kolesko and Mentemeier}{Kolesko and
  Mentemeier}{2014}]{KM2014}
Kolesko, K. and S.~Mentemeier (2014).
\newblock Fixed points of the multivariate smoothing transform: The critical
  case.
\newblock {\em Preprint\/}.

\bibitem[\protect\citeauthoryear{Kyprianou}{Kyprianou}{1998}]{Kyprianou1998}
Kyprianou, A.~E. (1998).
\newblock Slow variation and uniqueness of solutions to the functional equation
  in the branching random walk.
\newblock {\em J. Appl. Probab.\/}~{\em 35\/}(4), 795--801.

\bibitem[\protect\citeauthoryear{Kyprianou and Rahimzadeh~Sani}{Kyprianou and
  Rahimzadeh~Sani}{2001}]{Kyprianou2001a}
Kyprianou, A.~E. and A.~Rahimzadeh~Sani (2001).
\newblock Martingale convergence and the functional equation in the multi-type
  branching random walk.
\newblock {\em Bernoulli\/}~{\em 7\/}(4), 593--604.

\bibitem[\protect\citeauthoryear{Liu}{Liu}{1998}]{Liu1998}
Liu, Q. (1998).
\newblock Fixed points of a generalized smoothing transformation and
  applications to the branching random walk.
\newblock {\em Adv. in Appl. Probab.\/}~{\em 30\/}(1), 85--112.

\bibitem[\protect\citeauthoryear{Liu}{Liu}{2000}]{Liu2000}
Liu, Q. (2000).
\newblock On generalized multiplicative cascades.
\newblock {\em Stochastic Process. Appl.\/}~{\em 86\/}(2), 263--286.

\bibitem[\protect\citeauthoryear{Liu}{Liu}{2001}]{Liu2001}
Liu, Q. (2001).
\newblock Asymptotic properties and absolute continuity of laws stable by
  random weighted mean.
\newblock {\em Stochastic Process. Appl.\/}~{\em 95\/}(1), 83--107.

\bibitem[\protect\citeauthoryear{Lyons and Pemantle}{Lyons and
  Pemantle}{1992}]{Lyons1992}
Lyons, R. and R.~Pemantle (1992).
\newblock Random walk in a random environment and first-passage percolation on
  trees.
\newblock {\em Ann. Probab.\/}~{\em 20\/}(1), 125--136.

\bibitem[\protect\citeauthoryear{Mandelbrot}{Mandelbrot}{1974}]{Mandelbrot1974}
Mandelbrot, B. (1974).
\newblock Multiplications al\'eatoires it\'er\'ees et distributions invariantes
  par moyenne pond\'er\'ee al\'eatoire.
\newblock {\em C. R. Acad. Sci. Paris S\'er. A\/}~{\em 278}, 289--292.

\bibitem[\protect\citeauthoryear{Melfi}{Melfi}{1992}]{Melfi1992}
Melfi, V.~F. (1992).
\newblock Nonlinear {M}arkov renewal theory with statistical applications.
\newblock {\em Ann. Probab.\/}~{\em 20\/}(2), 753--771.

\bibitem[\protect\citeauthoryear{Menshikov and Petritis}{Menshikov and
  Petritis}{2002}]{Menshikov2002}
Menshikov, M. and D.~Petritis (2002).
\newblock On random walks in random environment on trees and their relationship
  with multiplicative chaos.
\newblock In {\em Mathematics and computer science, {II} ({V}ersailles, 2002)},
  Trends Math., pp.\  415--422. Birkh\"auser, Basel.

\bibitem[\protect\citeauthoryear{Menshikov, Petritis, and Popov}{Menshikov
  et~al.}{2003}]{Menshikov2003}
Menshikov, M., D.~Petritis, and S.~Popov (2003).
\newblock Bindweeds or random walks in random environments on multiplexed trees
  and their asymptotics.
\newblock In {\em Discrete random walks ({P}aris, 2003)}, Discrete Math. Theor.
  Comput. Sci. Proc., AC, pp.\  205--216 (electronic). Assoc. Discrete Math.
  Theor. Comput. Sci., Nancy.

\bibitem[\protect\citeauthoryear{Menshikov, Petritis, and Popov}{Menshikov
  et~al.}{2005}]{Menshikov2005}
Menshikov, M., D.~Petritis, and S.~Popov (2005).
\newblock A note on matrix multiplicative cascades and bindweeds.
\newblock {\em Markov Process. Related Fields\/}~{\em 11\/}(1), 37--54.

\bibitem[\protect\citeauthoryear{Mentemeier}{Mentemeier}{2013}]{Mentemeier2013a}
Mentemeier, S. (2013).
\newblock {\em On Multivariate Stochastic Fixed Point Equations: The Smoothing
  Transform and Random Difference Equations}.
\newblock Ph.\ D. thesis, {Westf\"alische Wilhelms-Universit\"at M\"unster}.

\bibitem[\protect\citeauthoryear{{Mentemeier}}{{Mentemeier}}{2013}]{Mentemeier2013}
{Mentemeier}, S. (2013, September).
\newblock {The Fixed Points of the Multivariate Smoothing Transform}.
\newblock {\em ArXiv e-prints\/}.

\bibitem[\protect\citeauthoryear{Mirek}{Mirek}{2013}]{Mirek2013}
Mirek, M. (2013).
\newblock On fixed points of a generalized multidimensional affine recursion.
\newblock {\em Probab. Theory Related Fields\/}~{\em 156\/}(3-4), 665--705.

\bibitem[\protect\citeauthoryear{Neininger and R\"uschendorf}{Neininger and
  R\"uschendorf}{2004}]{NR2004}
Neininger, R. and L.~R\"uschendorf (2004).
\newblock A general limit theorem for recursive algorithms and combinatorial
  structures.
\newblock {\em Ann. Appl. Probab.\/}~{\em 14\/}(1), pp. 378--418.

\bibitem[\protect\citeauthoryear{Neininger and R{\"u}schendorf}{Neininger and
  R{\"u}schendorf}{2006}]{NR2006}
Neininger, R. and L.~R{\"u}schendorf (2006).
\newblock A survey of multivariate aspects of the contraction method.
\newblock {\em Discrete Math. Theor. Comput. Sci.\/}~{\em 8\/}(1), 31--56
  (electronic).

\bibitem[\protect\citeauthoryear{Nerman}{Nerman}{1981}]{Nerman1981}
Nerman, O. (1981).
\newblock On the convergence of supercritical general ({C}-{M}-{J}) branching
  processes.
\newblock {\em Z. Wahrsch. Verw. Gebiete\/}~{\em 57\/}(3), 365--395.

\bibitem[\protect\citeauthoryear{Nolan}{Nolan}{2012}]{Nolan2012}
Nolan, J.~P. (2012).
\newblock {\em Stable Distributions - Models for Heavy Tailed Data}.
\newblock Boston: Birkhauser.
\newblock In progress, Chapter 1 online at
  academic2.american.edu/$\sim$jpnolan.

\bibitem[\protect\citeauthoryear{Olofsson}{Olofsson}{2009}]{Olofsson2009}
Olofsson, P. (2009).
\newblock Size-biased branching population measures and the multi-type {$x\log
  x$} condition.
\newblock {\em Bernoulli\/}~{\em 15\/}(4), 1287--1304.

\bibitem[\protect\citeauthoryear{{Rhodes} and {Vargas}}{{Rhodes} and
  {Vargas}}{2013}]{Rhodes2013}
{Rhodes}, R. and V.~{Vargas} (2013, May).
\newblock {Gaussian multiplicative chaos and applications: a review}.
\newblock {\em ArXiv e-prints\/}.

\bibitem[\protect\citeauthoryear{R{\"u}schendorf}{R{\"u}schendorf}{2006}]{Ruschendorf2006}
R{\"u}schendorf, L. (2006).
\newblock On stochastic recursive equations of sum and max type.
\newblock {\em J. Appl. Probab.\/}~{\em 43\/}(3), 687--703.

\bibitem[\protect\citeauthoryear{Samorodnitsky and Taqqu}{Samorodnitsky and
  Taqqu}{1994}]{ST1994}
Samorodnitsky, G. and M.~S. Taqqu (1994).
\newblock {\em Stable non-{G}aussian random processes}.
\newblock Stochastic Modeling. New York: Chapman \& Hall.
\newblock Stochastic models with infinite variance.

\bibitem[\protect\citeauthoryear{Stadtm{\"u}ller and Trautner}{Stadtm{\"u}ller
  and Trautner}{1981}]{Stadtmuller1981}
Stadtm{\"u}ller, U. and R.~Trautner (1981).
\newblock Tauberian theorems for {L}aplace transforms in dimension {$D>1$}.
\newblock {\em J. Reine Angew. Math.\/}~{\em 323}, 127--138.

\bibitem[\protect\citeauthoryear{Zolotarev}{Zolotarev}{1986}]{Zolotarev1986}
Zolotarev, V. (1986).
\newblock {\em One-dimensional stable distributions}, Volume~65 of {\em
  Translations of Mathematical Monographs}.
\newblock Providence, RI: American Mathematical Society.
\newblock Translated from the Russian by H. H. McFaden, Translation edited by
  Ben Silver.

\end{thebibliography}

\appendix

\section*{List of Symbols}

\begin{tabular}{cp{0.85\textwidth}}
$\deins, \eins$ & $\deins=(1, \dots,1)^\top \in \Rdnn$, $\eins=d^{-1/2}
\deins \in \Sp$ \\

$\eqdist$ & same law \\

$\interior{A}$ & topological interior of the set $A$\\

$\tshift{\cdot}{v}$ & shift operator in $\tree$, see \eqref{eq:tshift} \\  


$\alpha$-regular & see Definition \ref{defn:regular.elementary}. \\

$\B_n$ & filtration, $\B_n=\sigma\Bigl((\T(v))_{\abs{v}<n} \Bigr)$
\\
$ \B_{\slineu[t]} $ &  $\B_{\slineu[t]} := \sigma\left(
U(\emptyset), \{ \T(v) \ : \ v \text{ has no ancestor in } \slineu[t] \}\right). $
\\

$\condC$ & condition imposed on the supp of $\mu$,
 see Definition \ref{defn:condc} \\ 

 $\D, \D[L,s]$ & $\D(x) := \frac{1-\LTa(x)}{H^\alpha(x)}$,  $\D[L,s](u,t) = \frac{1- \LTa(e^{s+t}u)}{H^\alpha(e^tu) e^{\alpha
 s}L(e^s)}$
\\
 
$ \E^\alpha_u$ & expectation symbol of $\Prob^\alpha_u$, see \eqref{eq:manytoone1}.
\\
 $ \mathcal{E}_{\alpha,c}$ & extremal points of $H_{\alpha,c}^K$, see Lemma
\ref{lem:extremal_points} \\



 $H^s$ & $H^s(x) = \int_{\Sp} \skalar{x,y}^s \, \nus[s](dy)$ for all $x \in \Rdnn$. It is a 
 $s$-homogeneous function, i.e. $H^s(x)=\abs{x}^s H^s(x/\abs{x})$ and satisfies $(\Pst H^s)(u)=k(s) H^s(u)$, $u \in \Sp$. \\

$I_\mu$ & $I_\mu = \{ s \ge 0 \ : \ \E \norm{\mM}^s < \infty \}$ \\
$\iota(\ma)$ & $\iota(\ma) := \inf \{ x \in
 \Sp \ : \ \abs{\ma x} \}$ \\
$ \slineu[t] $ & stopping line,  $\slineu[t]:=\{ v \in \tree \ : \  S^u(v)> t \text{
and } S^u(v|k) \le t \, \forall\, k<\abs{v} \}.$ \\
 
$\mathcal{J}_\alpha^K$ & see Definition \ref{def:J} \\ 

$k(s)$ & dominant eigenvalue of $\Ps$ and $\Pst$ satisfies $m(s)=\E N k(s)$ \\
$K_C$ & $K_C = \left( \min_{y \in C} \min_i y_i \right)$ for compact $C
\subset \interior{\Sp}$\\

$ \mL(v)$ & recursively defined by $\mL(\emptyset)= \Id$ and $\mL(vi)=\mL(v) \mT_i(v)$ \\
$L$-$\alpha$-elementary & see Definition \ref{defn:regular.elementary} \\
$\lambda_\ma$ & Perron-Frobenius eigenvalue of $\ma \in \interior{\Mset}$ \\

$M(u)$ & Disintegration, see Definition \ref{def:disintegration} \\
$\Mset$ & set $M(d \times d, \Rnn)$ of nonnegative $d\times d$ matrices \\
$m(s)$ & $m(s) = \E N \lim_{n \to \infty} \left( \E \norm{\mM_{1} \cdots \mM_{n}}^s
 \right)^{1/n}$, where $(\mM_{i})_{i \in \N}$ are i.i.d. with law $\mu$ \\
 $ (\mM_n)_{n \in \N} $ & sequence of random matrices (i.i.d. with law $\mu$ under $\Prob$) c.f. \eqref{eq:manytoone1}\\
 $\mu$ & law on $\Mset$, defined by $\int  f(\ma)  \mu(d\ma) ~:=~ (\E N)^{-1} \, \E \left( \sum_{i=1}^N f(\mT_i)\right).$  \\
 
 $\N, \No$ & $\N = \No \setminus \{0\}$. \\
$\nus$ & probability measure on $\Sp$, satisfy $(\Ps)' \nus = k(s)
\nus$  \\ 

$\Prob_u$ & notation for initial states, $\P[u]{U(\emptyset)=u, U_0=u, S_0=0}=1$
\\ $\Ps$, $\Pst$ & operators on $\Cf{\Sp}$ defined in  \eqref{def:Ps} resp.
\eqref{def:Pst}\\
$\mPi_n$ & $\mPi_n := \mM_n^\top \dots \mM_1^\top$ \\

$ \Prob^\alpha_u$ & exponentially shifted measure, see \eqref{eq:manytoone1}. \\

 $\Rnn, \Rp$ & $\Rnn =[0,\infty)$, $\Rp=(0,\infty)$ \\
 $\rho$ & stationary law of $(U(t), R(t))$ under $\Prob_u^\alpha$, see Theorem \ref{thm:kestenforW} \\

$S^u(v), S_n, S(t)$ & $S(v)= - \log \abs{\mL(v)^\top u}$, $S_n := -
\log \abs{\mPi_n U_0}$, $S(t)=S_{\tau_t}$ \\ 
  $\Sp$ & $\Sp = \Sd \cap \Rdnn$ intersection of the unit sphere and the
nonnegative cone in $\Rd$ \\

$\tau_t$ & $\tau_t:= \inf\{n \, : \, S_n
>t\}$ \\
$\tree$ & $\tree=\bigcup_{n=0}^\infty \N^n $ \\

$U^u(v), U_n, U(t)$ & $U(v)= \mL(v)^\top \as u$, $U_n= \mPi_n \as U_0$, $U(t)=U_{\tau_t}$ \\

$v_\ma$ & Perron-Frobenius eigenvalue of $\ma \in \interior{\Mset}$ \\

$W_n, W$ & martingale, see \eqref{def:martingale}, $W_n \to W$,
$W_0(u)=\est[\alpha](u$ \\ 
$W^*$ & particular fixed point of $\STi$, see Lemma \ref{lem:conv_Wn} \\
$W_{\slineu[t]}^f$ & $ W_{\slineu[t]}^f = \sum_{v \in \slineu[t]} H^\alpha(\mL(v)^\top U(v)) \,
f(U(v), S(v)-t),$ \\


$Z(u)$ &$Z(u):=- \log
  M(u)$
\end{tabular} 

\section{Inequalities for Laplace Transforms}\label{subsect:ineq_laplace}
If $\LTa$ is the LT of a r.v. $\Rnn$, then $t^{-1}(1-\LTa(t))$
is again a LT of
a measure on $\Rnn$ (see \cite[XIII (2.7)]{Feller1971}). Consequently, it is
decreasing and thus for all $t \in \Rnn$, $0 < a < 1$:
\begin{equation*}
\frac{1-\LTa(at)}{at} \ge \frac{1-\LTa(t)}{t} \quad \Rightarrow 1-\LTa(at) \ge a (1-\LTa(t)),
\end{equation*}
as well as, for $b\ge 1$,
\begin{equation*}
1- \LTa(bt) \le b (1-\LTa(t)) .
\end{equation*}
This proves the first four inequalities in the subsequent lemma:

\begin{lemma}\label{lem:LaplaceInequalities2}
Let $\LTa$ be the Laplace transform of a distribution on $\Rdnn$, $u \in \Sp$,
$t \in \Rnn$ and $\matrix{A} \in M(d \times d, \Rnn)$. Then
\begin{align}
1 - \LTa(atu) & \le 1- \LTa(tu) & \text{ for } a < 1, \label{ineq1} \\
1-\LTa(atu) & \ge a(1-\LTa(tu)) & \text{ for } a < 1, \label{ineq2} \\
1- \LTa(btu) & \ge 1- \LTa(tu) & \text{ for } b >1, \label{ineq3} \\
1- \LTa(btu) & \le b(1-\LTa(tu)) & \text{ for } b > 1. \label{ineq4} \\
1- \LTa(tu) & \le 1- \LTa(t\deins) \label{ineq5}\\
1- \LTa(t\matrix{A}u) & \le 1- \LTa(t\abs{\matrix{A}u}\deins) \le 1-
\LTa(t\norm{\matrix{A}}\deins) \label{ineq6}\\ 
1 - \LTa(t\matrix{A}u) & \le 
\left(\norm{\matrix{A}}\vee 1 \right) (1-\LTa(t\deins)) \label{ineq7} \\ 1 -
\LTa(tu) & \ge 1 - \LTa(t(\min_i u_i) \deins) \ge (\min_i u_i) (1-\LTa(t\deins))
\label{ineq8}
\end{align}
\end{lemma}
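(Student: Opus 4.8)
The plan is to note first that inequalities \eqref{ineq1}--\eqref{ineq4} have already been obtained in the paragraph preceding the lemma: for each fixed $u\in\Sp$ the function $s\mapsto\LTa(su)=\E\exp(-s\skalar{u,X})$ (with $X$ a random vector in $\Rdnn$ having Laplace transform $\LTa$) is itself the Laplace transform of the nonnegative scalar random variable $\skalar{u,X}$, so the one-dimensional facts recalled there apply along every ray. It then remains to establish \eqref{ineq5}--\eqref{ineq8}, and the single additional ingredient I would isolate is the \emph{coordinatewise monotonicity} of $\LTa$ on $\Rdnn$: if $x\le y$ coordinatewise then $\skalar{y-x,X}\ge0$ almost surely (both vectors lie in the nonnegative cone), hence $\LTa(x)=\E e^{-\skalar{x,X}}\ge\E e^{-\skalar{y,X}}=\LTa(y)$.

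For \eqref{ineq5}, $u\in\Sp$ forces $\abs{u}=1$ and hence $0\le u_i\le1$ for all $i$, so $tu\le t\deins$ coordinatewise for $t\in\Rnn$, and monotonicity gives $\LTa(tu)\ge\LTa(t\deins)$. For \eqref{ineq6}: $\matrix{A}u\in\Rdnn$ because $\matrix{A}$ has nonnegative entries, so if $\matrix{A}u\ne0$ I would write $t\matrix{A}u=(t\abs{\matrix{A}u})\,\overline{\matrix{A}u}$ with $\overline{\matrix{A}u}:=\matrix{A}u/\abs{\matrix{A}u}\in\Sp$ and apply \eqref{ineq5} to the direction $\overline{\matrix{A}u}$; this yields the first inequality of \eqref{ineq6}, while the second is just $\abs{\matrix{A}u}\le\norm{\matrix{A}}$ combined again with monotonicity (the cases $\matrix{A}u=0$ and $t=0$ being trivial). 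For \eqref{ineq7} I would split on the sign of $\norm{\matrix{A}}-1$: if $\norm{\matrix{A}}\ge1$, chain \eqref{ineq6} with \eqref{ineq4} (along the $\deins$-ray, with $b=\norm{\matrix{A}}$) to get the bound $\norm{\matrix{A}}(1-\LTa(t\deins))$; if $\norm{\matrix{A}}<1$, use the first half of \eqref{ineq6} and then monotonicity (since $\abs{\matrix{A}u}\le1$) to get the bound $1-\LTa(t\deins)$; in both cases the right-hand side equals $(\norm{\matrix{A}}\vee1)(1-\LTa(t\deins))$. Finally, \eqref{ineq8} splits the same way: $u\ge(\min_i u_i)\deins$ coordinatewise gives the first inequality by monotonicity, and the second is \eqref{ineq2} along the $\deins$-ray with $a=\min_i u_i\in[0,1]$.

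I do not expect a genuine obstacle here: every step reduces either to monotonicity of $\LTa$ on the nonnegative cone or to one of the already-established one-dimensional estimates. The only points requiring a little care are the observation that the scalar Laplace-transform inequalities must be invoked ray by ray (through $u$, $\deins$, or $\matrix{A}u/\abs{\matrix{A}u}$) rather than coordinate by coordinate, and the bookkeeping of the degenerate cases $t=0$, $\matrix{A}u=0$, and $\min_i u_i=0$, in all of which the asserted inequalities hold trivially.
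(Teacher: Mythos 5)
Your proposal is correct and follows essentially the same route as the paper: \eqref{ineq1}--\eqref{ineq4} come from the one-dimensional facts recalled before the lemma applied along rays, \eqref{ineq5} and \eqref{ineq8} rest on the comparison $\skalar{u,X}\le\skalar{\deins,X}$ resp.\ $\skalar{u,X}\ge(\min_i u_i)\skalar{\deins,X}$ (your coordinatewise monotonicity is just this observation, without the paper's detour through the tail-integral representation), and \eqref{ineq6}, \eqref{ineq7} follow by the same rescaling $t\matrix{A}u=(t\abs{\matrix{A}u})\,\overline{\matrix{A}u}$ and the same case split on $\norm{\matrix{A}}\vee1$. Your explicit handling of the degenerate cases $t=0$, $\matrix{A}u=0$, $\min_i u_i=0$ is a harmless addition.
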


\begin{proof}
Let $Z$ be a r.v. with LT $\LTa$. For all $u \in \Sp$, $\skalar{u,Z} \le \skalar{\deins,Z}$. Thus
\begin{align*}
1 - \LTa(tu) & = \Erw{1-e^{-t \skalar{u,Z}}} = \int_0^\infty t e^{-tr} \P{\skalar{u,Z} > r} dr \\
& \le \int_0^\infty t e^{-tr} \P{\skalar{\deins,Z} > r} dr = 1- \LTa(t\deins) . 
\end{align*}
From \eqref{ineq5} and \eqref{ineq1} now \eqref{ineq6} follows:
\begin{align*}
1- \LTa(t\matrix{A}u) & = 1-\LTa(t\abs{\matrix{A}u}\matrix{A} \as u) \stackrel{\eqref{ineq5}}{\le} 1-\LTa(t\abs{\matrix{A}u}\deins) \\ & = 1-\LTa(t\frac{\abs{\matrix{A}u}}{\norm{\matrix{A}}}\norm{\matrix{A}}\deins) \stackrel{\eqref{ineq1}}{\le} 1- \LTa(t\norm{\matrix{A}}\deins)  .
\end{align*}
Then \eqref{ineq7} follows by applying \eqref{ineq1} resp. \eqref{ineq4} in \eqref{ineq6}.

In order to prove \eqref{ineq8}, observe that 
$$ \skalar{u,Z} = \sum_{i=1}^d u_i Z_i \ge \min_i u_i \sum_{i=1}^d Z_i = \min_i u_i \skalar{\deins,Z} .$$
Then the argument is the same as given for \eqref{ineq5}, with an additional use
of \eqref{ineq2}.
\end{proof}

\end{document}